\newtheorem{theorem}{Theorem} \newtheorem{theorem*}{Theorem}
\newtheorem{definition}[theorem]{Definition}
\newtheorem{definition*}{Definition}
\newtheorem{lemma}[theorem]{Lemma}
\newtheorem{lemma*}{Lemma}
\newtheorem{corollary}[theorem]{Corollary}
\newtheorem{corollary*}{Corollary}
\newtheorem{proposition}[theorem]{Proposition}
\newtheorem{proposition*}{Proposition}
\theoremstyle{remark}
\newtheorem{remark*}[theorem]{Remark}
\newtheorem{claim}[theorem]{Claim}
\newtheorem{claim*}[theorem]{Claim}
\numberwithin{theorem}{section} 
\numberwithin{equation}{section} 
\newcommand\eqnum{\stepcounter{equation}\tag{\theequation}}
\renewcommand*\showkeyslabelformat[1]{%
  \expandafter\def\expandafter\UrlBreaks\expandafter{\UrlBreaks
  \do\a\do\b\do\c\do\d\do\e\do\f\do\g\do\h\do\i\do\j%
  \do\k\do\l\do\m\do\n\do\o\do\p\do\q\do\r\do\s\do\t%
  \do\u\do\v\do\w\do\x\do\y\do\z\do\A\do\B\do\C\do\D%
  \do\E\do\F\do\G\do\H\do\I\do\J\do\K\do\L\do\M\do\N%
  \do\O\do\P\do\Q\do\R\do\S\do\T\do\U\do\V\do\W\do\X%
  \do\Y\do\Z}%
\parbox[t]{1.2\marginparwidth}{\raggedleft\noindent\normalfont\small\nolinkurl{#1}\par}}
 \NewDocumentEnvironment { ltae } {b }
 {\begin{array}[t]{*{50}{>{\displaystyle}l}} #1 \end{array}}{}
\newcommand{\mc}[1]{\mathcal{#1}} 
\newcommand{\mf}[1]{\mathfrak{#1}} 
\newcommand{\mbb}[1]{\mathbb{#1}} 
\newcommand{\mb}[1]{\mathbf{#1}} 
\newcommand{\ms}[1]{\mathscr{#1}} 
\newcommand{\mrm}[1]{\mathrm{#1}} 
\newcommand{\N}{\mathbb{N}} 
\newcommand{\Z}{\mathbb{Z}} 
\newcommand{\Q}{\mathbb{Q}} 
\newcommand{\R}{\mathbb{R}} 
\newcommand{\C}{\mathbb{C}} 
\newcommand{\Orth}{\mathrm{O}} 
\newcommand{\Id}{\mathrm{id}} 
\newcommand{\diam}{\mathrm{diam}} 
\newcommand{\spt}{\mathrm{spt}} 
\newcommand{\LHS}[1]{\ensuremath{\mathrm{LHS}{#1}}}
\newcommand{\eqd}{\coloneqq} 
\newcommand{\dd}{\mathrm{d}} 
\newcommand{\st}{\, \colon\,} 
\newcommand{\Fourier}{\mathfrak{F}} 
\newcommand{\FT}[1]{\widehat{#1}} 
\newcommand{\E}{\mathbb{E}} 
\renewcommand{\bar}[1]{\overline{#1}}
\newcommand{\1}{\mathbbm{1}} 
\newcommand{\sgn}{\mathrm{sgn}} 
\newcommand{\mcl}{\mathclap}
\newcommand{\mrl}{\mathrlap}
\newcommand{\nquad}{\hspace{-1em}} 
\newcommand{\nqquad}{\hspace{-2em}} 
\newcommand{\Lap}{\ms{L}} 
\newcommand{\mLap}{\mc{m}} 
\newcommand{\xs}{\mf{s}} 
\newcommand{\LP}{\mathop{\kern0pt \mathrm{P}}\hspace{-0.05em}\mathopen{}} 
\newcommand{\QP}{\mathop{\kern0pt \mathrm{Q}}\mathopen{}} 
\newcommand{\UP}{\mathop{\kern0pt \mathrm{U}}\hspace{-0.05em}\mathopen{}} 
\newcommand{\rf}{\mf{f}} 
\newcommand{\rz}{\mf{z}} 
\newcommand{\TT}{\mbb{T}} 
\newcommand{\rn}{\mf{n}} 
\DeclareMathOperator{\conv}{conv}
\newcommand{\vertiii}[1]{{\left\vert\kern-0.25ex\left\vert\kern-0.25ex\left\vert #1 
    \right\vert\kern-0.25ex\right\vert\kern-0.25ex\right\vert}}
\begin{document}
\title{Probabilistic well-posedness of generalized cubic nonlinear Schr{ö}dinger equations with strong dispersion using higher order expansions}
\providecommand{\todoGU}[1]{\todo[color=red!40]{#1}} 
\providecommand{\todoI}[1]{\todo[color=green!40]{#1}} 
\providecommand{\todoJB}[1]{\todo[color=blue!40]{#1}}
\author[J.-B. Casteras]{Jean-Baptiste Casteras}
\address{CMAFcIO, Faculdade de Ci{\^e}ncias da Universidade de Lisboa,
Edificio C6, Piso 1, Campo Grande 1749-016 Lisboa, Portugal}
\email{jeanbaptiste.casteras@gmail.com}

\author[J. F{ö}ldes]{Juraj F{ö}ldes}
\address{Dept. of Mathematics, University of Virginia, Kerchof Hall,
Charlottesville, VA 22904-4137}
\email{foldes@virginia.edu}

\author[I. Oliveira]{Itamar Oliveira}
\address{School of Mathematics, The Watson Building, University of Birmingham, Edgbaston,
Birmingham, B15 2TT, England.}
\email{ i.oliveira@bham.ac.uk, oliveira.itamar.w@gmail.com}

\author[G. Uraltsev]{Gennady Uraltsev}
\address{Dept.\ of Mathematical Sciences, University of Arkansas,
  Fayetteville, AR 72701}
\email{gennady.uraltsev@gmail.com}

\renewcommand{\shorttitle}{Higher order expansion}

\begin{abstract}
In this paper, we study the local well-posedness of the cubic
Schr{ö}dinger equation
$$(i\partial_t + \Lap) u = \pm |u|^2 u \qquad \textrm{on} \quad \ I\times \R^d ,$$
with initial data being a Wiener randomization at unit scale of a given function $f$ and $\Lap$ being an operator of degree $\sigma\geq 2$. In particular, we prove that a solution exists almost-surely locally in time provided \(f\in H^{S}_{x}(\mathbb{R}^{d})\) with \(S>\frac{2-\sigma}{4}\) for \(d\leq \frac{3\sigma}{2}\), i.e. even if the initial datum is taken in certain negative order Sobolev spaces. The solutions are constructed as a sum of an explicit multilinear expansion of the flow in terms of the random initial data and of an additional smoother remainder term with deterministically subcritical regularity. 
We develop the framework of directional space-time norms to control the (probabilistic) multilinear expansion and the (deterministic) remainder term and to obtain improved bilinear probabilistic-deterministic Strichartz estimates. 
\end{abstract}

\date{}
\maketitle

\tableofcontents

\section{Introduction}

The goal of this paper is to study the probabilistic local well-posedness theory of the cubic generalized Schrödinger equation (NLS):
\begin{equation}
\label{eq:intro}
\begin{cases} (i\partial_t + \Lap) u = \pm |u|^2 u \qquad \textrm{on} \quad \ I\times \R^d ,
\\ u(0)=f \in H_x^S (\R^d) ,\, \end{cases}
\end{equation}
where $\Lap$ is defined via the Fourier multiplier $\mLap$ according to the formula
 \[
\Lap f(x) \eqd \int_{\R^d}e^{2\pi ix\xi}\mc{m}(\xi)\FT{f} (\xi)\dd\xi.
\]
We say that the symbol \(\mLap: \R^{ d}\to \R\) is of \emph{order} \(\sigma\) if $\mLap$ is a real-valued smooth
function and there are $C_{\max}, C_{\Lap} \geq 1$ such that the bounds 
\begin{equation}\label{eq:symbol-conditions}
\begin{aligned}
& |\partial^{\alpha}\mLap(\xi)\big|\leq C_{\!\Lap} |\xi|^{\sigma-|\alpha|} \quad\text{for all multi-indexes } \alpha \text{ with } |\alpha|\in\{1, 2, 3\},
\\ 
&\big|\nabla\mLap(\xi)\big|\geq C_{\!\Lap}^{-1} |\xi|^{\sigma-1}, \qquad\big|\det D^{2}\mLap(\xi)\big|^{\sfrac{1}{d}}\geq C_{\!\Lap}^{-1}|\xi|^{\sigma-2},
\end{aligned}
\end{equation}
hold for all $|\xi| \geq C_{\max}$. 

The Laplace operator is the prototypical example for $\Lap$ when $\sigma=2$, in which case \eqref{eq:intro} is the classical Schr\" odinger equation with a Kerr nonlinearity. Another operator satisfying our conditions is $\Lap=\Delta +(-\Delta)^{s^{\#}}$, $s^{\#}\in (0,1)$, recently considered in \cite{dipierro2024qualitative}. 

When $\sigma=4$ important examples of operators that fall under our framework are the 4NLS 
\[
( i\partial_t - \Delta^2 )u=-|u|^{2m} u ,
\]
and fourth order Schr\"odinger-type equations with mixed dispersion. The latter were studied by Karpman \cite{MR1396248} and later with Shagalov (see \cite{MR1779828} and the references therein). They proposed adding higher-order dispersive terms to regularize solutions to the classical Schr\" odinger equation, as an alternative to stabilization by saturation of the nonlinearity (see, for instance, \cite{MR727767}). Namely, they considered the equation
\[\eqnum\label{eq:4NLS}
( i\partial_t -\gamma \Delta^2 +\Delta)u=-|u|^{2m} u ,
\]
for small $\gamma>0$ and $m<\frac{d}{(d-4)_+}$, i.e. $m < \infty$ if $d\leq 4$ and $m<\frac{d}{d-4}$ if $d\geq 5$. 

This level of generality allows us to better understand the interplay between the strength of dispersion phenomena compared to the effect of the nonlinearity $\pm |u|^2 u$. Our setup includes previously considered generalized Nonlinear Schrödinger equations, namely \cite{IvaKos,Tur} studies the biharmonic NLS ($\Lap=\Delta^{2}$) in relation to stability of solitons in magnetic materials. In \cite{ohtzvwan1}, the authors studied the dynamics of the biharmonic NLS on $\mathbb{T}$.

The conditions in \eqref{eq:symbol-conditions} are satisfied by perturbed powers of the Laplacian, i.e. by operators of the form $\Lap = (-\Delta)^{\sigma / 2} +
\Lap^{\sharp}$ with $\Lap^{\sharp}$ a lower order
operator. \todoGU{Originally: We remark that $\sigma \geq 2$ is a technical assumption to avoid additional difficulties.  On the other hand, the well-posedness results follow from the deterministic theory if $\sigma > d$.}\todoGU{We should actually think about what happens for $S<0$. Could we ever cover $S<(d-\sigma)/2<0$?} When $\Lap = (-\Delta)^{\sigma/2}$ is the prototypical operator of order $\sigma$, the equation \eqref{eq:intro} is invariant under the transformation
\[\eqnum\label{eq:scaling}
u\mapsto u_\lambda (t,x)\eqd\lambda^{-\sfrac{1}{2}} u(\lambda^{-1} t ,\lambda^{-\sfrac{1}{\sigma}} x),\quad \lambda>0.
\]
For any $S \in \R$ and any $\lambda>0$ one can verify that the corresponding initial condition $u_\lambda (0,  x) = \lambda^{-\sfrac{1}{2}} f(\lambda^{-\sfrac{1}{\sigma}} x)$ satisfies 
\[
\big\|u_\lambda (0)\big\|_{\dot{H}^{S}_{x}(\R^{d})}:=\Big(\int_{\R^d} \Big|(-\Delta)^{S/2}\big(\lambda^{-\sfrac{1}{2}} f(\lambda^{-\frac{1}{\sigma}} x)\big) \Big|^2 dx\Big)^{\sfrac{1}{2}} = 
\lambda^{-\sfrac{S-\xs_c}{\sigma}}  \|f\|_{\dot{H}^{S}_{x}(\R^{d})}\,, 
\]
where the deterministic critical scaling exponent is denoted by $\xs_c \eqd \frac{d-\sigma}{2}$. 
Therefore, the homogeneous Sobolev 
norm $\dot{H}^S_x$  of the initial condition is conserved under the scaling transformation \eqref{eq:scaling} if and only if $S=\xs_c$,  which suggests that this regularity is critical for the well-posedness of \eqref{eq:intro}.

One can show that \eqref{eq:intro} is locally well-posed if $S\geq \xs_{c}$ using a Banach fixed point argument, see for instance \cite{MR2002047,MR1055532,MR2415387,MR2288737,MR2353631,MR2746203}. On the other hand, Christ, Colliander, and Tao \cite{christ2003ill} established that \eqref{eq:intro} is locally ill-posed in the regime $S<\xs_{c}$. In other words, on any time interval, one can always find an initial condition arbitrarily small in norm for which the solution to \eqref{eq:intro} becomes arbitrarily large. 

However, ill-posedness is probabilistically exceptional: equation \eqref{eq:intro} admits with probability one a solution on some interval of positive length when the initial data $f$ is distributed according to a probability measure $\mu$ for any $\mu$  from an appropriate class of measures on $H^S_x(\R^d)$, satisfying $\mu\big(H^{S+\epsilon}_x(\R^d)\big) = 0$ for any $\epsilon>0$, . In this paper, we show that this is the case for measures given by the unit scale Wiener randomization of a profile $f\in H^S_x(\R^d)$ when \( S> S_{\mrm{min}}\) with $S_{\mrm{min}}(d,\sigma )$ given by
\[\eqnum \label{eq:condS}
S_{\mrm{min}}(d,\sigma):=
\begin{cases} \frac{2-\sigma}{4} & \text{if } d\leq\frac{3\sigma}{2},
\\  \frac{d+2}{4}-\frac{5\sigma}{8} & \text{if } \frac{3\sigma}{2} < d \leq \frac{7\sigma}{2}-2 , \\
  \frac{d+2}{2}- \frac{3\sigma}{2} & \text{if } d > \frac{7\sigma}{2}-2.
\end{cases}
\]

\subsection{History and background} The first result on probabilistic well-posedness for cubic NLS on $\mathbb{T}$ and $\mathbb{T}^{2}$ was obtained by Bourgain \cite{MR1309539,MR1374420}, who set $\mu$ to be a Gaussian noise, i.e. a random function whose Fourier coefficients are weighted i.i.d. normal Gaussian variables.  The motivation for such randomization arises from the formal construction of the Gibbs measure,  where $\mu$ originates from the principal part of the Hamiltonian.\todoGU{Details? More precision?}  One can easily check that $\mu$ is invariant with respect to the dynamics of the homogeneous linear Schr\" odinger equation.  By proving the invariance of such a measure with respect to the \emph{nonlinear} flow, Bourgain showed that almost all initial data admit global in time solutions
(see also \cite{MR939505,MR4312285,MR4236191,MR2425134} for other results in this direction).

There is no canonical choice of $\mu$ when the underlying domain is $\R^n$ rather than $\mathbb{T}^{n}$. Different ways of choosing randomized initial data (i.e. different $\mu$) have since been introduced and studied (see for instance \cite{MR2425134,spitz2021almost,shen2021almost}). In this work, we consider the randomized initial data introduced in \cite{MR3350022}, known as the \textit{Wiener randomization}, which we recall in  \Cref{sec:randomization}. It closely resembles the construction of the Gaussian noise on the torus at a unit spatial scale. 

The first local well-posedness results for \eqref{eq:intro} in $\R^n$ using this construction of randomized initial data
were obtained by B\' enyi, Oh, and Pocovnicu \cite{MR3350022} where the authors show that there exists a unique solution 
of the form 
\[
u = e^{-it\Delta}f^\omega +C([-T,T] ; H^{\frac{d-2}{2}}(\R^d)) \subset C([-T,T] ; H^{S}(\R^d)) 
\]
 to \eqref{eq:intro} with $u(0)=f^\omega$, for $\mu$ a.e. $\omega$ as long as  $d\geq 3$ and $S>\frac{d-1}{d+1}\frac{d-2}{2}$. Their argument relies on finding the perturbation around the linear evolution $e^{-it\Delta}f^\omega $ as an appropriate fixed point on variants of the atomic $X^{s,b}$ spaces adapted to the variation spaces $V^p$ and \(U^{p}\) introduced by Koch, Tataru, and collaborators \cite{MR2526409, MR2824485, MR3618884}.\todoGU{I am confused about this description of the spaces. We should check.}
This result was improved by Shen, Soffer, and Wu \cite{shen2021almost2} to cover the range $S\geq \frac{1}{6}$ when $d=3$. They also obtained  global well-posedness and scattering (see also \cite{camps2021scattering} for a related conclusion). 
As the functional framework, the articles \cite{MR3350022, shen2021almost2} use the Bourgain $X^{s,b}$ spaces that were introduced in \cite{MR2094851, MR2526409, MR2824485, MR3618884}.
 
A different framework was pioneered by  Dodson, L\" uhrmann, and Mendelson \cite{dodson},  who used 
anisotropic norms introduced by Ionescu and Kenig \cite{ionescu1,ionescu2} to prove well-posedness for the Schrödinger map equation.  Specifically,  in \cite{dodson}
the probabilistic local well-posedness in \eqref{eq:intro} was established when $d=4$, $\Lap=-\Delta$ and $S>\frac{1}{3}$, and therefore improved \cite{MR3350022}. The paper \cite{dodson} also studies global existence and scattering, but we will not discuss these results since we do not treat these questions here. Using the same functional spaces, in \cite{CFU} the first, second, and fourth authors of the present manuscript generalized \cite{dodson} to operators $\Lap$ satisfying \eqref{eq:symbol-conditions} in arbitrary dimension. 
More precisely, in \cite{CFU} it is shown that if $S>\tilde{S}_{\min}(d,\sigma)$ with
\begin{equation}\label{asosbis}
\tilde{S}_{\min}(d,\sigma) = \dfrac{d - \sigma}{2}\times
\begin{cases}
 \frac{1}{3} & \text{if}\ \sigma \geq \frac{d + 2}{3} \,, \\
 \frac{d+1- 2\sigma}{d-1}  & \text{if}\ \sigma \leq  \frac{d + 2}{3} \,,
\end{cases}
\end{equation}
then \eqref{eq:intro} is probabilistically locally well posed in the following sense: for almost every $\omega \in \Omega$, there exists an open interval $0\in I^\omega$ and a unique solution of the form
\[\eqnum\label{eq:foexs}
u(t) \in e^{-it\Lap}f^\omega +C(I^\omega; \dot{H}_x^{\frac{d-\sigma}{2}} (\R^d))
\]
to \eqref{eq:intro} on $I^\omega$.  Observe that
\[
\tilde{S}_{\min}(d,2) = \dfrac{d - 2}{2}\times
\begin{cases}
 \frac{1}{3} & \text{if}\ d=3 \,, \\
 \frac{d-3}{d-1}  & \text{if}\ d\geq 4 \,, 
\end{cases}
\]
hence \cite{CFU} improves all works mentioned above, except for the case $S=\frac{1}{6}$ in $d=3$, $\Lap=\Delta$ proved in \cite{shen2021almost2}).

We remark that if the solution of \eqref{eq:intro} has the form \eqref{eq:foexs}, then $u^{\#}\eqd u(t) - e^{-it\Lap}f^\omega$ satisfies \eqref{eq:intro} with the nonlinearity $\pm|u|^2u$ on the right-hand side replaced by $\zeta_3+\mc{N}(u^\#)$ where 
\[
\zeta_3 := |e^{-it\Lap} f^\omega|^2 e^{-it\Lap} f^\omega,
\]
\[
\mc{N}(u^\#) := |u^\#+e^{-it\Lap} f^\omega|^{2}(u^\#+e^{-it\Lap} f^\omega)-|e^{-it\Lap} f^\omega|^2 e^{-it\Lap} f^\omega.
\]
Observe that $\mc{N}(u^{\#})$ is a combination of seven terms containing $u^{\#}$. Since the linear propagator $e^{-it\Lap}$ does not have a smoothing effect, to obtain $u^{\#} \in C(I; \dot{H}_x^{\frac{d-\sigma}{2}} (\R^d))$, we expect 
$\zeta_3 \in \dot{H}_x^{\frac{d-\sigma}{2}}$ for all small $t$.  If $|\hat{f}^\omega (\xi)| \approx |\xi|^\alpha$ for some $\alpha \in \R$, we then expect $\zeta_3(t) \approx  |\xi|^{3\alpha}$ (again by the fact that $e^{-it\Lap}$ has no smoothing effect), hence we need $3\alpha \geq \frac{d-\sigma}{2}$. This way, heuristically, the lowest possible regularity of $f$ is $\dot{H}_x^{\frac{d-\sigma}{6}}$. Observe that this is indeed the result of \cite{CFU} for $d = 3$. 

\subsection{Statement of main result and contextual remarks}  Our goal in the present work is to improve the differentiability of the initial data by performing a higher order asymptotic  expansion of the solution. This idea was introduced by B\' enyi, Oh, and Pocovnicu \cite{bop2} for $\Lap=\Delta$ and $d=3$ in the context of randomized NLS in $\mathbb{R}^{n}$ with unit scale Weiner randomized initial data. The authors observed that the strictest condition on $S$ was indeed induced by the term $\zeta_3$, and to eliminate it they looked for the solution  of the form $u = \mf{z}_1 + \mf{z}_3+u^{\#}$ where
 \[
 \begin{aligned}[t]
     \mf{z}_1 (t)= e^{-it\Delta} f^\omega, \qquad 
     \mf{z}_3=-i \int_0^t e^{i\Delta (t-t^\prime )} \zeta_3(t^\prime) dt^\prime .
 \end{aligned} 
 \]
The main result of  \cite{bop2}  asserts that for any $S>\frac{1}{5}$ and for almost every $\omega$, there exists an open interval $0\in I$ and a unique solution
\[
u(t) \in \mf{z}_1 +\mf{z}_3 +C(I; \dot{H}_x^{\frac{1}{2}} (\R^3))
\]
to \eqref{eq:intro}. 
By continuing the expansion to arbitrarily large (finite) order, \cite{bop2} shows probabilistic local well-posedness of \eqref{eq:intro} with $\Lap=-\Delta$ for any $S>\frac{1}{6}$.  
Curiously, $\frac{1}{6}$ is exactly the regularity threshold of the first order expansion,  but the authors of \cite{bop2} were not able to surpass it even with higher order expansions. By performing the latter in the framework of directional spaces, we were able in \cite{CFU2} to improve the result of \cite{bop2}  to $S>0$,  which is the entire open range of regularity where the nonlinearity can be made interpreted classically.  The paper \cite{CFU2} also contains regularity thresholds for $\Lap$ being Laplacian in any dimension,  which are presently optimal.    
In this work we aim to generalize \cite{CFU2} to a larger set of operators.  Our main theorem reads as follows.

\begin{theorem}\label{thm:main}
Assume $\Lap$ satisfies \eqref{eq:symbol-conditions}. 
If  
$f\in H_{x}^{S} (\R^{d})$ with $S > S_{\min}(d,\sigma)$, where $S_{\min}(d,\sigma)$ is given by \eqref{eq:condS}, 
then there is an explicitly computable $\kappa_0 = \kappa_0(d, \sigma, S) \in \N$ such that for any $\kappa \geq \kappa_0$ the following holds. 
For a.e. $\omega \in \Omega$, there exist an open interval
$0\in I$ and a unique solution
\[
u(t) \in \sum_{j=0}^{\kappa} \mf{z}_{j}(t) +C(I; \dot{H}_{x}^{\frac{d-\sigma}{2}} (\R^{d}))
\]
to
\begin{equation}\label{eq:NLS-randomized}
    \begin{cases}
(i\partial_{t} + \Lap ) u = \pm |u|^{2} u & \text{on } I\times \R^{d} ,
\\
u(0)=f^\omega  \,,
\end{cases}
\end{equation}
where the random variables $z_j$ are defined in \eqref{eq:def:zn} and can be computed explicitly.  
\end{theorem}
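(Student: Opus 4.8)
\emph{Proof strategy.} The plan is to resolve the rough, low-regularity part of the flow explicitly by a high-order Picard expansion and to obtain the smooth remainder by a contraction argument at the deterministic critical regularity $\xs_c = \tfrac{d-\sigma}{2}$. Starting from the Duhamel formulation of \eqref{eq:NLS-randomized}, one sets $\mf{z}_0 \eqd e^{-it\Lap}f^\omega$ and defines $\mf{z}_j$ recursively as the Duhamel solution of the linear equation forced by the totally random cubic interaction $\pm\sum_{j_1+j_2+j_3 = j-1}\mf{z}_{j_1}\bar{\mf{z}}_{j_2}\mf{z}_{j_3}$ (with the conjugations and signs of the nonlinearity), which is \eqref{eq:def:zn}; thus $\mf{z}_j$ is $(2j+1)$-linear in $f^\omega$. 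Substituting the ansatz $u = \sum_{j=0}^\kappa \mf{z}_j + u^\#$ into \eqref{eq:NLS-randomized} and using $(i\partial_t+\Lap)\sum_{j=0}^\kappa \mf{z}_j = \pm\sum_{j_1+j_2+j_3\le\kappa-1}\mf{z}_{j_1}\bar{\mf{z}}_{j_2}\mf{z}_{j_3}$, the equation for the remainder becomes $(i\partial_t+\Lap)u^\# = \pm(\mc{R}_\kappa + \mc{M}(u^\#))$ with $u^\#(0)=0$, where $\mc{R}_\kappa \eqd \sum_{0\le j_1,j_2,j_3\le\kappa,\ j_1+j_2+j_3\ge\kappa}\mf{z}_{j_1}\bar{\mf{z}}_{j_2}\mf{z}_{j_3}$ is a purely random forcing of expansion order between $\kappa$ and $3\kappa$, and $\mc{M}(u^\#)$ collects the (at most) seven cubic terms each carrying at least one factor of $u^\#$.

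Next I would set up the functional framework: a space $X(I)$ built from directional space-time norms adapted to $\Lap$, combined with the energy space $C(I;\dot H^{\xs_c}_x)$. The hypotheses \eqref{eq:symbol-conditions} — the lower bound on $|\nabla \mLap|$, giving transversality of the characteristic surfaces, and the lower bound on $|\det D^2\mLap|$, giving the dispersive decay via stationary phase — are exactly what is needed to run the Strichartz, local smoothing, and improved bilinear probabilistic–deterministic Strichartz estimates in these norms. The analytic core consists of three families of estimates: (i) probabilistic bounds, via Wiener chaos and hypercontractivity together with multilinear probabilistic Strichartz inequalities, placing each $\mf{z}_j$ in $X(I)$ and showing that each step of the expansion contributes a fixed positive gain $\delta = \delta(d,\sigma,S)>0$ of effective regularity relative to $S$; (ii) the bilinear probabilistic–deterministic estimates controlling the mixed terms of $\mc{M}(u^\#)$, such as $\mf{z}_{j_1}\bar{\mf{z}}_{j_2}u^\#$, $\mf{z}_{j_1}\bar{u}^\#\mf{z}_{j_2}$, and $\mf{z}_{j_1}\bar{u}^\# u^\#$; and (iii) the purely deterministic trilinear estimate for $|u^\#|^2 u^\#$ at regularity $\xs_c$.

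With these in hand the contraction closes as follows. Because the forcing $\mc{R}_\kappa$ is built only from products with total order $\ge\kappa$, the accumulated regularity gain means that for $\kappa$ larger than an explicit threshold $\kappa_0(d,\sigma,S)$ the term $\mc{R}_\kappa$ lies in the dual Strichartz space at level $\xs_c$, with a norm that is small once $|I|$ is small, for all $\omega$ outside a null set. Since $u^\#(0)=0$, the solution map $u^\# \mapsto \int_0^t e^{-i(t-t')\Lap}\big(\mc{R}_\kappa + \mc{M}(u^\#)\big)\,dt'$ (up to constants) then sends a small ball of $C(I;\dot H^{\xs_c}_x)\cap X(I)$ into itself and is a contraction there — the deterministic trilinear term being absorbed by shrinking $|I|$, since at the critical exponent $\xs_c$ the relevant free-evolution Strichartz norms tend to $0$ as $|I|\to 0$. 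This yields existence and uniqueness of $u^\#$, hence of $u$, in the stated class; intersecting over a countable exhaustion of the deterministic parameters (the length of $I$ and the size of the exceptional frequency set) produces the full-measure set of admissible $\omega$.

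The main obstacle is establishing (i) and (ii) — in particular the improved bilinear probabilistic–deterministic Strichartz estimate — uniformly over every symbol obeying \eqref{eq:symbol-conditions} rather than only for the model dispersion relation $\mLap(\xi)=|\xi|^\sigma$: one must re-run the directional multilinear machinery of \cite{CFU,CFU2} with merely the qualitative non-degeneracy of $\nabla\mLap$ and $D^2\mLap$ available, and track how the three regimes of \eqref{eq:condS} (the cases $d\le\tfrac{3\sigma}{2}$, $\tfrac{3\sigma}{2}<d\le\tfrac{7\sigma}{2}-2$, $d>\tfrac{7\sigma}{2}-2$) arise from balancing $\sigma$, $d$, and the probabilistic gain against the numbers of derivatives consumed in the bilinear and trilinear interactions. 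By contrast, the combinatorial proliferation of terms in $\mc{R}_\kappa$ and $\mc{M}(u^\#)$ is harmless, since for each fixed $\kappa$ the number of terms is finite and each is estimated separately.
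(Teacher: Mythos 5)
Your overall architecture (resolve the rough part of the flow by a high-order random expansion, then close a contraction for the smoother remainder in directional space-time norms) is the same as the paper's. The genuine gap is in the step where you claim that, because each order of the expansion contributes a fixed positive regularity gain $\delta(d,\sigma,S)>0$, the forcing $\mc{R}_\kappa$ lies at level $\xs_c$ once $\kappa\geq\kappa_0$. The gain does not accumulate indefinitely: it saturates, and this saturation is precisely where the threshold \eqref{eq:condS} comes from. In the paper the order-$n$ random object is controlled in $Y^{\mu(n,S)}$ with $\mu(n,S)=\min\big(nS+\tfrac{(n-1)(\sigma-2)}{4},\,2S+\tfrac{3\sigma}{4}-1,\,S+\sigma-1\big)$ (see \eqref{eq:def:mu-v2}, \Cref{lem:mu-inductive-property}, \Cref{prop:tree-bound-probabilistic}). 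The caps $2S+\tfrac{3\sigma}{4}-1$ and $S+\sigma-1$ arise because the forcing $[\rz,\rz,\rz]_{>M}$ always contains products such as $\rz_{1}\bar{\rz}_{1}\rz_{M}$, in which low-order factors of regularity only $S$ are paired at high frequency; by the $zzz$ case of \Cref{lem:fourlinear} the output regularity of such a product cannot exceed $S+\min(S,\sfrac{\sigma}{4})+\sfrac{\sigma}{4}+\tfrac{\sigma-2}{2}$ no matter how smooth the high-order factor is. Consequently, increasing $\kappa$ does \emph{not} push $\mc{R}_\kappa$ above $\xs_c$ unless $\xs_c<\min\big(2S+\tfrac{3\sigma}{4}-1,\,S+\sigma-1\big)$; together with the $\kappa\to\infty$ limit of the genuinely accumulating term (which requires $S>\tfrac{2-\sigma}{4}$), solving these inequalities for $S$ is exactly the paper's proof of \Cref{thm:main}: one imposes $\xs_c<\xs\leq\mu(\kappa+2,S)$ in \Cref{prop:v-solution} and compares $\tfrac{2-\sigma}{4}$, $\tfrac{2d-5\sigma+4}{8}$, $\tfrac{d-3\sigma+2}{2}$ to obtain the three regimes of $S_{\min}(d,\sigma)$ and an explicit $\kappa_0$. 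You flag this "balancing" as an obstacle to be worked out later, but since the content of the theorem is the formula $S_{\min}(d,\sigma)$ (and the existence of a finite $\kappa_0$), omitting it — and asserting a mechanism that, taken literally, would give the stronger and unsupported conclusion $S_{\min}=\tfrac{2-\sigma}{4}$ in all dimensions — is a genuine gap rather than a deferred detail.

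A secondary issue: you run the remainder contraction exactly at the critical level $\xs_c$ and propose to absorb the deterministic cubic term by shrinking $|I|$; at critical regularity that estimate is scale-invariant and gains nothing as $|I|\to0$. The paper avoids this by taking the remainder at a slightly subcritical exponent $\xs\in(\xs_c,\xs_c+\tfrac{\sigma-1}{2})$, where \Cref{lem:iteration-map-bound} supplies the factor $\big\langle|I|^{-1}\big\rangle^{-c}$; this subcriticality (tied again to $\xs\leq\mu(\kappa+2,S)$ and to the mixed-term constraints \eqref{eq:asoxs} coming from \eqref{eq:YYX}--\eqref{eq:XXX}) is also what drives the almost-sure uniqueness and continuity statements in \Cref{prop:v-solution}.
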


To put Theorem \ref{thm:main} in context, observe that we recover the results of \cite{CFU2} for $\sigma=2$, but in a more general setting. On the other hand, we obtain new phenomena for higher order operators $\Lap$. For instance,  if \(\sigma=4\),  then
\[\eqnum \label{eq:condSbi}
S_{\min}(d, 4):=
\begin{cases} \frac{-1}{2} & \text{if } d\leq 6,
\\  \frac{d-8}{4} & \text{if } 6 < d \leq 12 , \\
  \frac{d-10}{2} & \text{if } d > 12.
\end{cases}
\]  
Thus,  when $d=5,6$ and $7$,  we allow for initial data in \textit{negative} Sobolev spaces without any renormalization\footnote{We refer to \cite{brun} for some well-posedness results for the one-dimensional cubic fractional nonlinear Schrödinger equations in negative Sobolev spaces}. More generally, we allow for initial data in negative Sobolev spaces if the dispersion is very strong in the corresponding dimension, more precisely, when $d< \frac{5\sigma}{2} -2$. 

\subsection{Outline of the argument} We say that $u\in C^{0}([0,T);H^{S}_{x}(\R^{d}))$ is a solution to \eqref{eq:intro} if it is a fixed point of the Duhamel iteration map
\begin{equation}\label{eq:dfge}
    u\mapsto e^{it\Lap}f \mp i\int_{0}^{t}e^{i(t-s)\Lap} |u(s)|^2 u(s) \dd s \,,
\end{equation}
with the sign in front of the integral being opposite to the sign of the right-hand side of \eqref{eq:intro}.

\subsubsection*{First step} 

We start by defining directional norms. These norms will capture the directional behavior of the solutions, and will be used to set up a fixed point argument for proving the well-posedness of \eqref{eq:intro}. The directions appearing in the norms are closely tied to the eigenvectors of $D^2\Lap(xi)$, which in turn depend on the frequency $xi$. This additional technical challenges compared to \cite{CFU2} is addressed by discretizing the frequency domain into \textit{sectors}. For each sector, we select an adapted collection of orthogonal directions used to define the norms in play. 

\subsubsection*{Second step}
 Next, we prove the boundedness of the Schr\" odinger propagator $f\mapsto e^{it\Lap}f$ from $L^2$ into the spaces we introduced. We prove a directional maximal estimate (\Cref{prop:max-dir}) and the smoothing estimate (\Cref{prop:dir-local-smoothing}). The latter encodes a directional gain of derivatives central to our argument., whereas the former is the main novelty of this manuscript. The structure of the directional maximal estimate allows us to isolate the action of $e^{it\Lap}$ along a special direction, and we proceed by combining an analogous (local) one-dimensional bound (\Cref{thm:Shiraki1}) with a local-to-global result (\Cref{thm:rogers1}).

\subsubsection*{Third step} The next step is to establish mapping properties of 
\begin{equation}
u \mapsto \int_{0}^{t}e^{i(t-s)\Lap} |u(s)|^2 u(s) \dd s \,.
\end{equation}
c.f.  the second term in the right-hand side of \eqref{eq:dfge}. The estimates on such operators depend only on the properties of $e^{it\Lap}$ and classical techniques such as the Christ-Kiselev lemma.

Following the lines of \cite{CFU2}, we define two families of norms $X^{\sigma}$ and $Y^{\sigma}$ indexed by a regularity parameter $\sigma\in\mathbb{R}$. These spaces are more delicate here compared to \cite{CFU2} due to the choice of a different system of coordinates for each sector. 


The spaces $Y^{\sigma}$ are well-suited to control the expansions $R_{\tau}\big[f, \ldots, f\big]$ (see \eqref{eq:def:R-inductive}) and $\mf{z}_j$, while the norms $X^{\sigma}$ are used to control the remainder term $v$ in the decomposition

\[
u(t)= \sum_{j=0}^{\kappa} \mf{z}_{j}(t) +v(t).
\]

In \Cref{cor:linear-estimate-adjoint-and-continuity}, we show that the boundedness in $X^{\sigma}$ and $Y^{\sigma}$ implies boundedness in $C^{0}([0,T]; H^{s}_{x}(\mathbb{R}^{d}))$, which relates our results to the classical setting.

Our main Theorem \ref{thm:main} will then follow from the fixed-point argument in \Cref{prop:v-solution}, which follows the lines of Theorem 1.6 of \cite{CFU2}, and we refer the reader to Subsection 1.3 of that paper for a detailed description of this machinery.

\noindent
\subsection*{Organization of the paper} In Section $2$, we introduce our sector decomposition and recall the unit-scale Wiener randomization. Section $3$ is devoted to the directional maximal and smoothing estimates. In Section $4$, we introduce our working spaces and establish some estimates for a linear non-homogeneous equation in them. We prove bilinear and multilinear estimates in Section $5$. In Section 6, we state probabilistic estimates on the $z_i$ terms. In Section $7$, we introduce our iteration scheme and prove, by combining the results obtained in Section $4$ and $5$, that a cubic nonlinear Schr\" odinger equation with a forcing term admits a solution. Finally, we prove our main Theorem \ref{thm:main} in Section $8$. The appendix collects technical results needed throughout the text.

\section{Notation and setup}

In this section, we introduce the notation and develop the technical framework in which the main estimates of the paper are established. Our analysis will be guided by certain spectral features of the symbol $\Lap$, and this will be reflected in the directions present in Propositions \ref{prop:max-dir} and \ref{prop:dir-local-smoothing}, which are the main novelties of this paper. We start by recalling the directional norms and by presenting the \textit{sector decomposition} of the Fourier space adapted to $\Lap$. The last part of this section recalls the unit-scale randomization of the initial data used in \cite{CFU2}.

\subsection{Directional norms}

Let $\Orth(d)$ be the set of $d\times d$ orthogonal matrices. We denote by \(\hat{e}_{j}\), \(j\in\{1,\ldots,d\}\), the \(j\)-th standard basis vector of \(\R^{d}\), and we define \(S_{j}\) to be the (orthogonal)
permutation matrix so that \(S_{j}\hat{e}_{1}=\hat{e}_{j}\) by setting
\[\eqnum\label{eq:dfsj}
S_{j}\hat{e}_k\eqd
\begin{dcases}
\hat{e}_{1} & \text{if } k=j , \\
\hat{e}_{j} & \text{if } k=1 , \\
\hat{e}_{k} & \text{if } k\notin\{1, j\}.
\end{dcases}    
\]

For any time interval $I \subseteq \R$, any orthogonal matrix
\(O\in\Orth(d)\), and any \(j\in\{1, \ldots, d\}\), the directional space-time
norm with exponents $ (\mf{a}, \mf{b}, \mf{c}) \in[1, \infty)^{3}$ is given
by
\[\eqnum\label{eq:def:directional-norm}
\begin{aligned}
&\|h\|_{L_{O, j}^{( \mf{a}, \mf{b}, \mf{c} )} (I)}=\Big\|h\circ(\Id_{I}\otimes O S_{j})\Big\|_{{L^{( \mf{a}, \mf{b}, \mf{c} )} (I)}}
\\
& =\Big( \int_{\R} \Big( \int_{I} \Big( \int_{\R^{d - 1}} \big|h (t, O S_{j}(x_1, x') \big|^{\mf{c}}  \dd {x'} \Big)^{\frac{\mf{b}}{\mf{c}}} \dd t \Big)^{\frac{\mf{a}}{\mf{b}}}  \dd x_1  \Big)^{\frac{1}{\mf{a}}} \,,
\end{aligned}
\]
where $x = (x_1, x')$ with $x_1 \in \R$ and $x' \in \R^d$.
If $\{ \mf{a}, \mf{b}, \mf{c} \} \ni \infty$ we use the standard modifications by the
essential supremum norm.

We refer to $L_{O, j}^{(\mf{a}, \infty, \mf{c} )} (I)$ and to
$L_{O, j}^{(\mf{a}, \mf{b}, \mf{c} )} (I)$ with \(\mf{b}\gg1\) as ``directional
maximal norms'' because of the supremum in \(t\in I\).  We refer to the
norms $L_{O, j}^{(\mf{a}, 2, \mf{c} )} (I)$ and
$L_{O, j}^{(\mf{a}, \mf{b}, \mf{c} )} (I)$ with \(\mf{b} \approx 2\) as ``directional
smoothing norms'' due to the negative power of \(N\) appearing below in
\eqref{eq:dir-local-smoothing-genbasis}, which manifests damping of
high oscillations.

Below, we use the directional norms to control the solution to NLS,
where the estimates are related to the principle curvature directions
of the symbol \(\Lap(\xi)\) and to the magnitude \(|\xi|\). To capture this
behaviour we define the collection of sectors and the associated
projectors.

\subsection{Sectors and associated projectors}
\label{sec:sectors}

In this section we introduce a covering of the Fourier space into subsets called \textit{sectors}, and on each sector we define a set of coordinates for which $\Lap$ has non-degenerate spectral properties. The sectors are carefully chosen to respect the scaling properties of $\Lap$. 


\begin{definition}[Sectors]\label{defn:sectors}
Fix a resolution \(\epsilon_{\Theta}\in(0, 1]\) 
and let
\(\ms{S}_{\Theta}\subset \mbb{S}^{d-1}\eqd\{\hat{e}\in\R^{d}\st \|\hat{e}\|=1\}\) be a
\emph{finite} collection of unit vectors, such that
\(\mbb{S}^{d-1}\subset\bigcup_{\hat{e}\in\ms{S}_{\Theta}}B_{\epsilon_{\Theta}}(\hat{e})\) and
\(\|\hat{e}-\hat{e}'\|\geq \epsilon_{\Theta}\) for any \(\hat{e}\neq\hat{e}'\in\ms{S}_{\Theta}\).
Then for every $N\in(1+\epsilon_{\Theta})^{\N}$ and any $\hat{e}\in\ms{S}_{\Theta}$ we define the sector by 
\[\eqnum
\label{eq:defn:sector}
\begin{aligned}[t]
&\theta_{N, \hat{e}}\eqd\Big\{\xi\in\R^{d}\st 1\leq\frac{|\xi|}{N}\leq (1+\epsilon_{\Theta})^{2}, \, \big| \sfrac{\xi}{|\xi|}-\hat{e} \big|\leq 2 \epsilon_{\Theta}\Big\}\quad N>1\,,
\\
&
\theta_{1, \hat{e}}\eqd\Big\{\xi\in\R^{d}\st |\xi|\leq (1+\epsilon_{\Theta})\Big\}\,,
\end{aligned}
\]
and we let $\Theta := \{\theta_{N, \hat{e}}: N\in(1+\epsilon_{\Theta})^{\N}, \hat{e}\in\ms{S}_{\Theta}\}$
be the collection of all sectors. 

To any sector \(\theta\in\Theta\) we associate
\(\hat{e}_{\theta}\in\ms{S}\) and \(N_{\theta}\in(1+\epsilon_{\Theta})^{\N}\) 
 such that
\(\theta=\theta_{N_{\theta}, \hat{e}_{\theta}}\). When $N_\theta=1$ we choose
$\hat{e}_{\theta}\in \ms{S}_{\Theta}$ arbitrarily. We define the \emph{center} of the sector
$\theta \in \Theta$ as 
\[
c_{\theta}\eqd
\begin{dcases} N_\theta\hat{e}_\theta
& \text{if } N_\theta>1,
\\
0 & \text{if } N_\theta=1.
\end{dcases}
\]
\end{definition}

Note that the union of sectors  in \(\Theta\) covers \(\R^{d}\) with finite overlap, that is, 
\(\sum_{\theta\in\Theta}\1_{\theta}\approx_{\epsilon_{\Theta}}1\), where $\1_{\theta}$ is the characteristic function of the set $\theta$. When
\(N=1\) the same sector is associated to all directions \(\hat{e}\in\ms{S}_{\Theta}\).
By construction, for any $N, N'\in(1+\epsilon_{\Theta})^{\N\setminus\{0\}}$ it holds that
\[
\theta_{N, \hat{u}}=\frac{N}{N'}\theta_{N', \hat{u}} := \Big\{\frac{N}{N'} \xi: \xi \in \theta_{N', \hat{u}}\Big\}.
\]
We also have 
\[\eqnum\label{eq:sector-diameter}
\diam(\theta)\lesssim_{d}
\begin{dcases}
N_\theta \epsilon_{\Theta} & \text{if } N_{\theta}>1,
\\
1 & \text{if } N_{\theta}=1,
\end{dcases}
\]
with an implicit constant depending only on the dimension.

Next, we introduce a smooth partition of unity
\(\Big\{\chi_{\theta}(\xi)\in C^{\infty}_{c}(\R^{d};[0, 1])\Big\}_{\theta\in\Theta}\) associated to the
sector in \(\Theta\) and  we use it to define sector
(approximate) projections by setting
\[\eqnum\label{eq:defn:sector-projections}
\FT{\LP_{\theta}f}(\xi)\eqd \chi_{\theta}(\xi)\FT{f}(\xi)\,.
\]

\begin{definition}[Sector projections]\label{defn:sector-projections}
Fix $\epsilon_{\Theta}$ and $\ms{S}_{\Theta}$
and let \(\Theta\) be a collection of sectors as in \Cref{defn:sectors}.
Let \(\chi\in C^{\infty}([0, \infty);\R)\) be a non-increasing function 
 with \(\chi\geq0\), \(\chi=1\) on
\([0, 1]\), \(\chi\geq1/2\) on \([0, \sqrt{1+\epsilon_{\Theta}}]\), and
\(\chi=0\) on \([1+\epsilon_{\Theta}, \infty)\). Then, define radial multipliers
\[
\chi_{N}^{\mrm{rad}}(\xi)
\eqd
\begin{dcases}
\chi\Big(\frac{|\xi|}{(1+\epsilon_{\Theta})N}\Big)-\chi\Big(\frac{|\xi|}{N}\Big) & \text{if } N>1,
\\
\chi\big(|\xi|\big) &\text{if } N=1
\end{dcases}
\]
and observe that the multipliers
\((\LP_{N}^{\mrm{rad}})_{N\in(1+\epsilon_{\Theta})^{\N}}\) form a partition of unity of
\(\R^{d}\).

Next, for any \(\hat{e}\in\ms{S}_{\Theta}\) define angular multipliers by setting 
\[
\begin{aligned}
& \chi_{\hat{e}}^{\mrm{ang}}(\xi)\eqd \frac{\chi\Big(\frac{\big|\sfrac{\xi}{|\xi|}-\hat{e}\big|}{2\epsilon_{\Theta}}\Big)}{\sum\limits_{\hat{e}'\in\ms{S}_{\Theta}}
\chi\Big(\frac{\big|\sfrac{\xi}{|\xi|}-\hat{e}'\big|}{2\epsilon_{\Theta}}\Big)} \,.
\end{aligned}
\]
 Note that the multipliers
\((\LP_{\hat{e}}^{\mrm{ang}})_{\hat{e}\in\ms{S}_{\Theta}}\) form
\(0\)-homogeneous smooth partition of unity of \(\R^{d}\setminus\{0\}\).

Finally, for each
sector \(\theta\) we define the multiplier
\[
\chi_{\theta}(\xi)
=
\begin{dcases}
\chi^{\mrm{rad}}_{N_{\theta}}(\xi) \chi^{\mrm{ang}}_{\hat{e}_{\theta}}(\xi)& \text{ if } N_{\theta}>1,
\\
\frac{1}{|\ms{S}_{\Theta}|}\chi^{\mrm{rad}}_{1}(\xi)& \text{ if } N_{\theta}=1.
\end{dcases}
\]
The multipliers \(\{\chi_{\theta}\}_{\theta\in\Theta}\) form a smooth partition of unity of \(\R^{d}\), with \(\chi_{\theta}\) supported in \(\theta\).

We abuse the notation and denote $P_\theta$ the operators with the Fourier multiplier $\chi_\theta$.
\end{definition}

\begin{lemma}\label{lem:sector-symbol-properties}
Let \(\conv(\theta)\) be the convex hull of \(\theta\). For any \(\xi\in\conv(\theta)\), it holds that 
\[\eqnum \label{eq:sector-points-bounds}
\begin{aligned}[t]
&
(1+\epsilon_{\theta})^{-1}N_{\theta}=(1+\epsilon_{\theta})^{-1}|c_{\theta}|\leq|\xi|\leq(1+\epsilon_{\Theta})^{2}N_{\theta},
\end{aligned}
\]
\[\eqnum \label{eq:sector-points-difference-bounds}
\begin{aligned}[t]
&
\big|\sfrac{\xi}{|\xi|}-\sfrac{c_{\theta}}{|c_{\theta}|}\big|\lesssim\epsilon_{\Theta},
\qquad |\xi-c_{\theta}|\lesssim\epsilon_{\Theta}N_{\theta},
\end{aligned}
\]
\[\eqnum \label{eq:sector-Lap-bounds}
\begin{aligned}[t]
& C_{\!\Lap}^{-1}N_{\theta}^{\sigma-1}\lesssim \big|\nabla\mLap(\xi)\big|\lesssim C_{\!\Lap} N_{\theta}^{\sigma-1},
\quad
C_{\!\Lap}^{-1}N_{\theta}^{\sigma-2}\lesssim \big\|D^{2}\mLap(\xi)\big\|\lesssim C_{\!\Lap} N_{\theta}^{\sigma-2},
\end{aligned}
\]
\[\eqnum \label{eq:sector-Lap-bounds-compare}
\begin{aligned}[t]
&\Big|\nabla\mLap(\xi)-\nabla\mLap(c_{\theta})\Big|\lesssim \epsilon_{\Theta}C_{\!\Lap} N_{\theta}^{\sigma-1},
\quad
\Big\|D^{2}\mLap(\xi)-D^{2}\mLap(c_{\theta})\Big\|\lesssim\epsilon_{\Theta}C_{\!\mLap}N_{\theta}^{\sigma-2}.
\end{aligned}
\]
and 
\[\eqnum \label{eq:sector-spectrum-lower-bounds}
\Big|\big\langle \hat{e}_{j};O^{T}D^{2}\mLap(\xi)O\hat{e}_{j}\big\rangle\Big|\gtrsim C_{\Lap}^{-2d+1}N_{\theta}^{\sigma-2}.
\]
if \(O^{T}D^{2}\Lap(\xi)O\) is diagonal.
\end{lemma}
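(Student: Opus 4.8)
The plan is to establish the five displays in the order listed, using only the definition \eqref{eq:defn:sector} of a sector and the symbol bounds \eqref{eq:symbol-conditions}. The first two displays are purely geometric, and once they are in hand every point of \(\conv(\theta)\) lies in the annulus \(|\xi|\approx N_\theta\), so that \eqref{eq:symbol-conditions} may be invoked on all of \(\conv(\theta)\) (once \(N_\theta\) exceeds a fixed multiple of \(C_{\max}\), which we henceforth assume, so that \(\conv(\theta)\subset\{|\xi|\ge C_{\max}\}\)); the remaining finitely many sectors are treated directly from the smoothness of \(\mLap\) on compact sets.

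For \eqref{eq:sector-points-bounds}, write a generic \(\xi\in\conv(\theta)\) as a convex combination \(\xi=\sum_i\lambda_i\xi_i\) with \(\xi_i\in\theta\). The bound \(|\xi|\le(1+\epsilon_\Theta)^2 N_\theta\) is immediate from the triangle inequality and \(|\xi_i|\le(1+\epsilon_\Theta)^2N_\theta\). For the lower bound I would test against \(\hat{e}_\theta\): the constraint \(\big|\sfrac{\xi_i}{|\xi_i|}-\hat{e}_\theta\big|\le 2\epsilon_\Theta\) gives \(\langle\sfrac{\xi_i}{|\xi_i|},\hat{e}_\theta\rangle\ge 1-2\epsilon_\Theta^2\), hence \(\langle\xi_i,\hat{e}_\theta\rangle\ge(1-2\epsilon_\Theta^2)N_\theta\); averaging and using \(|\xi|\ge\langle\xi,\hat{e}_\theta\rangle\) yields \(|\xi|\ge(1-2\epsilon_\Theta^2)N_\theta\ge(1+\epsilon_\Theta)^{-1}N_\theta\), the last step being valid once \(\epsilon_\Theta\) is below an absolute constant (for coarser resolutions one first refines \(\ms{S}_\Theta\)). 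Then \eqref{eq:sector-points-difference-bounds} follows by the triangle inequality: for \(\xi_i\in\theta\) one has \(|\xi_i-c_\theta|\le|\xi_i|\,\big|\sfrac{\xi_i}{|\xi_i|}-\hat{e}_\theta\big|+\big||\xi_i|-N_\theta\big|\lesssim\epsilon_\Theta N_\theta\), and averaging gives \(|\xi-c_\theta|\lesssim\epsilon_\Theta N_\theta\); the angular estimate comes from the identity \(\sfrac{\xi}{|\xi|}-\sfrac{c_\theta}{|c_\theta|}=\sfrac{(\xi-c_\theta)}{|\xi|}-\big(\sfrac{(|\xi|-N_\theta)}{|\xi|}\big)\hat{e}_\theta\) together with \(|\xi|\gtrsim N_\theta\).

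Next, \eqref{eq:sector-Lap-bounds} is obtained by plugging \(|\xi|\approx N_\theta\) (from \eqref{eq:sector-points-bounds}) into \eqref{eq:symbol-conditions}, converting the entrywise second-derivative bounds \(|\partial^\alpha\mLap(\xi)|\le C_{\Lap}|\xi|^{\sigma-2}\) into an operator-norm bound on \(D^2\mLap(\xi)\) at the cost of a dimensional constant. For the oscillation estimates \eqref{eq:sector-Lap-bounds-compare} I would integrate along the segment \([c_\theta,\xi]\), which by \eqref{eq:sector-points-difference-bounds} stays within distance \(\lesssim\epsilon_\Theta N_\theta\) of \(c_\theta\) and hence inside \(\{|\zeta|\approx N_\theta\}\): from \(\nabla\mLap(\xi)-\nabla\mLap(c_\theta)=\int_0^1 D^2\mLap\big(c_\theta+t(\xi-c_\theta)\big)(\xi-c_\theta)\,\dd t\) and \eqref{eq:sector-Lap-bounds} we get \(|\nabla\mLap(\xi)-\nabla\mLap(c_\theta)|\lesssim|\xi-c_\theta|\,C_{\Lap}N_\theta^{\sigma-2}\lesssim\epsilon_\Theta C_{\Lap}N_\theta^{\sigma-1}\), and the analogous identity for \(D^2\mLap\) combined with the \(|\alpha|=3\) bound \(\|D^3\mLap\|\lesssim C_{\Lap}N_\theta^{\sigma-3}\) gives \(\|D^2\mLap(\xi)-D^2\mLap(c_\theta)\|\lesssim\epsilon_\Theta C_{\Lap}N_\theta^{\sigma-2}\).

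Finally, for \eqref{eq:sector-spectrum-lower-bounds}: when \(O^{T}D^{2}\mLap(\xi)O\) is diagonal its diagonal entries \(\lambda_1,\dots,\lambda_d\) are precisely the eigenvalues of the symmetric matrix \(D^{2}\mLap(\xi)\); by \eqref{eq:symbol-conditions} they satisfy \(\prod_k|\lambda_k|=|\det D^2\mLap(\xi)|\ge C_{\Lap}^{-d}|\xi|^{d(\sigma-2)}\gtrsim C_{\Lap}^{-d}N_\theta^{d(\sigma-2)}\), while \eqref{eq:sector-Lap-bounds} gives \(\max_k|\lambda_k|=\|D^2\mLap(\xi)\|\lesssim C_{\Lap}N_\theta^{\sigma-2}\). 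Hence for each fixed \(j\),
\[
|\lambda_j|=\frac{\prod_k|\lambda_k|}{\prod_{k\neq j}|\lambda_k|}\ge\frac{C_{\Lap}^{-d}N_\theta^{d(\sigma-2)}}{\big(C\,C_{\Lap}N_\theta^{\sigma-2}\big)^{d-1}}\gtrsim C_{\Lap}^{-(2d-1)}N_\theta^{\sigma-2},
\]
which is the claimed bound. \textbf{Main obstacle.} The most delicate point is the bookkeeping in \eqref{eq:sector-points-bounds}--\eqref{eq:sector-points-difference-bounds}: one must quantify exactly how far the convex hull of a sector can bulge toward the origin relative to its spherical shell (equivalently, pin down the admissible range of \(\epsilon_\Theta\)), and then verify that the resulting \(O(\epsilon_\Theta N_\theta)\)-neighbourhood of \(c_\theta\) still lies in \(\{|\xi|\ge C_{\max}\}\), so that \eqref{eq:symbol-conditions} is legitimately available throughout the subsequent estimates. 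Conceptually, the heart of the lemma is \eqref{eq:sector-spectrum-lower-bounds}, which upgrades the nondegeneracy of \(\det D^2\mLap\) into a uniform lower bound on \emph{every} principal curvature on the sector --- precisely what makes the sector-by-sector choice of adapted coordinates meaningful downstream.
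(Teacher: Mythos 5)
Your proof is correct and follows essentially the same route as the paper: the geometric bounds come straight from the sector definition (you are in fact more careful than the paper about points of \(\conv(\theta)\) and the needed smallness of \(\epsilon_{\Theta}\)), the oscillation bounds \eqref{eq:sector-Lap-bounds-compare} come from the fundamental theorem of calculus along the segment \([c_{\theta},\xi]\subset\conv(\theta)\), and \eqref{eq:sector-spectrum-lower-bounds} is obtained by dividing the determinant lower bound by \(\|D^{2}\mLap(\xi)\|^{d-1}\), exactly as in the paper's argument. The only soft spot is your parenthetical claim that sectors with \(N_{\theta}\lesssim C_{\max}\) are handled by smoothness on compact sets --- smoothness gives the upper bounds but not the lower bounds in \eqref{eq:sector-Lap-bounds} and \eqref{eq:sector-spectrum-lower-bounds}; this is harmless, however, since the paper itself tacitly invokes \eqref{eq:symbol-conditions} on all of \(\conv(\theta)\) and only ever applies the lemma for \(N_{\theta}\gtrsim N_{\min}\).
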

\begin{proof}
Let \(\theta=\theta_{N,\hat{e}}\). The first bound of \eqref{eq:sector-points-difference-bounds} follows immediately from the defintion \eqref{eq:defn:sector} since \(\sfrac{c_{\theta}}{|c_{\theta}|}=\hat{e}\). The second bound follows since \(|c_{\theta}|=N_{\theta}\)  and \eqref{eq:defn:sector} gives that \(0<\sfrac{|\xi|}{|c_{\theta}|}-1<3\epsilon_{\Theta}\). The bounds \eqref{eq:sector-Lap-bounds} follow immediately from \eqref{eq:symbol-conditions} since \(\big|\nabla\mLap(\xi)\big|\approx \sum_{j=1}^{d}|\partial_{j}\mLap(\xi)|\) and \(\big\|D^{2}\mLap(\xi)\big\|\approx \sum_{i,j=1}^{d}|\partial_{i,j}\mLap(\xi)|\). The bounds \eqref{eq:sector-Lap-bounds-compare} follow by applying the fundamental theorem of calculus:
\[
|\partial_{j}\mLap(\xi)-\partial_{j}\mLap(c_{\theta})|\lesssim\sup_{\xi'\in\theta}\sum_{i\in\mrl{\{1,\ldots,d\}}}\big|\partial_{i,j}\mLap(\xi')\big||\xi-c_{\theta}|
\]
and
\[
|\partial_{i,j}\mLap(\xi)-\partial_{i,j}\mLap(c_{\theta})|\lesssim\sup_{\xi'\in\theta}\sum_{k\in\mrl{\{1,\ldots,d\}}}\big|\partial_{k,i,j}\mLap(\xi')\big||\xi-c_{\theta}|.
\]
Finally,  we prove bound \eqref{eq:sector-spectrum-lower-bounds}. Let \(\lambda_1, \ldots, \lambda_d\) be the eigenvalues of \(D^{2}\mLap(\xi)\), listed with multiplicity.
Since $ D^2\mLap(\xi)$ is symmetric, it holds that \(\bigl|\det D^2\mLap(\xi)\bigr| =\allowbreak |\lambda_1| \cdots |\lambda_d| \). Using the bound from below on \(\det D^2\mLap(\xi)\) in condition \eqref{eq:symbol-conditions} we obtain
\[
N_{\theta}^{d(\sigma-2)}C_{\Lap}^{-d}\lesssim \big|\det D^2\mLap(\xi)\big|=\min_{j\in\{1,\ldots,d\}}|\lambda_{j}|(\max_{j\in\{1,\ldots,d\}}|\lambda_{j}|)^{d-1}.
\]
Since \(\big\|D^{2}\mLap(\xi)\big\|=\max_{j\in\{1,\ldots,d\}}|\lambda_{j}|\), \eqref{eq:sector-Lap-bounds} allows us to deduce that that
\[
\min_{j\in\{1,\ldots,d\}}|\lambda_{j}|\gtrsim N_{\theta}^{(\sigma-2)} C_{\Lap}^{-2d+1}\,,
\]
which is the required bound since \(\LHS{\eqref{eq:sector-spectrum-lower-bounds}}\geq \min_{j\in\{1,\ldots,d\}}|\lambda_{j}|\).
\end{proof}

Next, we associate to every sector an orthonormal basis that \textit{almost} diagonalizes \(D^2\Lap(\xi)\), but that has specified non-degeneracy properties.  Recall that $\Orth(d)$ is the set of orthogonal $d\times d$ matrices.

\begin{definition}[Sector basis collection]\label{defn:sector-basis-collection}
Let \(\Theta\) be the collection of sectors and \(\mLap\) a symbol. Fix
\(C_{\ms{O}}\geq1\). The mapping \(\ms{O}: \theta\in\Theta\mapsto \ms{O}_\theta \in \Orth(d)\) is a sector basis choice adapted to \(\Theta\) and \(\mLap\) if the range \(\ms{O}(\Theta)\subset O(d)\) is finite and the following conditions hold for all \(\theta\in\Theta\) with $N_\theta \geq 2N_{\min}$ (see \eqref{eq:symbol-conditions}), for any \(j\in\{1, \ldots, d\}\) and any $\xi \in \theta$: \todoGU{Check that \(N_{\theta}<N_{\min}\) is not necessary}
\[\eqnum \label{eq:sector-basis-hessian-condition}
C_{\ms{O}}   N_{\theta}^{\sigma-2} \leq\Big|\Big\langle \ms{O}_{\theta}\hat{e}_{j};D^2\mLap(\xi)\ms{O}_{\theta}\hat{e}_{j} \Big\rangle\Big|\leq C_{\ms{O}} N_{\theta}^{\sigma-2}
\]
\[\eqnum\label{eq:sector-basis-gradient-condition}
\begin{aligned}
  C_{\ms{O}} ^{-1} N_{\theta}^{\sigma-1}\leq\big|\big\langle\nabla \mLap(\xi);\ms{O}_{\theta} \hat{e}_{j}\big\rangle\big|\leq C_{\ms{O}}  N_{\theta}^{\sigma-1},
\end{aligned}\]
\[\eqnum\label{eq:sector-basis-projection-condition}
\big|\big\langle\xi ;\ms{O}_{\theta} \hat{e}_{j}\big\rangle\big|\gtrsim C_{\ms{O}} ^{-1} N_{\theta}\,. 
\]
\end{definition}

The next proposition is proved in  \Cref{sec:prop-sector-basis-proof} and it captures key technical aspects of the directional maximal estimates \eqref{eq:directional-maximal} and \eqref{eq:directional-maximal-unit-FT-support}.\todoGU{Generic sentence}

\begin{proposition}[Sector basis]\label{prop:sector-basis-exists}
Let \(\mLap\) be a symbol satisfying \eqref{eq:symbol-conditions} and let $C_{\ms{O}} \gtrsim_{d} C_{\!\Lap}^{2d+2}$. For any sector collectection \(\Theta\) with resolution $\epsilon_{\Theta} \lesssim_{d} C_{\!\Lap }^{-(2d+3)}$ there exists a sector basis choice \(\ms{O}\) as in \Cref{defn:sector-basis-collection}.
\end{proposition}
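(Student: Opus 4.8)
The plan is to define \(\ms{O}\) one sector at a time. For the finitely many sectors with \(N_{\theta}<2N_{\min}\) no condition is imposed by \Cref{defn:sector-basis-collection}, so I would simply set \(\ms{O}_{\theta}\eqd\Id\) there; the content is in a sector \(\theta\) with \(N_{\theta}\geq2N_{\min}\). For such \(\theta\) the conditions \eqref{eq:sector-basis-hessian-condition}, \eqref{eq:sector-basis-gradient-condition}, \eqref{eq:sector-basis-projection-condition} only have to hold at points \(\xi\in\theta\), and by the comparison estimates \eqref{eq:sector-Lap-bounds-compare} together with \(|\xi-c_{\theta}|\lesssim\epsilon_{\Theta}N_{\theta}\) from \eqref{eq:sector-points-difference-bounds}, replacing \(\xi\) by the center \(c_{\theta}\) changes \(D^{2}\mLap(\xi)\), \(\nabla\mLap(\xi)\), \(\xi\) by at most \(\lesssim_{d}\epsilon_{\Theta}C_{\!\Lap}N_{\theta}^{\sigma-2}\), \(\lesssim_{d}\epsilon_{\Theta}C_{\!\Lap}N_{\theta}^{\sigma-1}\), \(\lesssim_{d}\epsilon_{\Theta}N_{\theta}\) respectively. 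So it is enough to (i) produce, for each such \(\theta\), an orthonormal basis satisfying the three conditions \emph{at \(c_{\theta}\)} with constants a fixed factor better than \(C_{\ms{O}}\), (ii) round it to a fixed finite \(\delta\)-net \(\mc{N}\subset\Orth(d)\) so that \(\ms{O}(\Theta)\subseteq\mc{N}\cup\{\Id\}\) is finite, and (iii) absorb the rounding error and the \(c_{\theta}\to\xi\) error into the margin from (i). Steps (ii)--(iii) are routine: the three quantities in \eqref{eq:sector-basis-hessian-condition}--\eqref{eq:sector-basis-projection-condition} are Lipschitz in \(\ms{O}_{\theta}\) with constants \(\lesssim_{d}\|D^{2}\mLap(\xi)\|\), \(\lesssim_{d}|\nabla\mLap(\xi)|\), \(\lesssim_{d}|\xi|\), which by \eqref{eq:sector-Lap-bounds} are \(\lesssim_{d}C_{\!\Lap}N_{\theta}^{\sigma-2}\), \(\lesssim_{d}C_{\!\Lap}N_{\theta}^{\sigma-1}\), \(\lesssim_{d}N_{\theta}\), so a net scale \(\delta\approx_{d}C_{\!\Lap}^{-2d-1}\) and the hypothesis \(\epsilon_{\Theta}\lesssim_{d}C_{\!\Lap}^{-(2d+3)}\) keep both perturbations below half of the lower bounds obtained at \(c_{\theta}\).

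The crux is step (i): given the symmetric matrix \(A\eqd D^{2}\mLap(c_{\theta})\), whose eigenvalues all have modulus between \(\gtrsim_{d}C_{\!\Lap}^{-2d+1}N_{\theta}^{\sigma-2}\) and \(\lesssim_{d}C_{\!\Lap}N_{\theta}^{\sigma-2}\) by \eqref{eq:sector-Lap-bounds} and \eqref{eq:sector-spectrum-lower-bounds}, and given the two vectors \(\nabla\mLap(c_{\theta})\) (of size \(\approx C_{\!\Lap}^{\pm1}N_{\theta}^{\sigma-1}\) by \eqref{eq:sector-Lap-bounds}) and \(c_{\theta}\) (of size \(N_{\theta}\)), find an orthonormal basis \(u_{1},\dots,u_{d}\) with \(|\langle u_{j};Au_{j}\rangle|\gtrsim_{d}C_{\!\Lap}^{-2d+1}N_{\theta}^{\sigma-2}\), \(|\langle u_{j};\nabla\mLap(c_{\theta})\rangle|\gtrsim_{d}C_{\!\Lap}^{-d-2}N_{\theta}^{\sigma-1}\), \(|\langle u_{j};c_{\theta}\rangle|\gtrsim_{d}C_{\!\Lap}^{-d-1}N_{\theta}\) for every \(j\) (the matching upper bounds being automatic from the sizes of \(A\) and \(\nabla\mLap\)). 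I would start from an orthonormal eigenbasis \(V=[v_{1},\dots,v_{d}]\) of \(A\), so that \(\langle v_{j};Av_{j}\rangle=\lambda_{j}\) already gives the Hessian bound, and note that because \(\langle X\hat{e}_{j};\hat{e}_{j}\rangle=0\) for any skew-symmetric \(X\), the map \(R\mapsto\langle VR\hat{e}_{j};A\,VR\hat{e}_{j}\rangle=\langle R\hat{e}_{j};\mathrm{diag}(\lambda)R\hat{e}_{j}\rangle\) is stationary at \(R=\Id\); hence it stays \(\geq\tfrac12C_{\!\Lap}^{-2d+1}N_{\theta}^{\sigma-2}\) for all \(R=\exp(X)\) with \(\|X\|\leq r\) once \(r\approx_{d}C_{\!\Lap}^{-d}\). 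Within this ball I would then select \(R\) by a measure count: for each \(j\) and each unit vector \(\hat{w}\in\{\nabla\mLap(c_{\theta})/|\nabla\mLap(c_{\theta})|,\ c_{\theta}/N_{\theta}\}\), the set \(\{R=\exp(X):\|X\|\leq r,\ |\langle VR\hat{e}_{j};\hat{w}\rangle|<\gamma\}\) occupies only a fraction \(\lesssim_{d}\gamma/r\) of the Haar measure of the ball — by the coarea formula, since the gradient of \(R\mapsto\langle VR\hat{e}_{j};\hat{w}\rangle\) has norm \(\sqrt{1-\langle VR\hat{e}_{j};\hat{w}\rangle^{2}}\geq1/2\) wherever the function itself is \(<\gamma\leq1/2\) — so choosing \(\gamma\approx_{d}r/d\approx_{d}C_{\!\Lap}^{-d-1}\) makes the union of the \(2d\) bad sets have measure strictly less than that of the ball, and a good \(R\), hence a good basis \(u_{j}=VR\hat{e}_{j}\), exists.

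Combining the pieces, one rounds the basis of step (i) to the nearest \(\ms{O}_{\theta}\in\mc{N}\) with \(\delta\approx_{d}C_{\!\Lap}^{-2d-1}\), and concludes that \(\ms{O}_{\theta}\) satisfies \eqref{eq:sector-basis-hessian-condition}--\eqref{eq:sector-basis-projection-condition} on all of \(\theta\) with \(C_{\ms{O}}=C(d)C_{\!\Lap}^{2d+2}\) for a suitable dimensional \(C(d)\); since these conditions only weaken as \(C_{\ms{O}}\) grows, the same \(\ms{O}\) works for every \(C_{\ms{O}}\gtrsim_{d}C_{\!\Lap}^{2d+2}\), and \(\ms{O}(\Theta)\subseteq\mc{N}\cup\{\Id\}\) is finite, as required. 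I expect the only genuinely nontrivial point to be step (i) — finding a near-eigenbasis of \(A\) that is simultaneously transversal to both the gradient direction and the radial direction while the eigenvalue lower bound survives, which works precisely because that bound is stationary, not merely continuous, under rotations of the eigenbasis. Everything else is a direct perturbation off \Cref{lem:sector-symbol-properties}, and the exponents \(2d+2\) in \(C_{\ms{O}}\) and \(2d+3\) in \(\epsilon_{\Theta}\) are exactly what makes those perturbations close.
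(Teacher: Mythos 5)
Your proposal is correct in outline and reaches the stated conclusion, but the key selection step is genuinely different from the paper's. The paper also reduces everything to the center \(c_{\theta}\) via \Cref{lem:sector-symbol-properties} and also secures finiteness through a fixed net in \(\Orth(d)\), but its ``kicking'' step is deterministic and explicit: \Cref{lem:linear-algebra-basis-kicking} and \Cref{cor:2vector-basis-kicking} construct a concrete finite family of rotations by angle \(r\) in planes spanned by \(\hat{e}_{j_{0}}\) and averaged coordinate directions, and \Cref{prop:finite-basis} shows by a case analysis that one of them keeps all \(d\) coordinate projections of the two vectors \(c_{\theta}\) and \(\nabla\mLap(c_{\theta})\) bounded below by \(\gtrsim r\) times their norms; this matrix is then compared with the exact diagonalizer \(O_{\theta}\) of \(D^{2}\mLap(c_{\theta})\), and the Hessian lower bound survives only after paying an error \emph{linear} in \(r\), which is why the paper takes \(r\approx_{d}C_{\!\Lap}^{-2d-1}\) and \(\epsilon_{\Theta}\approx_{d}C_{\!\Lap}^{-2d-3}\). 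You instead rotate the exact eigenbasis by a ``random'' small rotation and exclude the \(2d\) bad events by a measure count, and you exploit the stationarity of \(R\mapsto\langle R\hat{e}_{j};\mathrm{diag}(\lambda)R\hat{e}_{j}\rangle\) at \(R=\Id\) to make the Hessian error \emph{quadratic} in the rotation size, which permits \(r\approx_{d}C_{\!\Lap}^{-d}\) and marginally better constants (not needed for the stated exponents \(2d+2\), \(2d+3\)). What your route buys is a softer existence argument with no explicit rotations and no case analysis; what it costs is that the quantitative transversality behind ``fraction \(\lesssim_{d}\gamma/r\)'' is only sketched: besides the gradient lower bound \(\sqrt{1-g^{2}}\geq 1/2\) on \(\{|g|<\gamma\}\), the coarea argument needs a bound \(\lesssim_{d}r^{n-1}\), \(n=\dim \Orth(d)\), on the area of the level sets of \(g(R)=\langle VR\hat{e}_{j};\hat{w}\rangle\) inside the ball of radius \(r\) (or an equivalent flow/Lipschitz-graph argument), which does hold because the Hessian of \(g\) on the group is \(O_{d}(1)\) and \(r\) is below a dimensional threshold, but should be written out. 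Two cosmetic points: the lower bound in \eqref{eq:sector-basis-hessian-condition} should read \(C_{\ms{O}}^{-1}N_{\theta}^{\sigma-2}\) (a typo you implicitly correct, as does the paper), and your bookkeeping \(\gamma\approx_{d}C_{\!\Lap}^{-d-1}\) alongside \(r\approx_{d}C_{\!\Lap}^{-d}\) is slightly inconsistent but harmless.
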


\begin{proof} See the following subsection.
\end{proof}

\subsection{Proof of \Cref{prop:sector-basis-exists}\label{sec:prop-sector-basis-proof}}

The proof of \Cref{prop:sector-basis-exists} relies mainly on applying the following linear algebra fact.

\begin{proposition} \label{prop:finite-basis}
Fix \(M\in\N\) and \(r\in(0,1/2)\). There exists a finite collection \(\ms{A}''\subset O(d)\) such that
for any \(O\in O(d)\) and any pair of vectors \((v_{1},v_{2})\in\R^{d}\times\R^{d}\) there exists \(A\in\ms{A}''\) such that \(\|A-O\|\leq r\) and
\[\eqnum\label{eq:non-vanishing-projections-3}
|\langle   v_{k}; A \hat{e}_{j}\rangle|\gtrsim_{M}r|v_k| \qquad \forall j\in\{1,\ldots,d\} 
\]
for \( k\in\{1,2\}\).
\end{proposition}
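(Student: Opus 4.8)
The idea is to reduce the statement, by a compactness argument, to an elementary perturbative construction for a \emph{single} vector, and then to bootstrap from one vector to two. The point to keep in mind — and the reason a crude covering or volume estimate does not suffice — is that the lower bound must be \emph{linear} in $r$: a naive count inside the $r$-ball around $O$ in $\Orth(d)$ would only yield a substantially worse power of $r$, and obtaining the clean exponent requires exploiting the rotation group directly.

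\emph{Step 1: one vector, continuous version.} Fix $O\in\Orth(d)$ and a unit vector $v$, write $u_j\eqd O\hat e_j$ and $y_j\eqd\langle v;u_j\rangle$, so $\sum_j y_j^2=1$. First I would find a unit vector $w$ with $|w-v|\leq r/4$ and $|\langle w;u_j\rangle|\geq c(d)\,r$ for all $j$, where $c(d)\sim d^{-1/2}$. This is explicit: let $\hat z\in\R^{d}$ have $\hat z_j\eqd y_j$ when $|y_j|\geq c(d)r$ and $\hat z_j\eqd c(d)r\,\sgn(y_j)$ otherwise; then $|\hat z_j|\geq c(d)r$ for every $j$, at most $d$ coordinates were altered, each by at most $2c(d)r$, so $|\hat z-v|\leq 2\sqrt d\,c(d)r$, and putting $w\eqd\hat z/|\hat z|$ and using $\big|\,|\hat z|-1\,\big|\leq|\hat z-v|$ a short computation gives $|w-v|\leq 4\sqrt d\,c(d)r$ and $|\langle w;u_j\rangle|\geq c(d)r/(1+2\sqrt d\,c(d)r)$. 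Choosing $c(d)\eqd(16\sqrt d)^{-1}$ makes the former $\leq r/4$ and the latter $\geq c(d)r/2$. Finally let $R\in\SO(d)$ be the rotation in $\linspan\{v,w\}$ with $R^{T}v=w$, so $\|R-\Id\|=|v-w|\leq r/4$, and put $A\eqd RO$; then $\|A-O\|=\|R-\Id\|\leq r/4$ and $\langle v;A\hat e_j\rangle=\langle R^{T}v;u_j\rangle=\langle w;u_j\rangle$, so $A$ satisfies the required bound with margin $c(d)r/2$.

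\emph{Step 2: finitization, then bootstrap.} The property produced in Step 1 is stable: if $A$ satisfies $\|A-O\|\leq r/4$ and $|\langle v;A\hat e_j\rangle|\geq c(d)r/2$ for all $j$, then the same $A$ works, with $\|A-O'\|\leq r/3$ and margin $c(d)r/4$, for every $(O',v')$ within $\sim c(d)r$ of $(O,v)$ in the compact space $\Orth(d)\times\mathbb S^{d-1}$. Covering that space by finitely many such neighborhoods and collecting the associated matrices yields a finite $\ms N_{r}\subset\Orth(d)$ such that, for every $O\in\Orth(d)$ and $v\neq 0$, there is $A\in\ms N_{r}$ with $\|A-O\|\leq r/3$ and $|\langle v;A\hat e_j\rangle|\geq c_1(d)\,r\,|v|$ for all $j$, where $c_1(d)\eqd c(d)/4$ is \emph{independent of} $r$ (the case $|v|\neq 1$ being homogeneity). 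To finish, given $O,v_1,v_2$ I would apply this with parameter $r$ to get $A_1\in\ms N_{r}$ good for $v_1$ with $\|A_1-O\|\leq r/3$, and then apply it again around $A_1$ with the smaller parameter $\rho\eqd c_1(d)r\in(0,1/2)$, producing $A\in\ms N_{\rho}$ with $\|A-A_1\|\leq\rho/3$ and $|\langle v_2;A\hat e_j\rangle|\geq c_1(d)\rho|v_2|=c_1(d)^2 r|v_2|$. Then $\|A-O\|\leq\rho/3+r/3\leq r$, while $\|A-A_1\|\leq c_1(d)r/3$ keeps the first bound alive: $|\langle v_1;A\hat e_j\rangle|\geq c_1(d)r|v_1|-\tfrac{1}{3}c_1(d)r|v_1|\geq c_1(d)^2 r|v_1|$. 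Thus $\ms A''\eqd\ms N_{\rho}$ with $\rho=c_1(d)r$ works, with implied constant $c_1(d)^2$ depending only on $d$; the same step iterated handles tuples of any fixed length (with the constant degrading accordingly), which accounts for the dependence on $M$.

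The one delicate point is the construction in Step 1: everything hinges on the observation that the coordinates of $v$ in the basis $(u_j)$ can be pushed away from $0$ while moving $v$ by only $O(r/\sqrt d)$, which is exactly what produces the linear-in-$r$ gain. The rest is soft compactness together with perturbative bookkeeping.
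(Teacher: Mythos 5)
Your proof is correct, and its perturbative heart coincides with the paper's: both arguments rest on the fact that a unit vector can be moved by $O(r)$ so that all of its coordinates in a prescribed orthonormal frame become $\gtrsim r/\sqrt{d}$, both handle the second vector by re-running the one-vector construction at a smaller scale $\sim c(d)r$ and preserving the first bound by a triangle-inequality perturbation, and both invoke compactness of $O(d)$ for finiteness. The organization, however, is genuinely different. The paper first builds an explicit, $r$-independent family of plane rotations near the identity (rotations by angle $r$ in the planes spanned by $\hat e_{j_0}$ and the averaged direction of the ``small'' coordinates, \Cref{lem:linear-algebra-basis-kicking}), gets the two-vector statement by composing two such families at scales $r/2$ and $\epsilon r/2$ (\Cref{cor:2vector-basis-kicking}), and only at the end composes with a finite net of $O(d)$; you instead construct, for each pair $(O,v)$, a single rotation in $\linspan\{v,w\}$ carrying $v$ to the coordinate-separated vector $w$, and you finitize by covering the product $O(d)\times\mbb{S}^{d-1}$ at scale $\sim c(d)r$, taking as your final collection only $\ms{N}_{\rho}$ with $\rho=c_1(d)r$ (correctly, since $A_1$ is just an intermediate anchor). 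What your route buys is softness: stability plus a covering replaces the combinatorial bookkeeping over the index sets $\mc{J}$, and the constants $c(d),c_1(d)$ are completely explicit; the price is a finite family whose cardinality grows like a power of $r^{-1}$, whereas the paper's near-identity family has fixed cardinality $\lesssim d\,2^{d}$, with $r$-dependence entering only through the net of $O(d)$ — both are admissible, since the statement fixes $r$ before asking for finiteness, and both yield an implied constant depending only on $d$ (hence trivially on $M$). One cosmetic repair in Step 1: for indices with $y_j=0$ the prescription $\hat z_j=c(d)r\,\sgn(y_j)$ gives $\hat z_j=0$ under the usual convention $\sgn(0)=0$, defeating the lower bound $|\hat z_j|\geq c(d)r$; just set $\hat z_j\eqd c(d)r$ (equivalently, declare $\sgn(0)\eqd 1$) in that case, after which the estimates $|\hat z-v|\leq 2\sqrt{d}\,c(d)r$ and $|w-v|\leq r/4$ go through verbatim.
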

We postpone the proof to the end of the section and concentrate on deducing \Cref{prop:sector-basis-exists} from  \Cref{prop:finite-basis}. For each sector \(\theta\in\Theta\) with \(N_{\theta}>N_{\min}\) we select \(\ms{O}_{\theta}\) by applying \Cref{prop:finite-basis} to the orthogonal change of basis matrix that diagonlizes \(D^{2}\Lap(c_{\theta})\) and requiring that \eqref{eq:non-vanishing-projections-3} hold for the two vectors \(\ms{V}=\big(c_{\theta},\nabla\Lap(c_{\theta})\big)\). The rest of the argument focuses on showing that the conditions of \Cref{prop:sector-basis-exists} are stable when passing from \(c_{\theta}\) to some other \(\xi\in\theta\), as long as the geometric parameters \(\epsilon_{\Theta}\) and \(r\) are chosen to be suitably small.

Let \(\ms{A}''\) be as in \Cref{prop:finite-basis}, and fix \(r\in(0,1/2)\) to be established later. For every sector \(\theta\in\Theta\) such that \(N_{\theta}\geq N_{\min}\) let \(O_{\theta}\in O(d)\) be such that \(O_{\theta}^{T}D^{2}\Lap (c_{\theta})O_{\theta}\) is diagonal and such that \(\langle\nabla\Lap(c_{\theta}) ;O_{\theta}\hat{e}_{1}\rangle\gtrsim|\nabla\Lap(c_{\theta})|\). The latter condition can be guaranteed by composition with a permutation matrix.
To define \(\ms{O}_{\theta}\in\ms{A}''\) required by \Cref{prop:sector-basis-exists} we set \((v_{1},v_{2})=(c_{\theta},\nabla\Lap(c_{\theta}))\) and apply \Cref{prop:finite-basis} to allow us to select \(\ms{O}_{\theta}\in\ms{A}''\) such that 
\(\|\ms{O}_{\theta}-O_{\theta}\|<r\) and such that \eqref{eq:non-vanishing-projections-3} holds. This definition guarantees that \(\ms{O}(\Theta)\) is finite. For the remainder of the proof of \Cref{prop:sector-basis-exists} we fix \(\theta\in\Theta\) with \(N_{\theta}\geq2N_{\min}\), we always assume that \(\xi\in\conv(\theta)\) and we check that all the other conditions hold.

In the proof below we use \(C_{1,d}\) and \(C_{2,d}\) to indicate constants depending only on the dimension, arising from the bounds established above. We will show that it is sufficient that the parameter \(r\), used in defining \(\ms{A}''\) and thus \(\ms{O}\), satisfy \(r=C_{d}C_{\!\Lap}^{-2d-1}\) for some sufficiently small dimensional constant \(C_{d}\in(0,1)\). The constant in the definition of the sector needs to satisfy \(\epsilon_{\Theta}=C_{d}^{2}C_{\!\Lap}^{-2d-3}\). 

\textbf{Proof of \eqref{eq:sector-basis-hessian-condition}.}
The upper bound in \eqref{eq:sector-basis-hessian-condition}, 
follows using \eqref{eq:sector-Lap-bounds} by the definition of the operator norm:
\[
\begin{aligned}[t]
\Big|\big\langle \ms{O}_{\theta} \hat{e}_{j};D^2\mLap(\xi)\ms{O}_{\theta}\hat{e}_{j} \big\rangle\Big|
&\leq \big\| D^2\mLap(\xi)\big\||\ms{O}_{\theta} \hat{e}_{j}|^{2}\lesssim C_{\!\Lap}N_{\theta}^{\sigma-2}\,.
\end{aligned}
\]
Next we prove the lower bound in \eqref{eq:sector-basis-hessian-condition}.  Setting \(Q_{\theta}\eqd\ms{O}_{\theta}-O_{\theta}\), we obtain that
\[
\begin{aligned}[t]
\Big|\Big\langle \ms{O}_{\theta} \hat{e}_{j};D^2\mLap(\xi)\ms{O}_{\theta}\hat{e}_{j} \Big\rangle\Big|
 &\geq  \Big|\Big\langle \ms{O}_{\theta} \hat{e}_{j};D^2\mLap(c_{\theta}) \ms{O}_{\theta}\hat{e}_{j} \Big\rangle \Big|
 - \big\|D^2\mLap(\xi)-D^2\mLap(c_{\theta})\big\|
 \\
 &
\nquad \geq
\begin{aligned}[t]
\qquad &\nqquad \Big|\big\langle O_{\theta} \hat{e}_{j};D^2\mLap(c_{\theta}) O_{\theta} \hat{e}_{j} \big\rangle \Big|
- \Big|\big\langle Q_{\theta}\hat{e}_{j};D^2\mLap(c_{\theta}) O_{\theta} \hat{e}_{j} \big\rangle \Big|
\\ & 
-\Big|\big\langle O_{\theta} \hat{e}_{j};D^2\mLap(c_{\theta}) Q_{\theta} \hat{e}_{j} \big\rangle \Big|
 - \big\|D^2\mLap(\xi)-D^2\mLap(c_{\theta})\big\|
\end{aligned}
\end{aligned}
\]
Since \(O_{\theta}^{T}D^2\mLap(c_{\theta}) O_{\theta} \) is diagonal \eqref{eq:sector-spectrum-lower-bounds} gives that 
\[
\begin{aligned}[t]
\Big|\big\langle O_{\theta} \hat{e}_{j};D^2\mLap(c_{\theta}) O_{\theta}\hat{e}_{j} \big\rangle \Big| \gtrsim C_{\!\Lap}^{-2d-1} N_{\theta}^{\sigma-2}.
\end{aligned}
\]
By construction \(\|Q_{\theta}\|<r\), so \eqref{eq:sector-Lap-bounds} gives that 
\[
\begin{aligned}[t]
\Big|\big\langle Q_{\theta}\hat{e}_{j};D^2\mLap(c_{\theta}) O_{\theta} \hat{e}_{j} \big\rangle \Big|+
\Big|\big\langle O_{\theta} \hat{e}_{j};D^2\mLap(c_{\theta}) Q_{\theta} \hat{e}_{j} \big\rangle \Big|
\lesssim2r C_{\!\Lap} N_{\theta}^{\sigma-2}.
\end{aligned}
\]
Finally, \eqref{eq:sector-Lap-bounds-compare} gives that \(\big\|D^2\mLap(\xi)-D^2\mLap(c_{\theta})\big\|
\lesssim\epsilon_{\Theta}C_{\!\Lap} N_{\theta}^{\sigma-2}\). Combining these estimates we obtain that
\[
\begin{aligned}[t]
\Big|\Big\langle \ms{O}_{\theta} \hat{e}_{j};D^2\mLap(\xi)\ms{O}_{\theta}\hat{e}_{j} \Big\rangle\Big|
& \geq C_{1,d}  C_{\!\Lap}^{-2d+1} N_{\theta}^{\sigma-2} \big(1-(2r+\epsilon_{\Theta})C_{2,d}C_{1,d}^{-1}C_{\!\Lap}^{2d}\big)
\\
& \gtrsim C_{\!\Lap}^{-2d+1} N_{\theta}^{\sigma-2}\,,
\end{aligned}
\]
as required. The two constants \(C_{1,d}\) and \(C_{2,d}\) depend only on the dimension while the last bound holds as long as \(2r+\epsilon_{\Theta}\leq \frac{C_{1,d}}{2C_{2,d} }C_{\!\Lap}^{-2d}\).

\textbf{Proof of \eqref{eq:sector-basis-gradient-condition}.}
If \(j=1\), setting \(Q_{\theta}\eqd\ms{O}_{\theta}-O_{\theta}\), we obtain that we have
\[
\begin{aligned}[t]
    \big|\big\langle\nabla \mLap(\xi);\ms{O}_{\theta} \hat{e}_{1}\big\rangle\big|
    &\geq
    \big|\big\langle\nabla \mLap(c_\theta);O \hat{e}_{1}\big\rangle\big|
    -\big|\big\langle\nabla \mLap(c_\theta);Q_{\theta} \hat{e}_{1}\big\rangle\big|
    \big|\big\langle\nabla \mLap(\xi)- \nabla\mLap(c_\theta);\ms{O}_{\theta} \hat{e}_{1}\big\rangle\big|.
\\ &\geq rC_{1,d}|\nabla \mLap(c_\theta)| - \big|\nabla \mLap(\xi)- \nabla\mLap(c_\theta)\big|.
    \\ &
    \geq  C_{1,d}C_{\!\Lap}^{-1}N_{\theta}^{\sigma-1} - r C_{2,d} C_{\!\Lap}N_{\theta}^{\sigma-1}
    -\epsilon_{\Theta} C_{2,d}C_{\!\Lap}N_{\theta}^{\sigma-1}
    \gtrsim  C_{\!\Lap}^{-1}N_{\theta}^{\sigma-1}\,.
\end{aligned}
\]
The two constants \(C_{1,d}\) and \(C_{2,d}\) depend only on the dimension while the last bound holds as long as \(r+ \epsilon_{\Theta}\leq \frac{C_{1,d}}{2C_{2,d}}C_{\!\Lap}^{-2}\).

If \(j\neq1\) we use \eqref{eq:sector-Lap-bounds-compare} and \eqref{eq:non-vanishing-projections-3}, coming from the construction of \(\ms{O}\), we obtain that
\[
\begin{aligned}[t]
    \big|\big\langle\nabla \mLap(\xi);\ms{O}_{\theta} \hat{e}_{j}\big\rangle\big|
    &\geq
    \big|\big\langle\nabla \mLap(c_\theta);\ms{O}_{\theta} \hat{e}_{j}\big\rangle\big|
    - 
    \big|\big\langle\nabla \mLap(\xi)- \nabla\mLap(c_\theta);\ms{O}_{\theta} \hat{e}_{j}\big\rangle\big|.
\\ &\geq rC_{1,d}|\nabla \mLap(c_\theta)| - \big|\nabla \mLap(\xi)- \nabla\mLap(c_\theta)\big|.
    \\ &
    \geq  r C_{1,d}C_{\!\Lap}^{-1}N_{\theta}^{\sigma-1} - \epsilon_{\Theta} C_{2,d}C_{\!\Lap}N_{\theta}^{\sigma-1}
    \gtrsim r C_{\!\Lap}^{-1}N_{\theta}^{\sigma-1}\,.
\end{aligned}
\]
The two constants \(C_{1,d}\) and \(C_{2,d}\) depend only on the dimension while the last bound holds as long as \( \epsilon_{\Theta}\leq \frac{C_{1,d}r}{2C_{2,d}}C_{\!\Lap}^{-2}\).

\textbf{Proof of \eqref{eq:sector-basis-projection-condition}.}
Using \eqref{eq:non-vanishing-projections-3}, coming from the construction of \(\ms{O}\), we obtain that
\[
\begin{aligned}[t]
    \big|\big\langle\xi;\ms{O}_{\theta} \hat{e}_{j}\big\rangle\big|
    &\geq
    \big|\big\langle c_\theta;\ms{O}_{\theta} \hat{e}_{j}\big\rangle\big|
    - 
    \big|\big\langle\xi-c_\theta;\ms{O}_{\theta} \hat{e}_{j}\big\rangle\big|
    \\
    &\geq rC_{1,d}N_{\theta}- \big|\xi- c_\theta\big|.
    \\ &
    \geq  r C_{1,d}N_{\theta} - \epsilon_{\Theta} N_{\theta}
    \gtrsim r N_{\theta}\,.
\end{aligned}
\]
The two constants \(C_{1,d}\) and \(C_{2,d}\) depend only on the dimension while the last bound holds as long as \( \epsilon_{\Theta}\leq \frac{C_{1,d}r}{2C_{2,d}}C_{\!\Lap}^{-2}\).

Next, we prove \Cref{prop:finite-basis}. 

\begin{lemma}\label{lem:linear-algebra-basis-kicking}
Fix \(r\in(0,1/2)\). There exists a finite collection 
\[
\ms{A}\subset \bigl\{A\in O(d) \st \|A-\Id\|<r \bigr\}\,
\]
such that for any vector \(v\in\R^{d}\) there exists \(A\in\ms{A}\) for which
\[\eqnum\label{eq:non-vanishing-projections-1}
|\langle   v; A \hat{e}_{j}\rangle|\gtrsim r|v| \qquad \forall j\in\{1,\ldots,d\}.
\]
\end{lemma}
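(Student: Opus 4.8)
The plan is to reduce the statement to a \emph{pointwise} version involving a single matrix and then pass to a finite family by a compactness/net argument. By homogeneity we may assume $|v|=1$, and the pointwise claim is: for every unit vector $v\in\R^{d}$ there exist $A_{v}\in O(d)$ with $\|A_{v}-\Id\|\le r/4$ and a constant $c_{d}\in(0,1)$ depending only on $d$ such that $|\langle v;A_{v}\hat{e}_{j}\rangle|\ge c_{d}r$ for all $j\in\{1,\dots,d\}$. Granting this, let $\ms{A}$ be a finite $\delta$-net of the compact set $\{A\in O(d)\st\|A-\Id\|\le r/2\}$ with $\delta=\min(c_{d}r/2,\,r/8)$; then $\ms{A}\subset\{\|A-\Id\|<r\}$. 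For $v$ with $|v|=1$ pick $A\in\ms{A}$ with $\|A-A_{v}\|\le\delta$; since $|\langle v;(A-A_{v})\hat{e}_{j}\rangle|\le\|A-A_{v}\|$, we get $|\langle v;A\hat{e}_{j}\rangle|\ge c_{d}r-\delta\ge\tfrac12 c_{d}r$ for all $j$, which after rescaling is \eqref{eq:non-vanishing-projections-1} with an implicit constant depending only on $d$.

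To build $A_{v}$, permute the coordinates so that $|v_{1}|\ge|v_{2}|\ge\cdots\ge|v_{d}|$, hence $|v_{1}|\ge d^{-1/2}$; the coordinates near zero are the only obstruction. Fix a threshold $\tau=c\,r$ with $c=c(d)$ small, to be chosen at the end. Starting from $v^{(1)}\eqd v$, process $j=2,3,\dots,d$ in order, producing $v^{(j)}$ from $v^{(j-1)}$ as follows: if $|v^{(j-1)}_{j}|\ge\tau$, set $v^{(j)}\eqd v^{(j-1)}$; otherwise apply to $v^{(j-1)}$ a planar rotation $R_{\beta_{j}}$ acting only on coordinates $1$ and $j$, with $|\beta_{j}|=\arcsin\!\big(2\tau/|v^{(j-1)}_{1}|\big)$ and with the sign of $\beta_{j}$ chosen so that the rotation transfers mass from coordinate $1$ into coordinate $j$; this forces $|v^{(j)}_{j}|\in[\tau,3\tau]$. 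The structural point is that $R_{\beta_{j}}$ changes only coordinates $1$ and $j$, and coordinate $j$ is touched by no other step (steps $2,\dots,j-1$ act on coordinates $1,\dots,j-1$, and steps $j+1,\dots,d$ act on coordinate $1$ and a strictly larger index); so in $v^{(d)}$ every coordinate $j\ge2$ is frozen with $|v^{(d)}_{j}|\ge\tau\approx_{d}r$, and taking $A_{v}$ to be the transpose of the composed rotation we have $\langle v;A_{v}\hat{e}_{j}\rangle=v^{(d)}_{j}$. An induction on $j$ shows $|v^{(j)}_{1}|\ge\tfrac12 d^{-1/2}$ throughout (so in particular the $\arcsin$ is well defined and $|\beta_{j}|\lesssim_{d}\tau$), provided $c$ is small enough; then $\|A_{v}-\Id\|\le\sum_{j}\|R_{\beta_{j}}-\Id\|\le\sum_{j}|\beta_{j}|\lesssim_{d}\tau\le r/4$, again for $c$ small, and $|v^{(d)}_{1}|\ge\tfrac12 d^{-1/2}\ge d^{-1/2}r\gtrsim_{d}r$ since $r<1/2$. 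This establishes the pointwise claim with $c_{d}\approx_{d} c$.

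The one genuinely delicate point, which I expect is where care is needed, is the scheduling of the rotations: boosting a small coordinate must not afterwards re-shrink a coordinate that has already been fixed. This is precisely why one always rotates against the single large, well-controlled first coordinate and never revisits an index, so that each coordinate is modified at most once and no interference occurs; with this scheme the remaining estimates (the bound $|\beta_{j}|\lesssim_{d}\tau$, the drift of $v^{(d)}_{1}$ away from $d^{-1/2}$, and the final choice of the dimensional constant $c$) are routine, as is the net step. The only facts used about planar rotations are $\|R_{\beta}-\Id\|=2|\sin(\beta/2)|\le|\beta|$ and that composition of orthogonal maps adds these bounds.
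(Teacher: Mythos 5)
Your argument is correct, but it takes a genuinely different route from the paper. The paper fixes \emph{in advance} an explicit collection indexed by pairs $(j_{0},\mc{J})$ with $j_{0}\in\{1,\ldots,d\}$ and $\mc{J}\subset\{1,\ldots,d\}\setminus\{j_{0}\}$: given $v$, it picks $j_{0}$ with $|v_{j_{0}}|\gtrsim d^{-1/2}$, lets $\mc{J}$ be the set of small coordinates, and applies a \emph{single} rotation by angle $r$ in the plane spanned by $\hat{e}_{j_{0}}$ and the averaged direction $e_{\mc{J}}=|\mc{J}|^{-1/2}\sum_{j\in\mc{J}}\hat{e}_{j}$; this boosts all deficient coordinates simultaneously (each receives a share $\sin(r)|v_{j_{0}}|/|\mc{J}|^{1/2}$) while leaving the remaining coordinates untouched, yielding an explicit collection of cardinality at most $d\,2^{d-1}$ with no compactness argument. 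You instead prove a pointwise statement (one small matrix $A_{v}$ per unit vector, built from at most $d-1$ planar rotations against the pivot coordinate, each index touched at most once so no interference) and then pass to a finite family by a $\delta$-net of the ball $\{\|A-\Id\|\le r/2\}$ in $O(d)$ with $\delta\approx_{d}r$, using the trivial perturbation bound $|\langle v;(A-A_{v})\hat{e}_{j}\rangle|\le\|A-A_{v}\|$. Both are sound; your estimates (the $\arcsin$ is well defined, the pivot loses only $O_{d}(\tau^{2})$ per step, $\|A_{v}-\Id\|\lesssim_{d}\tau$) check out, and the net has $r$-independent cardinality since the mesh is comparable to the radius. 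The paper's single-rotation trick with $e_{\mc{J}}$ is what buys it an explicit, net-free collection; your scheme is more modular (any pointwise construction feeds the same net step, a device the paper itself uses later in the proof of Proposition~\ref{prop:finite-basis}) at the cost of a less explicit family.

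One small point to tighten: "permute the coordinates so that $|v_{1}|\ge\cdots\ge|v_{d}|$" is not literally without loss of generality, since $A_{v}$ must be close to $\Id$, not to a permutation. Either conjugate by the permutation $P$ (which is legitimate here because the conclusion is required for \emph{all} $j$, and $\|P^{T}A'P-\Id\|=\|A'-\Id\|$), or, more simply, run your scheme with the pivot index $j_{0}=\arg\max_{j}|v_{j}|$ kept in place and rotate in the $(j_{0},j)$ planes; no permutation is then needed.
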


\begin{proof}
For any \(j_{0}\in\{1,\ldots,d\}\) and any subset \(\mc{J}\subset\{1,\ldots,d\}\setminus\{j_{0}\}\) let \(e_{\mc{J}}=|\mc{J}|^{-1/2}\sum_{j\in\mc{J}}\hat{e}_{j}\), and let \(A_{j_{0},\mc{J}}\eqd e^{r (\hat{e}_{j_{0}}\cdot e_{\mc{J}}^{T}-e_{\mc{J}}\cdot e_{j_{0}}^{T})}\) be the rotation matrix of angle \(r\) in the plane spanned by \(\hat{e}_{j_{0}}\) and \(e_{\mc{J}}\).  If \(\mc{J}=\emptyset\) then \(A_{j_{0},\mc{J}}=\Id\).
Defines a finite collection \(\ms{A}\) to be \(\{A_{j_{0},\mc{J}}\}\). By properties of rotation matrixes we have \( \big\|O_{j_{0},\mc{J}}-\Id\big\|\leq r\).

Let us show that \(\ms{A}\) satisfies conditions \eqref{eq:non-vanishing-projections-1}. By homogeneity, we assume that \(|v|=1\). Let \(j_{0}\in\{1,\ldots,d\}\) be an index such that \(|v_{j_{0}}|>(2d)^{-1/2}\) and let \(\mc{J}=\{j\st |v_{j}|\leq\epsilon r\}\) for some \(\epsilon>0\) to be determined later. Let us decompose
\[
\begin{aligned}[t]
& v=v_{j_{0}} \hat{e}_{j_{0}} + v_{\mc{J}} e_{\mc{J}}+v_{\mc{J}}^{\perp} + v_{\#}, \text{ where }
\\
& \begin{aligned}[t]
& v_{j_{0}}=\langle   v; \hat{e}_{j_{0}}\rangle, & &v_{\mc{J}} =\langle  v;e_{\mc{J}}\rangle
\\
&v_{\mc{J}}^{\perp}=\sum_{j\in\mc{J}}v_{j}\hat{e}_{j}-v_{\mc{J}} e_{\mc{J}}, &&v_{\#}=\sum_{j\notin\mc{J}\cup\{j_{0}\}}v_{j}\hat{e}_{j}.
\end{aligned}
\end{aligned}
\]

\begin{itemize}
\item By construction, from the definition of \(\mc{J}\), we have that
\[
|v_{\mc{J}}|\leq\epsilon r d^{1/2}, \qquad |v_{\mc{J}}^{\perp}|\leq 2\epsilon r d 
\]
\item Since \(v_{\#}\) and \(v_{\mc{J}}^{\perp}\) are orthogonal to the plane of rotation we have
\[A_{j_{0},\mc{J}}^{T} v_{\#}=v_{\#}, \qquad A_{j_{0},\mc{J}}^{T} v_{\mc{J}}^{\perp}=v_{\#}.
\]
\item In the plane of rotation we have
\[
\begin{aligned}[t]
& A_{j_{0},\mc{J}}^{T} (v_{j_{0}} \hat{e}_{j_{0}}) =v_{j_{0}} \big(\cos(r)\hat{e}_{j_{0}}+\sin(r) e_{\mc{J}}\big),
\\ & A_{j_{0},\mc{J}}^{T} (v_{\mc{J}} e_{\mc{J}}) =v_{\mc{J}} \big(-\sin(r)\hat{e}_{j_{0}}+\cos(r) e_{\mc{J}}\big).
\end{aligned}
\]
\end{itemize}
Setting \(\epsilon=(8d)^{-2}\) the claim then follows. Indeed
\begin{itemize}
\item if \(j\notin \{j_{0}\}\cup\mc{J}\) we have \(|\langle v; A \hat{e}_{j}\rangle|=|\langle  v ; \hat{e}_{j}\rangle|\geq\epsilon r\gtrsim r\);
\item if \(j=j_{0}\) we have 
\[
\begin{aligned}[t]
|\langle v; A \hat{e}_{j}\rangle|&=|\cos(r)v_{j_{0}}-\sin(r)v_{\mc{J}}|
\\ & \geq |\cos(r)|(2d)^{-1/2}-|\sin(r)| r \epsilon d^{1/2}\gtrsim 1\,;
\end{aligned}
\]
\item if \(j\in\mc{J}\) we have 
\[
\begin{aligned}[t]
&|\langle v; A \hat{e}_{j}\rangle|=\big|(v_{j_{0}}\sin(r)-\cos(r)v_{\mc{J}}) \langle e_{\mc{J}};\hat{e}_{j} \rangle +\langle v_{\mc{J}}^{\perp},\hat{e}_{j}\rangle\big|
\\ & \quad>|\sin(r)|(2d)^{-1}-\epsilon rd^{1/2}|\cos(r)|-2\epsilon r d
\geq r ((4d)^{-1}-\epsilon-2 \epsilon d^{1/2})\gtrsim r.
\end{aligned}
\]
\end{itemize}
\end{proof}

\begin{corollary}\label{cor:2vector-basis-kicking}
Fix  \(r\in(0,1/2)\). There exists a finite collection 
\[
\ms{A}'\subset \bigl\{A'\in O(d) \st \|A'-\Id\|<r \bigr\}\,
\]
such that for any two vectors \(v_{1},v_{2}\subset\R^{d}\) there exists one \(A'\in\ms{A}'\) for which it holds that 
\[\eqnum\label{eq:non-vanishing-projections-2}
|\langle   v_{k}; A' \hat{e}_{j}\rangle|\gtrsim r|v_k| \qquad \forall j\in\{1,\ldots,d\} 
\]
for \( k\in\{1,2\}\).
\end{corollary}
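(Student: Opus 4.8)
The plan is to build $\ms{A}'$ by composing two rotations, each close to the identity, produced by two applications of \Cref{lem:linear-algebra-basis-kicking}: a first rotation $A_{0}$ making every coordinate of $v_{1}$ comparable to $|v_{1}|$, followed by a second, much smaller, rotation $C$ doing the same for $A_{0}^{T}v_{2}$, with $C$ so small that it cannot undo the gain obtained for $v_{1}$.

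Concretely, let $c_{d}>0$ be the dimensional constant hidden in the conclusion $|\langle v;A\hat e_{j}\rangle|\gtrsim r|v|$ of \Cref{lem:linear-algebra-basis-kicking} (we may clearly take $c_{d}\leq1$, so that the lemma provides $|\langle v;A\hat e_{j}\rangle|\geq c_{d}r|v|$ for all $j$). I would set $r_{0}\eqd r/2$ and $r_{1}\eqd c_{d}r/4$, and note that $r_{0}+r_{1}<r$ and $r_{1}=c_{d}r_{0}/2$. Letting $\ms{A}_{0}$ and $\ms{A}_{1}$ be the finite collections furnished by \Cref{lem:linear-algebra-basis-kicking} with parameters $r_{0}$ and $r_{1}$ respectively, define
\[
\ms{A}'\eqd\bigl\{A_{0}C\st A_{0}\in\ms{A}_{0},\ C\in\ms{A}_{1}\bigr\}\subset\Orth(d),
\]
which is finite. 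Since $\|A_{0}C-\Id\|\leq\|A_{0}C-A_{0}\|+\|A_{0}-\Id\|=\|C-\Id\|+\|A_{0}-\Id\|<r_{1}+r_{0}<r$, we have $\ms{A}'\subset\{A'\in\Orth(d)\st\|A'-\Id\|<r\}$ as required.

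Given $v_{1},v_{2}\in\R^{d}$, I would first apply \Cref{lem:linear-algebra-basis-kicking} with parameter $r_{0}$ to $v_{1}$ to obtain $A_{0}\in\ms{A}_{0}$ with $|\langle v_{1};A_{0}\hat e_{j}\rangle|\geq c_{d}r_{0}|v_{1}|$ for all $j$; writing $u_{1}\eqd A_{0}^{T}v_{1}$ this reads $|(u_{1})_{j}|\geq c_{d}r_{0}|u_{1}|$. Next I would apply \Cref{lem:linear-algebra-basis-kicking} with parameter $r_{1}$ to $u_{2}\eqd A_{0}^{T}v_{2}$ (note $|u_{2}|=|v_{2}|$), obtaining $C\in\ms{A}_{1}$ with $|\langle u_{2};C\hat e_{j}\rangle|\geq c_{d}r_{1}|u_{2}|$ for all $j$, and set $A'\eqd A_{0}C\in\ms{A}'$. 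For $v_{2}$, orthogonality of $A_{0}$ gives $\langle v_{2};A'\hat e_{j}\rangle=\langle A_{0}^{T}v_{2};C\hat e_{j}\rangle=\langle u_{2};C\hat e_{j}\rangle$, whence $|\langle v_{2};A'\hat e_{j}\rangle|\geq c_{d}r_{1}|v_{2}|=\tfrac{c_{d}^{2}r}{4}|v_{2}|\gtrsim r|v_{2}|$. For $v_{1}$, writing $C\hat e_{j}=\hat e_{j}+(C-\Id)\hat e_{j}$ with $\|(C-\Id)\hat e_{j}\|<r_{1}$ and using Cauchy--Schwarz,
\[
|\langle v_{1};A'\hat e_{j}\rangle|=|\langle u_{1};C\hat e_{j}\rangle|\geq|(u_{1})_{j}|-r_{1}|u_{1}|\geq(c_{d}r_{0}-r_{1})|u_{1}|=\tfrac{c_{d}r_{0}}{2}|v_{1}|=\tfrac{c_{d}r}{4}|v_{1}|\gtrsim r|v_{1}|,
\]
which completes the argument, with the implicit constant in \eqref{eq:non-vanishing-projections-2} of size $\asymp c_{d}^{2}$.

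The only point requiring any care — and it is bookkeeping rather than a genuine difficulty — is the ordering of the composition: the second rotation must be applied to $A_{0}^{T}v_{2}$, not to $v_{2}$ itself, which is exactly what makes $A_{0}C$ serve both vectors simultaneously; and $r_{1}$ must be chosen small relative to $c_{d}r_{0}$ so that the perturbation $\langle u_{1};(C-\Id)\hat e_{j}\rangle$ stays strictly below the gain $|(u_{1})_{j}|\geq c_{d}r_{0}|u_{1}|$. I anticipate no real obstacle: the statement is a clean two-fold application of \Cref{lem:linear-algebra-basis-kicking}, and the same scheme with one further composition (or combined with a finite net of $\Orth(d)$) is what will feed into \Cref{prop:finite-basis}.
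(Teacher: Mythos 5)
Your argument is correct and is essentially the paper's own proof: in both cases one composes $A_{1}A_{2}$ (your $A_{0}C$), where the first matrix is produced by \Cref{lem:linear-algebra-basis-kicking} at scale $\approx r$ for $v_{1}$ and the second at a much smaller scale $\approx c_{d}r$ for $A_{1}^{T}v_{2}$, the bound for $v_{2}$ following by orthogonality and the bound for $v_{1}$ surviving the small perturbation via the triangle inequality. The only cosmetic difference is that you fix the small parameter explicitly at the outset, whereas the paper carries an $\epsilon$ and sets $\epsilon = C_{1,d}/2$ at the end.
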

\begin{proof}
Fix \(\epsilon>0\) to be determined later and let \(\ms{A}_{1}, \ms{A}_{2}\subset O(d)\) the two collections given by \Cref{lem:linear-algebra-basis-kicking} with \(r\) replaced by \(r/2\) and \(\epsilon r/2\), respectively. Set \(\ms{A}'\eqd \bigl\{A_{1}A_{2} \st A_{k}\in\ms{A}_{k}\bigr\} \). For any \(A'\in\ms{A}'\) we have that  \(\|A'-\Id\|\leq r\) since \(\|A_{1}A_{2}-\Id\|\leq \|A_{1}-\Id\|\,\|A_{2}\|+\|A_{2}-\Id\|\).

It remains to prove \eqref{eq:non-vanishing-projections-2}. First, we use \Cref{lem:linear-algebra-basis-kicking} to select \(A_{1}\) so that 
\(|\langle v_{1}; A_{1} \hat{e}_{j}\rangle|\gtrsim r|v_{1}|, \quad \forall j\in\{1,\ldots,d\}\). Next, we select \(A_{2}\) using \Cref{lem:linear-algebra-basis-kicking} so that
\[
\Big|\big\langle A_{1}^{T}v_{2}; A_{2} \hat{e}_{j}\big\rangle\Big|\gtrsim\epsilon r|v_{2}| \qquad \forall j\in\{1,\ldots,d\},
\]
having used that \(\big|A_{1}^{T} v_{2}\big|=|v_{2}|\). We set \(A=A_{1}A_{2}\). We already have that  for all $j\in\{1,\ldots,d\}$ it holds that      
\(\Big|\big\langle v_{2};A_{1} A_{2} \hat{e}_{j}\big\rangle\Big| \gtrsim\epsilon r|v_{2}|\). On the other hand, for \(v_{1}\) we have that
\[
\begin{aligned}[t]
\qquad&\nqquad \Big|\big\langle v_{1}; A_{1}A_{2} \hat{e}_{j}\big\rangle\Big|
\geq
\Big|\big\langle v_{1}; A_{1}\hat{e}_{j}\big\rangle\Big|
-
\Big|\big\langle v_{1}; A_{1}(\Id-A_{2})\hat{e}_{j}\big\rangle\Big|
\\
&
\geq C_{1,d}r|v_{1}|- \epsilon r|v_{1}|\gtrsim r|v_{n}|\,,
\end{aligned}
\]
where \(C_{1,d}\) is a constant that depends only on the dimension. The last bound holds by setting \(\epsilon=\sfrac{C_{1,d}}{2}\).
\end{proof}

Finally,  \Cref{prop:finite-basis}  follows from the statement above.

\begin{proof}[ Proof of \Cref{prop:finite-basis}]
    Let $\ms{B}\subset O(d)$ be a finite collection of matrixes such that for each $O\in O(d)$ there exists $B\in \ms{B}$ such that $\|B-O\|\leq r$. Such a collection exists by compactness of $O(d)$. Let $\ms{A}'$ be constructed as in \Cref{cor:2vector-basis-kicking}. Define 
    \[
\ms{A}''=\bigcup{B^TA'\in O(d) \st \ms{B}\in \ms{B}\text{ and } A'\in \ms{A}'}    
    \]
    For any $O\in O(d)$ we can select $A''\in \ms{A}''$ by setting $A''=B^TA'$ where we choose $B\in \ms{B}$ such that $\|B-O\|\leq r$ and we choose $A'\in \ms{A}'$ such that
    \eqref{eq:non-vanishing-projections-3} holds for the vectors $Bv1$ and $Bv_2$ in place of  $v_1$ and $v_2$. Then we have that 
    \[
    |\langle   v_{k}; B^TA' \hat{e}_{j}\rangle|
    =
    |\langle   Bv_{k}; TA' \hat{e}_{j}\rangle|
    \gtrsim r|v_k| \qquad \forall j\in\{1,\ldots,d\} 
    \]
    for \( k\in\{1,2\}\). Finally, we have that 
    \[
    \|B^TA'-O\|\leq\|B^T(A'-\Id)\|+\|B^T-O\|=\|A'-\Id\|+\|B-O\|\leq 2r.
    \]
    The claim of \Cref{prop:finite-basis} follows by replacing $r$ with $r/2$
\end{proof}

\subsection{Randomization}\label{sec:randomization} In this subsection, we recall the unit-scale Wiener randomization \(\rf\) from \cite{CFU2} of the
initial condition \(f\in H^{S}_{x}(\R^{d})\). Fix a sequence
$(g_k)_{k\in \Z^d}$ of i.i.d. complex valued random variables on a
probability space $(\Omega , \mathcal{A}, \mathbb{P})$ and assume that all their moments
are bounded, that is, \(\E\big[ |g_{k}|^{p}\big]<\infty\) for all
\(p\in\N\).\footnote{These assumptions are for example satisfied when
\(g_{k}\) are independent, standard (unit variance and \(0\) mean) complex
Gaussian random variables.}

Let $\psi \in C_c^{\infty} (\R^d)$ be an
even, non-negative cut-off function supported in the unit-ball in
$\R^d$ centered at $0$ such that
\[
\sum_{k \in \Z^d} \psi (\xi - k) = 1
\]
for all $\xi \in \R^d$.  To each $k \in \Z^d$ we associate the projection $Q_k$ with the Fourier multiplier $ \psi (\cdot - k)$:
\begin{equation}\label{eq:proj-unit-scale}
\Fourier (\QP_{k} h) (\xi) = \psi (\xi - k)\Fourier(h) (\xi),\qquad \text{for} \quad \xi \in \R^d,
\end{equation}
where $\Fourier (h)$ stands for the Fourier transform of a function $h:\R^n \to \R$.
 If $(g_k)_{k \in \Z^d}$ is a sequence of i.i.d zero-mean complex random
variables with finite moments of all orders on a probability space
$(\Omega, \mathcal{A}, \mathbb{P})$, then  
 the
randomization of $f$ is 
\begin{equation}\label{eq:randomization}
\rf=f^{\omega} = \sum_{k \in \Z^d} g_k (\omega) \QP_{k} f\, .
\end{equation}

The randomization \eqref{eq:randomization} does not improve the differentiability of $f$ in the sense
that if $f \in H^S (\R^d) \setminus H^{S + \varepsilon} (\R^d)$ for some
$\varepsilon > 0$, then $f^{\omega} \in H^S (\R^d) \setminus H^{S +
\varepsilon} (\R^d)$ almost surely (see {\cite{MR2425133}}). 
On the other hand, it \textit{improves} space-time integrability. In particular, for any $p\in [2,\infty)$ it holds that 
\[
\Big(\E \|f^\omega\|_{L^p_x(\R^d)}^p\Big)^{\sfrac{1}{p}}
\approx 
\Big(\E \|f^\omega\|_{L^2_x(\R^d)}^2\Big)^{\sfrac{1}{2}} \approx \|f\|_{L^2_x(\R^d)}.
\]
A key part of our argument establishes a similar improvement in the spatial integrability of the evolution $e^{-it\Lap}f$ (\Cref{prop:max-dir}).
Recall that we interpret the infinite sum in \eqref{eq:randomization}
as the limit in \(\|\cdot\|_{L^{p}_{\Omega}H^{S}_{x}}\) for any
\(p<\infty\) of finite partial sums. Equivalently, we define
\eqref{eq:randomization} for functions
\(f\in L^{2}(\R^{d})\) with compact Fourier support and then extend the
definition by continuity. 

\section{Linear estimates}

In this section we establish the key linear estimates that are used in the rest of the paper. In particular, we show estimates on the Schödinger group in the directional spaces with a different basis in each section, which are counterparts of the Strichatz estimates. Specifically, we establish the directional maximal estimate which requires a careful choice of the basis and directional smoothing estimate which is independent of coordinates. These estimates are the main technical novelty of the manuscript. 

For better reference, we first formulate the classical Bernstein inequality. 

\begin{lemma}[Bernstein's inequality]\label{lem:bernstein}
For any $r_{1}, r_{2}\in[1, \infty]$ with \(r_{2}\geq r_{1}\) it holds that
\[
\|f\|_{L^{r_{2}}_{x} (\R^{d})}
\lesssim
\diam\big(\spt(\FT{f})\big)^{ \frac{d}{r_{1}} - \frac{d}{r_{2}} }
\|f\|_{L^{r_{1}}_{x} (\R^{d})} .
\]

In particular,
since $\diam \big( \spt (\FT{\QP_{k} f}) \big) \lesssim 1$ for any $k \in \Z^{d}$, we obtain
\[\eqnum\label{eq:unit-scale-bernstein}
\big\|\QP_{k} f\big\|_{L^{r_{2}} _{x}(\R^{d})} \lesssim \big\|\QP_{k} f\big\|_{L^{r_{1}}_{x}(\R^{d})}.
\]
Also for any section $\theta \in \Theta$ (see \Cref{defn:sectors}) 
we have 
$\diam\big( \spt( \FT{\LP_{\theta} f})\big) \lesssim N_{\theta}$, and therefore
\[\eqnum\label{eq:LP-bernstein}
\big\|\LP_{\theta} f\big\|_{L^{r_{2}}_{x} (\R^{d})} \leq N_{\theta}^{d( \frac{1}{r_{1}} - \frac{1}{r_{2}} )} \big\|\LP_{\theta} f\big\|_{L^{r_{1}}_{x} (\R^{d})}\,.
\]
The implicit constants are allowed to depend only on the dimension $d$ and on \(r_{1},r_{2}\).
\end{lemma}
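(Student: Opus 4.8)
The plan is to prove the displayed inequality for a general $f$ whose Fourier transform has compact support, and then read off the two corollaries. Set $R\eqd\diam\big(\spt(\FT f)\big)$ and assume $0<R<\infty$ (the remaining cases are trivial for the functions to which the lemma is applied, since those lie in $L^2_x$). Choose, once and for all, a function $\phi\in C^\infty_c(\R^d)$ with $\phi\equiv 1$ on $\{|\zeta|\leq1\}$ and $\spt(\phi)\subset\{|\zeta|\leq2\}$, and let $\psi_0\eqd\check\phi$ be its inverse Fourier transform, a Schwartz function. Since $\spt(\FT f)$ has diameter $R$, it is contained in a ball $\{|\xi-\xi_0|\leq R\}$ for some $\xi_0\in\R^d$, so $\FT f(\xi)=\phi\big(\tfrac{\xi-\xi_0}{R}\big)\FT f(\xi)$. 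On the physical side this reads
\[
f(x)=e^{2\pi i\xi_0 x}\,\big(K_R\ast (e^{-2\pi i\xi_0\cdot}f)\big)(x),\qquad K_R(x)\eqd R^{d}\psi_0(Rx),
\]
where the modulations are harmless because they preserve every $L^p_x$ norm.

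Next I would apply Young's convolution inequality. Setting $\tfrac1q\eqd 1-\tfrac1{r_1}+\tfrac1{r_2}$, the hypothesis $r_1\leq r_2$ forces $\tfrac1{r_1}-\tfrac1{r_2}\in[0,\tfrac1{r_1}]\subset[0,1]$, hence $q\in[1,\infty]$, and Young's inequality gives
\[
\|f\|_{L^{r_2}_x(\R^d)}=\big\|K_R\ast (e^{-2\pi i\xi_0\cdot}f)\big\|_{L^{r_2}_x(\R^d)}\leq \|K_R\|_{L^q_x(\R^d)}\,\|f\|_{L^{r_1}_x(\R^d)}.
\]
A change of variables yields $\|K_R\|_{L^q_x(\R^d)}=R^{d(1-1/q)}\|\psi_0\|_{L^q_x(\R^d)}=R^{d(1/r_1-1/r_2)}\|\psi_0\|_{L^q_x(\R^d)}$, and $\|\psi_0\|_{L^q_x(\R^d)}<\infty$ for every $q\in[1,\infty]$ since $\psi_0$ is Schwartz. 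This is exactly the claimed bound, with implicit constant $\|\psi_0\|_{L^q_x(\R^d)}$ depending only on $d$ and, through $q$, on $r_1,r_2$.

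The two corollaries are immediate specializations. For \eqref{eq:unit-scale-bernstein}, $\FT{\QP_k f}=\psi(\cdot-k)\FT f$ is supported in the unit ball centered at $k$, so $\diam\big(\spt(\FT{\QP_k f})\big)\leq 2$ and the main inequality gives $\|\QP_k f\|_{L^{r_2}_x(\R^d)}\lesssim\|\QP_k f\|_{L^{r_1}_x(\R^d)}$. For \eqref{eq:LP-bernstein}, $\FT{\LP_\theta f}=\chi_\theta\FT f$ is supported in $\theta$, and \eqref{eq:sector-diameter} gives $\diam(\theta)\lesssim N_\theta$ in all cases (using $\epsilon_\Theta\leq1$), so the main inequality produces the factor $\diam(\theta)^{d(1/r_1-1/r_2)}\lesssim N_\theta^{d(1/r_1-1/r_2)}$.

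There is no essential obstacle: the proof is the classical one. The only two points needing a little care are that $\spt(\FT f)$ need not be centered at the origin — dealt with by the modulations above — and that the implicit constant $\|\psi_0\|_{L^q_x}$ must be recorded as a function of $q$ (equivalently of $r_1,r_2$), so that the stated dependence of the implicit constants on $d,r_1,r_2$ only is respected.
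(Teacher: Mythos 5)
Your proof is correct: the modulation to recenter the Fourier support, the reproducing identity $f=\check m * f$ with the rescaled Schwartz kernel $K_R$, and Young's inequality with $\tfrac1q=1-\tfrac1{r_1}+\tfrac1{r_2}$ give exactly the stated bound with constant $\|\psi_0\|_{L^q}$ depending only on $d,r_1,r_2$, and the two specializations follow as you say. The paper states this lemma as classical and offers no proof, and your argument is precisely the standard one it implicitly invokes, so there is nothing to reconcile.
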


Recall that the Schrödinger propagator is defined as
\[
    e^{it \Lap}f(x)
    \eqd 
    \int_{\R^d}e^{2\pi ix\xi}e^{it \Lap(\xi)}\FT{f} (\xi)\dd\xi.
\]

The next two propositions are the main results of this section.

\begin{proposition}[Directional maximal estimate]\label{prop:max-dir}
Let $d>\sigma\geq2$, let $\Lap$ be an operator whose symbol
satisfies \eqref{eq:symbol-conditions}, let \(\Theta\) be a collection of sectors and let \(\ms{O}:\Theta\to O(d)\) be a collection of bases adapted to the sectors (see \Cref{defn:sector-basis-collection}).

For any $\epsilon>0$, $\mf{c}\in[2, \infty]$, and any \(\theta\in\Theta\) and \(j\in\{1,\ldots,d\}\) we have
\[\eqnum\label{eq:directional-maximal}
\Big\|e^{it\Lap} \LP_{\theta} f\Big\|_{L^{(2, \infty, \mf{c})}_{\ms{O}_{\theta}, j}(I\times\R^{d})}
\lesssim_{\epsilon}
N_{\theta}^{
\frac{\sigma}{4} + \big( d - 1 - \frac{\sigma}{2} \big) \big( \frac{1}{2} - \frac{1}{\mf{c}} \big)+\epsilon}\|\LP_{\theta}f\|_{L^{2}(\R^{d})} \,.
\]
Furthermore, if $\diam(\spt{\FT{f}})\leq R$, then
\[\eqnum\label{eq:directional-maximal-unit-FT-support}
\Big\|e^{it\Lap} \LP_{\theta} f\Big\|_{L^{(2, \infty, \mf{c})}_{\ms{O}_{\theta}, j}(I\times\R^{d})} \lesssim N_{\theta}^{\frac{\sigma}{4}+\epsilon}R^{( d - 1 ) \big( \frac{1}{2} - \frac{1}{\mf{c}} \big)}\|\LP_{\theta}f\|_{L^{2}(\R^{d})}.
\]
The implicit constants are allowed to depend on \(C_{\ms{O}}\),\(C_{\!\Lap}\), and \(\Theta\).
\end{proposition}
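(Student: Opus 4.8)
The plan is to establish the two endpoint cases $\mf{c}=\infty$ and $\mf{c}=2$ of \eqref{eq:directional-maximal} and then interpolate in $\mf{c}$; the Fourier-supported variant \eqref{eq:directional-maximal-unit-FT-support} will follow from the $\mf{c}=2$ case by Bernstein's inequality in the transverse variables. We may assume $N_{\theta}$ is large, the bounded range being handled by crude energy estimates. Unwinding the definition \eqref{eq:def:directional-norm} of the directional norm, it suffices to bound the rotated function $G(t,x)\eqd\big(e^{it\Lap}\LP_{\theta}f\big)(t,\ms{O}_{\theta}S_{j}x)$ in $L^{(2,\infty,\mf{c})}(I\times\R^{d})$. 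Changing variables in the defining Fourier integral and writing frequency points as $\xi=(\xi_{1},\xi')\in\R\times\R^{d-1}$, one has $G(t,x)=\int_{\R^{d}}e^{2\pi i x\cdot\xi}\,e^{it\mLap(\ms{O}_{\theta}S_{j}\xi)}\,g(\xi)\,d\xi$ with $g(\xi)\eqd\widehat{\LP_{\theta}f}(\ms{O}_{\theta}S_{j}\xi)$, so that $\|g\|_{L^{2}}=\|\LP_{\theta}f\|_{L^{2}}$. Since $S_{j}\hat{e}_{1}=\hat{e}_{j}$, the conditions of \Cref{defn:sector-basis-collection} translate into $C_{\ms{O}}^{-1}N_{\theta}^{\sigma-2}\le\big|\partial_{\xi_{1}}^{2}\mLap(\ms{O}_{\theta}S_{j}\xi)\big|\le C_{\ms{O}}N_{\theta}^{\sigma-2}$, $\big|\partial_{\xi_{1}}\mLap(\ms{O}_{\theta}S_{j}\xi)\big|\approx N_{\theta}^{\sigma-1}$ and $|\xi_{1}|\approx N_{\theta}$ on the (rotated copy of the) sector, and in particular \emph{uniformly in the transverse frequency $\xi'$}.

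For $\mf{c}=\infty$ we prove the slightly stronger bound $\|G\|_{L^{(2,\infty,\infty)}(I\times\R^{d})}\lesssim N_{\theta}^{(d-1)/2}\|\LP_{\theta}f\|_{L^{2}}$, which coincides with \eqref{eq:directional-maximal} at $\mf{c}=\infty$ and needs no oscillatory estimate: by Minkowski's inequality $\big\|\,\|h\|_{L^{\infty}_{t,x'}}\big\|_{L^{2}_{x_{1}}}\le\sup_{t,x'}\|h(t,\cdot,x')\|_{L^{2}_{x_{1}}}$, and applying Plancherel in $x_{1}$ to $G$ followed by Minkowski and Cauchy--Schwarz in $\xi'$ over the transverse support (of measure $\lesssim(\diam\theta)^{d-1}\lesssim N_{\theta}^{d-1}$) removes all unimodular factors and leaves $(\diam\theta)^{(d-1)/2}\|g\|_{L^{2}}$. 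This step uses no curvature information and is valid for any orthonormal basis.

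The case $\mf{c}=2$ is the substantive one, where we must show $\|G\|_{L^{(2,\infty,2)}(I\times\R^{d})}\lesssim_{\epsilon}N_{\theta}^{\sigma/4+\epsilon}\|\LP_{\theta}f\|_{L^{2}}$. Squaring the norm, applying Plancherel in $x'$ for each fixed $(t,x_{1})$, then using the elementary inequality $\sup_{t}\int_{\xi'}F(t,\xi')\,d\xi'\le\int_{\xi'}\sup_{t}F(t,\xi')\,d\xi'$ and Tonelli, reduces the claim to the one-dimensional maximal estimate
\[
\int_{\R}\sup_{t\in I}\Big|\int_{\R}e^{2\pi i x_{1}\xi_{1}}\,e^{it\,\mLap(\ms{O}_{\theta}S_{j}(\xi_{1},\xi'))}\,g(\xi_{1},\xi')\,d\xi_{1}\Big|^{2}dx_{1}\;\lesssim_{\epsilon}\;N_{\theta}^{\sigma/2+\epsilon}\int_{\R}|g(\xi_{1},\xi')|^{2}\,d\xi_{1},
\]
required to hold uniformly in $\xi'$; integrating this in $\xi'$ and invoking Plancherel once more recovers the bound. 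For each fixed $\xi'$ this is a Carleson-type maximal estimate for the one-dimensional dispersive propagator with phase $\xi_{1}\mapsto\mLap(\ms{O}_{\theta}S_{j}(\xi_{1},\xi'))$, whose second derivative is $\approx N_{\theta}^{\sigma-2}$ and whose frequency support is an interval at distance $\approx N_{\theta}$ of length $\lesssim N_{\theta}\epsilon_{\Theta}$; after rescaling $\xi_{1}\approx N_{\theta}$ to unit frequency it becomes a unit-frequency maximal estimate for a uniformly convex one-dimensional phase, which I would obtain by combining the local-in-time one-dimensional bound of \Cref{thm:Shiraki1} with the local-to-global mechanism of \Cref{thm:rogers1}, the factor $N_{\theta}^{\sigma/4+\epsilon}$ emerging from the bookkeeping of the rescaling. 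The conditions $\big|\partial_{\xi_{1}}\mLap(\ms{O}_{\theta}S_{j}\xi)\big|\approx N_{\theta}^{\sigma-1}$ and $|\xi_{1}|\approx N_{\theta}$ ensure that the hypotheses of \Cref{thm:rogers1} hold with constants uniform in $\xi'$ and in the sector.

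Finally, since $f\mapsto e^{it\Lap}\LP_{\theta}f$ is linear, complex interpolation of the mixed-norm targets $L^{2}_{x_{1}}L^{\infty}_{t}L^{\mf{c}}_{x'}$ between $\mf{c}=2$ and $\mf{c}=\infty$ — the $L^{2}_{x_{1}}$ and $L^{\infty}_{t}$ factors held fixed — gives, for $\tfrac{1}{\mf{c}}=\tfrac{1-\vartheta}{2}$, the constant $N_{\theta}^{(\sigma/4+\epsilon)(1-\vartheta)}N_{\theta}^{((d-1)/2)\vartheta}=N_{\theta}^{\frac{\sigma}{4}+(d-1-\frac{\sigma}{2})(\frac{1}{2}-\frac{1}{\mf{c}})+\epsilon}$, which is \eqref{eq:directional-maximal}; the hypothesis $d>\sigma$ enters only to guarantee $d-1-\tfrac{\sigma}{2}\ge 0$, so that this exponent never drops below its $\mf{c}=2$ value. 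For \eqref{eq:directional-maximal-unit-FT-support}, when $\diam(\spt\widehat{f})\le R$ the transverse frequency support of $e^{it\Lap}\LP_{\theta}f$ has diameter $\lesssim R$, so Bernstein's inequality \eqref{eq:LP-bernstein} in the $x'$ variables — applied pointwise in $(t,x_{1})$, then followed by $\sup_{t}$ and $L^{2}_{x_{1}}$ — upgrades the $\mf{c}=2$ bound to $\lesssim_{\epsilon}R^{(d-1)(\frac{1}{2}-\frac{1}{\mf{c}})}N_{\theta}^{\sigma/4+\epsilon}\|\LP_{\theta}f\|_{L^{2}}$. The main obstacle is the uniformity of the one-dimensional estimate in the transverse frequency $\xi'$ and in the sector: one must check that the rescaled phases $\zeta\mapsto N_{\theta}^{-\sigma}\mLap\big(\ms{O}_{\theta}S_{j}(N_{\theta}\zeta,\xi')\big)$ range over a fixed compact family of uniformly convex functions, which is precisely what \Cref{defn:sector-basis-collection}, \Cref{prop:sector-basis-exists}, and the finiteness of $\ms{O}(\Theta)$ are arranged to provide, so that \Cref{thm:Shiraki1} and \Cref{thm:rogers1} apply with a single implicit constant.
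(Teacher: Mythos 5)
Your overall architecture coincides with the paper's (reduction to a one-dimensional maximal estimate along $\ms{O}_{\theta}\hat e_{j}$ for $\mf{c}=2$, interpolation against the $\mf{c}=\infty$ endpoint, Bernstein in $x'$ for \eqref{eq:directional-maximal-unit-FT-support}, crude treatment of small $N_{\theta}$), but there is a genuine gap at the $\mf{c}=\infty$ endpoint. The inequality you invoke, $\big\|\,\|h\|_{L^{\infty}_{t,x'}}\big\|_{L^{2}_{x_{1}}}\le \sup_{t,x'}\|h(t,\cdot,x')\|_{L^{2}_{x_{1}}}$, is false: Minkowski gives exactly the reverse ordering (the supremum of $L^{2}_{x_1}$ norms is dominated by the $L^{2}_{x_1}$ norm of the supremum, not the other way around), so your ``no oscillatory estimate'' derivation of $\|e^{it\Lap}\LP_{\theta}f\|_{L^{(2,\infty,\infty)}_{\ms{O}_\theta,j}}\lesssim N_{\theta}^{(d-1)/2}\|\LP_{\theta}f\|_{L^{2}}$ does not stand. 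The legitimate soft variant of your computation (expand in $\xi'$, bound $\sup_{t,x'}$ by $\int_{\xi'}\sup_t$, Minkowski, Cauchy--Schwarz over the transverse support of measure $\lesssim(\epsilon_{\Theta}N_{\theta})^{d-1}$, then the one-dimensional maximal bound for each fixed $\xi'$) only yields $N_{\theta}^{\frac{d-1}{2}+\frac{\sigma}{4}+\epsilon}$, and interpolating that against the $\mf{c}=2$ bound gives the exponent $\frac{\sigma}{4}+(d-1)\big(\frac12-\frac1{\mf{c}}\big)$, which is strictly worse than the stated $\frac{\sigma}{4}+\big(d-1-\frac{\sigma}{2}\big)\big(\frac12-\frac1{\mf{c}}\big)$. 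The sharp endpoint constant $N_{\theta}^{(d-1)/2}$ does require an oscillatory-integral/maximal-function argument; the paper imports it as a black box from \cite{CFU} (Lemma 2.4 there), and without that ingredient (or an equivalent proof) your interpolation cannot close.

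A secondary issue concerns the one-dimensional input for $\mf{c}=2$. You propose to apply \Cref{thm:Shiraki1} and \Cref{thm:rogers1} ``after rescaling to unit frequency,'' but those theorems require symbol bounds on all of $\R$ (respectively $|\xi|\ge1$, $\xi\neq0$), whereas $\eta_{1}\mapsto\mLap\big(\ms{O}_{\theta}S_{j}(\eta_{1},\xi')\big)$ is only controlled on the slice of $\conv(\theta)$; this is precisely why the paper routes through \Cref{prop:local-shiraki}, whose proof contains an extension lemma producing a globally defined symbol with constants depending only on $\sigma$, $C_{0}$, $C_{L}$, together with the sign/orientation reductions. Moreover, the rescaling does not produce a ``fixed compact family'' problem with uniform constants: after normalizing the frequency to unit scale the effective curvature is $\approx N_{\theta}^{\sigma}$ (equivalently the time interval has length $\approx N_{\theta}^{\sigma}$), so the factor $N_{\theta}^{\sigma/4}$ is exactly the nontrivial content and must be extracted from the local-to-global mechanism at scale $R\approx N_{\theta}$, as in \Cref{prop:local-shiraki} combined with the verification, via \eqref{eq:sector-basis-gradient-condition}, \eqref{eq:sector-basis-hessian-condition} and \eqref{eq:sector-basis-projection-condition}, that its hypotheses hold uniformly in $\xi'$ and in $\theta$ (this is the paper's Claim 3.3). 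Your $\mf{c}=2$ reduction (Plancherel in $x'$, $\sup\!\int\le\int\!\sup$, Fubini), the interpolation arithmetic, and the Bernstein step for \eqref{eq:directional-maximal-unit-FT-support} are otherwise consistent with the paper's argument.
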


\begin{proposition}[Directional smoothing estimate]\label{prop:dir-local-smoothing}
Fix $d>\sigma\geq2$ and let $\mLap$ be the symbol
(corresponding to $\Lap$) satisfying
\eqref{eq:symbol-conditions}. Let \(\Theta\) be a collection of sectors with resolution \(\epsilon_{\Theta}\lesssim_{d}C_{\!\Lap}^{2}\) as in \Cref{defn:sectors}.

Given a sector \(\theta\in\Theta\), for any \(O\in \Orth(d)\) let \(\ms{J}_{\theta}(O)\) the collection of indexes \(j\in\{1, \ldots, d\}\) given by
\[\eqnum\label{eq:def:dominant-sector-directions}
\ms{J}_{\theta}(O)\eqd\Big\{j\in\{1, \ldots, d\}\st |\langle\nabla\mLap(c_{\theta});O \hat{e}_{j}\rangle|\geq d^{-\sfrac{1}{2}}|\nabla\mLap(c_{\theta})|\Big\}
\]

There exists $T_0 > 0$ such that for any open interval $I \subset \R$ with
$|I| \leq T_0$, any $\theta\in\Theta$, \(O\in\Orth(d)\),  $\mf{c}\in[2, \infty]$, and any \(j\in\ms{J}_{\theta}(O)\) it holds that 
\[\eqnum\label{eq:dir-local-smoothing-genbasis}
 \big\| e^{- i t \Lap} \LP_{\theta} f \big\|_{L_{O, j}^{(\infty, 2, \mf{c})} (I \times \R^d)} \lesssim
 N_{\theta}^{-\frac{\sigma - 1}{2} +(d-1)\big(\frac{1}{2}-\frac{1}{\mf{c}}\big)}
 \|\LP_{\theta} f\|_{L^2 (\R^d)} \,.
\]

Furthermore, if $\diam\big(\spt \FT{f} \big)\leq R$ then 
\[\eqnum\label{eq:dir-local-smoothing-genbasis-unit-spt}
 \big\| e^{- i t \Lap} \LP_{\theta}f \big\|_{L_{O, j}^{(\infty, 2, \mf{c})} (I \times \R^d)} 
 \lesssim
 N_{\theta}^{-\frac{\sigma - 1}{2}} R^{(d-1)\big(\frac{1}{2}-\frac{1}{\mf{c}}\big)}
 \|\LP_{\theta} f\|_{L^2 (\R^d)} \,.  
\]
\end{proposition}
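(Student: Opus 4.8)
The plan is to prove the two bounds one sector at a time and, for a fixed sector, to reduce the $t$-integral to an application of Plancherel after a change of variables in the single frequency direction $O\hat e_j$ along which $\nabla\mLap$ is non-degenerate. Fix $\theta\in\Theta$, $O\in\Orth(d)$ and $j\in\ms J_\theta(O)$; since $\LP_{\theta}$ already carries the frequency localization it suffices to prove \eqref{eq:dir-local-smoothing-genbasis} and \eqref{eq:dir-local-smoothing-genbasis-unit-spt} with this data fixed, and I first treat the sectors with $N_\theta$ large enough that \eqref{eq:symbol-conditions} is effective on $\conv(\theta)$. Passing to rotated coordinates $\xi=OS_j\zeta$, the coordinate $\zeta_1$ pairs with $OS_j\hat e_1=O\hat e_j$, so $\partial_{\zeta_1}\big[\mLap(OS_j\zeta)\big]=\langle\nabla\mLap(\xi);O\hat e_j\rangle$. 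The first thing I would record --- the only place the geometry enters --- is that \(N_\theta^{\sigma-1}\lesssim\big|\langle\nabla\mLap(\xi);O\hat e_j\rangle\big|\lesssim N_\theta^{\sigma-1}\) for \emph{every} $\xi\in\conv(\theta)$: the upper bound is \eqref{eq:sector-Lap-bounds}; the lower bound holds at the center $c_\theta$ by the definition \eqref{eq:def:dominant-sector-directions} of $\ms J_\theta(O)$ together with \eqref{eq:sector-Lap-bounds}, and it propagates to all of $\conv(\theta)$ via \eqref{eq:sector-Lap-bounds-compare} once $\epsilon_\Theta$ is small enough (the smallness hypothesis on $\epsilon_\Theta$).

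Next I would set up the change of variables. Put $g(\zeta)\eqd\FT{\LP_{\theta} f}(OS_j\zeta)$, supported where $OS_j\zeta\in\theta$ (so its $\zeta'$-support has diameter $\lesssim\min(\diam(\theta),R)$), and let $y=(y_1,y')\in\R\times\R^{d-1}$ be the rotated spatial variable, so the norm in \eqref{eq:dir-local-smoothing-genbasis} equals $\sup_{y_1}\big\|(e^{-it\Lap}\LP_{\theta} f)(OS_j y)\big\|_{L^2_{t\in I}L^{\mf c}_{y'}}$. By the non-stationarity above, for each fixed $\zeta'$ the map $\zeta_1\mapsto\mLap(OS_j\zeta)$ is strictly monotone on the (convex, hence interval) $\zeta_1$-slice of $(OS_j)^{-1}\conv(\theta)$, which contains the $\zeta_1$-support of $g(\cdot,\zeta')$; write $\zeta_1=\phi(\mu,\zeta')$ for the inverse, so $|\partial_\mu\phi|\approx N_\theta^{-(\sigma-1)}$. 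Substituting $\mu=\mLap(OS_j\zeta)$ in the $\zeta_1$-integral exhibits $t\mapsto(e^{-it\Lap}\LP_{\theta} f)(OS_j y)$, up to a normalizing constant, as the Fourier transform in $t$ of $\mu\mapsto G_\mu(y_1,y')$, where \(G_\mu(y_1,y')\eqd\int_{\R^{d-1}}e^{2\pi iy'\zeta'}e^{2\pi iy_1\phi(\mu,\zeta')}g(\phi(\mu,\zeta'),\zeta')\,\partial_\mu\phi\,\dd\zeta'\). Since restriction to $t\in I$ only lowers the $L^2_t$ norm, Plancherel in $t$ lets me pass from $\|\cdot\|_{L^2_t(I)}$ to $\big(\int|G_\mu|^2\dd\mu\big)^{1/2}$; the whole smoothing gain $N_\theta^{-(\sigma-1)}$ appears only at the very end upon undoing this substitution, since $(\partial_\mu\phi)^2\dd\mu=|\partial_\mu\phi|\dd\zeta_1\lesssim N_\theta^{-(\sigma-1)}\dd\zeta_1$.

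The remaining manipulations are routine. I would interchange $L^2_t$ and $L^{\mf c}_{y'}$ by Minkowski's inequality (valid since $\mf c\geq2$) so that the $t$-Plancherel just mentioned applies, then use Minkowski once more to pull the $\dd\mu$-integration out of $L^{\mf c}_{y'}$, then apply Bernstein's inequality (\Cref{lem:bernstein}) in $\R^{d-1}$ --- legitimate because, for fixed $\mu,y_1$, the $y'$-Fourier transform of $G_\mu(y_1,\cdot)$ is supported in a set of diameter $\lesssim\min(\diam(\theta),R)$ --- which turns $\|G_\mu(y_1,\cdot)\|_{L^{\mf c}_{y'}}$ into $\big(\min(\diam(\theta),R)\big)^{(d-1)(\frac12-\frac1{\mf c})}\|G_\mu(y_1,\cdot)\|_{L^2_{y'}}$, and finally Plancherel in $y'$, after which $\|G_\mu(y_1,\cdot)\|_{L^2_{y'}}=\|g(\phi(\mu,\cdot),\cdot)\,\partial_\mu\phi\|_{L^2_{\zeta'}}$ no longer depends on $y_1$, so the supremum over $y_1$ is trivial. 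Collecting these bounds, undoing the change of variables, and using $\|g\|_{L^2}=\|\LP_{\theta} f\|_{L^2}$ and $\diam(\theta)\lesssim N_\theta$ (from \eqref{eq:sector-diameter}) yields \eqref{eq:dir-local-smoothing-genbasis}, while keeping the factor $R$ yields \eqref{eq:dir-local-smoothing-genbasis-unit-spt}. Finally, the finitely many low sectors (those $\theta$ for which \eqref{eq:symbol-conditions} carries no information) have $\FT{\LP_{\theta} f}$ supported in a ball of radius $\lesssim1$; there, Bernstein's inequality in $x_1$ and in $x'$ together with the unitarity of $e^{-it\Lap}$ on $L^2_x$ gives the two bounds with an extra factor $|I|^{1/2}\leq T_0^{1/2}$, and this is the only point at which the hypothesis $|I|\leq T_0$ is used --- for the high sectors the argument above is uniform in $I$.

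The step I expect to be the main obstacle is the geometric input flagged in the first paragraph: establishing $|\langle\nabla\mLap(\xi);O\hat e_j\rangle|\gtrsim N_\theta^{\sigma-1}$ \emph{uniformly over $\xi\in\conv(\theta)$} rather than just at the center, since this is exactly what makes the substitution $\zeta_1\leftrightarrow\mu$ well-defined and injective on each $\zeta_1$-slice and what converts the size of $\nabla\mLap$ into the gain $N_\theta^{-(\sigma-1)/2}$. Quantitatively it amounts to choosing the resolution $\epsilon_\Theta$ small enough relative to $C_{\!\Lap}$ that the oscillation of $\nabla\mLap$ across a sector --- controlled by \eqref{eq:sector-Lap-bounds-compare} --- is dominated by its size at the center, controlled through \eqref{eq:def:dominant-sector-directions} and \eqref{eq:sector-Lap-bounds}; once that is secured, everything else is Plancherel, Minkowski, and Bernstein.
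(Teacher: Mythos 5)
Your proposal is correct and follows essentially the same route as the paper's proof: reduce to $\mf c=2$ (respectively keep the $R$-factor) via Bernstein in the transverse variables, reduce by rotation to the direction $O\hat e_j$, establish $|\langle\nabla\mLap(\xi);O\hat e_j\rangle|\approx N_\theta^{\sigma-1}$ on the whole sector slice from \eqref{eq:def:dominant-sector-directions}, \eqref{eq:sector-Lap-bounds} and \eqref{eq:sector-Lap-bounds-compare} with $\epsilon_\Theta$ small, and then perform the one-dimensional change of variables $\mu=\mLap$ followed by extension of the time interval and Plancherel in $t$, the Jacobian producing the gain $N_\theta^{-(\sigma-1)/2}$, with the low sectors handled crudely using $|I|\leq T_0$. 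The only differences are cosmetic (order of Minkowski/Bernstein/Plancherel steps and a slightly cleaner unitarity argument for the low-frequency sectors).
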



We prove Propositions \ref{prop:max-dir} and \ref{prop:dir-local-smoothing} in the next two subsections.

\subsection{Proof of \Cref{prop:max-dir}} Fix the sector \(\theta\in\Theta\) and assume, without loss of generality, that \(j=1\) and thus that \(S_{j}=\Id\). We allow all the implicit constants in the proof to depend on \(C_{\ms{O}}\),\(C_{\!\Lap}\), and \(\Theta\).

If  $N_\theta <  2N_{\min}$ (see \eqref{eq:symbol-conditions}) we prove the claim using a rather crude argument. The fundamental theorem of calculus gives that 
\[
\begin{aligned}[t]
\sup_{t\in I} \|e^{it\Lap} \LP_{\theta} f(x_1,x')\|_{L^{\mf{c}}_{x'}(\R^{d-1})}
\leq&  
\|\LP_{\theta} f(x_1,x')\|_{L^{\mf{c}}_{x'}(\R^{d-1})}
\\ &+
\int_{t \in I} \|\Lap e^{it\Lap} \LP_{\theta} f(x_1,x')\|_{L^{\mf{c}}_{x'}(\R^{d-1})}.
\end{aligned}
\]
Using the triangle inequality, Minkowski, and Bernstein's inequality allow us to obtain that 
\[
\begin{aligned}[t]
\qquad &\nqquad \Big\|e^{it\Lap} \LP_{\theta} f\Big\|_{L^{(2, \infty,\mf{c})}_{\ms{O}_{\theta}, j}(I\times\R^{d})} 
\\ 
& \leq
\Big\|\big\|\LP_{\theta} f(x_1,x')\big\|_{L_{x'}^{\mf{c}}(\R^{d-1})}\Big\|_{L^2_{x_1}(\R)} 
+
\int_{t \in I} \Big\|\big\|\Lap e^{it\Lap}\LP_{\theta} f(x_1,x')\big\|_{L_{x'}^{\mf{c}}(\R^d)}\Big\|_{L^2_{x_1}(\R)} 
\\ 
& 
\lesssim N_{\min}^{(d-1)\big(\frac{1}{2}-\frac{1}{c}\big)}
\Big( 
\big\|\LP_{\theta} f(x_1,x')\big\|_{L^{2}(\R^d)}
+
\int_{t \in I} \big\|\Lap e^{it\Lap}\LP_{\theta} f(x_1,x')\big\|_{L^{2}(\R^d)}
\Big)
\\
\end{aligned}
\]
By Plancherel, and using that  $\mLap(\xi) e^{it\mLap(\xi)}$ is bounded on the bounded domain $B_{4N_{\min}}(0)$ we obtain that 
\[
\begin{aligned}[t]
& \Big\|e^{it\Lap} \LP_{\theta} f\Big\|_{L^{(2, \infty,\ mf{c})}_{\ms{O}_{\theta}, j}(I\times\R^{d})} 
\lesssim N_{\min}^{(d-1)\big(\frac{1}{2}-\frac{1}{c}\big)} |I|\sup_{\xi\in \theta} |\mLap(\xi)| \, \big \|\LP_{\theta} f\big\|_{L^2(\R^d)},
\end{aligned}
\]
as desired, since the implicit constant in our claim is allowed to depend on $N_{\min}$ and $|I|$.

Henceforth, let us suppose that $N_\theta \geq  2N_{\min}$, and thus
that \eqref{eq:symbol-conditions} hold for all \(\xi\in\conv(\theta)\).
A series of reductions allows us to deduce the full claim of \Cref{prop:max-dir} from the bounds \eqref{eq:directional-maximal} with \(\mf{c}=2\). Indeed, 
When \(\mf{c}=\infty\), bounds \eqref{eq:directional-maximal} for any $O\in \Orth(d)$ in place of $\ms{O}_{\theta}$
 follow from \cite[Lemma
2.4]{CFU} that 
\[\eqnum\label{eq:dir-maximal-13feb24}
\big\| e^{- i t \Lap} \LP_{\theta} f \big\|_{L_{O, j}^{(2, \infty, \infty)} (I \times \R^d)} 
\lesssim N_\theta^{\frac{d - 1}{2}}  \|\LP_{\theta} f\|_{L_x^2 (\R^d)} .
\]
 Using Riesz-Thorin interpolation in the inner \(L_{x'}^{\mf{c}}(\R^{d})\) norm the bound one \eqref{eq:directional-maximal} follows for arbitrary $\mf{c} \in (2, \infty)$ as long as one shows them for \(\mf{c}=2\). Furthermore, bound \eqref{eq:directional-maximal-unit-FT-support}
can be obtained by applying the Bernstein inequality (\Cref{lem:bernstein}). Indeed, if $\diam (\spt \FT{\LP_{\theta} f})\leq R$ then for any fixed $(x_{1},t)$ it holds that 
\[
\Big\|e^{it\Lap} \LP_{\theta} f\ms{O}_{\theta}(x_{1},\cdot)\Big\|_{L^{\mf{c}}_{x'}(\R^{d-1})}  \lesssim R^{(d-1)\big(\frac{1}{2}-\frac{1}{\mf{c}}\big)}\Big\|e^{it\Lap} \LP_{\theta} \ms{O}_{\theta}f(x_{1},\cdot)\Big\|_{L^{2}_{x'}(\R^{d-1})}\,,
\]
since the Fourier support of $e^{it\Lap} \LP_{\theta} f(x)$ is also contained in a ball of radius $R$.\todoGU{Actually, we may want to write a more precise proof of the \(\mf{c}=\infty\) case and track how it depends on \(R\). Also the current proof in our previous paper blackboxes some oscillatory estimates. We should provide a clean proof for the final version of this manuscript}

It remains to prove \eqref{eq:directional-maximal} for $\mf{c}=2$. We represent
$\eta\in\theta$ in the orthonormal basis of the columns of
$\ms{O}_{\theta}$ by setting \(\eta= \ms{O}_{\theta}S_{j}(\eta_{1}, \eta')^T\) with
\(\eta_{1}\in\R\) and \(\eta'\in\R^{d-1}\). 

For any fixed $\eta'\in\R^{d-1}$, define
the one-dimensional symbol
\[\eqnum\label{eq:odld}
\mLap_{\eta'}(\eta_{1}) := \mLap \big(\ms{O}_{\theta}(\eta_{1}, \eta')\big) \,.
\]
Our proof relies on the fact that \(\mLap_{\eta'}(\eta')\) is a symbol of
order \(\sigma\). More precisely, we assume the following claim to be shown at the end of the proof.

\begin{claim}\label{claim:claim-eta}
For every \(\eta'\in\R^{d-1}\), if the set  \(\big\{\eta_{1}\st \ms{O}_{\theta}(\eta_{1}, \eta')\in\conv(\theta) \big\}\) is non-empty then it is an interval of the form \((R,C_{0}R)\) with \(R>0\) and \(|R|\approx N_{\theta}\) or of the form \((C_{0}R,R)\) with \(R<0\) and \(|R|\approx N_{\theta}\) for some \(C_{0}\approx1\). Furthermore, on that interval, the multiplier $\eta_{1}\mapsto\mLap_{\eta'}(\eta_{1})$ satisfies the conditions \eqref{eq:local-shiraki-multiplier-conditions} of \Cref{prop:local-shiraki}. 
\end{claim}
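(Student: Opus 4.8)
The plan is to first identify the set $E_{\eta'}\eqd\big\{\eta_{1}\in\R\st \ms{O}_{\theta}(\eta_{1},\eta')\in\conv(\theta)\big\}$ as an interval and pin down its location, and then to read off the one-dimensional symbol bounds for $\mLap_{\eta'}$ directly from the sector basis conditions. Recall that at this stage of the proof of \Cref{prop:max-dir} we have reduced to $N_{\theta}\ge 2N_{\min}$, so \eqref{eq:symbol-conditions} holds on all of $\conv(\theta)$ and, as established in the proof of \Cref{prop:sector-basis-exists}, the sector basis bounds \eqref{eq:sector-basis-hessian-condition}, \eqref{eq:sector-basis-gradient-condition}, \eqref{eq:sector-basis-projection-condition} hold for every $\xi\in\conv(\theta)$, not merely for $\xi\in\theta$. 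Since $j=1$ and hence $S_{j}=\Id$, the map $\eta_{1}\mapsto\ms{O}_{\theta}(\eta_{1},\eta')=\ms{O}_{\theta}(0,\eta')+\eta_{1}\,\ms{O}_{\theta}\hat{e}_{1}$ parametrises an affine line in $\R^{d}$, whose intersection with the bounded convex set $\conv(\theta)$ is a bounded interval; thus $E_{\eta'}$ is a bounded interval (possibly empty). Assuming it is non-empty, pick $\eta_{1}\in E_{\eta'}$ and set $\xi\eqd\ms{O}_{\theta}(\eta_{1},\eta')\in\conv(\theta)$; since $\ms{O}_{\theta}$ is orthogonal, $\eta_{1}=\langle\xi;\ms{O}_{\theta}\hat{e}_{1}\rangle$, so $|\eta_{1}|\le|\xi|\lesssim N_{\theta}$ by \eqref{eq:sector-points-bounds} while \eqref{eq:sector-basis-projection-condition} gives $|\eta_{1}|\gtrsim N_{\theta}$. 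Hence $|\eta_{1}|\approx N_{\theta}$ on all of $E_{\eta'}$; in particular $0\notin E_{\eta'}$, so by connectedness $\eta_{1}$ keeps a fixed sign there and the two endpoints of $E_{\eta'}$ share that sign and have magnitude $\approx N_{\theta}$. This is exactly the claimed form: $E_{\eta'}=(R,C_{0}R)$ with $R>0$, $|R|\approx N_{\theta}$ if $E_{\eta'}\subset(0,\infty)$, and $E_{\eta'}=(C_{0}R,R)$ with $R<0$, $|R|\approx N_{\theta}$ if $E_{\eta'}\subset(-\infty,0)$, the constant $C_{0}$ being the ratio of the two endpoint magnitudes, hence $C_{0}\approx1$ uniformly in $\eta'$ and $\theta$.

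For the second assertion I would differentiate $\mLap_{\eta'}(\eta_{1})=\mLap\big(\ms{O}_{\theta}(\eta_{1},\eta')\big)$ along $\ms{O}_{\theta}\hat{e}_{1}$. The chain rule gives, at $\xi=\ms{O}_{\theta}(\eta_{1},\eta')\in\conv(\theta)$,
\[
\mLap_{\eta'}'(\eta_{1})=\big\langle\nabla\mLap(\xi);\ms{O}_{\theta}\hat{e}_{1}\big\rangle,
\qquad
\mLap_{\eta'}''(\eta_{1})=\big\langle\ms{O}_{\theta}\hat{e}_{1};D^{2}\mLap(\xi)\,\ms{O}_{\theta}\hat{e}_{1}\big\rangle,
\]
and $\mLap_{\eta'}'''(\eta_{1})=D^{3}\mLap(\xi)[\ms{O}_{\theta}\hat{e}_{1},\ms{O}_{\theta}\hat{e}_{1},\ms{O}_{\theta}\hat{e}_{1}]$. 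Then \eqref{eq:sector-basis-gradient-condition} gives $|\mLap_{\eta'}'(\eta_{1})|\approx N_{\theta}^{\sigma-1}$, \eqref{eq:sector-basis-hessian-condition} gives $|\mLap_{\eta'}''(\eta_{1})|\approx N_{\theta}^{\sigma-2}$, and the third-order bound in \eqref{eq:symbol-conditions} together with $|\xi|\approx N_{\theta}$ gives $|\mLap_{\eta'}'''(\eta_{1})|\lesssim N_{\theta}^{\sigma-3}$, all uniformly for $\eta_{1}\in E_{\eta'}$. In particular $\mLap_{\eta'}'$ and $\mLap_{\eta'}''$ are sign-definite on $E_{\eta'}$, so $\mLap_{\eta'}$ is strictly monotone and convex or concave there. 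As $E_{\eta'}$ is an interval of length $\approx N_{\theta}$ carrying precisely these scaling and non-degeneracy bounds on its first three derivatives, $\mLap_{\eta'}$ satisfies the hypotheses \eqref{eq:local-shiraki-multiplier-conditions} of \Cref{prop:local-shiraki} with scale parameter $\approx N_{\theta}$, as claimed.

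Each of these steps is short; the point requiring care is the uniformity of every implicit constant — and of $C_{0}$ — in $\eta'$ and in $\theta$. This is precisely why one works with the sector basis $\ms{O}$: the conditions \eqref{eq:sector-basis-hessian-condition}--\eqref{eq:sector-basis-projection-condition} were arranged in \Cref{prop:sector-basis-exists} to hold on $\conv(\theta)$ with constants depending only on $d$, $C_{\!\Lap}$, $C_{\ms{O}}$ and $\epsilon_{\Theta}$, and these are the only inputs used above, so all the resulting bounds inherit this uniformity. I expect this bookkeeping, rather than any individual estimate, to be the main thing to get right.
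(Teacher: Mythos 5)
Your proof is correct and takes essentially the same route as the paper's: convexity of \(\conv(\theta)\) gives an interval, \eqref{eq:sector-points-bounds} combined with \eqref{eq:sector-basis-projection-condition} forces \(|\eta_{1}|\approx N_{\theta}\) (hence the claimed form of the interval with \(C_{0}\approx1\)), and the chain rule together with \eqref{eq:sector-basis-gradient-condition} and \eqref{eq:sector-basis-hessian-condition} yields the derivative bounds in \eqref{eq:local-shiraki-multiplier-conditions}. The only differences are cosmetic: the third-derivative bound you add is not required by \eqref{eq:local-shiraki-multiplier-conditions}, and your explicit remarks on uniformity and on the basis conditions holding on \(\conv(\theta)\) merely spell out what the paper leaves implicit.
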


By the definition \eqref{eq:def:directional-norm} of the directional norms we have 
\[
\Big\|e^{it\Lap} \LP_{\theta} f\Big\|_{L^{(2, \infty, 2)}_{\ms{O}_{\theta}, 1}(I\times\R^{d})}
 = \Big\|\sup_{t\in I}\Big\|\big(e^{it\Lap}\LP_{\theta}f\big)\big(\ms{O}_{\theta}(x_{1}, x^{\prime})\big)\Big\|_{L^{2}_{x'(\R^{d-1})}}\Big\|_{L^{2}_{x_{1}}(\R)}.
\]
Using the Plancherel identity in the \(x'\) variable we obtain that 
\[
\begin{aligned}[t]
\hspace{2em}&\hspace{-2em}
\Big\|\big(e^{it\Lap}\LP_{\theta}f\big)\big(\ms{O}_{\theta}(x_{1}, x^{\prime})\big)\Big\|_{L^{2}_{x'}} 
\\
&=
\Big\|\int_{\R}
\int_{\R^{d-1}}e^{it \Lap\big(\ms{O}_{\theta}(\eta_{1}, \eta')\big)}\FT{\LP_{\theta}f}\big(\ms{O}_{\theta}(\eta_{1}, \eta')\big)
e^{2\pi i\eta_{1}x_{1}+2\pi i\langle\eta'; x'\rangle}\dd\eta'
\dd \eta_{1}\Big\|_{L^{2}_{x'}}
\\
&=
\Big\|\int_{\R} e^{it\Lap_{\eta'}(\eta_{1})}\FT{\LP_{\theta}f}\big(\ms{O}_{\theta}(\eta_{1}, \eta')\big)e^{2\pi i\eta_{1}x_{1}}\dd \eta_{1}\Big\|_{L^{2}_{\eta'}}.
\end{aligned}
\]
Using the Minkowski and Fubini theorems we obtain that 
\[
\Big\|e^{it\Lap} \LP_{\theta} f\Big\|_{L^{(2, \infty, 2)}_{\ms{O}_{\theta}, 1}(I\times\R^{d})}
\leq
\bigg\|\bigg\|
\sup_{t}\Big|\int_{\R} e^{it\Lap_{\eta'}(\eta_{1})}\FT{\LP_{\theta}f}\big(\ms{O}_{\theta}(\eta_{1}, \eta')\big)e^{2\pi i\eta_{1}x_{1}}\dd \eta_{1}\Big|
\bigg\|_{L^{2}_{x_{1}}} \bigg\|_{L^{2}_{\eta'}}.
\]
Having assumed \Cref{claim:claim-eta}, we apply \Cref{prop:local-shiraki} for each fixed \(\eta'\in\R^{d-1}\) to obtain that 
\[
\Big\|e^{it\Lap} \LP_{\theta} f\Big\|_{L^{(2, \infty, 2)}_{\ms{O}_{\theta}, 1}(I\times\R^{d})}
\lesssim
\bigg\|
\bigg\|\big(1+|\eta_{1}|^{2}\big)^{s/2}\FT{\LP_{\theta}f}\big(\ms{O}_{\theta}(\eta_{1}, \eta')\big) \bigg\|_{L^{2}_{\eta_{1}}}
\bigg\|_{L^{2}_{\eta'}}.
\]
for all $s>\frac{\sigma}{4}$, with an implicit constant independent of $\eta'$ and $\theta$. The desired bound
\eqref{eq:directional-maximal} with $\mf{c} = 2$ is equivalent to
\[
\Big\|e^{it\Lap} \LP_{\theta} f\Big\|_{L^{(2, \infty, 2)}_{\ms{O}_{\theta}, 1}(I\times\R^{d})}\lesssim \big\| \LP_{\theta}f\big\|_{H^{s}(\R^{d})}
\]
which holds since 
\[
\bigg\|
\Big\|\big(1+|\eta_{1}|^{2}\big)^{s/2}\FT{\LP_{\theta}f}\big(\ms{O}_{\theta}(\eta_{1}, \eta')\big) \Big\|_{L^{2}_{\eta_{1}}}
\bigg\|_{L^{2}_{\eta'}}
\lesssim\|\LP_{\theta}f\|_{H^{s}}.
\]

We conclude the proof by showing that \Cref{claim:claim-eta} holds. Since \(\conv(\theta)\) is convex then \(\big\{\eta_{1}\st \ms{O}_{\theta}(\eta_{1}, \eta')\in\conv(\theta) \big\}\) is an interval. If it is non-empty, then, using
\eqref{eq:sector-basis-projection-condition} and
\eqref{eq:sector-points-bounds} it follows that
\[
|\eta_{1}|= |\langle \eta, O_\theta \hat{e}_j \rangle| \approx |\eta|\approx N_{\theta},
\]
and so \(\big\{\eta_{1}\st \ms{O}_{\theta}(\eta_{1}, \eta')\in\conv(\theta) \big\}\) is indeed an interval of the required form.  
Next we check \eqref{eq:local-shiraki-multiplier-conditions}. We have that
\[
\partial_{\eta_{1}} \mLap_{\eta'} (\eta_{1})=\partial_{\eta_{1}} \mLap\ms{O}_{\theta}(\eta_{1},\eta') =\big\langle \nabla \mLap\ms{O}_{\theta}(\eta_{1},\eta');\ms{O}_{\theta}\hat{e}_{1}\big\rangle
\]
Thus the bound \(|\partial_{\eta_{1}} \mLap_{\eta'} (\eta_{1})|
\approx N_{\theta }^{\sigma-1}\) follows from the condition  \eqref{eq:sector-basis-gradient-condition} on the sector basis. This shows the bound on the first derivative in \eqref{eq:local-shiraki-multiplier-conditions}.

Finally, we have that
\[
\partial_{\eta_{1}}^{2} \mLap_{\eta'} (\eta_{1})=\big\langle \partial_{\eta_{1}} \nabla \mLap\ms{O}_{\theta}(\eta_{1},\eta');\ms{O}_{\theta}\hat{e}_{1}\big\rangle
=
\big\langle  D^{2} \mLap\ms{O}_{\theta}(\eta_{1},\eta') \ms{O}_{\theta}\hat{e}_{1};\ms{O}_{\theta}\hat{e}_{1}\big\rangle.
\]
Thus the bound\(|\partial_{\eta_{1}}^{2} \mLap_{\eta'} (\eta_{1})|
\approx N_{\theta }^{\sigma-2}\),  follows from the condition  \eqref{eq:sector-basis-hessian-condition} on the sector basis. This shows the bound on the second derivative in \eqref{eq:local-shiraki-multiplier-conditions} and concludes the proof of \ref{claim:claim-eta}.

\subsection{Proof of \Cref{prop:dir-local-smoothing}}  Bounds \eqref{eq:dir-local-smoothing-genbasis} and \eqref{eq:dir-local-smoothing-genbasis-unit-spt} for \(\mf{c} \geq 2\) follow from \eqref{eq:dir-local-smoothing-genbasis} for \(\mf{c}=2\). Indeed, by Bernstein's inequality (\Cref{lem:bernstein}) if $\diam (\spt \FT{\LP_{\theta} f})\leq R$   
\[
\Big\|\big(e^{it\Lap}\LP_{\theta}f\big)\big(O_{\theta}(x_{1}, x^{\prime})\big)\Big\|_{L^{\mf{c}}_{x'}(\mathbb{R}^{d-1})}\lesssim R^{(d-1)(\frac{1}{2}-\frac{1}{\mf{c}})}\Big\|\big(e^{it\Lap}\LP_{\theta}f\big)\big(O_{\theta}(x_{1}, x^{\prime})\big)\Big\|_{L^{2}_{x'}(\mathbb{R}^{d-1})}
\]
and \(\diam (\spt \FT{\LP_{\theta} f}))\lesssim\min\Big(N_{\theta}\diam (\spt \FT{f})\Big) \). So, we concentrate on proving \eqref{eq:dir-local-smoothing-genbasis} with $\mf{c}=2$.

Fix $j\in \ms{J}_{\theta}(O)$. By replacing \(\Lap(\xi)\) with \(\Lap(OS_{j} \xi)\) we may, without loss of generality (by \eqref{eq:def:directional-norm}), assume that \(O=\Id\) and \(j=1\). Let 
\[
T f (t, x_{1},x') \eqd \int_{\mrl{\nquad \R\times\R^{d-1}}} e^{2 \pi i (\xi_{1}x_{1},\xi'x') - i t \Lap (\xi_{1},\xi')} \chi_{{\theta}}(\xi_{1},\xi') \FT{f} (\xi_{1},\xi') \dd \xi_{1}d \xi' \,,
\]
where $\chi_{{\theta}} = \chi^{\mrm{rad}}_{N_\theta}\chi^{\mrm{ang}}_{\hat{u}_\theta}$ is the multiplier associated to $P_\theta$. For each fixed $x_1\in\R$, Plancherel's identity in $x'$ gives 
\[
\begin{aligned}[t]
\big\| T f (t, x_1, x') \big\|_{L^2_t L^2_{x'} ( I \times \R^{d-1})}^{2}
= \int_{\mcl{I\times \R^{d-1}}} \Big| \int_{\R} e^{2 \pi i \xi_1 x_1 - i t \Lap (\xi_{1},\xi')} \chi_{\theta} (\xi_{1},\xi') \FT{f} (\xi_1, \xi') \dd \xi_1 \Big|^2 \dd t \dd\xi',
\end{aligned}
\]
where
$\xi=\xi_{1}e_{1}+\xi'$ with $\xi_{1}\in \R$ and  $\xi' \in \R^{d-1}$.

If $N_{\theta} \leq 2N_{\min}$, by Cauchy-Schwarz inequality, using that $| I | \lesssim T_0$, and that \(\diam(\spt\chi_{\theta})\lesssim N_{\min}\) we obtain 
\[
\big\| T f (\cdot, x_1, x') \big\|_{L^2_t L^2_{x'} ( I \times \R^{d-1})}^{2} \lesssim
\int_{\mrl{\nquad I\times \R^{d-1}}} N_{\min}\|\FT{f}(\xi_{1},\xi')\|_{L^{2}_{\xi_{1}}(\R)}^{2} \dd\xi'
\lesssim T_{0}N_{\min}\|\FT{f}\|_{L^{2}(\R^{d})}^{2}\,.
\]
In this case, our claim follows since the implicit constant is allowed to depend on $N_{\min}$ and on $T_{0}$.

Now let us focus on the case \(N_{\theta}\geq 2N_{\min}\). It is sufficient to show that for fixed $x_1 \in \R$, and $\xi' \in \R^{d-1}$ one has
\[\eqnum\label{eq:ibfls-genbasis}
\int_I \Big| \int_{\R} e^{i 2 \pi \xi_1 \cdot x_1  - i t  \Lap (\xi_{1},\xi')} \chi_{\theta} (\xi_{1},\xi') \FT{f} (\xi_{1},\xi') d\xi_1 \Big|^2 \dd t
\\
\lesssim N_{\theta}^{-(\sigma -1)}\hspace{-0.7em}\int_{\R} | \FT{f} (\xi_{1},\xi') |^2 \dd   \xi_1 \,.
\]
Define
$\theta_{\xi'}:=\big\{\xi_{1}\in\mathbb{R};(\xi_{1},\xi')\in\conv(\theta)\big\}$; we can restrict the
integral in \(\xi_{1}\) to \(\theta_{\xi'}\) since otherwise \(\chi_{\theta}(\xi_{1},\xi')\) vanishes.

For all \(\xi_{1}\in\theta_{\xi'}\) it holds that \(|\partial_{\xi_{1}}\mLap (\xi_1,  \xi')|\approx N_{\theta}^{\sigma-1}
\). Indeed, by taking \(j\in\ms{J}_{\theta}(O)\) and using \eqref{eq:sector-Lap-bounds} we have assumed that \(|\partial_{\xi_{1}}\mLap (c_{\theta})|\geq d^{-\sfrac{1}{2}}C_{\!\Lap}N_{\theta}^{\sigma-1}\). On the other hand, using \eqref{eq:sector-Lap-bounds-compare}, we obtain that
\[
|\partial_{\xi_{1}}\mLap (\xi_{1},\xi')|\geq |\partial_{\xi_{1}}\mLap (c_{\theta})|-|\partial_{\xi_{1}}\mLap (c_{\theta})-\partial_{\xi_{1}}\mLap (\xi_{1},\xi')|
\geq (d^{-\sfrac{1}{2}}-\epsilon_{\Theta})C_{\!\Lap}N_{\theta}^{\sigma-1},
\]
as required. 
In particular, \(\partial_{\xi_{1}}\mLap (\xi_1,  \xi')\) does not change sign so we can rewrite the integral on the left-hand side of \eqref{eq:ibfls-genbasis} in terms of the variable \(\eta=\mLap(\xi_{1},\xi')\):
\[
\begin{aligned}[t]
&
\int_I \Big| \int_{\theta_{\xi'}} e^{i 2 \pi \xi_1 \cdot x_1  - i t  \Lap (\xi_{1},\xi')} \chi_{\theta} (\xi_{1},\xi') \FT{f} (\xi_{1},\xi') d\xi_1 \Big|^2 \dd t
\\
& =
\int_I \Big| \int_{\tilde{\theta}_{\xi'}} e^{i 2 \pi \xi_1(\eta) \cdot x_1  - i t\eta} \chi_{\theta} (\xi_{1}(\eta),\xi') \FT{f} (\xi_{1}(\eta),\xi') \frac{d\eta}{|\partial_{\xi_{1}}\mLap(\xi_{1}(\eta),\xi')|} \Big|^2 \dd t,
\end{aligned}
\]
with \(\tilde{\theta}_{\xi'}=\{\mLap(\xi_{1},\xi')\st \xi_{1}\in\theta_{\xi'}\}\). By Plancherel we obtain that
\[
\begin{aligned}[t]
&
\int_I \Big| \int_{\tilde{\theta}_{\xi'}} e^{i 2 \pi \xi_1(\eta) \cdot x_1  - i t\eta} \chi_{\theta} (\xi_{1}(\eta),\xi') \FT{f} (\xi_{1}(\eta),\xi') \frac{d\eta}{|\partial_{\xi_{1}}\mLap(\xi_{1}(\eta),\xi')|} \Big|^2 \dd t,
\\
& \qquad \leq
\int_{\tilde{\theta}_{\xi'}} \Big|  e^{i 2 \pi \xi_1(\eta) \cdot x_1 } \chi_{\theta} (\xi_{1}(\eta),\xi') \FT{f} (\xi_{1}(\eta),\xi') \Big|^2 \frac{ d\eta}{|\partial_{\xi_{1}}\mLap(\xi_{1}(\eta),\xi')|^{2}}.
\end{aligned}
\]
Combining the above two step and using the inverse change of variables, we obtain that 
\[
\begin{aligned}[t]
\quad &\nquad
\int_I \Big| \int_{\theta_{\xi'}} e^{i 2 \pi \xi_1 \cdot x_1  - i t  \Lap (\xi_{1},\xi')} \chi_{\theta} (\xi_{1},\xi') \FT{f} (\xi_{1},\xi') d\xi_1 \Big|^2 \dd t
\\
& \leq
\int_{\theta_{\xi'}} \Big|  e^{i 2 \pi \xi_1 \cdot x_1 } \chi_{\theta} (\xi_{1},\xi') \FT{f} (\xi_{1},\xi') \Big|^2 \frac{ d\xi_{1}}{|\partial_{\xi_{1}}\mLap(\xi_{1},\xi')|}
\\
& \leq
C_{\!\Lap}N_{\theta}^{-(\sigma-1)}\int_{\theta_{\xi'}} \Big|  e^{i 2 \pi \xi_1 \cdot x_1 } \chi_{\theta} (\xi_{1},\xi') \FT{f} (\xi_{1},\xi') \Big|^2  d\xi_{1}\,,
\end{aligned}
\]
as desired.

\section{Evolution norms}
For this section we suppose that $\Lap$ is a fixed symbol (and also
corresponding operator) satisfying \eqref{eq:symbol-conditions},
\(\Theta\) is a collection of sectors, and let \(\ms{O}\) be a basis collection
adapted to the sectors and \(\Lap\).

Let $\epsilon_{0}\in\big(0, 2^{-100}\big)$ be sufficiently small (to be chosen
appropriately below depending on other parameters) and we allow all
subsequent constants to depend implicitly on \(\epsilon_{0}\).

For any interval
\(I\subseteq\R\) and for any $\sigma \in \R$ we set
\[\eqnum\label{eq:def:X-norm}
\begin{aligned}[c] &
\|v\|_{X^{\sigma} (I)} \eqd \Big( \sum_{\theta\in\Theta} N_{\theta}^{2 \sigma} \|\LP_{\theta} v\|^{2}_{X_{\theta} (I)} \Big)^{\sfrac{1}{2}} \,,
\\
& 
\begin{aligned}[t]
\|v\|_{X_{\theta} (I)}  \eqd \nqquad & \qquad
\|v\|_{L_{t}^{\frac{2}{\epsilon_0}} L_{x}^{\frac{2}{1 - \epsilon_{0}}} (I)}
+
\|v\|_{L_{t}^{\frac{2}{1 - \epsilon_{0}}} L_{x}^{\frac{2d}{d-2}\frac{1}{1 - \epsilon_{0}}} (I)}
\\
& 
+ \sum_{j= 1}^{d}\Big(
N_{\theta}^{-\frac{\sigma}{4}} \|v\|_{L_{\ms{O}_{\theta}, j}^{( \frac{2}{1 - \epsilon_{0}}, \frac{2}{\epsilon_{0}},  \frac{2}{1-\epsilon_{0}})} (I)}
+
N_{\theta}^{-\frac{d-1}{2}} \|v\|_{L_{\ms{O}_{\theta}, j}^{( \frac{2}{1 - \epsilon_{0}}, \frac{2}{\epsilon_{0}}, \frac{2}{\epsilon_{0}})} (I)}
\Big)
\\
&
+ \sum_{\mcl{O\in\ms{O}(\Theta)}}\;\;\;\sum_{\!\nquad\mrl{j \in\mc{J}_{\theta}(O)}}
\Big( N_{\theta}^{\frac{\sigma-1}{2}} \| v \|_{L_{O, j}^{(\frac{2}{\epsilon_{0}}, \frac{2}{1 - \epsilon_{0}}, \frac{2}{1 - \epsilon_{0}} )} (I)}
+   N_{\theta}^{-\frac{d-\sigma}{2}}\|  v\|_{L_{O, j}^{(\frac{2}{\epsilon_{0}}, \frac{2}{1 - \epsilon_{0}}, \frac{2}{\epsilon_{0}} )} (I)}\Big)\,,
\end{aligned}
\end{aligned}
\]
and 
\[\eqnum\label{eq:def:Y-norm}
\begin{aligned}[c] 
& \|v\|_{Y^{\sigma} (I)} \eqd \Big( \sum_{\omega \in \Theta} N_{\theta}^{2 \sigma} \|\LP_{\theta} v\|^{2}_{Y_{N} (I)} \Big)^{\frac{1}{2}},
\\
& \begin{aligned}[t]
\|v\|_{Y_{\theta} (I)} \eqd \nquad &\quad   \|v\|_{L_{t}^{\frac{2}{\epsilon_0}} L_{x}^{\frac{2}{1 - \epsilon_{0}}} (I)} + \|v\|_{L_{t}^{\frac{2}{1 - \epsilon_{0}}} L_{x}^{\frac{2d}{d-2}\frac{1}{1 - \epsilon_{0}}} (I)}
\\
&
+ \sum_{j= 1}^{d}\Big(
N_{\theta}^{-\frac{\sigma}{4}} \|v\|_{L_{O_{\theta}, j}^{( \frac{2}{1 - \epsilon_{0}}, \frac{2}{\epsilon_{0}},  \frac{2}{1-\epsilon_{0}})} (I)}
+
N_{\theta}^{-\frac{\sigma}{4}} \|v\|_{L_{O_{\theta}, j}^{( \frac{2}{1 - \epsilon_{0}}, \frac{2}{\epsilon_{0}}, \frac{2}{\epsilon_{0}})} (I)}
\Big)
\\
&
+ \sum_{O\in\ms{O}}\sum_{\nquad\mrl{j \in\mc{J}_{\theta}(O)}}
\Big( N_{\theta}^{\frac{\sigma-1}{2}} \|v\|_{L_{O, j}^{(\frac{2}{\epsilon_{0}}, \frac{2}{1 - \epsilon_{0}}, \frac{2}{1 - \epsilon_{0}} )} (I)}
+   N_{\theta}^{\frac{\sigma-1}{2}}\|v\|_{L_{O, j}^{(\frac{2}{\epsilon_{0}}, \frac{2}{1 - \epsilon_{0}}, \frac{2}{\epsilon_{0}} )} (I)}\Big)\,,
\end{aligned}
\end{aligned}
\]

The norms $X_{\theta}$ and $Y_{\theta}$ consist of a combination of space-time
norms with appropriate scaling. Note that the set $\ms{O}$ is finite,
and therefore all the sums in the definitions of
$X_{\theta}$ and $Y_{\theta}$ are well defined.

Next, we formulate estimates on  solutions to the non-homogeneous Schrödinger type equation
\[\eqnum\label{eq:non-homogeneous-schroedinger}
\begin{cases}
(i \partial_{t} + \Lap) v = h(t, x) & \text{on } \R \times \R^{d},
\\
v (0, x) = v_{0}(x) \,. &
\end{cases}
\]
By the Duhamel's formula, the solution $v$ of \eqref{eq:non-homogeneous-schroedinger} is given by 
\[\eqnum\label{eq:duhamel}
v (t, x) = e^{ i t \Lap} v_{0} -i\int_{0}^{t} e^{i (t - s) \Lap} h (s, x) \dd s \,.
\]

To control the non-homogeneous term $h$ we use the norms
\(X^{*, \sigma}(I) \), related through duality to the space
$X^{\sigma} (I)$. Specifically, we define
\[\eqnum\label{eq:def:X-norm-dual}
\begin{aligned}[c]
& \|h\|_{X^{*, \sigma} (I)} \eqd \big( \sum_{\theta\in\Theta} N^{2 \sigma} \|\LP_{\theta} h\|^{2}_{X_{\theta}^{*}(I)} \big)^{\frac{1}{2}},
\\
& \|h\|_{X_{\theta}^{*} (I)} \eqd \sup \Big\{ \Big| \int_{I \times \R^{d}} h_{*} (t, x ) h (t, x )  \dd t \dd x \Big| \st \|h_{*} \|_{X_{\theta} (I)} \leq 1 \Big\} \,.
\end{aligned}
\]    
Also, for any $r \in \R$ we denote
\begin{equation}\label{eq:jbnb}
 \langle r \rangle = (1 + r^2)^{\sfrac{1}{2}} 
\end{equation}
the Japanese bracket. 

\begin{proposition}\label{prop:main-linear-estimate}
Fix \(I\subseteq\R\), \(I\ni0\), and $s \in \R$. Fix any 
$0 < \epsilon\lesssim 1$ and any 
$0 < \epsilon_0 \lesssim_{\epsilon} 1$.   
Then, for any $v :I \times \R^{d} \mapsto \mathbb{C}$ given by \eqref{eq:duhamel},  there exists a constant \(c>0\) such that
\[\eqnum\label{eq:non-homogeneous-bound-2}
\|v\|_{X^{s} (I)} \lesssim \big\langle|I|^{-1}\big\rangle^{-c} \Big( \|v_{0} \|_{H^{s+\epsilon} (\R^{d})} +\|h\|_{X^{*, s+\epsilon} (I)} \Big)\,.
\]
If \(\diam\big(\spt(\FT{v_{0}})\big) \leq2R \) and \(\diam\big(\spt(\FT{h})\big)\leq2R\), then 
\[\eqnum\label{eq:non-homogeneous-bound-unit-scale-2}
\|v\|_{Y^{s} (I)}
\lesssim_{R}
\big\langle|I|^{-1}\big\rangle^{-c}  \Big(\|v_{0} \|_{H^{s+\epsilon} (\R^{d})} +\|h\|_{X^{*, s+\epsilon} (I)} \Big)\,.
\]
More specifically, for any $\theta \in \Theta$, we have 
\[\eqnum\label{eq:non-homogeneous-bound}
\|\LP_{\theta} v\|_{X_{\theta} (I)}
\lesssim N_\theta^{\epsilon}  \big\langle|I|^{-1}\big\rangle^{-c} \big( \| \LP_{\theta} v_{0} \|_{L^{2}_x (\R^{d})}+ \|\LP_{\theta} h\|_{X_{\theta} ^{*}(I)}  \big) 
\]
and, if \(\diam\big(\spt(\FT{v_{0}})\big) \leq2R\), and \(\diam\big(\spt(\FT{h})\big)\leq2R\), then
\[\eqnum\label{eq:non-homogeneous-bound-unit-scale}
\|\LP_{\theta} v\|_{Y_{\theta} (I)}
\lesssim_{R} N^{\epsilon}  \big\langle|I|^{-1}\big\rangle^{-c}  \big( \| \LP_{\theta} v_{0} \|_{L^{2}_x (\R^{d})}+ \|\LP_{\theta} h\|_{X_{\theta}^{*} (I)}  \big)\,.
\]
The constant \(c\) and all implicit constants are allowed to depend on \(\epsilon, \epsilon_{0}\) but are independent of \(s\), $I$, and \(\theta\). 
\end{proposition}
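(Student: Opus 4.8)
The estimate parallels the corresponding one in \cite{CFU2}; the only genuinely new feature is that the adapted frame \(\ms{O}_{\theta}\) (see \Cref{defn:sector-basis-collection}) now varies with the sector \(\theta\), so each ingredient must be invoked in the frame for which it was proved and the pieces reassembled carefully. The plan is: (i) reduce the global bounds \eqref{eq:non-homogeneous-bound-2}, \eqref{eq:non-homogeneous-bound-unit-scale-2} to the per-sector bounds \eqref{eq:non-homogeneous-bound}, \eqref{eq:non-homogeneous-bound-unit-scale}; (ii) rescale each sector to unit frequency; (iii) estimate the homogeneous part \(e^{it\Lap}\LP_\theta v_0\) norm-by-norm using \Cref{prop:max-dir}, \Cref{prop:dir-local-smoothing}, Strichartz, interpolation and Bernstein; (iv) treat the Duhamel term by factoring through \(L^2\), duality and the Christ--Kiselev lemma, and extract the small-interval gain by H\"older in time.

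\emph{Reduction and rescaling.} Since \(\LP_\theta\) is a Fourier multiplier supported in \(\theta\) it commutes with \(e^{it\Lap}\) and with \eqref{eq:duhamel}, so \(\LP_\theta v = e^{it\Lap}\LP_\theta v_0 - i\int_0^t e^{i(t-s)\Lap}\LP_\theta h(s)\dd s\). Because the sectors cover \(\R^d\) with finite overlap one has \(\sum_\theta N_\theta^{2(s+\epsilon)}\|\LP_\theta w\|_{L^2}^2\approx\|w\|_{H^{s+\epsilon}}^2\), and the \(X^{*,s+\epsilon}\)-norm is already defined sector-by-sector, so multiplying \eqref{eq:non-homogeneous-bound} (resp.\ \eqref{eq:non-homogeneous-bound-unit-scale}) by \(N_\theta^s\), squaring and summing in \(\theta\) yields \eqref{eq:non-homogeneous-bound-2} (resp.\ \eqref{eq:non-homogeneous-bound-unit-scale-2}), the \(N_\theta^\epsilon\) loss being absorbed by the \(\epsilon\) extra derivatives on the right. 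Thus it suffices to prove the per-sector estimates. For these, replacing \(\Lap(\xi)\) by \(N_\theta^{-\sigma}\Lap(N_\theta\xi)\) — again a symbol of order \(\sigma\) with the same constants by \eqref{eq:symbol-conditions} — and using the dilation \eqref{eq:scaling} together with \(\theta_{N,\hat e}=N\theta_{1,\hat e}\), we reduce to \(N_\theta\approx1\): the \(N_\theta\)-powers built into \eqref{eq:def:X-norm}, \eqref{eq:def:Y-norm}, \eqref{eq:def:X-norm-dual} are precisely those making the inequalities scale invariant, apart from the genuine \(\epsilon\)-loss of the one–dimensional maximal bound underlying \Cref{prop:max-dir}, which reappears as \(N_\theta^\epsilon\). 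It remains to prove, at unit scale on an interval \(J\) with \(|J|\le T_0\) (a general bounded interval being covered by subintervals of this length and resummed), the bound \(\|\LP_\theta v\|_{X_\theta(J)}\lesssim_\epsilon |J|^{c}\big(\|\LP_\theta v_0\|_{L^2}+\|\LP_\theta h\|_{X_\theta^*(J)}\big)\) with \(c=c(\epsilon_0)>0\), together with its \(Y_\theta\)-counterpart under the Fourier-support hypothesis.

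\emph{Homogeneous and inhomogeneous parts.} For \(e^{it\Lap}\LP_\theta v_0\) each constituent norm of \(X_\theta\) (resp.\ \(Y_\theta\)) is estimated separately. The two non-directional norms are handled by the (global) Strichartz estimates for \(e^{it\Lap}\), which hold for symbols satisfying \eqref{eq:symbol-conditions} by non-degeneracy of \(D^2\Lap\) and a \(TT^*\)/stationary-phase argument, and which at unit frequency cost at most an \(O(\epsilon_0)\)-power of \(N_\theta\). The directional maximal-type norms (large middle exponent \(2/\epsilon_0\)) are controlled by \Cref{prop:max-dir}, applied in the adapted frame \(\ms{O}_\theta\) — this is where \Cref{defn:sector-basis-collection} is used — and the directional smoothing-type norms (middle exponent \(\approx 2\), index \(j\in\mc{J}_\theta(O)\)) by \Cref{prop:dir-local-smoothing}; the norms with both outer and middle exponents strictly between their extremes follow by Riesz--Thorin interpolation of these two estimates in the \(x_1\)- and \(t\)-exponents (interpolation parameter \(\approx\epsilon_0\)), with the \(x'\)-exponent adjusted by Bernstein (\Cref{lem:bernstein}); in each case the \(N_\theta\)-weight in the norm cancels the \(N_\theta\)-power produced, up to \(N_\theta^\epsilon\). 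For the \(Y_\theta\)-version one uses instead the sharpened bounds \eqref{eq:directional-maximal-unit-FT-support}, \eqref{eq:dir-local-smoothing-genbasis-unit-spt}. For the Duhamel term, write its non-retarded version \(h\mapsto\int_\R e^{i(t-s)\Lap}\LP_\theta h(s)\dd s = e^{it\Lap}\circ\big(\int_\R e^{-is\Lap}\LP_\theta h(s)\dd s\big)\); by \eqref{eq:def:X-norm-dual} the inner operator is the adjoint of \(f\mapsto e^{it\Lap}\LP_\theta f\colon L^2\to X_\theta(J)\), hence maps \(X_\theta^*(J)\to L^2\) with the bound just obtained, and composing with the homogeneous estimate gives the non-retarded bound. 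The retarded integral \(\int_0^t\) is recovered by the Christ--Kiselev lemma, applied to each constituent norm (after, where needed, using Minkowski's inequality to place the time integration outermost), which is legitimate since every time exponent in \(X_\theta\) exceeds \(2\) while those of the predual \(X_\theta^*\) are below \(2\). Finally, \(|J|\le T_0\) lets H\"older in \(t\) trade the excess time-integrability of each norm for \(|J|^{c}\), \(c=c(\epsilon_0)>0\); choosing \(\epsilon_0\) small in terms of \(\epsilon\) keeps all accumulated \(N_\theta^{O(\epsilon_0)}\) factors below \(N_\theta^\epsilon\).

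\emph{Main obstacle.} All analytic inputs are already in place, so the difficulty is organisational: one must invoke each directional estimate in the exact frame for which it was established, control the finitely many frames in \(\ms{O}(\Theta)\) when a norm is summed over \(O\in\ms{O}(\Theta)\), and check that interpolation and Christ--Kiselev respect these frames. The most delicate point is the mismatch between the single frame \(\ms{O}_\theta\) attached to the maximal norms and the several frames \(O\in\ms{O}(\Theta)\) attached to the smoothing norms (with \(j\in\mc{J}_\theta(O)\)), which did not arise in \cite{CFU2} and forces the bookkeeping to be carried out sector by sector and frame by frame throughout.
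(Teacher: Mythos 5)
Your proposal follows essentially the same route as the paper, whose proof consists of citing the result as an immediate adaptation of \cite[Proposition 3.1]{CFU}: per-sector bounds for the homogeneous flow obtained from the directional maximal and smoothing estimates together with Strichartz, Bernstein and interpolation, the Duhamel term handled by duality and the Christ--Kiselev lemma, the $\big\langle|I|^{-1}\big\rangle^{-c}$ gain extracted by H\"older in time, and square-summation over sectors to pass to $X^{s}$, $Y^{s}$. One small caution: the rescaling to $N_{\theta}\approx 1$ is not needed and interacts with the interval factor (the rescaled interval has length $N_{\theta}^{\sigma}|I|$, producing an extra $N_{\theta}^{\sigma c}$ that must then be absorbed into $N_{\theta}^{\epsilon}$), so it is cleaner to apply the frequency-$N_{\theta}$ estimates of \Cref{prop:max-dir} and \Cref{prop:dir-local-smoothing} directly, as in \cite{CFU}.
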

\begin{proof}
The proof is an immediate adaptation of the proof of  \cite[Proposition 3.1]{CFU}.
\end{proof}
Proceeding as in \cite[Corollary $3.3$]{CFU}, we obtain an estimate in more classical norms. 

\begin{corollary}\label{cor:linear-estimate-adjoint-and-continuity}
Under the assumptions of \Cref{prop:main-linear-estimate} 
we have
\[\eqnum\label{eq:continuity-bound}
\|v\|_{C^{0}(I, H^{s}(\R^{d}))}\lesssim \|v_{0}\|_{H^{s}(\R^{d})}+ \|h\|_{X^{*, s+\epsilon}(I)}.
\]
\end{corollary}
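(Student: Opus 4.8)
The plan is to follow the argument of \cite[Corollary 3.3]{CFU}, deducing the bound from the per-sector estimate \eqref{eq:non-homogeneous-bound}. Decompose the solution from \eqref{eq:duhamel} as $v=v_{\mathrm{hom}}+v_{\mathrm{inh}}$, where $v_{\mathrm{hom}}(t)=e^{it\Lap}v_{0}$ and $v_{\mathrm{inh}}(t)=-i\int_{0}^{t}e^{i(t-s)\Lap}h(s)\,\dd s$. Since $\{\chi_{\theta}\}_{\theta\in\Theta}$ is a partition of unity with bounded overlap and $\langle\xi\rangle\approx N_{\theta}$ on $\spt\chi_{\theta}$, for every fixed $t$ one has $\|v(t)\|_{H^{s}(\R^{d})}^{2}\lesssim\sum_{\theta\in\Theta}N_{\theta}^{2s}\|\LP_{\theta}v(t)\|_{L^{2}_{x}(\R^{d})}^{2}$, so it is enough to estimate $\|\LP_{\theta}v(t)\|_{L^{2}_{x}}$ uniformly for $t\in I$. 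For the homogeneous part there is nothing to do: $e^{it\Lap}$ is a unitary Fourier multiplier (its symbol $e^{it\mLap(\xi)}$ has modulus one since $\mLap$ is real), hence $\|\LP_{\theta}v_{\mathrm{hom}}(t)\|_{L^{2}_{x}}=\|\LP_{\theta}v_{0}\|_{L^{2}_{x}}$ and $\sum_{\theta}N_{\theta}^{2s}\|\LP_{\theta}v_{0}\|_{L^{2}_{x}}^{2}\approx\|v_{0}\|_{H^{s}}^{2}$.

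For the inhomogeneous part I would estimate $\|\LP_{\theta}v_{\mathrm{inh}}(t)\|_{L^{2}_{x}}$ by duality at a fixed $t\in I$ (say $t\ge0$; the case $t<0$ is symmetric). For $g\in L^{2}_{x}$ with $\|g\|_{L^{2}_{x}}\le1$, using self-adjointness of $\LP_{\theta}$ and $(e^{i\tau\Lap})^{*}=e^{-i\tau\Lap}$,
\[
\big\langle\LP_{\theta}v_{\mathrm{inh}}(t),g\big\rangle
=-i\int_{0}^{t}\big\langle\LP_{\theta}h(s),e^{-i(t-s)\Lap}g\big\rangle_{L^{2}_{x}}\dd s
=-i\int_{I\times\R^{d}}\LP_{\theta}h(s,x)\,\overline{H_{*}(s,x)}\,\dd s\,\dd x ,
\]
where $H_{*}(s,x)=\1_{[0,t]}(s)\,\big(e^{i(s-t)\Lap}g\big)(x)$ and we used $[0,t]\subseteq I$. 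After a harmless frequency truncation — since $\LP_{\theta}h$ is spectrally supported in $\theta$, one may replace $H_{*}$ by $\LP'_{\theta}H_{*}$ for an enlarged sector projector $\LP'_{\theta}$ with symbol $\equiv1$ on $\spt\chi_{\theta}$ and supported in a fixed dilate of $\theta$ — the definition \eqref{eq:def:X-norm-dual} of $X^{*}_{\theta}(I)$, together with conjugation-invariance, time-restriction-monotonicity, and time-translation-invariance of the mixed norms building $X_{\theta}$, gives
\[
\big|\big\langle\LP_{\theta}v_{\mathrm{inh}}(t),g\big\rangle\big|
\le\|\LP_{\theta}h\|_{X^{*}_{\theta}(I)}\,\big\|\tau\mapsto e^{i\tau\Lap}\LP'_{\theta}g\big\|_{X_{\theta}(I-t)} .
\]
Now $e^{i\tau\Lap}\LP'_{\theta}g$ has Fourier support in a bounded dilate of $\theta$, so \eqref{eq:non-homogeneous-bound} — whose proof applies verbatim to such functions, with $I-t\ni0$ — bounds the last factor by $N_{\theta}^{\epsilon}\big\langle|I|^{-1}\big\rangle^{-c}\|g\|_{L^{2}_{x}}\le N_{\theta}^{\epsilon}\|g\|_{L^{2}_{x}}$, uniformly in $t$. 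Taking the supremum over $g$ yields $\|\LP_{\theta}v_{\mathrm{inh}}(t)\|_{L^{2}_{x}}\lesssim N_{\theta}^{\epsilon}\|\LP_{\theta}h\|_{X^{*}_{\theta}(I)}$ for all $t\in I$.

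Combining the two contributions and summing the squares weighted by $N_{\theta}^{2s}$,
\[
\|v(t)\|_{H^{s}}^{2}\lesssim\sum_{\theta\in\Theta}N_{\theta}^{2s}\|\LP_{\theta}v_{0}\|_{L^{2}_{x}}^{2}+\sum_{\theta\in\Theta}N_{\theta}^{2(s+\epsilon)}\|\LP_{\theta}h\|_{X^{*}_{\theta}(I)}^{2}\approx\|v_{0}\|_{H^{s}(\R^{d})}^{2}+\|h\|_{X^{*,s+\epsilon}(I)}^{2} ,
\]
uniformly in $t\in I$; here the extra $\epsilon$ derivatives on $h$ in \eqref{eq:continuity-bound} are precisely what absorbs the $N_{\theta}^{\epsilon}$ loss from \eqref{eq:non-homogeneous-bound}. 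This proves the bound at the level of $L^{\infty}_{t}H^{s}_{x}$; to upgrade it to $C^{0}(I,H^{s})$ I would first note that when $v_{0}$ and $h$ are smooth with compactly supported spatial Fourier transform the formula \eqref{eq:duhamel} visibly defines a continuous $H^{s}$-valued map on $I$, and then approximate a general pair $(v_{0},h)$ in $H^{s}\times X^{*,s+\epsilon}(I)$ by such data, applying the estimate just proved to the difference to get convergence in $C^{0}(I,H^{s})$. The only delicate point is the bookkeeping in the duality step — the frequency truncation, the time translation and restriction, and checking that all implicit constants stay uniform in $t\in I$ — but there is no genuine difficulty beyond \Cref{prop:main-linear-estimate}, which is why this is an immediate consequence of \cite[Corollary 3.3]{CFU}.
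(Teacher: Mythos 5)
Your proposal is correct and follows essentially the same route as the paper, which proves this corollary simply by invoking the argument of \cite[Corollary 3.3]{CFU}: unitarity of $e^{it\Lap}$ for the free part, a fixed-time duality (adjoint) argument pairing the Duhamel term against $L^{2}$ data so that the per-sector homogeneous bound \eqref{eq:non-homogeneous-bound} applies, and absorption of the $N_{\theta}^{\epsilon}$ loss by the $s+\epsilon$ weight before summing over sectors. The only points worth tightening are routine: the fattened projector $\LP'_{\theta}$ requires the sector-basis conditions on a slight enlargement of $\theta$ (available by taking $\epsilon_{\Theta}$ a bit smaller, or by summing over the boundedly many neighbouring sectors), and the upgrade from $L^{\infty}_{t}H^{s}_{x}$ to genuine continuity is more safely argued directly (group continuity plus smallness of $\int_{t'}^{t}$ via the same duality and local time-integrability of the dual pieces) than by asserting density of nice data in the dual-type norm $X^{*,s+\epsilon}(I)$.
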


\section{Multilinear estimates}
We start this section by proving a bilinear estimates in terms of
directional space-time norms. 

\begin{lemma}\label{lem:bilinear-2-2-bound}
Fix any \(\epsilon\in(0, 1]\) and let \(0 < \epsilon_{0} \lesssim_{\epsilon} 1\) be as in the definition of $X^\sigma$ and $Y^\sigma$ norms. For any two functions
$h_{+}, h_{-} : I \times \R^{d} \to \C$ and any $\theta_{+}, \theta_{-} \in \Theta$ with $N_{\theta_{+}} \geq N_{\theta_{-}}$
it holds that 
\[\eqnum\label{eq:bilinear-bound}
\begin{aligned}
\|h_{+} h_{-} \|_{L^{2}_{t} L^{2}_{x}  (I \times \R^{d})} \lesssim_{\epsilon_{0}}N_{\theta_{+}}^{\epsilon}
N_{\theta_{+}}^{-\frac{\sigma-1}{2}}N_{\theta_{-}}^{\frac{\sigma}{4}}
\times
\begin{dcases}
N_{\theta_{-}}^{\frac{2d-\sigma-1}{4}} \|h_{+} \|_{X_{\theta_{+}} (\R) }  \|h_{-} \|_{X_{\theta_{-}} (\R) },
\\
\|h_{+} \|_{Y_{\theta_{+}} (\R) }  \| h_{-} \|_{Y_{\theta_{-}} (\R) },
\\
\| h_{+} \|_{X_{\theta_{+}} (\R) }  \| h_{-} \|_{Y_{\theta_{-}} (\R) },
\\
\| h_{+} \|_{Y_{\theta_{+}} (\R) }\| h_{-} \|_{X_{\theta_{-}} (\R) } \,.
\end{dcases}
\end{aligned}
\]
\end{lemma}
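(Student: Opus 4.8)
The whole proof is a single H\"older splitting performed in a carefully chosen orthonormal frame, after which each of the two factors is dominated by one of the directional terms already present in the definitions \eqref{eq:def:X-norm}--\eqref{eq:def:Y-norm}. Note that $h_+,h_-$ are arbitrary functions here (no Fourier support is assumed), so there is nothing to decompose; the sectors $\theta_\pm$ enter only through the weights in the norms.

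First I would fix the frame. Since $\{\ms{O}_{\theta_-}\hat{e}_j\}_{j=1}^{d}$ is an orthonormal basis of $\R^d$, expanding $\nabla\mLap(c_{\theta_+})$ in it gives $\sum_{j=1}^{d}\big|\langle\nabla\mLap(c_{\theta_+});\ms{O}_{\theta_-}\hat{e}_j\rangle\big|^2=|\nabla\mLap(c_{\theta_+})|^2$, so there is an index $j_0$ with $\big|\langle\nabla\mLap(c_{\theta_+});\ms{O}_{\theta_-}\hat{e}_{j_0}\rangle\big|\geq d^{-1/2}|\nabla\mLap(c_{\theta_+})|$; that is, $j_0\in\ms{J}_{\theta_+}(\ms{O}_{\theta_-})$ in the notation of \eqref{eq:def:dominant-sector-directions}. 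Also $\ms{O}_{\theta_-}$ belongs to the (finite) range $\ms{O}(\Theta)$. This is the only real idea: in the frame $(\ms{O}_{\theta_-},j_0)$ the directional \emph{smoothing} norm of $h_+$ is one of the summands of $\|h_+\|_{X_{\theta_+}}$ and of $\|h_+\|_{Y_{\theta_+}}$, carrying the weight $N_{\theta_+}^{(\sigma-1)/2}$, while a directional \emph{maximal} norm of $h_-$ in the \emph{same} frame is a summand of $\|h_-\|_{X_{\theta_-}}$ and of $\|h_-\|_{Y_{\theta_-}}$.

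Next I would do the H\"older step. Set $H_\pm\eqd h_\pm\circ(\Id_I\otimes\ms{O}_{\theta_-}S_{j_0})$, so $\|h_+h_-\|_{L^2_tL^2_x(I\times\R^d)}=\|H_+H_-\|_{L^2_tL^2_x(I\times\R^d)}$. Applying H\"older pointwise first in $x'$, then in $t$, then in $x_1$, with the exponent triples $(\tfrac{2}{\epsilon_0},\tfrac{2}{1-\epsilon_0},\tfrac{2}{1-\epsilon_0})$ for $h_+$ and $(\tfrac{2}{1-\epsilon_0},\tfrac{2}{\epsilon_0},\tfrac{2}{\epsilon_0})$ for $h_-$ (their coordinatewise reciprocals add up to $(\tfrac12,\tfrac12,\tfrac12)$), and observing that this inner-to-outer nesting produces precisely the mixed norms in the order prescribed by \eqref{eq:def:directional-norm} (so that Minkowski's inequality is never needed to reorder), one gets
\[
\|h_+h_-\|_{L^2_tL^2_x(I\times\R^d)}
\leq
\|h_+\|_{L_{\ms{O}_{\theta_-},j_0}^{(\frac{2}{\epsilon_0},\,\frac{2}{1-\epsilon_0},\,\frac{2}{1-\epsilon_0})}(\R)}
\;
\|h_-\|_{L_{\ms{O}_{\theta_-},j_0}^{(\frac{2}{1-\epsilon_0},\,\frac{2}{\epsilon_0},\,\frac{2}{\epsilon_0})}(\R)}\,.
\]
(In the $Y$--$Y$ case one may equally take the $x'$-exponent $\tfrac{2}{\epsilon_0}$ for $h_+$ and $\tfrac{2}{1-\epsilon_0}$ for $h_-$; in the mixed cases one chooses the $x'$-exponent pair so that the $h_-$-norm used is the maximal term of weight $N_{\theta_-}^{-\sigma/4}$ and the $h_+$-norm used is a smoothing term of weight $N_{\theta_+}^{(\sigma-1)/2}$.)

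Finally I would read off each factor from \eqref{eq:def:X-norm}--\eqref{eq:def:Y-norm}. Since $j_0\in\ms{J}_{\theta_+}(\ms{O}_{\theta_-})$ and $\ms{O}_{\theta_-}\in\ms{O}(\Theta)$, the term $N_{\theta_+}^{(\sigma-1)/2}\|h_+\|_{L_{\ms{O}_{\theta_-},j_0}^{(\frac{2}{\epsilon_0},\frac{2}{1-\epsilon_0},\frac{2}{1-\epsilon_0})}}$ is a summand of both $\|h_+\|_{X_{\theta_+}}$ and $\|h_+\|_{Y_{\theta_+}}$, whence the first factor is $\leq N_{\theta_+}^{-(\sigma-1)/2}\|h_+\|_{X_{\theta_+}}$, resp.\ $\leq N_{\theta_+}^{-(\sigma-1)/2}\|h_+\|_{Y_{\theta_+}}$; the term $N_{\theta_-}^{-(d-1)/2}\|h_-\|_{L_{\ms{O}_{\theta_-},j_0}^{(\frac{2}{1-\epsilon_0},\frac{2}{\epsilon_0},\frac{2}{\epsilon_0})}}$ is a summand of $\|h_-\|_{X_{\theta_-}}$ (giving $\leq N_{\theta_-}^{(d-1)/2}\|h_-\|_{X_{\theta_-}}$), whereas the corresponding maximal term in $\|h_-\|_{Y_{\theta_-}}$ has weight $N_{\theta_-}^{-\sigma/4}$ (giving $\leq N_{\theta_-}^{\sigma/4}\|h_-\|_{Y_{\theta_-}}$). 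Multiplying the two factors gives the $Y$--$Y$, $X$--$Y$, and $Y$--$X$ lines of \eqref{eq:bilinear-bound} exactly (even with the factor $N_{\theta_+}^{\epsilon}$ replaced by $1$), and in the $X$--$X$ case it gives $N_{\theta_+}^{-(\sigma-1)/2}N_{\theta_-}^{(d-1)/2}\|h_+\|_{X_{\theta_+}}\|h_-\|_{X_{\theta_-}}$, which implies the stated bound because $\tfrac{d-1}{2}\leq\tfrac{\sigma}{4}+\tfrac{2d-\sigma-1}{4}=\tfrac{2d-1}{4}$. The argument presents no serious obstacle: everything is routine once the frame $(\ms{O}_{\theta_-},j_0)$ is in place, and the only point requiring care is matching the $\epsilon_0$-perturbed exponents so that their reciprocals sum to $\tfrac12$ in all three variables at once while the mixed-norm ordering remains compatible with \eqref{eq:def:directional-norm}.
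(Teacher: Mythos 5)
Your proposal is correct and follows essentially the same route as the paper: fix the frame $(\ms{O}_{\theta_-},j)$ with $j\in\ms{J}_{\theta_+}(\ms{O}_{\theta_-})$ (the paper takes this nonemptiness for granted; your pigeonhole justification is a welcome addition), apply the nested H\"older inequality with the $\epsilon_0$-perturbed exponent triples, and read each factor off the definitions \eqref{eq:def:X-norm}--\eqref{eq:def:Y-norm}, with the $X$--$X$ line following from $\frac{d-1}{2}\leq\frac{2d-1}{4}$. The only point to tighten is the last paragraph's claim that the $Y$--$X$ line follows ``exactly'' from the first splitting's factors: since the inner-$\frac{2}{\epsilon_0}$ maximal term of $X_{\theta_-}$ carries weight $N_{\theta_-}^{-\frac{d-1}{2}}$ rather than $N_{\theta_-}^{-\frac{\sigma}{4}}$, that case genuinely needs the alternative exponent pairing you describe in the parenthetical, which is precisely the paper's second H\"older application.
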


\begin{proof}
Fix \(j\in\ms{J}_{\theta_{+}}(O_{\theta_{-}})\). By Fubini's Theorem, and Hölder's inequality we obtain that 
\[
\big\|h_{+}  \, h_{-} \big\|_{L^{2}_{t} L^{2}_{x}  (\R \times \R^{d})}
\begin{aligned}[t]
& = \big\|h_{+}  h_{-} \big\|_{L_{O_{\theta_{-}}, j}^{(2, 2, 2)} (\R )} 
\\
& \lesssim \|h_{+}\|_{L_{O_{\theta_{-}}, j}^{(\frac{2}{\epsilon_{0}}, \frac{2}{1 - \epsilon_{0}}, \frac{2}{1-\epsilon_{0}} )} (\R) }  \| h_{-} \|_{L_{O_{\theta_{-}}, j}^{( \frac{2}{1 - \epsilon_{0}}, \frac{2}{\epsilon_{0}}, \frac{2}{\epsilon_{0}})} (\R) } \,.
\end{aligned}
\]
The first three bounds in \eqref{eq:bilinear-bound} directly follow
from the definitions \eqref{eq:def:X-norm} and \eqref{eq:def:Y-norm} of the norms \(X_{\theta_{+}}\) and \(Y_{\theta_{-}}\). To prove the last estimate in \eqref{eq:bilinear-bound}, we use Hölder's inequality to obtain the bound
\[
\big\|h_{+} h_{-} \big\|_{_{L_{O_{\theta_{-}}, j}^{(2, 2, 2)} (\R )}}
\lesssim
\|h_{+}\|_{L_{O_{\theta_{-}}, j}^{(\frac{2}{\epsilon_{0}}, \frac{2}{1 - \epsilon_{0}}, \frac{2}{\epsilon_{0}} )} (\R) }
\|h_{-} \|_{L_{O_{\theta_{-}}, j}^{( \frac{2}{1 - \epsilon_{0}}, \frac{2}{\epsilon_{0}}, \frac{2}{1-\epsilon_{0}})} (\R) }\,,
\]
and the assertion follows from the definitions \eqref{eq:def:X-norm} and \eqref{eq:def:Y-norm} of \(X_{\theta_{+}}\) and \(Y_{\theta_{-}}\).
\end{proof}

Next, using the bilinear estimates in Lemma \ref{lem:bilinear-2-2-bound}, we obtain quadrilinear bounds.

\begin{lemma}\label{lem:fourlinear} Let
\[
\frac{1}{3}-\frac{\sigma}{4}\leq S_1 \leq S_2 \leq S_3<\xs_c=\frac{d-\sigma}{2}<\xs<\xs_c+\frac{\sigma-1}{2}\,,
\]
and fix any $\sigma^\ast \in \R$, any $0 < \epsilon \lesssim 1$, and any $0 < \epsilon_0 \lesssim_{\epsilon, S_j, \xs} 1$.

For any function $h \in \{v_*, z_j, v_j\}$, $j \in\{ 1, 2, 3\}$, and $\theta \in \Theta $ we define 
quantities
\[\eqnum\label{eq:srnta}
Z_{\theta}( h) \eqd
\begin{cases}
\|v_*\|_{X_{\theta}(\R)} & \text{if } h = v_* \,,
\\
\|v_j\|_{X_{\theta}(\R)} & \text{if } h = v_j \,,
\\
\|z_j\|_{Y_{\theta}(\R)} & \text{if } h = z_j \,,
\end{cases}
\qquad 
\alpha(h) \eqd
\begin{cases}
-\sigma_{*} & \text{if } h = v_* \,,
\\
\xs & \text{if } h = v_j \,,
\\
S_j & \text{if } h = z_j \,.
\end{cases}    
\]
We denote $\sigma(q_1, \cdots, q_4)$ any permutation of a quadruple $(q_1, \cdots, q_4)$. 

For any 4 sectors \((\theta_{j}\in\Theta)_{j\in\{1, 2, 3, 4\}}\) we have that
\[\eqnum\label{eq:4-linear-orthogonality}
\int_{\R\times \R^{d}}\LP_{\theta_{1}}h_{1}\LP_{\theta_{2}}h_{2}\LP_{\theta_{3}}h_{3}\LP_{\theta_{4}}h_{4}\dd t\dd x
= 0
\]
unless \(N_{\theta_{j}}\lesssim\sum_{j'\neq j} N_{\theta_{j'}}\) for all \(j\in\{1, 2, 3, 4\}\). Furthermore, it holds that 
\[\eqnum\label{eq:4linear-form}
\begin{aligned}
\Big|
\int_{\R\times \R^{d}}\LP_{\theta_{1}}h_{1}\LP_{\theta_{2}}h_{2}\LP_{\theta_{3}}h_{3}\LP_{\theta_{4}}h_{4}\dd t\dd x
\Big| \hspace{-7em} & 
\\ & \lesssim (N_{\theta_1} N_{\theta_2}N_{\theta_3}N_{\theta_4})^{\epsilon}
\prod_{j = 1}^4 N_{\theta_j}^{\alpha(h_j)} \|\LP_{N_{\theta_j}} h_j\|_{Z_{\theta_j}(h_j)} \,,
\end{aligned}
\]
assuming that the functions $(h_j)_{j\in\{1,\ldots,4\}}$ and $\sigma_*$ are as in one of the following cases.

\noindent 
Case $zzz$:   
$(h_1, \cdots, h_4) = 
\sigma (v_*, z_1, z_2, z_3)$ and
\[\eqnum\label{eq:YYY}
    \sigma_{*} \leq S_{1}+ \min\big(S_{2}, \sfrac{\sigma}{4}\big)+\min\big(S_{3}, \sfrac{\sigma}{4}\big) + \frac{\sigma-2}{2}   \,.
\]
Case $zzv$: 
$(h_1, \cdots, h_4) = 
\sigma (v_*, z_1, z_2, v_3)$ and
\[\eqnum\label{eq:YYX}
    \sigma_{*} \leq S_{1}+\min\big(  S_2,  \sfrac{\sigma}{4} \big)+\min\big(\xs,\sfrac{\sigma}{4} \big)  + \frac{\sigma-2}{2}
\]
Case $zvv$:
 $(h_1, \cdots, h_4) = 
\sigma (v_*, z_1, v_2, v_3)$ and
\[\eqnum\label{eq:YXX}
\sigma_{*} \leq \min\big(2\xs -\xs_{c} + \frac{\sigma - 2}{4} + \min\{\sfrac{\sigma}{4}, S_1 \},
S_1+\sigma-1\big) \,,
\]
Case $vvv$:  $(h_1, \cdots, h_4) = 
\sigma (v_*, v_1, v_2, v_3)$  and
\[\eqnum\label{eq:XXX}
    \sigma_{*} \leq  \xs + 2(\xs - \xs_c) \,.
\]
\end{lemma}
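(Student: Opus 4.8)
I would prove \Cref{lem:fourlinear} by reducing the quadrilinear form to the bilinear estimate of \Cref{lem:bilinear-2-2-bound} and then verifying a short list of scalar inequalities, one per case.

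\emph{Reductions.} The vanishing statement \eqref{eq:4-linear-orthogonality} follows from Plancherel: for fixed $t$ the spatial integral of $\prod_j\LP_{\theta_j}h_j$ equals the convolution of the four spatial Fourier transforms at the origin, so it vanishes unless $0\in\theta_1+\theta_2+\theta_3+\theta_4$, which forces $N_{\theta_j}\lesssim\sum_{j'\neq j}N_{\theta_{j'}}$ for every $j$. Since both sides of \eqref{eq:4linear-form} and all hypotheses \eqref{eq:YYY}--\eqref{eq:XXX} are invariant under permuting the four indices, I may assume $N_{\theta_1}\geq N_{\theta_2}\geq N_{\theta_3}\geq N_{\theta_4}$; the orthogonality constraint then yields $N_{\theta_1}\approx N_{\theta_2}$. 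By homogeneity of \eqref{eq:4linear-form} I may further normalize $N_{\theta_j}^{\alpha(h_j)}\|\LP_{\theta_j}h_j\|_{Z_{\theta_j}(h_j)}=1$ for each $j$.

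\emph{Bilinear splitting.} Next I would estimate the form by Cauchy--Schwarz, pairing one of the two top frequencies with one of the two bottom frequencies in the two possible ways:
\[
\Big|\int_{\R\times\R^d}\prod_{j=1}^4\LP_{\theta_j}h_j\,\dd t\,\dd x\Big|\leq\min\Big(\|\LP_{\theta_1}h_1\LP_{\theta_4}h_4\|_{L^2_{t,x}}\|\LP_{\theta_2}h_2\LP_{\theta_3}h_3\|_{L^2_{t,x}},\ \|\LP_{\theta_1}h_1\LP_{\theta_3}h_3\|_{L^2_{t,x}}\|\LP_{\theta_2}h_2\LP_{\theta_4}h_4\|_{L^2_{t,x}}\Big),
\]
and apply \Cref{lem:bilinear-2-2-bound} to each bilinear factor. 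Write $\beta(h_l,h_{l'})$ for the extra exponent furnished by that lemma: it is nonnegative, equals $\tfrac{d-1}{2}-\tfrac{\sigma}{4}$ when $h_l$ and $h_{l'}$ are both of $v$-type (controlled in $X_\theta$ norms), and equals $0$ as soon as one of the two is a $z_j$ (controlled in $Y_\theta$). Then, for $l<l'$ and after the normalization above, the bilinear bound reads
\[
\|\LP_{\theta_l}h_l\,\LP_{\theta_{l'}}h_{l'}\|_{L^2_{t,x}}\lesssim N_{\theta_l}^{\epsilon}\,N_{\theta_l}^{-\frac{\sigma-1}{2}-\alpha(h_l)}\,N_{\theta_{l'}}^{\frac{\sigma}{4}+\beta(h_l,h_{l'})-\alpha(h_{l'})}.
\]
Absorbing the $\epsilon$-powers and using $N_{\theta_1}\approx N_{\theta_2}$, the desired bound \eqref{eq:4linear-form} collapses to a monomial inequality in the two variables $N_{\theta_3}\geq N_{\theta_4}$.

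\emph{Scalar inequalities and case analysis.} First I would record that $\alpha(h_1)+\alpha(h_2)+\sigma-1\geq0$: the only negative value of $\alpha$ is $-\sigma_*$, attained by $v_*$, and in each of the four cases the hypotheses \eqref{eq:YYY}--\eqref{eq:XXX} together with $S_1\leq\xs$, $\xs\leq\xs_{c}+\tfrac{\sigma-1}{2}$ and $\sigma\geq2$ give $\sigma_*\leq\xs+\sigma-1$, which suffices. Consequently the power of $N_{\theta_1}\approx N_{\theta_2}$ is nonnegative, so (using $N_{\theta_1}\geq N_{\theta_3}$) it is enough to prove the reduced inequality at $N_{\theta_1}=N_{\theta_3}$; taking the better of the two pairings reduces this to one of two explicit inequalities among the $\alpha$'s and $\beta$'s. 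I would then check these case by case. In $zzz$ all $\beta$'s vanish and, noting that the worst allocation of the low frequencies puts the largest exponent $S_3$ on $h_4$, the inequality is precisely \eqref{eq:YYY}. Cases $zzv$ and $zvv$ are handled the same way, using $\alpha(h_4)\leq\xs$, that $\beta(h_1,h_4)+\beta(h_2,h_3)=\beta(h_1,h_3)+\beta(h_2,h_4)=\tfrac{d-1}{2}-\tfrac{\sigma}{4}$, and splitting according to whether the lowest-frequency function is the distinguished $z_1$; these reduce to \eqref{eq:YYX} and \eqref{eq:YXX} respectively. In $vvv$ everything reduces, via $\xs\leq\xs_{c}+\tfrac{\sigma-1}{2}$, to $\sigma_*\leq3\xs-2\xs_{c}$, i.e. \eqref{eq:XXX}.

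\emph{Main obstacle.} All the analysis sits in \Cref{lem:bilinear-2-2-bound}; the remaining difficulty is purely combinatorial bookkeeping. One must choose, in the Cauchy--Schwarz step, the pairing of top and bottom frequencies so that the $v$-type/$z$-type structure produces as much of the $\beta$-gain as possible --- this is exactly why the bound above involves a $\min$ and why two alternative scalar inequalities must be checked. The delicate case is $zvv$: there the $\min$ in the hypothesis \eqref{eq:YXX} encodes the dichotomy between the subcase in which $h_4=z_1$ carries the possibly small exponent $S_1$ and the subcase in which a $v$-factor is at the bottom frequency, and the standing assumption $S_1\geq\tfrac13-\tfrac{\sigma}{4}$ is what keeps the relevant exponent combinations nonnegative in these borderline configurations.
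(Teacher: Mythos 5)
Your overall strategy is exactly the paper's: Plancherel for the orthogonality statement, ordering $N_{\theta_1}\geq\dots\geq N_{\theta_4}$ with $N_{\theta_1}\approx N_{\theta_2}$, Cauchy--Schwarz over the two high--low pairings, \Cref{lem:bilinear-2-2-bound} with the extra exponent $\beta$, normalization by homogeneity, and reduction to scalar exponent inequalities checked case by case. Two points in your bookkeeping, however, do not hold as stated. First, for the preliminary claim $\alpha(h_1)+\alpha(h_2)+\sigma-1\geq0$ you assert that $\sigma_*\leq\xs+\sigma-1$ suffices; it does not when the other high-frequency factor is $z_1$, since then $\alpha(h_2)$ may equal $S_1<\xs$ and one needs $\sigma_*\leq S_1+\sigma-1$. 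That stronger bound does follow from \eqref{eq:YYY}--\eqref{eq:YXX} (and in case vvv one has $\alpha(h_2)=\xs$, so \eqref{eq:XXX} suffices), but it must be checked case by case, as the paper does, together with $2S_1+\sigma-1\geq0$ (from $S_1\geq\frac13-\frac{\sigma}{4}$) when neither top factor is $v_*$.

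Second, and more substantively, your case verification assumes $\alpha(h_4)\leq S_3$ (case zzz) or $\alpha(h_4)\leq\xs$ (cases zzv, zvv, vvv). This fails when $h_4=v_*$: the lemma only imposes upper bounds on $\sigma_*\in\R$, so $-\sigma_*$ can be arbitrarily large, exceeding both $S_3$ and $\xs$. The paper therefore splits on $\alpha(h_4)\leq\max\{S_3,-\sigma_*\}$ (resp.\ $\max\{\xs,-\sigma_*\}$), and in the subcases where $-\sigma_*$ dominates the $-\sigma_*$ terms cancel between the two sides of \eqref{eq:nlcn1}--\eqref{eq:nlcn2}, reducing the inequality to conditions such as $1\leq S_1+S_2+S_3+\frac{3\sigma}{4}$; this is precisely where the standing hypothesis $S_1\geq\frac13-\frac{\sigma}{4}$ enters, and it does so in all four cases, not only in zvv and not through the $h_4=z_1$ versus $v$-factor dichotomy you describe (that dichotomy only governs the $\beta$ values in zvv). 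As written, your sketch does not cover these configurations; you need to add the $-\sigma_*$-dominant subcases to each of zzz, zzv, zvv, and vvv to obtain the lemma for all admissible $\sigma_*$.
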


\begin{proof} 
To simplify the notation we denote $N_j = N_{\theta_j}$ for each $j = 1, \cdots, 4$. 
Note that the expressions in \eqref{eq:4-linear-orthogonality} and \eqref{eq:4linear-form}, as well as the subsequent conditions, are the same if we exchange \(P_{\theta_{j}}h_{j}\) with \(P_{\theta_{j'}}h_{j'}\) \(j, j'\in\{1, 2, 3, 4\}\), and therefore we assume without loss of generality, that \(N_{1}\geq N_{2}\geq N_{3}\geq N_{4}\).

The assertion \eqref{eq:4-linear-orthogonality}
follows from Plancherel identity exactly as in Lemma 4.3 of \cite{CFU2}. 

We henceforth suppose that \(N_{1}\approx N_{2}\) and we proceed to prove \eqref{eq:4linear-form}.
To estimate the left-hand side of \eqref{eq:4linear-form} we apply the Cauchy-Schwarz inequality by pairing one function with the high frequency ($N_1$ or $N_2$) with a function with a lower frequency ($N_3$ or $N_4$). Hence, 
\[
\begin{aligned}
&  \Big|\int_{\R\times \R^{d}}\LP_{\theta_{1}}h_{1}\LP_{\theta_{2}}h_{2}\LP_{\theta_{3}}h_{3}\LP_{\theta_{4}}h_{4}\dd t\dd x\Big| \\
 & \hspace{5em} \leq  \min\Big(
\begin{aligned}[t]
& \|\LP_{\theta_{1}} h_{1}\LP_{\theta_{4}}h_{4} \|_{L^{2}_{t} L^{2}_{x} ( \R \times \R^{d} )}
\|\LP_{\theta_{2}} h_{2}\LP_{\theta_{3}}h_{3}\|_{L^{2}_{t} L^{2}_{x} (\R  \times \R^{d}  )} ,
\\
& \qquad\|\LP_{\theta_{1}} h_{1}\LP_{\theta_{3}}h_{3} \|_{L^{2}_{t} L^{2}_{x} ( \R \times \R^{d} )}
\|\LP_{\theta_{2}} h_{2}\LP_{\theta_{3}}h_{4}\|_{L^{2}_{t} L^{2}_{x} (\R  \times \R^{d}  )} \Big).
\end{aligned}
\end{aligned}
\]
By definitions in \eqref{eq:srnta} and \Cref{lem:bilinear-2-2-bound} for any \(l<l'\in\{1, 2, 3, 4\}\) it holds that 
\[
\begin{aligned}
\|\LP_{\theta_{l}} h_{l}\LP_{\theta_{l'}}h_{l'} \|_{L^{2}_{t} L^{2}_{x} ( \R \times \R^{d} )} &  \lesssim  N_l^{\epsilon -\sfrac{(\sigma - 1)}{2}-\alpha(h_{l})} N_{l'}^{\sfrac{\sigma}{4}  - \alpha(h_{l'})+\beta(h_{l}, h_{l'})} 
\\
& \hspace{5em}\times \prod_{j \in \{l, l'\}} N_j^{\alpha(h_j)}Z_{N_{j}}(\LP_{\theta_{j}} h_{j}) \,, \end{aligned}
\]
where
\[\eqnum
\beta(h_{l}, h_{l'}) \eqd
\begin{cases}
\frac{d-1}{2} - \frac{\sigma}{4} & \text{if } h_l, h_{l'} \in \{v_*, v_1, v_{2}, v_3\}\,, \\
0 & \text{otherwise}.
\end{cases}
\]
By homogeneity of the required bound \eqref{eq:4linear-form} we can assume, without loss of generality, that  \(N_j^{\alpha(h_j)}Z_{N_{j}}(\LP_{N_{j}} h_{j}) = 1\), \(j\in\{1, 2, 3, 4\}\). Thus, recalling that \(N_{1}\approx N_{2}\),  \eqref{eq:4linear-form} reduces to proving
\begin{multline}\eqnum\label{eq:sieww}
N_{3}^{\sfrac{\sigma}{4}-\alpha(h_{3})}N_{4}^{\sfrac{\sigma}{4}-\alpha(h_{4})}
\min\Big( N_{3}^{\beta(h_{2}, h_{3})}N_{4}^{\beta(h_{1}, h_{4})}, N_{3}^{\beta(h_{1}, h_{3})}N_{4}^{\beta(h_{2}, h_{4})}\Big)  
\\
\lesssim N_{1}^{\sigma - 1+\alpha(h_{1})+\alpha(h_{2})} \,.
\end{multline}  
We first claim that \(\alpha(h_{1})+\alpha(h_{2})+ \sigma - 1 \geq0\). Indeed, if $\{\alpha(h_{1}),\alpha(h_{2})\}\subset\{S_{1},S_{2},S_{3},\xs\}$, then \(\alpha(h_{1})+\alpha(h_{2})+ \sigma - 1 \geq 2S_{1}+ \sigma - 1 \geq0\), because $S_{1}\geq\frac{1}{3}-\frac{\sigma}{4}\geq \frac{1-\sigma}{2}$ by hypothesis. If $\alpha(h_{1})=-\sigma_*$, then $\alpha(h_2)\geq S_{1}$, hence \(\alpha(h_{1})+\alpha(h_{2})+ \sigma - 1 \geq -\sigma_* +S_1 + \sigma - 1 \geq 0\), which holds in the zzz, zzv and zvv cases by \eqref{eq:YYY}, \eqref{eq:YYX} and \eqref{eq:YXX}. In the vvv case we have the stronger condition $\alpha(h_2)\geq \xs$, hence \(\alpha(h_{1})+\alpha(h_{2})+ \sigma - 1 \geq -\sigma_* +\xs + \sigma - 1 \geq 0\), which holds by \eqref{eq:XXX}. The analysis of the case $\alpha(h_2)=-\sigma_*$ is analogous.

Thus, since \(N_{1}\geq N_{3}\) it is sufficient to show \eqref{eq:sieww} for \(N_{1}=N_{3}\), that is,  to show
\[\eqnum\label{eq:ctble}
\begin{aligned}  
\hspace{10em}& \hspace{-10em}\min\Big( N_{3}^{\beta(h_{2}, h_{3})}N_{4}^{\beta(h_{1}, h_{4})}, N_{3}^{\beta(h_{1}, h_{3})}N_{4}^{\beta(h_{2}, h_{4})}\Big)
\\
& \lesssim  N_{3}^{\sfrac{3\sigma}{4} -1 +\alpha(h_{1})+\alpha(h_{2})+\alpha(h_{3})}N_{4}^{-\sfrac{\sigma}{4}+\alpha(h_{4})}
\end{aligned}
\]
After standard algebraic manipulations, \eqref{eq:ctble} follows if we show that for any $N_3 \geq N_4$ either 
\[
N_4^{\beta(h_{1}, h_{4}) +\sfrac{\sigma}{4}-\alpha(h_{4}) } \lesssim N_3^{\sfrac{3\sigma}{4}-1+\alpha(h_{1})+\alpha(h_{2})+\alpha(h_{3}) - \beta(h_{2}, h_{3})}
\] 
or 
\[
N_4^{\beta(h_{2}, h_{4}) +\sfrac{\sigma}{4}-\alpha(h_{4}) } \lesssim N_3^{\sfrac{3\sigma}{4}-1+\alpha(h_{1})+\alpha(h_{2})+\alpha(h_{3}) - \beta(h_{1}, h_{3})} \,.
\]
Since $N_3 \geq N_4$ are arbitrary, it suffices to 
show that either 
\[
   \max\{0, \beta(h_{1}, h_{4}) +\sfrac{\sigma}{4}-\alpha(h_{4}) \}  \leq \sfrac{3\sigma}{4} - 1+\alpha(h_{1})+\alpha(h_{2})+\alpha(h_{3}) - \beta(h_{2}, h_{3})
\] 
or 
\[
  \max\{0, \beta(h_{2}, h_{4}) +\sfrac{\sigma}{4}-\alpha(h_{4}) \}  \leq \sfrac{3\sigma}{4} - 1 +\alpha(h_{1})+\alpha(h_{2})+\alpha(h_{3}) - \beta(h_{1}, h_{3})
\]
holds. In more symmetric form, it suffices to show 
\[\eqnum\label{eq:nlcn1}
\begin{multlined}
\beta(h_{2}, h_{3}) + \beta(h_{1}, h_{4}) +  \max\{\alpha(h_{4}) - \sfrac{\sigma}{4}   - \beta(h_{1}, h_{4}), 0 \}  \\
\leq \alpha(h_{1})+\alpha(h_{2})+\alpha(h_{3}) 
+\alpha(h_{4}) + \sfrac{\sigma}{2} - 1
\end{multlined}
\]
or 
\[\eqnum\label{eq:nlcn2}
\begin{multlined}
  \beta(h_{1}, h_{3}) + \beta(h_{2}, h_{4}) +  \max\{\alpha(h_{4})  -\sfrac{\sigma}{4} - \beta(h_{2}, h_{4}), 0\}  \\
  \leq \alpha(h_{1})+\alpha(h_{2})+\alpha(h_{3}) 
    +\alpha(h_{4}) + \sfrac{\sigma}{2} - 1
\end{multlined}
\]
Next, we discuss each case separately.

\noindent
\textbf{Case $zzz$}. Then, 
\[ 
\alpha(h_{1})+\alpha(h_{2})+\alpha(h_{3}) 
    +\alpha(h_{4}) \geq -\sigma_* + S_1 + S_2 + S_3    
\]
and $\beta(h_j, h_k) = 0$ for any $j \neq k$, and therefore \eqref{eq:nlcn1} is equivalent to 
\begin{equation}\label{eqn:zzzspec}
   \max\{\alpha(h_{4}) -  \sfrac{\sigma}{4}, 0 \}  \leq 
   -\sigma_* + S_1 + S_2 + S_3 + \sfrac{\sigma}{2} - 1 \,. 
\end{equation}

Notice that $\alpha(h_{4})\leq\max\{S_3,-\sigma^{\ast}\}$. If $\max\{S_3,-\sigma^{\ast}\}=S_3$, \eqref{eqn:zzzspec} holds if 
\[ 
 \sigma_* \leq  S_1 + S_2 + S_3 - 
 \max\{S_3 -  \sfrac{\sigma}{4}, 0 \} + \sfrac{\sigma}{2} - 1 = 
 S_1 + S_2  + \min\{S_3, \sfrac{\sigma}{4}\} + \sfrac{\sigma}{2} - 1 \,,
\]
which follows from \eqref{eq:YYY}. If $\max\{S_3,-\sigma^{\ast}\}=-\sigma^{\ast}$, then \eqref{eqn:zzzspec} holds provided
   \[ 
   \max\{-\sigma_{\ast} -  \sfrac{\sigma}{4}, 0 \}  \leq 
   -\sigma_* + S_1 + S_2 + S_3 + \sfrac{\sigma}{2} - 1 \,.
\]
If $\max\{-\sigma_{\ast} -  \sfrac{\sigma}{4}, 0 \}=0$, then
   \[ 
   \sigma_* \leq  S_1 + S_2 + S_3 + \sfrac{\sigma}{2} - 1 \,,
\]
which follows from \eqref{eq:YYY}. If $\max\{-\sigma_{\ast} -  \sfrac{\sigma}{4}, 0 \}=-\sigma_{\ast} -  \sfrac{\sigma}{4}$, then
   \[ 
   1 \leq  S_1 + S_2 + S_3 + \sfrac{3\sigma}{4} \,,
\]
which follows from $S_1\geq \frac{1}{3}-\frac{\sigma}{4}$.

\noindent
\textbf{Case $zzv$}. Then,
\[ 
\alpha(h_{1})+\alpha(h_{2})+\alpha(h_{3}) 
    +\alpha(h_{4}) \geq -\sigma_* + S_1 + S_2 + \xs    
\]
and either $\beta(h_1, h_4) = \beta(h_2, h_3) =  0$ or 
$\beta(h_1, h_3) = \beta(h_2, h_4) = 0$. Suppose the former holds true, the latter case 
follows analogously by proving \eqref{eq:nlcn2} instead of \eqref{eq:nlcn1}. Then, \eqref{eq:nlcn1} 
is equivalent to 
\begin{equation}\label{eqn:zzvspec}
  \max\{\alpha(h_{4}) -  \sfrac{\sigma}{4}, 0\}  \leq -\sigma_* + S_1 + S_2 + \xs + \sfrac{\sigma}{2} - 1 \,.
\end{equation}

Observe that $\alpha(h_{4})\leq\max\{\mathfrak{s},-\sigma^{\ast}\}$. If $\max\{\mathfrak{s},-\sigma^{\ast}\}=\mathfrak{s}$, then \eqref{eqn:zzvspec} follows if we show that 
\begin{equation}
  \max\{\xs -  \sfrac{\sigma}{4}, 0 \}  \leq -\sigma_* + S_1 + S_2 + \xs + \sfrac{\sigma}{2} - 1 \,,
\end{equation}
which is equivalent to 
\[ 
  \sigma_*   \leq  S_1 + S_2 + 
  \min\{\sfrac{\sigma}{4}, \xs\} + \sfrac{\sigma}{2} - 1\,,
\] 
and the assertion follows from \eqref{eq:YYX}. If $\max\{\mathfrak{s},-\sigma_{\ast}\}=-\sigma_{\ast}$, it's enough to check that
\[ 
  \max\{-\sigma_{\ast} -  \sfrac{\sigma}{4}, 0\}  \leq -\sigma_* + S_1 + S_2 + \xs + \sfrac{\sigma}{2} - 1 \,.
\]
If $\max\{-\sigma_{\ast} -  \sfrac{\sigma}{4}, 0 \}=0$, then
\[ 
  \sigma_* \leq S_1 + S_2 + \xs + \sfrac{\sigma}{2} - 1 \,,
\]
which follows from \eqref{eq:YYX}. If $\max\{-\sigma_{\ast} -  \sfrac{\sigma}{4}, 0 \}=-\sigma_{\ast} -  \sfrac{\sigma}{4}$, then
 \[ 
   1 \leq  S_1 + S_2 + \xs + \sfrac{3\sigma}{4} \,,
\]
which follows from $S_1\geq \frac{1}{3}-\frac{\sigma}{4}$.

\noindent
\textbf{Case $zvv$}. Assuming without loss of generality that the $z$ term is $z_1$, 
\[ 
\alpha(h_{1})+\alpha(h_{2})+\alpha(h_{3}) 
    +\alpha(h_{4}) = -\sigma_* + S_1 + \xs + \xs 
\]
and 
\[\eqnum
 \beta(h_{1}, h_{4}) + \beta(h_{2}, h_{3}) =  \beta(h_{1}, h_{3}) + \beta(h_{2}, h_{4}) = \frac{d - 1}{2} - \frac{\sigma}{4} \,.
\]
If $\beta(h_{1}, h_{4}) = \beta(h_{2}, h_{4}) = 0$, then $h_4 = z_1$, and \eqref{eq:nlcn1} is equivalent to 
\begin{equation}
\label{eqn:zvvspec}
  \frac{d-1}{2} - \frac{\sigma}{4} + \max\{S_1 -  \sfrac{\sigma}{4}, 0 \}  \leq -\sigma_* + S_1 + 2\xs  + \sfrac{\sigma}{2} - 1 \,.  
\end{equation}
Standard algebraic manipulations and $\xs_{c} = \frac{d-\sigma}{2}$
yield 
\[\eqnum
 \sigma_* \leq  2\xs -\xs_{c} + \frac{\sigma - 2}{4} + \min\{\sfrac{\sigma}{4}, S_1 \}    
\]
and \eqref{eqn:zvvspec} follows from \eqref{eq:YXX}. 

Next, assume without loss of generality that 
$\beta(h_{1}, h_{4}) = \frac{d-1}{2}- \frac{\sigma}{4}$, otherwise  $\beta(h_{2}, h_{4}) = \frac{d-1}{2}- \frac{\sigma}{4}$ and we prove \eqref{eq:nlcn2} instead of \eqref{eq:nlcn1}. Again, $\alpha(h_{4})\leq\max\{\mathfrak{s},-\sigma^{\ast}\}$. If $\max\{\mathfrak{s},-\sigma^{\ast}\}=\mathfrak{s}$, \eqref{eq:nlcn1} holds if we prove 
\[\eqnum\label{eq:agzvvs}
  \frac{d-1}{2} - \frac{\sigma}{4} + \max\Big\{\xs - \frac{d-1}{2}, 0 \Big\}  \leq -\sigma_* + S_1 + 2\xs + \sfrac{\sigma}{2} - 1\,.
\]
Since $\xs \leq \xs_{c} + \frac{\sigma - 1}{2} = \frac{d - 1}{2}$, then \eqref{eq:agzvvs} and $\xs_{c} = \frac{d-\sigma}{2}$ is equivalent to
\[\eqnum\label{eq:agzvvs2}
  \sigma_*  \leq   S_1 + 2\xs - \xs_{c} + \frac{\sigma - 2}{4} \,,
\]
which
follows from  \eqref{eq:YXX}.  If $\max\{\mathfrak{s},-\sigma_{\ast}\}=-\sigma_{\ast}$, it's enough to verify that
\[
  \frac{d-1}{2} - \frac{\sigma}{4} + \max\Big\{-\sigma_{\ast} - \frac{d-1}{2}, 0 \Big\}  \leq -\sigma_* + S_1 + 2\xs + \frac{\sigma}{2} - 1\,.
\]
If $\max\Big\{-\sigma_{\ast} - \frac{d-1}{2}, 0 \Big\}=0$, then 
\[
  \sigma_*   \leq 2\xs - \xs_{c} + \frac{\sigma}{4} - \frac{1}{2} + S_{1}\,,
\]
which follows from \eqref{eq:YXX}. If $\max\Big\{-\sigma_{\ast} - \frac{d-1}{2}, 0 \Big\}=-\sigma_{\ast} - \frac{d-1}{2}$, then
$$1\leq S_{1} + 2\xs +\frac{3\sigma}{4},$$
which follows from $S_1\geq \frac{1}{3}-\frac{\sigma}{4}$.

\noindent
\textbf{Case $vvv$}. Then, 
\[ 
\alpha(h_{1})+\alpha(h_{2})+\alpha(h_{3}) 
    +\alpha(h_{4}) = -\sigma_* + 3\xs    
\]
and $\beta(h_j, h_k) = \frac{d-1}{2} - \frac{\sigma}{4}$ for each $j \neq k$. Then, \eqref{eq:nlcn1} follows if we show
\begin{equation}\label{eqn:vvvspec}
d-1 - \frac{\sigma}{2} + \max\{\alpha(h_{4}) -  \frac{d-1}{2}, 0 \}  \leq -\sigma_* + 3\xs + \frac{\sigma}{2} - 1 \,.
\end{equation}
Again, $\alpha(h_{4})\leq\max\{\mathfrak{s},-\sigma^{\ast}\}$. If $\max\{\mathfrak{s},-\sigma^{\ast}\}=\mathfrak{s}$, as in the case $zvv$ we have $\xs \leq  \frac{d-1}{2}$, and we obtain an equivalent expression
\[ 
\sigma_*  \leq  3\xs - (d-1) + \sigma - 1 = 3\xs - 2\xs_{c}
  \,,
\] 
and \eqref{eqn:vvvspec} follows from \eqref{eq:XXX}. If $\max\{\mathfrak{s},-\sigma_{\ast}\}=-\sigma_{\ast}$, it's enough to check that
\[ 
d-1 - \frac{\sigma}{2} + \max\{-\sigma_{\ast} -  \frac{d-1}{2}, 0 \}  \leq -\sigma_* + 3\xs + \frac{\sigma}{2} - 1 \,.
\] 
If $\max\{-\sigma_{\ast} -  \frac{d-1}{2}, 0 \}=0$, this becomes
\[ 
\sigma_* \leq 3\xs -2\xs_{c} \,,
\] 
which follows from \eqref{eq:XXX}. If $\max\{-\sigma_{\ast} -  \frac{d-1}{2}, 0 \}=-\sigma_{\ast} -  \frac{d-1}{2}$, we must have
\[ 
\frac{d-\sigma}{2} + \frac{1}{2} -\frac{\sigma}{2}  \leq 3\xs \,.
\] 
Given that $\frac{d-\sigma}{2}\leq\xs$, this holds if $\xs\geq \frac{1}{4}-\frac{\sigma}{4}$, which holds if $\xs_{c}\geq \frac{1}{4}-\frac{\sigma}{4}$, which is equivalent to $d\geq\frac{1}{2}+\frac{\sigma}{2}$. This is true because $\sigma\leq d$. 

\end{proof}

\begin{proposition}\label{prop:trilinear-estimate} Fix any 
\[\eqnum\label{eq:trilinear-exponent-assumptions}
    \frac{1}{3}-\frac{\sigma}{4} < S_1 \leq S_2 \leq S_3 < \xs_c < \xs 
    \,,
\]
and choose any \( 0 < \epsilon \lesssim_{S_{j}, \xs}\!1\) and
\(0 < \epsilon_{0} \lesssim_{\epsilon, S_j, \xs} \!1\).  Then for any
$z_j$ and \(v_j\), $j\in\{1, 2, 3\}$, the following estimates hold:
\[\eqnum\label{eq:YYYg}
\big\|z_{1}z_{2}z_{3}\big\|_{X^{*, \sigma_{*}}(\R)}\lesssim\|z_{1}\|_{Y^{S_{1}+\epsilon  }(\R)}\|z_{2}\|_{Y^{S_{2}+\epsilon}(\R)}\|z_{3}\|_{Y^{S_{3}+\epsilon}(\R)}
\quad \sigma_{*} \textrm{ as in } \eqref{eq:YYY} \,.
\]
\[\eqnum\label{eq:YYXg}
 \big\|z_{1} z_{2}v_{3}\big\|_{X^{*, \sigma_{*}}(\R)} \lesssim\|z_{1}\|_{Y^{S_{1}+\epsilon  }(\R)}\|z_{2}\|_{Y^{S_{2}+\epsilon}(\R)}\|v_{3}\|_{X^{\xs+\epsilon}(\R)}
 \quad  \sigma_{*} \text{ as in } \eqref{eq:YYX} \,.
\]
\[\eqnum\label{eq:YXXg}
 \big\|z_{1}v_{2}v_{3}\big\|_{X^{*, \sigma_{*}}(\R)} \lesssim
\|z_{1}\|_{Y^{S_1+\epsilon}(\R)}\|v_{2}\|_{X^{\xs+\epsilon}(\R)}\|v_{3}\|_{X^{\xs+\epsilon}(\R)}
 \quad  \sigma_{*}\text{ as in } \eqref{eq:YXX} \,.
\]
\[\eqnum\label{eq:XXXg}
 \big\|v_{1}v_{2}v_{3}\big\|_{X^{*, \sigma_{*}}(\R)} \lesssim \|v_{1}\|_{X^{\xs+\epsilon}(\R)}\|v_{2}\|_{X^{\xs+\epsilon}(\R)}\|v_{3}\|_{X^{\xs+\epsilon}(\R)}
 \quad  \sigma_{*}
 \text{ as in } \eqref{eq:XXX} \,.
\]
The implicit constants are allowed to depend on \(\epsilon\), \(\epsilon_{0}\), $\xs$, and $S_{j}$. 
\end{proposition}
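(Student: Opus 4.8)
The plan is to deduce these multilinear estimates from the quadrilinear bound \eqref{eq:4linear-form} in \Cref{lem:fourlinear} by duality, exactly in the spirit of how trilinear estimates in $X^{*,\sigma}$-type spaces are obtained from $4$-linear forms. First I would recall that by the definition \eqref{eq:def:X-norm-dual} of the norm $X^{*,\sigma_*}$, proving e.g. \eqref{eq:YYYg} amounts to bounding
\[
\Big|\int_{\R\times\R^d} v_* \, z_1 z_2 z_3 \,\dd t\dd x\Big|
\lesssim \|v_*\|_{X^{-\sigma_*}(\R)}\,\|z_1\|_{Y^{S_1+\epsilon}(\R)}\,\|z_2\|_{Y^{S_2+\epsilon}(\R)}\,\|z_3\|_{Y^{S_3+\epsilon}(\R)}
\]
for every test function $v_*$, and similarly for the other three estimates (with the appropriate mix of $X$ and $Y$ norms on the factors, and $v_*$ always measured in the $X$-norm since it is paired against the $X^{*}$-norm). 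Then I would insert the sector decompositions $v_* = \sum_{\theta_1} \LP_{\theta_1} v_*$, $z_j = \sum_{\theta_{j+1}} \LP_{\theta_{j+1}} z_j$ (or $v_j$), turning the integral into $\sum_{\theta_1,\theta_2,\theta_3,\theta_4} \int \LP_{\theta_1} h_1 \LP_{\theta_2} h_2 \LP_{\theta_3} h_3 \LP_{\theta_4} h_4$, and apply \eqref{eq:4linear-form} termwise with the matching case ($zzz$, $zzv$, $zvv$, or $vvv$), having first verified the hypothesis on $\sigma_*$ is exactly the one assumed in each part of the Proposition.

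The main work is then the summation over the four sector indices. After applying \eqref{eq:4linear-form} each summand carries a factor $(N_{\theta_1}N_{\theta_2}N_{\theta_3}N_{\theta_4})^\epsilon \prod_j N_{\theta_j}^{\alpha(h_j)} \|\LP_{\theta_j} h_j\|_{Z_{\theta_j}(h_j)}$, and by \eqref{eq:4-linear-orthogonality} the sum is restricted to the "no high-high-high" region where each $N_{\theta_j} \lesssim \sum_{j'\neq j} N_{\theta_{j'}}$; in particular the two largest of the four frequencies are comparable. On that region one performs a Cauchy--Schwarz in the sector indices: the weights $N_{\theta_j}^{\alpha(h_j)}$ are designed so that, combined with the $N_{\theta_j}^{2\sigma}$-type weights hidden in the definitions \eqref{eq:def:X-norm}, \eqref{eq:def:Y-norm} of the $X^\sigma$, $Y^\sigma$ norms (recall $\|v\|_{X^\sigma}^2 = \sum_\theta N_\theta^{2\sigma}\|\LP_\theta v\|_{X_\theta}^2$, etc.), one can pair off the four factors into two $\ell^2$ sums over sectors and absorb the remaining frequency gaps into a convergent geometric series. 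Concretely: the factor $\LP_{\theta_1} h_1$ (the $v_*$ factor, with weight $N_{\theta_1}^{-\sigma_*}$) is handled together with the lowest-frequency factor so that the residual power of the smallest frequency is strictly negative, while the two comparable top frequencies are summed via Cauchy--Schwarz against the $Y^{S_j+\epsilon}$ or $X^{\xs+\epsilon}$ norms; the tiny $\epsilon$-powers are absorbed by choosing $\epsilon$ small relative to the strict gaps in \eqref{eq:trilinear-exponent-assumptions} (note the strict inequality $\tfrac13-\tfrac\sigma4 < S_1$, $S_3 < \xs_c$, $\xs_c < \xs$, which is exactly what makes all these sums converge).

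The step I expect to be the main obstacle is the bookkeeping of this sector summation: one must check case by case ($zzz$, $zzv$, $zvv$, $vvv$, and within each the various sub-placements of which factor is at which frequency, matching the $\beta(h_l,h_{l'})$ casework already done inside the proof of \Cref{lem:fourlinear}) that after extracting the $\ell^2$ norms there is genuinely a negative net power of $\min_j N_{\theta_j}$ — or more precisely that the matrix of exponents, after Cauchy--Schwarz, is summable over the restricted frequency region. This is where the quantitative content lies; however, since \Cref{lem:fourlinear} has already been stated and proved with precisely the exponents $\alpha, \beta$ and the conditions \eqref{eq:YYY}--\eqref{eq:XXX} that make \Cref{prop:trilinear-estimate}'s conclusions scale-correct, the summation is a now-standard argument — essentially identical to the corresponding step in \cite[Section 5]{CFU2} — and I would carry it out by reducing, via homogeneity and the orthogonality \eqref{eq:4-linear-orthogonality}, to a single geometric sum in $\log(N_{\max}/N_{\min})$ with strictly negative rate, then cite \cite{CFU2} for the details common to both treatments and only spell out the points where the order-$\sigma$ operator changes the exponents.
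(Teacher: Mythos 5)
Your plan is essentially the paper's own proof: the paper disposes of this proposition in one line by declaring it a minor modification of \cite[Lemma 4.2]{CFU2}, i.e.\ precisely the dualization-plus-sector-summation argument you outline, with all the exponent case analysis already packaged into \Cref{lem:fourlinear} and the remaining bookkeeping deferred to \cite{CFU2} exactly as you propose. The only cosmetic caveat is that $X^{*,\sigma_*}$ is defined in \eqref{eq:def:X-norm-dual} as a weighted $\ell^2$ sum of sector-wise dual norms rather than literally as the dual of $X^{-\sigma_*}$, so the duality step should be run sector by sector (test $\LP_{\theta_1}(h_1h_2h_3)$ against $h_*$ with $\|h_*\|_{X_{\theta_1}}\leq 1$ and take the weighted $\ell^2$ sum in $\theta_1$ at the end), which is in effect what your Cauchy--Schwarz over the sector indices accomplishes.
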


The proof of \Cref{prop:trilinear-estimate} is a minor
modification of \cite[Lemma 4.2]{CFU2}.

We end this subsection, with a nonlinear estimate which is  relevant for probabilistic estimates on $z_i$.
For any $S \in \R$, $n \geq 1$ define
\[\eqnum\label{eq:def:mu-v2}
\mu(n, S) := \min\Big(nS+\frac{(n-1)(\sigma-2)}{4}, \, 2S + \frac{3\sigma}{4}-1, \, S +\sigma - 1 \Big) .
\]
Observe that the definition of $\mu (n, S)$ is related to the zzz case of Lemma \ref{lem:fourlinear}.

\begin{claim} If $S>\frac{1}{2}-\frac{\sigma}{4}$, then $\mu(k_{1},S)\leq\mu(k_{2},S)$ if $k_{1}<k_{2}$.
\end{claim}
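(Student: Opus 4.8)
The plan is to exploit the fact that, among the three quantities inside the minimum defining $\mu(n,S)$, only the first depends on $n$. Write
\[
f(n) \eqd nS + \frac{(n-1)(\sigma-2)}{4}, \qquad A \eqd 2S + \frac{3\sigma}{4}-1, \qquad B \eqd S+\sigma-1,
\]
so that $\mu(n,S) = \min\bigl(f(n), A, B\bigr)$ with $A$ and $B$ independent of $n$.

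First I would compute the discrete increment
\[
f(n+1)-f(n) = S + \frac{\sigma-2}{4} = S + \frac{\sigma}{4} - \frac12,
\]
which is strictly positive exactly when $S > \frac12 - \frac{\sigma}{4}$, i.e.\ precisely under the hypothesis of the claim. Hence $n\mapsto f(n)$ is strictly increasing, and so $k_1 < k_2$ yields $f(k_1) \le f(k_2)$.

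The conclusion then follows from elementary monotonicity of the minimum: the number $\min\bigl(f(k_1),A,B\bigr)$ is simultaneously $\le f(k_1)\le f(k_2)$, $\le A$, and $\le B$, hence it is a lower bound for each of $f(k_2)$, $A$, and $B$, and therefore $\mu(k_1,S) = \min\bigl(f(k_1),A,B\bigr) \le \min\bigl(f(k_2),A,B\bigr) = \mu(k_2,S)$. There is no genuine obstacle here; the only point requiring care is matching the threshold $\frac12 - \frac{\sigma}{4}$ with the sign of the increment $S+\frac{\sigma-2}{4}$, which is immediate. (Note also that this monotonicity is what makes the definition of $\kappa_0$ in \Cref{thm:main} meaningful: once $\mu(\kappa,S)$ exceeds the critical exponent $\xs_c$ for some $\kappa$, it stays above for all larger $\kappa$.)
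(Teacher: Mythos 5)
Your proof is correct, and since the paper simply records that "the proof is elementary," your argument (monotonicity in $n$ of the only $n$-dependent term $nS+\frac{(n-1)(\sigma-2)}{4}$, whose increment $S+\frac{\sigma-2}{4}$ is positive exactly when $S>\frac{1}{2}-\frac{\sigma}{4}$, combined with monotonicity of the minimum) is precisely the intended elementary reasoning. Nothing further is needed.
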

\begin{proof} The proof is elementary.
\end{proof}

\begin{lemma}[Inductive property of $\mu(k, S)$]\label{lem:mu-inductive-property} For any  any \(k_{1}, k_{2}, k_{3}\in\N\setminus\{0\}\), with
\(k_{1}\leq k_{2}\leq k_{3}\), and any \(S>\frac{1}{2}-\frac{\sigma}{4}\), it holds that
\[\eqnum\label{eq:mu-inductive-property}
\mu(k_{1}+k_{2}+k_{3}, S) \leq\mu(k_{1}, S)+ \min\big(\mu(k_{2}, S), \sfrac{\sigma}{4}\big)+ \min\big(\mu(k_{3}, S), \sfrac{\sigma}{4}\big) + \frac{\sigma}{2} - 1.
\]

Furthermore, set \(k\eqd k_{1}+k_{2}+k_{3}\) and fix
\(0 < \epsilon \lesssim_{k} 1\) and 
\(0 < \epsilon_{0} \lesssim_{k, S, \epsilon} 1\). Then for any functions
$z_j$, $j \in \{1, 2, 3\}$, it holds that
\[\eqnum\label{eq:trilinear-Y-mu-bound}
\|z_{1}z_{2}z_{3}\|_{X^{*, \mu(k, S)}}\lesssim \prod_{j = 1}^{3} \|z_{j}\|_{Y^{\mu(k_{j}, S)+\epsilon}} \,,
\]
where the implicit constant depends on $k$, $\epsilon$, $\epsilon_0$, and $S$. 

\end{lemma}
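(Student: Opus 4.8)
The plan is to prove the combinatorial inequality \eqref{eq:mu-inductive-property} by an elementary case analysis, and then to deduce the analytic estimate \eqref{eq:trilinear-Y-mu-bound} from it together with the trilinear bound \eqref{eq:YYYg} of \Cref{prop:trilinear-estimate}.

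For \eqref{eq:mu-inductive-property}, abbreviate $a_{k}\eqd kS+\tfrac{(k-1)(\sigma-2)}{4}$, $b\eqd 2S+\tfrac{3\sigma}{4}-1$ and $c\eqd S+\sigma-1$, so $\mu(k,S)=\min(a_{k},b,c)$. First I would record three consequences of $S>\tfrac12-\tfrac\sigma4$ and $\sigma\ge2$: (i) $a_{k+1}-a_{k}=S+\tfrac{\sigma-2}{4}>0$, so $a_{k}$ is increasing in $k$ and, since $b$ and $c$ are independent of $k$, the map $k\mapsto\mu(k,S)$ is nondecreasing; (ii) $a_{1}=S<\min(b,c)$, hence $\mu(1,S)=S$ and $\mu(k,S)\ge S$ for every $k\ge1$; (iii) $\min(b,c)>\tfrac\sigma4$, so that if $\mu(k,S)\le\tfrac\sigma4$ then the minimum is attained by $a_{k}$, i.e. $\mu(k,S)=a_{k}$. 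After reordering so that $k_{1}\le k_{2}\le k_{3}$, and hence $\mu(k_{1},S)\le\mu(k_{2},S)\le\mu(k_{3},S)$ by (i), write $J$ for the right-hand side of \eqref{eq:mu-inductive-property} and split into three cases. If $\mu(k_{2},S)\ge\tfrac\sigma4$, then $J\ge S+\tfrac\sigma4+\tfrac\sigma4+\tfrac\sigma2-1=c\ge\mu(k_{1}+k_{2}+k_{3},S)$ by (ii). If $\mu(k_{2},S)<\tfrac\sigma4\le\mu(k_{3},S)$, then $J\ge S+S+\tfrac\sigma4+\tfrac\sigma2-1=b\ge\mu(k_{1}+k_{2}+k_{3},S)$. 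Finally if $\mu(k_{3},S)<\tfrac\sigma4$, then by (iii) $\mu(k_{j},S)=a_{k_{j}}$ for $j=1,2,3$, and the identity $\tfrac{(k_{1}-1)+(k_{2}-1)+(k_{3}-1)}{4}(\sigma-2)+\tfrac\sigma2-1=\tfrac{(k_{1}+k_{2}+k_{3}-1)(\sigma-2)}{4}$ gives $J=a_{k_{1}+k_{2}+k_{3}}\ge\mu(k_{1}+k_{2}+k_{3},S)$. This proves \eqref{eq:mu-inductive-property}.

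For \eqref{eq:trilinear-Y-mu-bound}, put $k\eqd k_{1}+k_{2}+k_{3}$ and invoke \eqref{eq:YYYg} with $(S_{1},S_{2},S_{3})=(\mu(k_{1},S),\mu(k_{2},S),\mu(k_{3},S))$ and $\sigma_{*}=\mu(k,S)$. The ordering $S_{1}\le S_{2}\le S_{3}$ is (i), the lower bound $S_{1}>\tfrac13-\tfrac\sigma4$ follows from $S_{1}\ge\mu(1,S)=S>\tfrac12-\tfrac\sigma4>\tfrac13-\tfrac\sigma4$, and the admissibility requirement on $\sigma_{*}$ in \eqref{eq:YYY}, namely $\sigma_{*}\le S_{1}+\min(S_{2},\tfrac\sigma4)+\min(S_{3},\tfrac\sigma4)+\tfrac{\sigma-2}{2}$, is exactly \eqref{eq:mu-inductive-property}; the resulting bound is \eqref{eq:trilinear-Y-mu-bound}, with the smallness of $\epsilon$ and $\epsilon_{0}$ inherited (now depending on $k$). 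If in addition one needs $S_{3}<\xs_{c}$ for \eqref{eq:YYYg} but $\mu(k_{3},S)\ge\xs_{c}$, one first lowers the exponents to slightly smaller admissible values: since $N_{\theta}\ge1$ the norms $Y^{\sigma}$ and $X^{*,\sigma}$ are nondecreasing in $\sigma$, so shrinking the $S_{j}$ only enlarges the right-hand side of \eqref{eq:trilinear-Y-mu-bound} and only weakens the constraint on $\sigma_{*}$, after which one checks the resulting inequality directly.

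I expect the only non-routine steps to be the bookkeeping in the third case of \eqref{eq:mu-inductive-property} — the algebraic identity collapsing $a_{k_{1}}+a_{k_{2}}+a_{k_{3}}+\tfrac\sigma2-1$ to $a_{k_{1}+k_{2}+k_{3}}$, which is where the precise form of the first term defining $\mu$ enters — and, in the borderline regime $\mu(k_{3},S)\ge\xs_{c}$, confirming that one still lands in the admissible window of \Cref{prop:trilinear-estimate}; everything else is an immediate consequence of \eqref{eq:YYYg} and the monotonicity of the evolution norms in the regularity index.
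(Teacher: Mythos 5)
Your proposal is correct and takes essentially the same route as the paper: the same three-case analysis of \eqref{eq:mu-inductive-property} (using that both $2S+\frac{3\sigma}{4}-1$ and $S+\sigma-1$ exceed $\frac{\sigma}{4}$ when $S>\frac{1}{2}-\frac{\sigma}{4}$, and that in the remaining case the linear terms sum exactly to the first branch of $\mu(k_1+k_2+k_3,S)$), followed by the same deduction of \eqref{eq:trilinear-Y-mu-bound} from \eqref{eq:YYYg} with $S_j=\mu(k_j,S)$ and $\sigma_*=\mu(k,S)$. Your extra remark about the borderline case $\mu(k_3,S)\geq\xs_c$ flags a point the paper silently glosses over, but be aware the sketched fix has a slip: lowering $S_3$ below $\xs_c$ \emph{tightens} (not weakens) the admissibility condition \eqref{eq:YYY} whenever $\xs_c<\sfrac{\sigma}{4}$, so the ``direct check'' you defer is genuinely needed there.
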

\begin{proof}[Proof of \Cref{lem:mu-inductive-property}]
Without loss of generality, suppose that \(k_{1}\leq k_{2}\leq k_{3}\). Let
\[
J\eqd\mu(k_{1}, S)+ \min\big(\mu(k_{2}, S), \sfrac{\sigma}{4}\big)+ \min\big(\mu(k_{3}, S), \sfrac{\sigma}{4}\big) + \frac{\sigma}{2} - 1.
\]
We first observe that $\mu(k_{j}, S) \geq S$, \(j\in\{1, 2, 3\}\), which follows from the restriction $S\geq\frac{1}{2}-\frac{\sigma}{4}$. If $\mu(k_{3}, S)\geq \mu(k_{2}, S) \geq \frac{\sigma}{4}$ then 
\[
 J \geq S + \frac{\sigma}{4} + \frac{\sigma}{4} + \frac{\sigma}{2} - 1 = S + \sigma - 1 \geq  \mu(k_{1}+k_{2}+k_{3}, S)\,, 
\]
and \eqref{eq:mu-inductive-property} follows. If $\mu(k_{3}, S) \geq \frac{\sigma}{4} \geq \mu(k_{2}, S)$, then
\[
J \geq S + S + \frac{\sigma}{4} + \frac{\sigma}{2} - 1  = 2S + \frac{3\sigma}{4} - 1 \geq  \mu(k_{1}+k_{2}+k_{3}, S) \,,
\]
and \eqref{eq:mu-inductive-property} follows.  Finally, if $ \frac{\sigma}{4} \geq \mu(k_{3}, S)\geq \mu(k_{2}, S)$ then
\[
 J = \mu(k_{1}, S) + \mu(k_{2}, S)  + \mu(k_{3}, S) + \frac{\sigma}{2} - 1
\]
and in addition, \(\min(S+\sigma - 1, 2S+\frac{3\sigma}{4} - 1)>\frac{\sigma}{4}\) implies 
\[
\mu(k_{j}, S) = k_{j}S+\frac{(k_{j}-1)(\sigma-2)}{4},\quad j\in\{1, 2, 3\} \,.
\]
 Then we deduce that 
\[
 J =  \big(k_{1}+k_{2}+k_{3}\big)S + \frac{(k_{1}+k_{2}+k_{3}-3)(\sigma-2)}{4} +\frac{\sigma-2}{2} \geq\mu(k_{1}+k_{2}+k_{3}, S)
 ,
\]
and \eqref{eq:mu-inductive-property} follows. Finally, \eqref{eq:trilinear-Y-mu-bound} follows from  \eqref{eq:YYY} with \(S\) replaced by \(S+\epsilon\). This completes the proof.
\end{proof}

\section{Probabilistic estimates}

We define the set \(\TT\) of ternary trees as
$\TT := \bigcup_{n \geq 1} \TT_{n}$, where for each
\(n\in \N\setminus{0}\) the set of trees \(\TT_{n}\) is given by induction as
follows:
\begin{itemize}
\item We say $\tau \in \TT_{1}$ if \(\tau = [ \bullet ]\).
\item We say $\tau \in \TT_{n}$ for $n \geq 1$ if \(\tau = [\tau_{1}, \tau_{2}, \tau_{3}]\) for some $\tau_{j} \in \TT_{n_{j}}$ and $n = n_{1} + n_{2} + n_{3}$ with $1 \leq n_{j} < n$.
\end{itemize}

To each tree \(\tau\in\TT_{n}\) with \(n\in2\N+1\) we inductively assign an
\(n\)-(real) linear tree operator \(R_{\tau}\) mapping \(n\)-tuples of
functions \((f_{1}, \ldots, f_{n})\in L^{2}(\R^{d})\) to functions on \(\R\times\R^{d}\).
\begin{itemize}
\item We set \(R_{[\bullet]}[f](t, x) \eqd e^{it\Lap}f(x).\)
\item Inductively, we set
\[\eqnum\label{eq:def:R-inductive}
\begin{aligned}[c]
\hspace{5em}&\hspace{-5em} R_{[\tau_{1}, \tau_{2}, \tau_{3}]}[\mb{f}_{1}\oplus \mb{f}_{2}\oplus\mb{f}_{3}](t, \cdot)
\\
& \eqd \mp i \int_{0}^{t} e^{i (t - s) \Lap} \Big(R_{\tau_{1}} [\mb{f}_{1}] (s, \cdot)\, \overline{R_{\tau_{2}} [\mb{f}_{2}] (s, \cdot)}\,  R_{\tau_{3}} [\mb{f}_{3}] (s, \cdot)\Big) \dd s \,,
\end{aligned}
 \]
 where the choice of the sign is exactly the opposite to the sign on the right hand side of \eqref{eq:intro}. Here \(\mb{f}_{j}=(f_{j, 1}, \ldots, f_{j,|\tau_{j}|})\in \big(L^{2}(\R^{d})\big)^{|\tau_{j}|}\) are \(|\tau_{j}|\)-tuples of functions in \(L^{2}(\R^{d})\) and 
\[
\mb{f}_{1}\oplus \mb{f}_{2}\oplus\mb{f}_{3}=\big(f_{1, 1}, \ldots, f_{1,|\tau_{1}|}, f_{2, 1}, \ldots, f_{2,|\tau_{2}|}, f_{3, 1}, \ldots, f_{3,|\tau_{3}|}^{3}\big)\,.
\]
\end{itemize}

To simplify notation, given \(\tau\in\TT\) and a scalar function \(f \in L^{2}(\R^{d})\), we write
\[
R_{\tau}[f] := R_{\tau}\big[f, \ldots, f\big] \,,
\]
where the right-hand side contains $|\tau|$ copies of $f$. Thanks to our definition of $\mu (n, S)$, we can proceed as in \cite[Proposition $5.1$]{CFU} to obtain the following result.

\begin{proposition}\label{prop:tree-bound-probabilistic} 
Fix \(\tau\in\TT\), \(S\in\mathbb{R}\) and let \(0<\epsilon \lesssim_{|\tau|} 1\) be small. Then for
every \(0 < \epsilon_{0}\lesssim_{\epsilon, |\tau|, S} 1\) there exist constants
$C > 0$ and $c > 0$ depending $\epsilon_{0}$, \(S\), \(\epsilon\), and on
\(|\tau|\), but independent of $f\in L^{2}(\R^{d})$, such that for any
$\lambda > 0$ it holds that
\[\eqnum\label{eq:tree-bound-probabilistic}
\mathbb{P} \Big(\big\{\omega \in \Omega : \big\| R_{\tau}[\rf] \big\|_{Y^{\mu(|\tau|, S)} (\R)} > \lambda\big\}\Big)
\leq C \exp\Bigg(-c \frac{ \lambda^{\frac{2}{|\tau|}}}{\|f\|_{H_{x}^{S +\epsilon} (\R^{d})}^{2}}\Bigg)\,,
\]
where \(\rf\) is the randomization of \(f\) given by
\eqref{eq:randomization}. In particular,
\[
\big\|R_{\tau}[\rf] \big\|_{Y^{\mu(|\tau|, S)} (\R)} < \infty \qquad \text{almost surely}
\]
for any \(f\in H^{S+\epsilon}(\R^{d})\).
\end{proposition}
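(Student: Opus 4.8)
The plan is to run an induction on the size $|\tau|$ of the tree, combining the multilinear deterministic estimates of Section 5 with a Khinchin-type (hypercontractivity) inequality for the randomized data $\rf$, exactly along the lines of \cite[Proposition 5.1]{CFU}. The base case is $\tau=[\bullet]$, where $R_{[\bullet]}[\rf]=e^{it\Lap}\rf$; here one decomposes $\rf=\sum_{k\in\Z^d}g_k\QP_k f$, applies \Cref{prop:main-linear-estimate} (more precisely \eqref{eq:non-homogeneous-bound-unit-scale}) to each unit-frequency piece $e^{it\Lap}\QP_k f$, and then uses the hypercontractivity of the Gaussian-like chaos generated by $(g_k)$ to pass from the $L^2_\omega$ bound to the exponential tail bound; since $\mu(1,S)=S$ (for $S$ in the relevant range), this matches the claimed exponent $\lambda^{2/|\tau|}=\lambda^2$.

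For the inductive step, suppose $\tau=[\tau_1,\tau_2,\tau_3]$ with $|\tau|=|\tau_1|+|\tau_2|+|\tau_3|$ and assume the statement holds for each $\tau_i$. By \eqref{eq:def:R-inductive}, $R_\tau[\rf]$ is the Duhamel integral of the cubic product $R_{\tau_1}[\rf]\,\overline{R_{\tau_2}[\rf]}\,R_{\tau_3}[\rf]$. Applying \eqref{eq:non-homogeneous-bound-unit-scale-2} of \Cref{prop:main-linear-estimate} together with \Cref{cor:linear-estimate-adjoint-and-continuity}, it suffices to bound this product in $X^{*,\mu(|\tau|,S)+\epsilon}(\R)$. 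Here is where \Cref{lem:mu-inductive-property} enters: applying \eqref{eq:trilinear-Y-mu-bound} (i.e. the $zzz$-case trilinear estimate \eqref{eq:YYYg}) with $k_j=|\tau_j|$ and $S$ replaced by $S+\epsilon'$ for a slightly smaller $\epsilon'$, and using the monotonicity claim $\mu(k_1,S)\le\mu(k_2,S)$ when $k_1\le k_2$ to reorder the factors so that $|\tau_1|\le|\tau_2|\le|\tau_3|$, one gets
\[
\big\|R_\tau[\rf]\big\|_{Y^{\mu(|\tau|,S)}(\R)}\lesssim \prod_{i=1}^3\big\|R_{\tau_i}[\rf]\big\|_{Y^{\mu(|\tau_i|,S)+\epsilon}(\R)}\,.
\]
(One must check the hypothesis $\tfrac13-\tfrac\sigma4<S_j$ of \Cref{prop:trilinear-estimate} is implied by $S>S_{\min}$ together with $\mu(k_j,S)\ge S$, which holds for $S>\tfrac12-\tfrac\sigma4$; this is exactly the range where the preceding Claim guarantees monotonicity of $\mu$.)

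The passage from this deterministic multilinear bound to the probabilistic tail estimate is the main technical point. One cannot simply multiply three tail bounds, because the three factors are not independent — they are built from the same randomization $\rf$. The standard remedy (as in \cite{CFU,CFU2}) is to expand each $R_{\tau_i}[\rf]$ into its Wiener chaos: $R_{\tau_i}[\rf]$ is a sum over $|\tau_i|$-tuples $(k_1,\dots,k_{|\tau_i|})$ of the deterministic multilinear expression $R_{\tau_i}[\QP_{k_1}f,\dots,\QP_{k_{|\tau_i|}}f]$ times a monomial in the $g_k$'s (and their conjugates) of degree $|\tau_i|$. The product $R_{\tau_1}[\rf]\overline{R_{\tau_2}[\rf]}R_{\tau_3}[\rf]$ is then a polynomial chaos of degree exactly $|\tau|$ in the Gaussians. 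Hypercontractivity for Gaussian (or, more generally, for the finite-moment i.i.d.\ variables assumed here, via the Bourgain/Burq–Tzvetkov argument using Khinchin plus interpolation) gives, for $p\ge 2$,
\[
\big\|\,\big\|R_\tau[\rf]\big\|_{Y^{\mu(|\tau|,S)}(\R)}\,\big\|_{L^p_\omega}\lesssim p^{|\tau|/2}\,\big\|\,\big\|R_\tau[\rf]\big\|_{Y^{\mu(|\tau|,S)}(\R)}\,\big\|_{L^2_\omega}\,,
\]
and a Chebyshev/optimization argument over $p$ converts this into the stretched-exponential tail $\exp\!\big(-c\lambda^{2/|\tau|}/\|f\|_{H^{S+\epsilon}_x}^2\big)$. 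It remains to bound the $L^2_\omega$ norm: one squares, uses that distinct chaos monomials are (nearly) orthogonal in $\omega$, and is reduced to summing $\ell^2$ the deterministic quantities $\|R_{\tau_i}[\QP_{k_1}f,\dots]\|_{Y^{\dots}}$ over the $k$-indices — which is controlled by the multilinear estimate above applied to the unit-scale pieces, together with the Bernstein/almost-orthogonality bookkeeping of Section 4 and the observation that $\sum_k\|\QP_k f\|_{H^{S+\epsilon}}^2\approx\|f\|_{H^{S+\epsilon}}^2$. The final assertion $\|R_\tau[\rf]\|_{Y^{\mu(|\tau|,S)}}<\infty$ a.s.\ follows from the tail bound by Borel–Cantelli (or simply because a finite exponential moment implies finiteness a.s.).

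The main obstacle is the careful bookkeeping in the chaos expansion: one must verify that the near-orthogonality of distinct monomials survives the directional $Y_\theta$-norms (which, as stressed in the paper, use a different orthonormal basis $\ms{O}_\theta$ in each sector $\theta$), and that the sum over the combinatorially many $k$-tuples and over the finitely many bases in $\ms{O}(\Theta)$ converges with constants independent of $f$. This is precisely why the $\epsilon$-loss ($S\mapsto S+\epsilon$, hence $H^{S+\epsilon}_x$ on the right) is needed: it provides the summable decay $N_\theta^{-\epsilon}$ absorbing the polynomial factors $N_\theta^{\epsilon}$ in \eqref{eq:4linear-form} and \eqref{eq:non-homogeneous-bound-unit-scale}. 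Since all the directional estimates of Sections 3–5 were engineered to be coordinate-independent (or to depend only on the finite range $\ms{O}(\Theta)$), the hypercontractivity input from \cite{CFU2} transfers verbatim, and no genuinely new probabilistic ingredient is required beyond what is cited.
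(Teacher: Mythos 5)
Your proposal is correct in outline and follows essentially the same route as the paper, which gives no independent argument but defers to \cite[Proposition 5.1]{CFU} together with \Cref{lem:mu-inductive-property}: induction over trees via the trilinear $Y$--$\mu$ estimate \eqref{eq:trilinear-Y-mu-bound}, a unit-scale chaos expansion of $\rf$ so that the compact-support $Y$-norm linear estimate \eqref{eq:non-homogeneous-bound-unit-scale} applies to each frequency-tuple piece, and Khinchin/hypercontractivity with optimization in $p$ to produce the stretched-exponential tail and the a.s.\ finiteness. The one loose point is only an ordering issue, not a gap: \eqref{eq:non-homogeneous-bound-unit-scale-2} cannot be applied to $R_{\tau}[\rf]$ before the chaos expansion (its Fourier support is not compact), but since your argument ultimately estimates piece-by-piece on the unit-scale decomposition, where the support hypothesis does hold, the structure you describe is the intended one.
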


\section{Iteration scheme}
 
For any \(t_{0}\geq0\), $z \in Y^S$, and $v \in X^{\xs}$
we replace $h$ by $\Phi_{z}[v] +h$ in 
\eqref{eq:duhamel} and define
\[\eqnum\label{eq:def:delayed-iteration-map}
\mc{K}(v) := e^{i(t-t_{0})\Lap}v_{0}\mp i\int_{t_{0}}^{t}e^{i(t-s)\Lap}(\Phi_{z}[v] +h)\dd s \,,
\]
where
\[\eqnum\label{eq:remainder-cubic-nonlinearity}
\begin{aligned}[t]
\Phi_{z}[v] & \eqd\big|z+v\big|^{2}\big(z+v\big)-\big|z\big|^{2}z
 = \sum_{\hspace{-3em}\mrl{\substack{
    \mrl{h_{1},h_{2},h_{3}\in\{z,v\}}
    \\
\mrl{ (h_{1},h_{2},h_{3})\neq (z,z,z)}
  }}} h_{1}\bar{h_{2}}h_{3} \,.
\end{aligned}
\]
In the definition of $\mc{K}(v)$ we consider arbitrary $t_0 \geq 0$, because it is used in the proof of the uniqueness.  
Combining the multilinear estimates from Lemma \ref{lem:fourlinear} and linear estimates from  \Cref{prop:main-linear-estimate}, we obtain the following result.

\begin{lemma}\label{lem:iteration-map-bound} Fix an interval \(I\subseteq\R\) and assume \(0<S<\xs_{c}<\xs\) satisfy
\[\eqnum\label{eq:asoxs}
\xs < S+\sigma- 1 \quad \text{and if $\xs>\frac{\sigma}{4}$, then in addition }
\xs < 2S+\frac{3\sigma-4}{4} \,.
\]
Choose \(0<\epsilon\lesssim_{S, \xs}1\) and a small \(0<\epsilon_{0}\lesssim_{\epsilon, S, \xs}1\).  Then
there exist constants \(C, c>0\) depending on \(\epsilon\), \(\epsilon_0\), \(S\), \(\xs\)
such that for any $v_0 \in H^{\xs+\epsilon}(\R^{d})$, \(z\in Y^{S}(I)\), \(h\in X^{\xs
+ \epsilon}(I)\), and \(v_{1}, v_{2}\in X^{\xs}(I)\) the following bounds hold:
\[\eqnum\label{eq:iteration-map-bound}
\begin{aligned}
\Big\|\mc{K}(v_1)\Big\|_{X^{\xs}(I)}
 \leq C \big\langle |I|^{-1}\big\rangle^{-c} 
\begin{aligned}[t]
\Big(& \|v_{0}\|_{H_x^{\xs+\epsilon} (\R^{d})} + \|h\|_{X^{*, \xs+\epsilon}(I)}
\\
& \quad+ \big(\|v_{1}\|_{X^{\xs}(I)}+\|z\|_{Y^{S}(I)}\big)^{2}\|v_{1}\|_{X^{\xs}(I)} \Big)
\end{aligned}
\end{aligned}
\]
and 
\[\eqnum\label{eq:iteration-map-lipschitz}
\begin{aligned}
& \Big\|\mc{K}(v_1) - \mc{K}(v_2)\Big\|_{X^{\xs}(I)}
\\
&\qquad \leq C \big\langle |I|^{-1}\big\rangle^{-c} \big(\|v_{1}\|_{X^{\xs}(I)}+\|v_{2}\|_{X^{\xs}(I)}+\|z\|_{Y^{S}(I)}\big)^{2}\big\|v_{1}-v_{2}\big\|_{X^{\xs}(I)}\,.
\end{aligned}
\]
Note that $\langle \cdot \rangle$ was defined in \eqref{eq:jbnb}. 
As a consequence:
\begin{itemize}
\item For any \(v_{0}\in H^{\xs+\epsilon}(\R^{d})\), \(z\in Y^{S}(I)\), \(h\in X^{*, \xs+\epsilon}(I)\), and $\delta_0 > 0$ we define
\[\eqnum\label{eq:contraction-time-bound}
\begin{aligned}
& T_{\delta_{0}}\big(\|v_{0}\|_{H^{\xs+\epsilon}(\R^{d})}, \|z\|_{Y^{S}(I)}, \|h\|_{X^{*, \xs+\epsilon}(I)}\big)
\\
& \quad\eqd\frac{1}{2^{100}(C\delta_{0})^{\sfrac{1}{c}}}\big\langle\delta_0+\|v_{0}\|_{H_x^{\xs+\epsilon}(\R^{d})} + \|h\|_{X^{*, \xs+\epsilon}(\R)} +\|z\|_{Y^{S}(\R)}\big\rangle^{-\sfrac{3}{c}} \,.
\end{aligned}
\]
Then for any interval \(J=[t_{0}, t_{1}]\subseteq I\) with
\[
|J|<T_{\delta_{0}}\big(\|v_{0}\|_{H^{\xs+\epsilon}(\R^{d})}, \|z\|_{Y^{S}(I)}, \|h\|_{X^{*, \xs+\epsilon}(I)}\big)
\]
the map $v \mapsto \mc{K}(v)$ is a contraction on the set
\[
\bar{\mathcal{B}}_{\delta_{0}}^{J}\eqd\Big\{v\in X^{\xs}(J)\st \|v\|_{ X^{\xs}(J)}\leq\delta_{0}\Big\}\,,
\]
and thus it admits a unique fixed point in \(\bar{\mathcal{B}}_{\delta_{0}}^{J}\).
\end{itemize}
\end{lemma}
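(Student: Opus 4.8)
The plan is to derive \eqref{eq:iteration-map-bound} and \eqref{eq:iteration-map-lipschitz} by combining the linear estimate of \Cref{prop:main-linear-estimate} with the trilinear product estimates of \Cref{prop:trilinear-estimate}, and then to obtain the contraction statement by a direct computation with the explicit threshold \(T_{\delta_{0}}\). For the first reduction, write \(\mc{K}(v)\) in the Duhamel form \eqref{eq:duhamel}: after translating time by \(t_{0}\), the function \(t\mapsto\mc{K}(v)(t_{0}+t)\) is exactly the Duhamel solution with initial datum \(v_{0}\) and forcing \(\pm(\Phi_{z}[v]+h)\). Since the norms \(X^{s}(\cdot)\) and \(X^{*,s}(\cdot)\) are invariant under time translation, \Cref{prop:main-linear-estimate} with \(s=\xs\) yields
\[
\|\mc{K}(v)\|_{X^{\xs}(I)}\lesssim\big\langle|I|^{-1}\big\rangle^{-c}\Big(\|v_{0}\|_{H^{\xs+\epsilon}(\R^{d})}+\|h\|_{X^{*,\xs+\epsilon}(I)}+\|\Phi_{z}[v]\|_{X^{*,\xs+\epsilon}(I)}\Big),
\]
so \eqref{eq:iteration-map-bound} reduces to the cubic bound \(\|\Phi_{z}[v]\|_{X^{*,\xs+\epsilon}(I)}\lesssim(\|v\|_{X^{\xs}(I)}+\|z\|_{Y^{S}(I)})^{2}\|v\|_{X^{\xs}(I)}\), and \eqref{eq:iteration-map-lipschitz} to the analogous bound for \(\Phi_{z}[v_{1}]-\Phi_{z}[v_{2}]\).

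For the cubic estimate, expand \eqref{eq:remainder-cubic-nonlinearity} as \(\Phi_{z}[v]=\sum h_{1}\bar h_{2}h_{3}\) over the seven triples \((h_{1},h_{2},h_{3})\in\{z,v\}^{3}\setminus\{(z,z,z)\}\) — three of type \(zzv\), three of type \(zvv\), one of type \(vvv\) — each containing at least one factor \(v\). Since \(N_{\theta}\geq1\) for every sector, the dual norms are monotone, \(\|\cdot\|_{X^{*,\sigma_{1}}}\leq\|\cdot\|_{X^{*,\sigma_{2}}}\) for \(\sigma_{1}\leq\sigma_{2}\), so it suffices to control each summand in some \(X^{*,\sigma_{*}}\) with \(\sigma_{*}\geq\xs+\epsilon\). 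We apply \Cref{prop:trilinear-estimate} (which is insensitive to complex conjugation and permutation of factors at the level of these norms) with \(S_{1}=S_{2}=S_{3}\) replaced by \(S-\epsilon\) and \(\xs\) replaced by \(\xs-\epsilon\); both replacements are admissible for \(\epsilon\) small since \(S>0\) and \(\sigma\geq2\) give \(S>\tfrac13-\tfrac\sigma4\), and \(\xs>\xs_{c}\), and they turn the right-hand sides into \(\|z\|_{Y^{(S-\epsilon)+\epsilon}}=\|z\|_{Y^{S}}\) and \(\|v\|_{X^{(\xs-\epsilon)+\epsilon}}=\|v\|_{X^{\xs}}\). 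It remains to check \(\xs+\epsilon\leq\sigma_{*}'\) for the shifted exponents: in the \(vvv\) case \(\sigma_{*}'=3\xs-3\epsilon-2\xs_{c}\) and the inequality holds for small \(\epsilon\) because \(\xs>\xs_{c}\); in the \(zvv\) case \(\sigma_{*}'\) is the minimum of \((S-\epsilon)+\sigma-1\) and \(2\xs-2\epsilon-\xs_{c}+\tfrac{\sigma-2}{4}+\min\{\tfrac\sigma4,S-\epsilon\}\), the first exceeding \(\xs+\epsilon\) by the first clause of \eqref{eq:asoxs} and the second exceeding \(\xs_{c}\) because \(\tfrac{2-\sigma}{4}\leq0<\min\{\tfrac\sigma4,S-\epsilon\}\); in the \(zzv\) case, when \(\xs\leq\tfrac\sigma4\) the gap \(\sigma_{*}'-\xs\to S+\min\{S,\tfrac\sigma4\}+\tfrac{\sigma-2}{2}>0\) is automatic, while for \(\xs>\tfrac\sigma4\) one has \(\sigma_{*}'=(S-\epsilon)+\min\{S-\epsilon,\tfrac\sigma4\}+\tfrac{3\sigma-4}{4}\) and \(\xs+\epsilon\leq\sigma_{*}'\) reduces to the second clause of \eqref{eq:asoxs} when \(S\leq\tfrac\sigma4\) and to the first clause when \(S>\tfrac\sigma4\). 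All inequalities in \eqref{eq:asoxs} being strict, a sufficiently small \(\epsilon=\epsilon(S,\xs)\) (and then \(\epsilon_{0}=\epsilon_{0}(\epsilon,S,\xs)\), as required by \Cref{prop:trilinear-estimate} and \Cref{prop:main-linear-estimate}) absorbs all losses, and summing the seven contributions gives the cubic bound. The Lipschitz bound follows identically: \(\Phi_{z}[v_{1}]-\Phi_{z}[v_{2}]=|z+v_{1}|^{2}(z+v_{1})-|z+v_{2}|^{2}(z+v_{2})\) expands, via the elementary identity for \(|a|^{2}a-|b|^{2}b\), into a finite sum of trilinear products, each carrying exactly one factor \(v_{1}-v_{2}\) and two factors from \(\{z,v_{1},v_{2}\}\), none of type \(zzz\), to which the same three trilinear estimates apply.

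For the contraction conclusion, fix \(v_{0},z,h\) and \(\delta_{0}>0\), let \(T_{\delta_{0}}\) be given by \eqref{eq:contraction-time-bound}, and take \(J=[t_{0},t_{1}]\subseteq I\) with \(|J|<T_{\delta_{0}}\). Using \(\langle|J|^{-1}\rangle^{-c}\leq|J|^{c}<T_{\delta_{0}}^{c}\) together with the explicit form of \(T_{\delta_{0}}\) — whose powers of \(\delta_{0}\) and of \(\langle\delta_{0}+\|v_{0}\|_{H^{\xs+\epsilon}}+\|h\|_{X^{*,\xs+\epsilon}}+\|z\|_{Y^{S}}\rangle\) are calibrated for exactly this — a direct computation from \eqref{eq:iteration-map-bound} shows that \(\|\mc{K}(v)\|_{X^{\xs}(J)}\leq\delta_{0}\) for every \(v\in\bar{\mathcal{B}}_{\delta_{0}}^{J}\), and \eqref{eq:iteration-map-lipschitz} shows that \(v\mapsto\mc{K}(v)\) has Lipschitz constant at most \(\tfrac12\) on \(\bar{\mathcal{B}}_{\delta_{0}}^{J}\). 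Hence \(\mc{K}\) is a contraction of the complete metric space \(\bar{\mathcal{B}}_{\delta_{0}}^{J}\) into itself, and the Banach fixed point theorem furnishes the unique fixed point.

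The only genuinely delicate step is the exponent arithmetic in the cubic estimate: verifying that the hypotheses \eqref{eq:asoxs} are precisely what guarantees \(\xs+\epsilon\leq\sigma_{*}\) in the cases \(zzv\), \(zvv\), \(vvv\), correctly handling the \(\min\)'s and the case split according to the positions of \(\xs\) and \(S\) relative to \(\tfrac\sigma4\), and making sure that the preliminary shift \(S\rightsquigarrow S-\epsilon\), \(\xs\rightsquigarrow\xs-\epsilon\) (used to pass from \(Y^{S+\epsilon}\) to \(Y^{S}\) and from \(X^{\xs+\epsilon}\) to \(X^{\xs}\)) keeps every hypothesis of \Cref{prop:trilinear-estimate} valid. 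The reduction via \Cref{prop:main-linear-estimate}, the algebraic expansions of \(\Phi_{z}\), and the final fixed-point bookkeeping are routine.
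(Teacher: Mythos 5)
Your proposal is correct and follows essentially the same route as the paper: reduce via the linear estimate of \Cref{prop:main-linear-estimate} to a cubic bound on \(\Phi_{z}\), verify that \eqref{eq:asoxs} makes the trilinear estimates \eqref{eq:YYXg}--\eqref{eq:XXXg} applicable with \(\sigma_{*}=\xs+\epsilon\), and conclude with the Banach fixed point theorem. Your case-by-case exponent check (including the \(S\rightsquigarrow S-\epsilon\), \(\xs\rightsquigarrow\xs-\epsilon\) shift to land exactly on the \(Y^{S}\) and \(X^{\xs}\) norms) is in fact more explicit than the paper's one-line verification, the only quibble being your phrase that in the \(zvv\) case the second branch ``exceeds \(\xs_{c}\)'' — what is needed, and what indeed holds since \(\xs-\xs_{c}>0\), \(\tfrac{\sigma-2}{4}\geq0\) and \(\min\{\tfrac{\sigma}{4},S-\epsilon\}>0\), is that it exceeds \(\xs+\epsilon\) for \(\epsilon\) small.
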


\begin{proof}[Proof of Lemma \ref{lem:iteration-map-bound}]
Fix \(z\in Y^{S}(I)\), \(h\in X^{\xs + \epsilon}(I)\), and \(v_{1}, v_{2}\in X^{\xs}(I)\). Using \eqref{eq:non-homogeneous-bound-2}, we have
\[
\begin{aligned}[t]
\Big\|\mathcal{K}(v_1)\Big\|_{X^{\xs}(I)}
\leq C \big\langle |I|^{-1}\big\rangle^{-c} \Big(\|v_{0}\|_{H^{\xs+\epsilon} (\R^{d})} + \|h\|_{X^{*, \xs+\epsilon}(I)}+ \big\|\Phi_{z}[v_{1}]\big\|_{X^{*, \xs+\epsilon}(I)}\Big)\, ,
\end{aligned}
\]
and
\[
\Big\|\mc{K}(v_1) - \mc{K}(v_2)\Big\|_{X^{\xs}(I)}
\leq C  \big\langle |I|^{-1}\big\rangle^{-c} \big\|\Phi_{z}[v_{1}]-\Phi_{z}[v_{2}]\big\|_{X^{*, \xs+\epsilon}(I)}.
\]
So to prove \eqref{eq:iteration-map-bound} and \eqref{eq:iteration-map-lipschitz}, we only have to control the terms involving $\Phi_z$. We can rewrite \(\Phi_{z}[v]\) (defined in \eqref{eq:remainder-cubic-nonlinearity} as 
\[
\Phi_{z}[v_{1}]-\Phi_{z}[v_{2}]=G_{1}\big[z, \bar{z}, v_{1}, \overline{v_{1}}, v_{2}, \overline{v_{2}}\big] (v_{1}-v_{2})+G_{2}\big[z, \bar{z}, v_{1}, \overline{v_{1}}, v_{2}, \overline{v_{2}}\big](\overline{v_{1}-v_{2}}) \,,
\]
where \(G_{j}[z, \bar{z}, v_{1}, \overline{v_{1}}, v_{2}, \overline{v_{2}}] \), \(j\in\{1, 2\}\), are homogeneous polynomials of degree 2 in the variables \(z\), \(\bar{z}\), \(v_{1}\), \(\bar{v_{1}}\), \(v_{2}\), \(\bar{v_{2}}\). 
Note that from \eqref{eq:asoxs} it follows that \eqref{eq:YYX}, \eqref{eq:YXX}, and \eqref{eq:XXX} are satisfied with $\sigma_{*}$ replaced by $\xs$ and $S_j = S$, $j = \{1, 2, 3\}$. Indeed, the right hand sides in \eqref{eq:YYX}, \eqref{eq:YXX}, and \eqref{eq:XXX} contain $S+\sigma-1$, which is assumed in \eqref{eq:asoxs}. Additionally, the right hand sides might contain $\xs$ and such inequality is trivially satisfied. Finally, \eqref{eq:YYX} either contain $\xs$ or $\xs > \frac{\sigma}{4}$, with inequality assumed in \eqref{eq:asoxs}. 
Then \Cref{prop:trilinear-estimate} implies
\[
\begin{aligned}
\Big\|\Phi_{z}[v_{1}]-\Phi_{z}[v_{2}]\Big\|_{X^{*, \xs +\epsilon}(\R)}
& \leq 
\begin{aligned}[t]
& \Big\|G_{1}\big[z, \bar{z}, v_{1}, \overline{v_{1}}, v_{2}, \overline{v_{2}}\big] \big(v_{1}-v_{2}\big)\Big\|_{X^{*, \xs +\epsilon}(\R)}
\\
& \qquad +\Big\|G_{2}\big[z, \bar{z}, v_{1}, \overline{v_{1}}, v_{2}, \overline{v_{2}}\big] \big(\overline{v_{1}-v_{2}}\big)\Big\|_{X^{*, \xs +\epsilon}(\R)}
\end{aligned}
\\
& \lesssim\big(\|v_{1}\|_{X^{\xs}(I)}+\|v_{2}\|_{X^{\xs}(I)}+\|z\|_{Y^{S}(I)}\big)^{2}\|v_{1}-v_{2}\|_{X^{\xs}(I)}\,.
\end{aligned}
\]
Setting $v_2=0$, we obtain 
\[\eqnum\label{eq:F-bound}
\big\|\Phi_{z}[v_{1}]\big\|_{X^{*, \xs +\epsilon}(\R)}\lesssim\big(\|v_{1}\|_{X^{\xs}(\R)}+\|z\|_{Y^{S}(\R)}\big)^{2}\|v_{1}\|_{X^{\xs}(\R)}.
\]
and \eqref{eq:iteration-map-bound}, \eqref{eq:iteration-map-lipschitz} follow. Then, \(\mc{K}\) is a contraction map on \(\bar{\mathcal{B}}_{\delta_{0}}^{\R}\) and the remaining statements follow from the Banach fixed point theorem.

\end{proof}

\section{Proof of Theorem \ref{thm:main}}

Before stating main results of this section, let us introduce some
notation. For any $n \in 2\N+1$
we define the random variables
\(\rz_{n}: \R\times\R^{d}\to \mathbb{C}\) as
\[\eqnum\label{eq:def:zn}
\rz_{n}  \eqd\sum_{\tau \in \TT_{n}} R_{\tau} [\rf]
\]
and we set \(\rz_{2n}=0\) by convention. We also define
\[\eqnum\label{eq:def:zn-2}
\rz_{\leq M}\eqd\sum_{n\leq M}\rz_{n},
\qquad
[\rz, \rz, \rz]_{> M}\eqd\sum_{\substack{n_{1}+n_{2}+n_{3}> M, \\ n_{j}\leq M}} \rz_{n_{1}}\overline{\rz_{n_{2}}} \rz_{n_{3}}\,.
\]
Then for each integer $M \geq 1$, $u$ is a solution to \eqref{eq:intro} if 
\[
u= \rz_{\leq M}+v,
\]
and $v \in X^{\xs}(I)\cap C^0 (I; H^{\xs_c})$ solves
\[\eqnum\label{eq:iteration-map}
\mc{I}(v)\eqd e^{it\Lap}v_{0}\mp i\int_{0}^{t}e^{i(t-s)\Lap}(\Phi_{\rz_{\leq M}}[v]+[\rz, \rz, \rz]_{>M})\dd s,
\]
with $v_0=0$, where \(\Phi\) is defined in \eqref{eq:remainder-cubic-nonlinearity}.

\begin{proposition}\label{prop:v-solution}
Fix any \(M\in2\N+1\) and any \(0<S<\xs_{c}<\xs<\xs_{c}+\sfrac{(\sigma-1)}{2}\) that
satisfies $\xs \leq \mu(M+2, S)$. 
Choose any
\(0\leq\epsilon\lesssim_{M, S, \xs} 1\), and \(0<\epsilon_{0}\lesssim_{\epsilon, S, \xs, M} \!\!1\). For any initial datum
\(v_{0}\in H^{\xs+\epsilon}(\R^{d})\) and
\(f\in H^{S+\epsilon}(\R^{d})\) with corresponding randomization $\rf$ (as
defined in \eqref{eq:randomization}),  let \(\rz_{n}\) be as
defined in \eqref{eq:def:zn} and let \(v\mapsto\mc{I}(v)\) be as in
\eqref{eq:iteration-map}.

Then, there exist large constants \(C, c>0\) depending on 
$\epsilon, \epsilon_0,  S, \xs,$ and \(M\), such that the following hold\footnote{The $\epsilon_0$ dependence is implicit in the $X$ and $Y$ norms.}. 
\begin{description}
\item[Almost sure time-continuity of the correction term] Almost surely, if \(v\) is a fixed point of \(\mc{I}\) in \(X^{\xs}([0, T])\) for some \(T\in(0, \infty]\), then 
\[\eqnum\label{eq:v-classical-space}
v\in C_t^{0}\big([0, T], H_x^{\xs+\epsilon}(\R^{d})\big).
\]
\item[Almost sure uniqueness of the solution] Almost surely, any two fixed \\points \(v_{j}\in X^{\xs}([0, T_{j}])\), \(j\in\{1, 2\}\) of $\mc{I}$ coincide  on \(\big[0, \min(T_{1}, T_{2})\big]\).

\item[Almost sure local existence of solutions] 
Assume that a random variable $v_0$ satisfies almost surely 
$\|v_0\|_{H^{\xs + \epsilon}_x(\R^d)} < \infty$. Then,
there is a random variable $T\in(0, \infty]$ such that almost surely
\(\mc{I}\) has a unique fixed point on \(X^{\xs}([0, T])\). Furthermore,
\[\eqnum\label{eq:probability-of-time-of-existence}
\mathbb{P}\Big(  T <  \lambda  \Big)< C \exp \Big(-c \frac{\lambda^{-\sfrac{6}{cM}}}{\|f\|_{H_x^{S+\epsilon}(\R^{d})}^{2}} \Big)
+ \mathbb{P} \Big( \frac{\lambda^{-3/c}}{3} \leq   \|v_{0}\|_{H^{\xs+\epsilon}(\R^{d})}  \Big) \,.
\]

\end{description}
\end{proposition}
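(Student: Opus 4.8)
The plan is to follow the blueprint of \cite[Theorem~1.6]{CFU2}: first convert the probabilistic bounds on the explicit terms $\rz_{n}$ into almost sure membership in the relevant evolution spaces, and then run the deterministic contraction scheme of \Cref{lem:iteration-map-bound} on the randomized data. Summing \eqref{eq:tree-bound-probabilistic} over the finite set $\TT_{n}$ shows that for every odd $n\le M$ one has $\rz_{n}=\sum_{\tau\in\TT_{n}}R_{\tau}[\rf]\in Y^{\mu(n,S)}(\R)$ almost surely, with a tail bound of the shape $\mathbb{P}\big(\|\rz_{n}\|_{Y^{\mu(n,S)}(\R)}>\rho\big)\lesssim \exp\big(-c\,\rho^{2/n}\|f\|_{H^{S+\epsilon}_{x}}^{-2}\big)$; since $\mu(\cdot,S)$ is nondecreasing (the Claim preceding \Cref{lem:mu-inductive-property}) and $\mu(1,S)=S$, this gives $\rz_{\le M}\in Y^{S}(\R)$ almost surely. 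For the leftover forcing I would apply the trilinear estimate \eqref{eq:YYYg} to each summand $\rz_{n_{1}}\overline{\rz_{n_{2}}}\rz_{n_{3}}$ of $[\rz,\rz,\rz]_{>M}$: by \Cref{lem:mu-inductive-property} the admissible $zzz$ exponent dominates $\mu(n_{1}+n_{2}+n_{3},S)\ge\mu(M+2,S)\ge\xs$, so, choosing $\epsilon$ small so that $\xs+\epsilon$ is still below this value, $[\rz,\rz,\rz]_{>M}\in X^{*,\xs+\epsilon}(\R)$ almost surely, again with an exponential tail bound whose worst exponent is $2/M$. Fix a full-measure set $\Omega_{0}$ on which all of these finitely many quantities are finite; all three assertions are then deterministic statements on $\Omega_{0}$.

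For \textbf{time-continuity}: a fixed point $v$ of $\mc{I}$ in $X^{\xs}([0,T])$ solves the Duhamel equation with forcing $h=\Phi_{\rz_{\le M}}[v]+[\rz,\rz,\rz]_{>M}$. By \Cref{prop:trilinear-estimate} (cases \eqref{eq:YYXg}, \eqref{eq:YXXg}, \eqref{eq:XXXg}, whose exponent hypotheses follow from $\xs\le\mu(M+2,S)$ and the formula for $\mu$, since $\mu(M+2,S)\le\min(S+\sigma-1,\,2S+\tfrac{3\sigma}{4}-1)$) together with $v\in X^{\xs}$ and $\rz_{\le M}\in Y^{S}$, this $h$ lies in $X^{*,\xs+\epsilon}(\R)$, and \Cref{cor:linear-estimate-adjoint-and-continuity} then yields \eqref{eq:v-classical-space} after relabelling $\epsilon$.

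For \textbf{uniqueness}: given fixed points $v_{1},v_{2}$ on $[0,T_{1}],[0,T_{2}]$, set $T=\min(T_{1},T_{2})$ and $D=\|v_{1}\|_{X^{\xs}([0,T])}+\|v_{2}\|_{X^{\xs}([0,T])}+\|\rz_{\le M}\|_{Y^{S}(\R)}<\infty$. On any subinterval $[t_{0},t_{0}+\delta]\subseteq[0,T]$ the difference $v_{1}-v_{2}$, if $v_{1}=v_{2}$ on $[0,t_{0}]$, solves the integral equation based at $t_{0}$, so the Lipschitz bound \eqref{eq:iteration-map-lipschitz} for the map $\mc{K}$ of \eqref{eq:def:delayed-iteration-map} (which is stated for arbitrary base point $t_{0}\ge0$ precisely for this purpose) gives $\|v_{1}-v_{2}\|_{X^{\xs}([t_{0},t_{0}+\delta])}\le C\langle\delta^{-1}\rangle^{-c}D^{2}\|v_{1}-v_{2}\|_{X^{\xs}([t_{0},t_{0}+\delta])}$; here I use only that the $X^{\xs}$ norm is monotone under restriction, not any continuity of it. Choosing $\delta$ small enough that $C\langle\delta^{-1}\rangle^{-c}D^{2}<1$ forces $v_{1}\equiv v_{2}$ on $[t_{0},t_{0}+\delta]$, and since $\delta$ depends only on $D$ we tile $[0,T]$ by finitely many such intervals to conclude $v_{1}\equiv v_{2}$ on $[0,T]$.

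For \textbf{local existence and the probability estimate}: on $\Omega_{0}$ put $A\eqd\|\rz_{\le M}\|_{Y^{S}(\R)}+\|[\rz,\rz,\rz]_{>M}\|_{X^{*,\xs+\epsilon}(\R)}$ and apply \Cref{lem:iteration-map-bound} with $z=\rz_{\le M}$, $h=[\rz,\rz,\rz]_{>M}$ and $\delta_{0}=1$: $\mc{I}$ has a unique fixed point on $X^{\xs}([0,T])$ whenever $T<T_{1}\big(\|v_{0}\|_{H^{\xs+\epsilon}},A\big)\gtrsim\big\langle 1+\|v_{0}\|_{H^{\xs+\epsilon}}+A\big\rangle^{-3/c}$, by \eqref{eq:contraction-time-bound}. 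Letting $T$ be the maximal such time, $\{T<\lambda\}\subseteq\{1+\|v_{0}\|_{H^{\xs+\epsilon}}+A\gtrsim\lambda^{-c/3}\}$ (after renaming $c$), hence is contained in $\{\|v_{0}\|_{H^{\xs+\epsilon}}\gtrsim\lambda^{-3/c}\}\cup\{A\gtrsim\lambda^{-3/c}\}$; the first event gives the second term of \eqref{eq:probability-of-time-of-existence}, while for the second I bound $\mathbb{P}(A>\lambda^{-3/c})$ by the Step-1 tail estimates, using that $A$ is controlled by a fixed finite sum of the quantities $\|\rz_{n}\|_{Y^{\mu(n,S)}}$ ($n\le M$) and of their triple products, so at least one such factor must exceed a power $\lambda^{-3/(cM)}$, producing $C\exp\big(-c\,\lambda^{-6/(cM)}\|f\|_{H^{S+\epsilon}_{x}}^{-2}\big)$.

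I expect the main nuisance to be the $\epsilon$-bookkeeping — ensuring that the strict exponent conditions of \Cref{lem:iteration-map-bound} ($\xs<S+\sigma-1$, and $\xs<2S+\tfrac{3\sigma-4}{4}$ when $\xs>\tfrac{\sigma}{4}$) and of \Cref{prop:trilinear-estimate} survive after replacing $S,\xs$ by $S+\epsilon,\xs+\epsilon$, which is exactly where one invokes that these follow from $\xs\le\mu(M+2,S)$ and the explicit formula for $\mu$ — together with phrasing the uniqueness tiling so that it relies only on monotonicity of the $X^{\xs}$ norm under restriction and on the $\langle\delta^{-1}\rangle^{-c}$ gain.
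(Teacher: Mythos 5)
Your proposal is correct and follows essentially the same route as the paper: almost-sure finiteness of \(\|\rz_{\leq M}\|_{Y^{S}(\R)}\) and \(\|[\rz,\rz,\rz]_{>M}\|_{X^{*,\mu(M+2,S)}(\R)}\) via \Cref{prop:tree-bound-probabilistic} and \Cref{lem:mu-inductive-property}, time-continuity via \Cref{cor:linear-estimate-adjoint-and-continuity} together with the bound \eqref{eq:F-bound}, and uniqueness/existence via the delayed map \(\mc{K}\) and \Cref{lem:iteration-map-bound}. Your uniform-step tiling for uniqueness is only a cosmetic variant of the paper's sup-of-coincidence-time contradiction (both hinge on the base-point-\(t_{0}\) Lipschitz estimate \eqref{eq:iteration-map-lipschitz} and on \eqref{eq:v-classical-space} to make sense of \(v_{j}(t_{0})\)), and your sketch of \eqref{eq:probability-of-time-of-existence} — which the paper leaves implicit — is sound up to constant bookkeeping: a triple product exceeding \(\lambda^{-3/c}\) only forces one factor above roughly \(\lambda^{-1/c}\) (not \(\lambda^{-3/(cM)}\)), giving a priori the exponent \(\lambda^{-2/(cM)}\), which is then reconciled with the stated \(\lambda^{-6/(cM)}\) by the freedom to choose the final constant \(c\).
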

\begin{proof}[Proof of Proposition \ref{prop:v-solution}]
First, we show that if $v \in X^{\xs} (I)$, then we have that \(\mc{I}(v) \in C_t^{0}\big(I, H^{\xs+\epsilon}(\R^{d})\big)\). Indeed, by Corollary \ref{cor:linear-estimate-adjoint-and-continuity}
with $h = \Phi_{\rz_{\leq M}}[v]+[\rz, \rz, \rz]_{>M}$, 
we have
\[
\begin{aligned}
\|\mc{I}(v)\|_{C_t^0(I, H_x^{\xs +\varepsilon} (\R^d))} &
\lesssim \|v_0\|_{H^{\xs +\varepsilon} (\R^d)}
\\ & \qquad + \|\Phi_{\rz_{\leq M}}[v]\|_{X^{\ast , \xs +2\varepsilon} (I)}+\|[\rz, \rz, \rz]_{> M}\|_{X^{\ast, \xs +2\varepsilon}(I)} \,.
\end{aligned}
\]
Since $\xs \leq \mu(M+2, S)$, then  $\xs$ satisfies \eqref{eq:asoxs} and by
\eqref{eq:F-bound} we obtain
\[
 \big\|\Phi_{\rz_{\leq M}}[v]\big\|_{X^{\ast , \xs +2\varepsilon} (I)} \lesssim \big(\|v\|_{X^{\xs} (I)}+\|\rz_{\leq M}\|_{Y^S (\R)}\big)^2 \|v\|_{X^s (I)} .
\]
On the other hand, as a consequence of Proposition \ref{prop:tree-bound-probabilistic} and proceeding as in \cite[Lemma $6.3$]{CFU}, there is \(\Omega_{0} \subset \Omega\) with \(\mbb{P}(\Omega_{0})=1\) such that on $\Omega_{0}$
\[\eqnum\label{eq:almost-sure-multirandom-finite}
\big\|\rz_{\leq M}\big\|_{Y^{S}(\R)}+\big\|[\rz, \rz, \rz]_{>M}\big\|_{X^{*, \mu(M+2, S)}(\R)}< \infty.
\]
Also,  almost surely, it holds that 
\[
\big\|[\rz, \rz, \rz]_{> M}\big\|_{X^{\ast, \xs +2\varepsilon}(I)}\leq\|[\rz, \rz, \rz]_{> M}\|_{X^{\ast, \mu(M+2, S)}(I)}<\infty
\]
provided that \(\epsilon>0\) is sufficiently small such that \(\xs + 2\varepsilon<\mu(M+2, S)\). This proves that
 \(\mc{I}(v) \in C_t^{0}\big(I, H^{\xs+\epsilon}(\R^{d})\big)\).

\textbf{Almost sure uniqueness of the solution.}  For any two fixed points 
\(v_{j}\in X^{\xs}([0, T_{j}])\), \(j\in\{1, 2\}\)
of the map \(v\mapsto\mc{I}(v)\), set \(T_{\cap}=\min(T_{1}, T_{2})\) and \(I\eqd[0, T_{\cap}]\). 
Define a random variable
\[\eqnum\label{eq:uniqueness-contradiction-time}
t_{*}\eqd\sup\big(t\in[0, T_{\cap}] \st v_{1}(s)= v_{2}(s) \text{ for all } s\in[0, t] \big).
\]
Assume by contradiction that \(t_{*}<T_{\cap} \). By \eqref{eq:v-classical-space} we have that $v_{1}, v_{2}\in C_t^{0}\big(I,
H_x^{\xs+\epsilon}(\R^{d})\big)$.
Also, 
 \(v_{1}(s)=v_{2}(s)\) for \(s\in[0, t_{*}]\subset I\) and  we
define \(v_{0}\eqd v_{1}(t_{*})= v_{2}(t_{*})\in H^{\xs+\epsilon}(\R^{d})\). Define a random variable
\[\eqnum\label{eq:dfoko}
K_0 \eqd \max\big(\sup_{t \in I}\|v_1(t)\|_{H_x^{\xs+\epsilon}(\R^d)}, \sup_{t \in I}\|v_2(t)\|_{H_x^{\xs+\epsilon}(\R^d)}\big)<\infty\,.    
\]
From the group properties of the linear Schrödinger evolution \(t\mapsto
e^{it\Lap}\) for all \(t \in [t_{*}, T_{\cap}]\) we have that
\[
v_{j}(t)=e^{i(t-t_{*})\Lap}v_{0}\mp i\int_{t_{*}}^{t}e^{i(t-s)\Lap}(\Phi_{\rz_{\leq M}}[v_{j}] + [\rz, \rz, \rz]_{> M}) \, ds \qquad j = 1, 2\,.
\]
Using \(\mu(M+2, S)>\xs+\epsilon\), \eqref{eq:almost-sure-multirandom-finite}
 and the monotonicity of the norms \(Y^{S}(I)\) and \(X^{*, \mu(M+2, S)}(I)\) with respect to inclusion of time intervals, and  Lemma \ref{lem:iteration-map-bound}, we deduce that there exists a random variable \(T>0\) such that \(v_{1}(s)=v_{2}(s)\) for \(s\in\big[t_{*}, \min(t_{*}+T, T_{\cap})\big]\). This contradicts our assumption \(t_{*}<T_{\cap}\) and the desired assertion follows.

\textbf{Almost sure local existence of solutions.} Define the random variable $T = T_{\delta_0 = 1}$,
where $T_{\delta_0}$ is as in \eqref{eq:contraction-time-bound}. Note that since $[\rz, \rz, \rz]_{> M}$, and $\rz_{\leq M}$ are random variables and norms depend continuously on the argument, $T$ is also a random variable. Thanks to \eqref{eq:almost-sure-multirandom-finite}
and $\|v_0\|_{H^{\xs + \epsilon}_x(\R^d)} < \infty$ 
it holds that \(T>0\) and the existence of fixed point in \(X^{\xs}([0, T])\) follows from Lemma \ref{lem:iteration-map-bound}.

\end{proof}

\begin{proof}[Proof Of Theorem \ref{thm:main}]

The assertion follows from Proposition \ref{prop:v-solution} once we 
satisfy the assumption $\xs \leq \mu(\kappa+2, S)$, which, if we choose $\xs$ sufficiently close to $\xs_{c}$ follows from 

\[
\frac{d - \sigma}{2} < \min\Big\{ (\kappa + 2)S + \frac{(\kappa + 1)(\sigma-2)}{4}, S + \sigma - 1, 2S + \frac{3}{4}\sigma - 1\Big\} \,.
\]

Solving for $S$, we equivalently have 

\[\eqnum\label{eq:lbons}
S > \max \Big\{ \frac{d - \sigma}{2(\kappa + 2)} - \frac{(\kappa + 1)(\sigma-2)}{4(\kappa + 2)}, \frac{d - 3\sigma + 2}{2}, \frac{2d - 5\sigma + 4}{8} \Big\} \,.
\]

Note that 
\[
\frac{d - 3\sigma + 2}{2} \geq \frac{2d - 5\sigma + 4}{8}
\qquad \textrm{if and only if} \quad d \geq \frac{7\sigma - 4}{2} \,.
\]
Also, as $\kappa \to \infty$, then the first term in the maximum converges to $\frac{2-\sigma}{4}$. It is standard to show that 
\[
\frac{d - 3\sigma + 2}{2} \geq \frac{2-\sigma}{4}
\qquad \textrm{if and only if} \quad d \geq \frac{5\sigma - 2}{2} 
\]
and 
\[
\frac{2d - 5\sigma + 4}{8} \geq \frac{2-\sigma}{4}
\qquad \textrm{if and only if} \quad d \geq \frac{3\sigma}{2} \,. 
\]
Since $\sigma \geq 2$ implies $\frac{7\sigma - 4}{2} \geq \frac{5\sigma - 2}{2} \geq \frac{3\sigma}{2}$, the assertion follows. 


\end{proof}

\appendix\label{firstApp}

\section{The generalized Schrödinger maximal operator in 1 dimension} \label{sec:app-local}

Here we state and prove a global in space boundedness result for the
generalized 1D Schrödinger maximal operator.
\begin{proposition} \label{prop:local-shiraki} Let $\sigma\geq 2$ and
$R>1$, and suppose that for some \(C_0,C_{L}\geq 1\) the function
$m_{L} \in C^2 \big( (R,C_0 R) ; \R \big)$ satisfies the bounds
  \[\eqnum\label{eq:local-shiraki-multiplier-conditions} 
  \begin{aligned}[t]
       & C_{L}^{- 1} | \xi |^{\sigma-1} \leq \left|  \partial m_{L} (\xi)  \right| \leq C_{L} | \xi |^{\sigma-1},
       \\
       & C_{L}^{- 1} | \xi |^{\sigma-2} \leq \left| \partial^2 m_{L} (\xi)  \right| \leq C_{L} | \xi |^{\sigma-2},
     \end{aligned} 
     \]
  for all $\xi$ with $\xi \in (R, C_0 R)$.
  
  Then, for any $f\in L^{2}(\R)$ with $\spt(\FT{f}) \subset(R, C_0 R)$, and for any $s>\frac{\sigma}{4}$ it holds that
  \[\eqnum\label{eq:conclusion1-29may24}
      \Big\| \sup_{t \in [- 1, 1]} e^{i t L } f \Big\|_{L^2( \R )} \lesssim_{s} R^{s} \| f\|_{L^2( \R )},
  \]
  with an implicit constant depending only on  $s$, $\sigma$,
  $C_0$, and $C_L$, but not on $R$ or $f$.

  The same claim holds if \(R<-1\) and the interval \((R,C_{0}R)\) is
  replaced by \((C_{0}R,R)\).
\end{proposition}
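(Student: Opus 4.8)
The plan is to reduce, by a frequency rescaling, to a \emph{normalized} one–dimensional maximal estimate for a non-degenerate phase over a long time interval, and then to obtain the sharp $L^2$ bound for that estimate by a $TT^{*}$/oscillatory–integral argument combined with a local–to–global transference.

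\emph{Step 1: rescaling.} Assume $R>1$; the case $R<-1$ reduces to it by the reflection $\xi\mapsto-\xi$ applied to both $\FT f$ and $m_L$. Writing $\xi=R\eta$ with $\eta\in(1,C_0)$ and setting $\tilde m(\eta):=R^{-\sigma}m_L(R\eta)$, the chain rule and \eqref{eq:local-shiraki-multiplier-conditions} give $|\tilde m'(\eta)|=R^{1-\sigma}|m_L'(R\eta)|\in[C_L^{-1}\eta^{\sigma-1},C_L\eta^{\sigma-1}]$ and $|\tilde m''(\eta)|=R^{2-\sigma}|m_L''(R\eta)|\in[C_L^{-1}\eta^{\sigma-2},C_L\eta^{\sigma-2}]$, so on the bounded interval $(1,C_0)$ both $|\tilde m'|$ and $|\tilde m''|$ are comparable to $1$ with constants depending only on $\sigma,C_0,C_L$. (Subtracting from $\tilde m$ its value at a fixed point of $(1,C_0)$ changes $e^{i\tau\tilde m(D)}$ only by a unimodular factor and renders $\tilde m$ bounded, which is convenient below.) Defining $G$ by $\FT G(\eta)=\FT f(R\eta)$, one checks $e^{itL}f(x)=R\,\big(e^{itR^{\sigma}\tilde m(D)}G\big)(Rx)$, hence, substituting $y=Rx$ and $\tau=tR^{\sigma}$,
\[
\big\|\sup_{|t|\leq1}|e^{itL}f|\big\|_{L^2(\R)}=R^{1/2}\big\|\sup_{|\tau|\leq T}|e^{i\tau\tilde m(D)}G|\big\|_{L^2(\R)},\qquad \|f\|_{L^2}=R^{1/2}\|G\|_{L^2},\qquad T:=R^{\sigma}.
\]
Since $R^s=T^{s/\sigma}$ and $s>\tfrac{\sigma}{4}\iff\tfrac{s}{\sigma}>\tfrac14$, the estimate \eqref{eq:conclusion1-29may24} is equivalent to
\[
\big\|\sup_{|\tau|\leq T}|e^{i\tau\tilde m(D)}G|\big\|_{L^2(\R)}\lesssim_{\epsilon}T^{\frac14+\epsilon}\|G\|_{L^2}
\]
for every $G$ with $\FT G$ supported in $(1,C_0)$ and every $\epsilon>0$.

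\emph{Step 2: the normalized maximal estimate.} I would first prove the \emph{local} bound $\big\|\sup_{|\tau|\leq T}|e^{i\tau\tilde m(D)}G|\big\|_{L^2(B)}\lesssim T^{1/4}\|G\|_{L^2}$ for balls $B$ of radius $\sim T$, by the $TT^{*}$ method applied to the linearized operator $G\mapsto e^{i\tau(x)\tilde m(D)}G$ with $\tau\colon B\to[-T,T]$ measurable. Its composition with the adjoint has kernel $\int\psi(\xi)e^{2\pi i(x-y)\xi+i(\tau(x)-\tau(y))\tilde m(\xi)}\,d\xi$, $\psi\in C_c^{\infty}(1,C_0)$ equal to $1$ on $\spt\FT G$; the \emph{lower} bound $|\tilde m''|\gtrsim1$ yields, by van der Corput, the dispersive decay $|\tau(x)-\tau(y)|^{-1/2}$ when the phase has a critical point, while the lower bound $|\tilde m'|\gtrsim1$ yields, by repeated integration by parts, rapid decay of the kernel away from the set where $|x-y|\sim|\tau(x)-\tau(y)|$. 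Feeding this kernel estimate into the standard Carleson--Sjölin bookkeeping (dyadic decomposition in $|x-y|$, Schur test on each piece) produces the exponent $1/4$; this is essentially the content of the one–dimensional local maximal estimate of Shiraki type. To pass from balls to all of $\R$ I would then invoke a local–to–global transference principle of Rogers--Villarroya type: a local maximal bound over a family of finitely–overlapping unit balls transfers to a global $L^2$ bound at the cost of an arbitrarily small $T^{\epsilon}$ loss. Combining with Step 1 gives $\|\sup_{|t|\leq1}|e^{itL}f|\|_{L^2}\lesssim R^{\sigma(1/4+\epsilon)}\|f\|_{L^2}$, and choosing $\epsilon$ with $\sigma\epsilon<s-\tfrac{\sigma}{4}$ closes the proof.

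\emph{Main obstacle.} The difficulty is the \emph{sharp} power. The elementary argument, namely $\sup_{|\tau|\leq T}|e^{i\tau\tilde m(D)}G|^{2}\leq|G|^{2}+2\int_{-T}^{T}|e^{i\tau\tilde m(D)}G|\,|\tilde m(D)e^{i\tau\tilde m(D)}G|\,d\tau$ followed by Cauchy--Schwarz and Plancherel, only yields $T^{1/2}$, i.e.\ $R^{\sigma/2}$, missing $R^{\sigma/4}$ by a full square root. Recovering that square root forces genuine use of the curvature of the symbol (the lower bound on $m_L''$) through the oscillatory–integral kernel estimate, and the subsequent local–to–global step is exactly where the $\epsilon$–loss, hence the strictness $s>\sigma/4$, enters. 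The remaining points — reducing to compactly supported data using that the bulk of $e^{i\tau\tilde m(D)}G$ propagates at unit speed, handling the regime $|\tau|\lesssim1$ where no dispersion is available, and verifying that all constants depend only on $s,\sigma,C_0,C_L$ and not on $R$ — are routine once the rescaling of Step 1 is in place.
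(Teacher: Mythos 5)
Your proposal is correct in substance, but it takes a genuinely different route from the paper. The paper keeps the original frequency scale and reduces everything to two black boxes, \Cref{thm:Shiraki1} (Shiraki's unit-ball one-dimensional maximal estimate) and \Cref{thm:rogers1} (Rogers' local-to-global transference, which is what converts the local exponent $1/4$ into the global exponent $\sigma/4$, cf.\ \Cref{cor:shiraki-and-rogers-global}); the actual work is then the extension construction of \Cref{lem:multiplier-extension-lemma-step1}, which produces a symbol on all of $\R$ agreeing with $m_L$ on $(R,C_0R)$ and satisfying the quoted hypotheses with constants independent of $R$, together with the case analysis on the signs of $\partial m_L,\partial^2 m_L$ and on positive/negative frequencies. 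You instead rescale to unit frequency and long time $|\tau|\leq T=R^{\sigma}$ and prove the bound directly by linearization, $TT^{*}$, van der Corput/non-stationary phase and a Schur test: since $\tilde m'$ and $\tilde m''$ are continuous, nonvanishing, hence of fixed sign and of size $\approx 1$ on the compact frequency interval, the kernel satisfies $|K(x,y)|\lesssim(1+|x-y|)^{-1/2}$ in the region $|x-y|\lesssim T$ and decays rapidly for $|x-y|\gg T$, so the Schur test gives $\|TT^{*}\|\lesssim T^{1/2}$, i.e.\ the maximal bound $T^{1/4}=R^{\sigma/4}$. This buys a self-contained argument that dispenses with the extension lemma and the cited theorems, and in fact reaches the endpoint exponent $R^{\sigma/4}$ (no $\epsilon$-loss), whereas the paper's route outsources the oscillatory-integral analysis to the literature at the cost of the extension construction.

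One caveat: your concluding transference step is misstated. A local maximal bound over finitely overlapping \emph{unit} balls does not pass to a global $L^2$ bound with only a $T^{\epsilon}$ loss — that exponent jump (from $s_0$ to $\sigma s_0$ when $d=1$, $q=2$, $r=\infty$) is precisely the content of \Cref{thm:rogers1}, and taken literally your scheme over unit balls would yield the false global bound $T^{\epsilon}$. What you actually need, and have, is the passage from balls of radius $\sim T$ to all of $\R$; this step is harmless because the data is frequency-localized at unit scale and your own kernel estimate already gives rapid decay for $|x-y|\gg T$, so the Schur test can simply be run globally and no ball decomposition (and no transference theorem) is required — alternatively, a routine almost-orthogonality patching over radius-$T$ balls works. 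With that correction the argument closes.
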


The proof of this statement relies on a local-in-space version
obtained in \cite[Theorem
1]{shirakiPointwiseConvergenceRestricted2020}, which we state as
\Cref{thm:Shiraki1} below. Next, a local-to-global argument due to
\cite{rogersLocalSmoothingEstimate2008}, which we state as \Cref{thm:rogers1}, allows us to
obtain the global-in-space 1D Schrödinger maximal operator bounds
stated in \Cref{cor:shiraki-and-rogers-global}. To prove
\Cref{prop:local-shiraki} it remans to show that a symbol defined
locally on \((R,C_{0}R)\) can be extended to a symbol on \(\R\)
satisfying the conditions of \Cref{thm:Shiraki1}. This procedure is
illustrated in \Cref{lem:multiplier-extension-lemma-step1}.

Next we state the two theorems from \cite{shirakiPointwiseConvergenceRestricted2020} and \cite{rogersLocalSmoothingEstimate2008} discussed above, then we state and prove \Cref{lem:multiplier-extension-lemma-step1}, and we conclude by deducing \Cref{prop:local-shiraki}.
\todoI{There is another point I'd like to make: in the appendices, the time intervals are always $[-1,1]$, whereas in the text we use a generic $I$. This won't make a difference in the end, but we should address it sometime.}.

\begin{theorem}[{\cite[Theorem 1]{shirakiPointwiseConvergenceRestricted2020}}] \label{thm:Shiraki1} 
Let $d=1$ and suppose that $\mLap\in C^{2}(\R)$ satisfies 
satisfies
\[\eqnum\label{eq:Shiraki-symbol-conditions}
\begin{aligned}[t]
|\partial^{2}\!\mLap(\xi)| &\geq C_{L}|\xi|^{-1}, \quad\forall |\xi|\geq 1
\\
|\partial^{2}\!\mLap(\xi)| &\geq C_{L}|\xi|^{-1}|\partial\!\mLap(\xi)|, \quad\forall |\xi|\geq 1.
\\
\end{aligned}
\]
for some $C_{L}\geq 0$. Then, for any $q \in [1, 4]$ and $s>\frac{1}{4}$, there is a constant $C_{q, s}$ such that
\begin{equation*}
    \Big\| \sup_{t\in [-1, 1]}\big(e^{it\Lap}f\big) \Big\|_{L^{q}({B}_{1}(0))} 
    \leq C_{q, s} \Big\| f\Big\|_{H^{s}(\R)}.
\end{equation*}
\end{theorem}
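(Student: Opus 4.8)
The plan is to combine a Littlewood--Paley decomposition and a parabolic rescaling, reducing to a unit-frequency statement, then to linearize the maximal operator and run a $TT^{*}$ / oscillatory-integral analysis in which the two hypotheses \eqref{eq:Shiraki-symbol-conditions} enter. First I would localize in frequency: writing $f=P_{\leq1}f+\sum_{N}P_{N}f$ with $N$ dyadic $\geq2$, the low-frequency part is trivial because $\mLap\in C^{2}$ is bounded with bounded derivatives on $\{|\xi|\leq1\}$, so $\sup_{|t|\leq1}|e^{it\Lap}P_{\leq1}f|$ is pointwise $\lesssim\|P_{\leq1}f\|_{L^{2}}$. Since $B_{1}(0)$ has finite measure, $L^{4}(B_{1}(0))\hookrightarrow L^{q}(B_{1}(0))$ for $q\leq4$, so it suffices to treat $q=4$; and since $\sum_{N}N^{1/4}\|P_{N}f\|_{L^{2}}\lesssim_{\epsilon}\|f\|_{H^{1/4+\epsilon}}$ by Cauchy--Schwarz in $N$, it suffices to prove, for every dyadic $N\geq2$ and every $g$ with $\spt(\FT{g})\subset\{|\xi|\sim N\}$, the \emph{sharp} frequency-localized bound
\[
\Big\|\sup_{|t|\leq1}\big|e^{it\Lap}g\big|\Big\|_{L^{4}(B_{1}(0))}\lesssim N^{1/4}\|g\|_{L^{2}}
\]
with constant independent of $N$; the $N^{\epsilon}$ room in the theorem is spent entirely on the Littlewood--Paley sum. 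The change of variables $\xi=N\eta$, $y=Nx$, $s=N^{\sigma}t$, with $\psi(\eta)\eqd N^{-\sigma}\mLap(N\eta)$ --- which satisfies $|\partial\psi|,|\partial^{2}\psi|\approx1$ on $\{|\eta|\sim1\}$ by \eqref{eq:Shiraki-symbol-conditions} (together with the matching upper bounds present in the applications, cf.\ \eqref{eq:local-shiraki-multiplier-conditions}) --- reduces this to the $N$-uniform estimate
\[
\Big\|\sup_{|s|\leq N^{\sigma}}\big|e^{is\psi(D)}g\big|\Big\|_{L^{4}(B_{N}(0))}\lesssim\|g\|_{L^{2}}\qquad\text{for }\spt(\FT{g})\subset\{|\eta|\sim1\}.
\]

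Next I would linearize: choosing a measurable $s:B_{N}(0)\to[-N^{\sigma},N^{\sigma}]$ with $\sup_{|s|\leq N^{\sigma}}|e^{is\psi(D)}g(y)|\leq2|e^{is(y)\psi(D)}g(y)|$, it suffices to bound $Tg(y)\eqd\chi(D)e^{is(y)\psi(D)}g(y)$ (with $\chi$ a bump on $\{|\eta|\sim1\}$) from $L^{2}$ to $L^{4}(B_{N}(0))$ with norm $\lesssim1$ uniformly in $s(\cdot)$ and $N$. For the $L^{2}(B_{N}(0))$ bound one uses $TT^{*}$; for the $L^{4}(B_{N}(0))$ bound one writes $\|Tg\|_{L^{4}}^{2}=\|(Tg)\overline{(Tg)}\|_{L^{2}}$, splits $\{|\eta|\sim1\}\times\{|\eta|\sim1\}$ into diagonal and off-diagonal (Whitney) pieces, and on each off-diagonal piece applies a bilinear estimate for $(T_{\nu}g)\overline{(T_{\nu'}g)}$ that exploits the separation of the frequency supports of $T_{\nu}g,T_{\nu'}g$.

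The technical core is the oscillatory analysis of the resulting kernels. The $TT^{*}$-kernel is $K(y,y')=\int e^{i[2\pi(y-y')\eta+(s(y)-s(y'))\psi(\eta)]}\chi(\eta)^{2}\,d\eta$; the second-derivative van der Corput lemma, using the curvature lower bound $|\partial^{2}\psi|\approx1$ (the first hypothesis in \eqref{eq:Shiraki-symbol-conditions} after rescaling), gives $|K(y,y')|\lesssim\min\!\big(1,|s(y)-s(y')|^{-1/2}\big)$, while integration by parts --- valid where $2\pi(y-y')+(s(y)-s(y'))\partial\psi(\eta)$ does not vanish on $\{|\eta|\sim1\}$ --- gives rapid decay off a neighbourhood of the stationary locus, whose location and transversal size are controlled by $|\partial\psi|\approx1$ and the ratio bound $|\partial^{2}\psi|\gtrsim|\partial\psi|$ (the second hypothesis in \eqref{eq:Shiraki-symbol-conditions}). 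Feeding these bounds into a Schur test organized by a dyadic decomposition in $|s(y)-s(y')|$ and in the distance to the stationary locus (the Carleson--Sjölin bookkeeping) gives $\|T\|_{L^{2}\to L^{2}(B_{N}(0))}\lesssim1$; the analogous two-variable computation, in which the frequency separation supplies a non-degenerate Jacobian for the phase of the bilinear kernel, gives the $L^{4}(B_{N}(0))$ bound. Interpolating in $q$, summing the Littlewood--Paley pieces, and restoring the low frequencies completes the argument.

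The step I expect to be the main obstacle is the $L^{4}$ bilinear estimate, uniform in $N$ and in $s(\cdot)$ --- morally a Fourier-restriction-type estimate for the curve $(\eta,\psi(\eta))$ --- since everything sits at the endpoint $s=1/4$ and there is no room to lose powers of $N$ anywhere. The precise role of \eqref{eq:Shiraki-symbol-conditions} is exactly to make the (bi)linear oscillatory analysis uniform over all the rescalings: the first condition provides the curvature bound feeding van der Corput, and the second controls $\partial\psi/\partial^{2}\psi$ so that the stationary-phase geometry --- and hence the Whitney and time bookkeeping --- is the same as for the classical parabola $\psi(\eta)=\eta^{2}$ treated by Carleson and Kenig--Ruiz.
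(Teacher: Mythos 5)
First, note what you are comparing against: the paper does not prove this statement at all — \Cref{thm:Shiraki1} is quoted verbatim as Theorem~1 of \cite{shirakiPointwiseConvergenceRestricted2020} and used as a black box (the paper's own contribution in the appendix is only the extension/localization machinery of \Cref{lem:multiplier-extension-lemma-step1} and the local-to-global step via \Cref{thm:rogers1}). So your task here is really to reprove a cited result of the literature, and your sketch should be measured against Shiraki's hypotheses, not against the stronger conditions available elsewhere in the paper.

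Measured that way, there is a genuine gap in your reduction. The hypotheses \eqref{eq:Shiraki-symbol-conditions} contain no upper bounds on $\partial\mLap$ or $\partial^{2}\mLap$ and no homogeneity degree $\sigma$: they only say $|\partial^{2}\mLap(\xi)|\gtrsim|\xi|^{-1}$ and $|\partial\mLap(\xi)|\lesssim|\xi|\,|\partial^{2}\mLap(\xi)|$ for $|\xi|\geq1$. Your Littlewood--Paley plus parabolic rescaling step, which sets $\psi(\eta)\eqd N^{-\sigma}\mLap(N\eta)$ and asserts $|\partial\psi|,|\partial^{2}\psi|\approx1$ on $\{|\eta|\sim1\}$, silently imports the two-sided bounds of \eqref{eq:local-shiraki-multiplier-conditions}; you acknowledge this in a parenthesis, but with it you are proving a different (weaker) theorem than the one stated, and the ensuing bookkeeping (the choice of time window $|s|\leq N^{\sigma}$, the uniformity of the Whitney/bilinear geometry in $N$) has no meaning without a $\sigma$. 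In addition, the step you yourself flag as the main obstacle — the $N$-uniform endpoint $L^{4}$ bilinear estimate for the curve $(\eta,\psi(\eta))$ — is exactly the content you would need to supply and is not supplied; for the local estimate over $B_{1}(0)$ it is also unnecessary. The proofs in the literature (Sjölin, Kenig--Ruiz, and Shiraki's Theorem~1 itself) proceed without any rescaling: one linearizes the maximal function with a measurable $t(x)\in[-1,1]$, applies a $TT^{*}$ argument on $B_{1}(0)$, and estimates the kernel $K(x,y)=\int e^{i[2\pi(x-y)\xi+(t(x)-t(y))\mLap(\xi)]}\chi(\xi)\,d\xi$ directly by van der Corput's lemma in the form that uses precisely the two conditions \eqref{eq:Shiraki-symbol-conditions}: the first gives the curvature input, and the second controls the first-order term $|\partial\mLap|/(|\xi|\,|\partial^{2}\mLap|)\lesssim1$ that arises when one integrates by parts away from the stationary set, all without upper bounds on the symbol. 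If you want a self-contained proof for this appendix, either follow that route or simply retain the citation as the paper does; as written, your argument neither matches the stated hypotheses nor closes its own key estimate.
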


\begin{theorem}[{\cite[Theorem 13]{rogersLocalSmoothingEstimate2008}}]
\label{thm:rogers1}
Let $q, r\geq 2$. Suppose that $|\partial^{\alpha}\mLap(\xi)|\leq C_{0}|\xi|^{\sigma-|\alpha|}$ and $|\nabla\mLap(\xi)|\geq C_{0}^{-1}|\xi|^{\sigma-1}$ for all $\xi\in\R^{d}\backslash\{0\}$, where $|\alpha|\leq 2$ and $\sigma>1$. Then
\begin{equation*}
    \Big\|\big(e^{it\Lap}f\big)(x)\Big\|_{L^{q}_{x}(\mathbb{B}^{d}, L^{r}_{t}[0, 1])}\leq C_{s}\|f\|_{H^{s}(\R^{d})}
\end{equation*}
holds for all $s>s_{0}$ if and only if
\begin{equation*}
    \Big\|\big(e^{it\Lap}f\big)(x)\Big\|_{L^{q}_{x}(\R^{d}, L^{r}_{t}[0, 1])}\leq C_{s}\|f\|_{H^{s}(\R^{d})}
\end{equation*}
for all $s>\sigma s_{0}-(\sigma-1)\Big(d(\frac{1}{2}-\frac{1}{q})-\frac{\sigma}{r}\Big)$.
\end{theorem}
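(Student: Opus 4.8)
The plan is to reprove the local--global equivalence of \cite{rogersLocalSmoothingEstimate2008} by a Littlewood--Paley decomposition followed by parabolic rescaling; since only the implication ``local $\Rightarrow$ global'' is needed in the sequel, I would carry that direction out and obtain the converse by running the same rescaling in reverse.

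\textbf{Step 1: reduction to a single frequency band.} First I would write $f=P_{\le 1}f+\sum_{\lambda\in 2^{\N}}P_{\lambda}f$, where $\widehat{P_{\lambda}f}$ is supported in $\{|\xi|\sim\lambda\}$. Because $q,r\ge 2$, summing the pieces by the triangle inequality in $L^{q}_{x}L^{r}_{t}$ and by Cauchy--Schwarz in the dyadic parameter shows that, for $B$ equal to $\mathbb B^{d}$ or $\R^{d}$, the bound $\|e^{it\Lap}f\|_{L^{q}_{x}(B)L^{r}_{t}[0,1]}\lesssim_{s}\|f\|_{H^{s}(\R^{d})}$ for all $s>s_{\star}$ is equivalent to the family of frequency-localized bounds
\[
\big\|e^{it\Lap}P_{\lambda}f\big\|_{L^{q}_{x}(B)L^{r}_{t}[0,1]}\lesssim_{\epsilon}\lambda^{\,s_{\star}+\epsilon}\big\|P_{\lambda}f\big\|_{L^{2}(\R^{d})}\qquad(\lambda\in 2^{\N},\ \epsilon>0),
\]
the converse implication being obtained by testing on a single $P_{\lambda}f$ and using $\|P_{\lambda}f\|_{H^{s}}\sim\lambda^{s}\|P_{\lambda}f\|_{L^{2}}$, while the contribution of $P_{\le 1}f$ is immediate since $\mLap\in C^{2}$ near the origin. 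It therefore remains, band by band, to pass from these bounds with $B=\mathbb B^{d}$, $s_{\star}=s_{0}$, to the one with $B=\R^{d}$, $s_{\star}=s_{0}'$.

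\textbf{Step 2: parabolic rescaling.} Fix $\lambda\in 2^{\N}$ and $g$ with $\widehat g$ supported in $\{|\xi|\sim\lambda\}$. Substituting $\xi=\lambda\eta$ in the defining Fourier integral and setting $z=\lambda x$, $\tau=\lambda^{\sigma}t$, I would write $e^{it\Lap}g(x)=\lambda^{d}\,\big(e^{i\tau\widetilde M_{\lambda}}\psi\big)(z)$, where $\psi(z)=\lambda^{-d}g(\lambda^{-1}z)$ has $\widehat\psi$ supported in $\{|\eta|\sim1\}$ and $\|\psi\|_{L^{2}}=\lambda^{-d/2}\|g\|_{L^{2}}$, and $\widetilde M_{\lambda}$ is the Fourier multiplier with symbol $\widetilde m_{\lambda}(\eta):=\lambda^{-\sigma}\mLap(\lambda\eta)$. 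The decisive observation is that $\widetilde m_{\lambda}$ satisfies the \emph{same} hypotheses as $\mLap$ with the \emph{same} constants, since $|\partial^{\alpha}\widetilde m_{\lambda}(\eta)|=\lambda^{|\alpha|-\sigma}|\partial^{\alpha}\mLap(\lambda\eta)|\le C_{0}|\eta|^{\sigma-|\alpha|}$ and $|\nabla\widetilde m_{\lambda}(\eta)|=\lambda^{1-\sigma}|\nabla\mLap(\lambda\eta)|\ge C_{0}^{-1}|\eta|^{\sigma-1}$; hence the local estimate --- read as holding uniformly over the admissible class of symbols, which is the form in which \Cref{thm:Shiraki1} is available --- applies to $\widetilde M_{\lambda}$ with constants independent of $\lambda$. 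Under this rescaling the window $[0,1]$ dilates to $[0,\lambda^{\sigma}]$, the ball $\mathbb B^{d}$ contracts to $B(0,\lambda^{-1})$, and a change of variables in $x$ and $t$ gives
\[
\big\|e^{it\Lap}P_{\lambda}f\big\|_{L^{q}_{x}(\R^{d})L^{r}_{t}[0,1]}=\lambda^{\,d-\frac{d}{q}-\frac{\sigma}{r}}\,\big\|e^{i\tau\widetilde M_{\lambda}}\psi\big\|_{L^{q}_{z}(\R^{d})L^{r}_{\tau}[0,\lambda^{\sigma}]},\qquad\|\psi\|_{L^{2}}=\lambda^{-d/2}\|P_{\lambda}f\|_{L^{2}}.
\]

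\textbf{Step 3: long-time, whole-space reassembly.} Comparing exponents, it then remains to establish, for $\psi$ of unit frequency and $T=\lambda^{\sigma}$, the (far from sharp, but sufficient) long-time global bound
\[
\big\|e^{i\tau\widetilde M_{\lambda}}\psi\big\|_{L^{q}_{z}(\R^{d})L^{r}_{\tau}[0,T]}\lesssim_{\epsilon}T^{\,s_{0}-d(\frac12-\frac1q)+\frac{\sigma}{r}+\epsilon}\,\|\psi\|_{L^{2}},
\]
which, inserted into the identity of Step~2 and after unwinding all powers of $\lambda$, is exactly the frequency-localized global bound with threshold $s_{0}'=\sigma s_{0}-(\sigma-1)\big(d(\tfrac12-\tfrac1q)-\tfrac{\sigma}{r}\big)$. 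To prove it I would tile $[0,T]\times\R^{d}$ into unit-time $\times$ unit-ball cells: the spatial sum is controlled by the rapid decay of the kernel of $e^{i\tau\widetilde M_{\lambda}}$ away from $\{|z|\lesssim\tau\}$ (the group velocity $|\nabla\widetilde m_{\lambda}|$ and the dispersion $\|D^{2}\widetilde m_{\lambda}\|$ are both $\sim1$ at unit frequency), so that well-separated cells are almost orthogonal and, for $q\ge2$, cost only an $\epsilon$-power; for the long time interval one cannot merely add $T$ copies of the unit-time local estimate, so instead one uses that the evolution over a time slab of length $T_{1}$ is, after the dilation $\tau\mapsto T_{1}\tau$, $z\mapsto T_{1}^{1/\sigma}z$ (a genuine parabolic dilation since $T_{1}\widetilde m_{\lambda}(\eta)=\widetilde m_{\mu}(T_{1}^{1/\sigma}\eta)$ with $\mu=\lambda T_{1}^{-1/\sigma}$ and $\widetilde m_{\mu}=\mu^{-\sigma}\mLap(\mu\cdot)$ in the same class), again an evolution of the same structural type but at the larger spectral scale $\sim T_{1}^{1/\sigma}$, where the local estimate yields a gain governed by $s_{0}$ rather than the trivial $T_{1}^{1/r}$. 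Carrying the $L^{r}_{\tau}$ and $L^{q}_{z}$ norms through these rescaled slabs, combining with the spatial almost-orthogonality, and undoing Step~2 yields the displayed bound; feeding it back into Step~1 completes ``local $\Rightarrow$ global'', and the reverse implication is entirely parallel, with $\mathbb B^{d}$ now covered by $\sim\lambda^{(\sigma-1)d}$ rescaled cells on which the global hypothesis is invoked. I expect this reassembly --- arranging the long-time and whole-space summations so that the total loss is exactly the Jacobian exponents of Step~2 together with $s_{0}$ rescaled by the factor $\sigma$, with no spurious power of $\lambda$ surviving --- to be the main obstacle; it is precisely the content of the relevant portion of \cite{rogersLocalSmoothingEstimate2008}, which we quote.
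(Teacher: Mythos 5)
First, a point of comparison: the paper itself gives no proof of \Cref{thm:rogers1} --- it is quoted verbatim as Theorem~13 of \cite{rogersLocalSmoothingEstimate2008} --- and since your proposal ends by deferring the decisive ``reassembly'' to that same reference, what you have written is in substance the citation the paper makes, wrapped in a rescaling sketch. Citing Rogers is fine; but the sketch itself should not be presented as a proof, because its central step does not work as described. Your Steps 1--2 are sound (frequency localization, scale invariance of the symbol class, and the exponent bookkeeping: the long-time bound you isolate in Step~3 is indeed equivalent, after undoing the Step~2 Jacobians, to the global estimate at the threshold $\sigma s_{0}-(\sigma-1)\big(d(\tfrac12-\tfrac1q)-\tfrac{\sigma}{r}\big)$), up to a sign slip: under $z=\lambda x$ the unit ball dilates to $B(0,\lambda)$, it does not contract to $B(0,\lambda^{-1})$.

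The genuine gap is the mechanism in Step~3. Dilating a time slab of length $T_{1}$ by $\tau\mapsto T_{1}\tau$, $z\mapsto T_{1}^{1/\sigma}z$ does not bring the \emph{local} hypothesis into play: it converts the unit-frequency evolution on $\R^{d}\times[0,T_{1}]$ into a frequency-$T_{1}^{1/\sigma}$ evolution on $\R^{d}\times[0,1]$, so for $T_{1}=T=\lambda^{\sigma}$ it literally undoes Step~2 and returns you to the global estimate you are trying to prove; for smaller $T_{1}$ it returns you to the same global problem at an intermediate frequency. The missing idea --- and the actual source of the factor $\sigma$ multiplying $s_{0}$ --- is spatial localization at the group-velocity scale: for frequency-$\lambda$ data one has $|\nabla\mLap|\lesssim\lambda^{\sigma-1}$, so for $t\in[0,1]$ the solution on a ball of radius $\sim\lambda^{\sigma-1}$ is, up to rapidly decaying kernel tails, determined by the data on a comparable ball; summing over a partition of $\R^{d}$ into such balls (using $q\geq 2$) reduces the whole-space estimate to a single ball $B(0,C\lambda^{\sigma-1})\times[0,1]$, and rescaling \emph{that} ball to $\mathbb{B}^{d}$ multiplies the frequency by $\lambda^{\sigma-1}$, so the local hypothesis is applied at frequency $\lambda^{\sigma}$ (giving $\lambda^{\sigma s_{0}}$), the rescaled time interval $[0,\lambda^{-\sigma(\sigma-1)}]$ sits inside $[0,1]$ so no time summation is needed in this direction, and the Jacobian of this single rescaling produces exactly the correction $-(\sigma-1)\big(d(\tfrac12-\tfrac1q)-\tfrac{\sigma}{r}\big)$. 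Your tiling-into-unit-cells-plus-slab-dilation scheme never accesses this frequency boost and so cannot recover the stated exponent; likewise the converse implication is not a matter of covering $\mathbb{B}^{d}$ by spatial cells (though only the forward direction, with $q=2$, $r=\infty$, $s_{0}=\tfrac14$ from \Cref{thm:Shiraki1}, is used in \Cref{cor:shiraki-and-rogers-global}). Either carry out the localization-plus-rescaling argument above, or simply cite \cite{rogersLocalSmoothingEstimate2008} as the paper does, without the intervening sketch.
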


In this section we prove a local version of \Cref{thm:Shiraki1}. 

\begin{corollary}\label{cor:shiraki-and-rogers-global} Let $d=1$ and suppose that $\mLap\in C^{2}(\R)$ satisfies
\[\eqnum\label{eq:shiraki-rogers-global-condition} 
   \begin{aligned}[t]
       & C_{L}^{- 1} | \xi |^{\sigma-1} \leq \left| \partial_{\xi} \mLap (\xi)
       \right| \leq C_{L} | \xi |^{\sigma-1},
       \\
       & C_{L}^{- 1} | \xi |^{\sigma} \leq \left| \xi^2 \partial_{\xi}^2 \mLap (\xi)
       \right| \leq C_{L} | \xi |^{\sigma},
     \end{aligned} 
\]
for all $\xi \in \R$. Then
\[\eqnum\label{ineq1-03june24}
    \Big\| \sup_{t\in [-1, 1]}\big(e^{it \Lap}f\big) \Big\|_{L^{2}(\R)}\leq C_{s}\|f\|_{H^{s}(\R^{d})}
\]
for all $s>\frac{\sigma}{4}$.
\end{corollary}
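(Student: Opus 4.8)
The plan is to deduce \eqref{ineq1-03june24} by feeding the local-in-space maximal bound of \Cref{thm:Shiraki1} into the local-to-global principle of \Cref{thm:rogers1}, after checking that the hypotheses of both are implied by \eqref{eq:shiraki-rogers-global-condition}. First I would record a harmless normalization: replacing $\mLap$ by $\mLap-\mLap(0)$ multiplies $e^{it\Lap}f$ by the unimodular factor $e^{-it\mLap(0)}$, hence does not change $\sup_{t\in[-1,1]}|e^{it\Lap}f|$, and it preserves \eqref{eq:shiraki-rogers-global-condition} since only the derivatives of $\mLap$ appear there. Integrating the bound $|\partial_\xi\mLap|\le C_L|\xi|^{\sigma-1}$ from $0$ then gives $|\mLap(\xi)|\le\int_0^{|\xi|}C_L t^{\sigma-1}\,\dd t\le C_L|\xi|^{\sigma}$ for all $\xi$. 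So without loss of generality $\mLap(0)=0$ and $|\mLap(\xi)|\le C_L|\xi|^{\sigma}$.

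Next I would verify that \eqref{eq:shiraki-rogers-global-condition} implies the hypotheses \eqref{eq:Shiraki-symbol-conditions} of \Cref{thm:Shiraki1}. For $|\xi|\ge1$ and $\sigma\ge2$ one has $|\xi|^{\sigma-2}\ge1\ge|\xi|^{-1}$, so the lower bound $|\partial^2\mLap(\xi)|\ge C_L^{-1}|\xi|^{\sigma-2}$ gives $|\partial^2\mLap(\xi)|\ge C_L^{-1}|\xi|^{-1}$; combining the same lower bound with $|\partial\mLap(\xi)|\le C_L|\xi|^{\sigma-1}$ gives $|\partial^2\mLap(\xi)|\ge C_L^{-1}|\xi|^{\sigma-2}\ge C_L^{-2}|\xi|^{-1}|\partial\mLap(\xi)|$. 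Thus \Cref{thm:Shiraki1}, applied with $q=2\in[1,4]$, yields for every $s>\tfrac14$
\[
\Big\|\sup_{t\in[-1,1]}\big(e^{it\Lap}f\big)\Big\|_{L^{2}(B_1(0))}\lesssim_{s}\|f\|_{H^{s}(\R)}\,,
\]
and a fortiori the same bound with the supremum restricted to $t\in[0,1]$, i.e. with the $L^{2}_{x}(\mathbb{B}^{1}, L^{\infty}_{t}[0,1])$ norm on the left.

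I would then invoke \Cref{thm:rogers1} with $d=1$, $q=2$, $r=\infty$ and $s_{0}=\tfrac14$. Its hypotheses $|\partial^{\alpha}\mLap(\xi)|\le C_L|\xi|^{\sigma-|\alpha|}$ for $|\alpha|\le2$ and $|\partial_\xi\mLap(\xi)|\ge C_L^{-1}|\xi|^{\sigma-1}$, with $\sigma\ge2>1$, follow from \eqref{eq:shiraki-rogers-global-condition} together with the normalization $|\mLap(\xi)|\le C_L|\xi|^{\sigma}$ from the first step (for $|\alpha|=0$). Since the local ($\mathbb{B}^{1}$) estimate just established holds for every $s>s_{0}=\tfrac14$, \Cref{thm:rogers1} gives the global estimate for every
\[
s>\sigma s_{0}-(\sigma-1)\Big(d\big(\tfrac{1}{2}-\tfrac{1}{q}\big)-\tfrac{\sigma}{r}\Big)=\frac{\sigma}{4}-(\sigma-1)\big(0-0\big)=\frac{\sigma}{4}\,,
\]
that is, $\big\|\sup_{t\in[0,1]}|e^{it\Lap}f|\big\|_{L^{2}(\R)}\lesssim_{s}\|f\|_{H^{s}(\R)}$ for $s>\tfrac{\sigma}{4}$. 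Finally I would upgrade the time interval from $[0,1]$ to $[-1,1]$: by the group law $e^{it\Lap}=e^{i(t+1)\Lap}e^{-i\Lap}$ and the unitarity of $e^{-i\Lap}$ on $H^{s}(\R)$, the $[0,1]$ estimate also holds over $t\in[-1,0]$, and $\sup_{t\in[-1,1]}|e^{it\Lap}f|\le\sup_{t\in[-1,0]}|e^{it\Lap}f|+\sup_{t\in[0,1]}|e^{it\Lap}f|$ pointwise, which yields \eqref{ineq1-03june24}. Since the argument is essentially a bookkeeping exercise matching two cited theorems, the only delicate point is getting the exponent arithmetic of \Cref{thm:rogers1} correct — here the correction term conveniently vanishes because $q=2$ and $r=\infty$ — and keeping track of the constants and time intervals; no genuinely new estimate is needed.
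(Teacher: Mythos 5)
Your proof is correct and follows essentially the same route as the paper: feed the Shiraki local maximal estimate (with $q=2$) into Rogers' local-to-global transfer with $d=1$, $q=2$, $r=\infty$, $s_{0}=\tfrac14$, so the threshold becomes $\sigma s_{0}=\tfrac{\sigma}{4}$. The only difference is that you carefully supply the bookkeeping the paper leaves implicit (the normalization giving the $|\alpha|=0$ bound, the verification of the Shiraki hypotheses, and the $[0,1]$ versus $[-1,1]$ time-interval adjustment via the group law), all of which is sound.
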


\begin{proof} By \Cref{thm:Shiraki1},
\begin{equation*}
    \Big\| \sup_{t\in [-1, 1]}\big(e^{it\Lap}f\big) \Big\|_{L^{q}({B}_{1}(0))} 
    \leq C_{q, s} \Big\| f\Big\|_{H^{s}(\R)}.
\end{equation*}
for any $q \in [1, 4]$ and $s>\frac{1}{4}$. The conditions \eqref{eq:shiraki-rogers-global-condition} verify the hypotheses of \Cref{thm:rogers1} with $q=2, r=\infty$, $s_0 = \frac{1}{4}$ and \eqref{ineq1-03june24} follows.
\end{proof}

The proof of \Cref{prop:local-shiraki} is essentially reduced to the following extension Lemma. 

\begin{lemma}\label{lem:multiplier-extension-lemma-step1}
Let $\sigma$, $R$, $C_0, C_L$ be as in \Cref{prop:local-shiraki}. Suppose
that $L \in C^2 \big( (R, C_0 R) ; \R \big)$ satisfies
\eqref{eq:local-shiraki-multiplier-conditions} for all
$\xi \in (R, C_0 R)$. Suppose, furthermore, that
$\xi \partial m_{L} (\xi)$ and $\xi^2 \partial^2 m_{L}(\xi)$ have the same sign for each
$\xi \in (R, C_0 R)$.
  
Then there exists $\mLap \in C^2( \R ; \R )$ such that
\begin{itemize}
\item $\mLap (\xi) = m_{L} (\xi)$ for $\xi \in (R, C_0R)$;
    
\item there exists $\tilde{C}_{L} > 0$, depending only on $\sigma > 0$, $C_0$, and 
$C_L$, but not on $R$ or $m_{L}$, such that
\[\eqnum\label{eq:eulbol}
\begin{aligned}[t]
\tilde{C}_{L}^{- 1} | \xi |^{\sigma-1} &\leq \left|  \partial \!\mLap (\xi) \right| \leq \tilde{C}_{L} | \xi |^{\sigma-1}
\\
\tilde{C}_{L}^{- 1} | \xi |^{\sigma-2} &\leq \left| \partial^2 \!\mLap (\xi) \right| \leq \tilde{C}_{L} | \xi |^{\sigma-2}
\end{aligned}
\] 
for all $\xi \in \R$.
\end{itemize}
\end{lemma}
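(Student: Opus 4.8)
The plan is to construct the extension $\mLap$ explicitly by gluing the given $m_L$ on $(R, C_0R)$ to appropriately chosen profiles outside this interval, so that the resulting function is $C^2$ and satisfies the two-sided bounds \eqref{eq:eulbol} globally. Without loss of generality I would assume $R > 1$ (the case $R < -1$ is symmetric by reflecting $\xi \mapsto -\xi$), and I would also reduce to the case $R = 1$ by rescaling: set $\tilde m(\eta) \eqd R^{-\sigma} m_L(R\eta)$ for $\eta \in (1, C_0)$, note that $\partial\tilde m(\eta) = R^{1-\sigma}\partial m_L(R\eta)$ and $\partial^2\tilde m(\eta) = R^{2-\sigma}\partial^2 m_L(R\eta)$, so the hypotheses \eqref{eq:local-shiraki-multiplier-conditions} are scale-invariant and $\tilde m$ satisfies them with the same constants on $(1, C_0)$; moreover the sign condition on $\eta\partial\tilde m$ and $\eta^2\partial^2\tilde m$ is preserved. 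Any extension of $\tilde m$ satisfying \eqref{eq:eulbol} with constant $\tilde C_L$ then gives an extension of $m_L$ via $\mLap(\xi) \eqd R^\sigma \tilde m(\xi/R)$ with the \emph{same} constant $\tilde C_L$, which is exactly the $R$-independence we need.

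Next, for the normalized problem on $(1, C_0)$, I would fix the sign: say $\partial\tilde m > 0$ and $\partial^2\tilde m > 0$ on $(1, C_0)$ (the three other sign combinations are handled identically, flipping signs of the gluing profiles). The idea is to extend to the right of $C_0$ by a power-law tail behaving like $c\,\xi^\sigma$ and to the left of $1$ (covering $(0,1)$ and then $(-\infty, 0)$) similarly. Concretely, I would pick a smooth cutoff and define, for $\xi \geq C_0$, the extension to be a function matching $\tilde m(C_0)$, $\partial\tilde m(C_0)$, $\partial^2\tilde m(C_0)$ at $\xi = C_0$ and equal to $a\xi^\sigma + b$ for $\xi \geq 2C_0$, with $a > 0$ chosen comparable to the values at $C_0$; on the transition region $[C_0, 2C_0]$ one uses a convex combination via a smooth partition of unity $1 = \chi + (1-\chi)$, $\chi = 1$ near $C_0$, ensuring the first and second derivatives of both pieces are comparable in size and of the same sign so that no cancellation occurs. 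The crucial quantitative point is that on $[C_0, 2C_0]$ all quantities $|\xi|, |\partial|, |\partial^2|$ are bounded above and below by absolute constants (depending on $C_0, C_L$), so the $C^2$-gluing there is harmless; the only place where the power $\sigma$ and the precise form matter is the far tail $\xi \geq 2C_0$ and near and past $\xi = 1$ going down to $0$, where the bounds $|\xi|^{\sigma - 1}$ and $|\xi|^{\sigma - 2}$ degenerate (for $\sigma > 2$) or blow up (for $\sigma < 2$, i.e.\ $\sigma \in [2,2]$ — actually $\sigma \geq 2$ so $|\xi|^{\sigma-2}$ does not blow up, but for $\sigma = 2$ the second derivative bound is $\asymp 1$ and one must be careful that the extension's second derivative stays bounded below near $\xi = 0$).

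Handling the neighborhood of the origin is the main obstacle. To the left of $\xi = 1$, I would extend $\tilde m$ across $(0,1)$ by a function of the form $\tilde m(\xi) = a'\xi^\sigma + (\text{lower order})$ near $0$; the constraint \eqref{eq:eulbol} near $\xi = 0$ reads $|\partial\tilde m(\xi)| \asymp |\xi|^{\sigma - 1}$ and $|\partial^2\tilde m(\xi)| \asymp |\xi|^{\sigma - 2}$, which forces $\tilde m(\xi) = a'|\xi|^\sigma(1 + o(1))$ with $a' \neq 0$ — this is consistent and one can take, e.g., $\tilde m(\xi) = a'|\xi|^\sigma$ for $|\xi|$ small, which is $C^2$ at $0$ precisely because $\sigma \geq 2$ (indeed $\sigma > 2$ gives $C^2$ automatically, and $\sigma = 2$ gives a smooth quadratic). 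The sign of $a'$ is dictated by matching the sign of $\partial\tilde m$ and $\partial^2\tilde m$ at $\xi = 1^+$, which is where the hypothesis that $\xi\partial m_L$ and $\xi^2\partial^2 m_L$ have the same sign is used: it guarantees $\partial\tilde m(1)$ and $\partial^2\tilde m(1)$ have the same sign, so a single monomial $a'|\xi|^\sigma$ (whose first and second derivatives have the same sign as $a'$ for $\xi > 0$) can match both. Then continue to $\xi < 0$ by any $C^2$ profile behaving like $|a'| |\xi|^\sigma$ at $-\infty$, again gluing on a bounded transition region where everything is comparable to constants. Finally, I would assemble all pieces, verify $C^2$ regularity at the four gluing points $C_0, 2C_0, 1$, and a point near $0$ by construction, and check \eqref{eq:eulbol} region by region: on $(1, C_0)$ it is the hypothesis; on the bounded transition regions it is the observation that $|\xi|, |\partial\mLap|, |\partial^2\mLap|$ are all $\asymp 1$ and the signs do not cancel; on the far tails it is the explicit monomial form. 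Tracking that $\tilde C_L$ depends only on $\sigma, C_0, C_L$ is then immediate since every estimate was made with such constants, and the $R$-independence follows from the rescaling reduction. With \Cref{lem:multiplier-extension-lemma-step1} in hand, \Cref{prop:local-shiraki} follows by applying it to produce a global symbol $\mLap$, checking that \eqref{eq:eulbol} implies the hypothesis \eqref{eq:shiraki-rogers-global-condition} of \Cref{cor:shiraki-and-rogers-global} (note $|\xi^2\partial^2\mLap(\xi)| \asymp |\xi|^\sigma$ is exactly the rescaled form of the second line of \eqref{eq:eulbol}), and invoking \eqref{ineq1-03june24}, since $e^{it\mLap}f = e^{itL}f$ when $\spt\FT f \subset (R, C_0R)$.
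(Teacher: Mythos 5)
Your reductions (rescaling to $R=1$, flipping the sign via $L\mapsto -L$, and aiming to cap the symbol with $|\xi|^{\sigma}$-type tails at $0$, at $\pm\infty$, and past $C_0$) are exactly the ones the paper uses, and the deduction of \Cref{prop:local-shiraki} from the lemma is also along the paper's lines. The gap is in the gluing itself, which is the actual content of the lemma. First, a pure tail $a\xi^{\sigma}+b$ has only two free parameters, so it cannot in general match the full $2$-jet $\bigl(m_L(C_0),\partial m_L(C_0),\partial^2 m_L(C_0)\bigr)$ at the junction; you implicitly acknowledge this by introducing a transition region, but then the bridging function is never specified. Second, and more seriously, the partition-of-unity blend $\mLap=\chi g+(1-\chi)h$ does not inherit the lower bounds in \eqref{eq:eulbol} from $g$ and $h$: one has
\[
\partial^{2}\mLap=\chi\,\partial^{2}g+(1-\chi)\,\partial^{2}h+2\,\partial\chi\,(\partial g-\partial h)+\partial^{2}\chi\,(g-h),
\]
and on a transition region of length $\sim C_0$ the cross terms are of size $\lesssim C_L'\,C_0^{\sigma-2}$, i.e.\ of the \emph{same order} as the lower bound $C_L^{-1}C_0^{\sigma-2}$ you need to preserve (lengthening the region makes things worse for $\sigma>2$, shortening it blows up $\partial\chi,\partial^{2}\chi$). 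So "both pieces have second derivatives of the same sign, hence no cancellation" is an assertion, not a proof: the blended second derivative can vanish in the transition region unless the jets are matched much more carefully. The same issue recurs at the left junction near $\xi=1$, where in addition the weights $|\xi|^{\sigma-1},|\xi|^{\sigma-2}$ vary across the blending interval.

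The paper sidesteps cutoffs entirely: on each side it extends by a \emph{three-parameter} family $a+b\,h_{1}^{\pm}+c\,h_{2}^{\pm}$ built from explicit power-type profiles whose derivative data at the junction forms a triangular (resp.\ diagonal) $2\times2$ matrix with entries of size $\approx 1$. Solving the resulting linear system matches value, first, and second derivative exactly (so $\mLap\in C^{2}$ with no transition region), and the hypotheses \eqref{eq:local-shiraki-multiplier-conditions} guarantee the coefficients $b_{\pm},c_{\pm}$ are positive and bounded above and below in terms of $C_L$ alone; since each $h_{j}^{\pm}$ has nonnegative first and second derivatives comparable to $|\xi|^{\sigma-1}$ and $|\xi|^{\sigma-2}$ (in sum), the two-sided bounds \eqref{eq:eulbol} follow with no cancellation to rule out. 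To repair your argument you would need either this kind of exact jet matching with sign-controlled coefficients, or to glue at the level of $W=\partial m_L$ (a $C^{1}$, single-signed extension of the derivative, then integrate), keeping quantitative control of the error terms; as written, the lower bounds across the gluing regions are not established.
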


We postpone the proof of this
\Cref{lem:multiplier-extension-lemma-step1} to the end of the
section. The idea of the proof is to extend $m_{L}$ to satisfy
$\mLap(\xi)=\rho_- |\xi|^\sigma$ for $\xi\ll R$ and
$\mLap(\xi)=\rho_+ |\xi|^\sigma$ for $\xi\gg C_0 R$ for some appropriate
$\rho_-, \rho_+$, using a variant of the standard smooth extension argument.

Next, assuming \Cref{lem:multiplier-extension-lemma-step1}, we prove
\Cref{prop:local-shiraki}.

\begin{proof}[Proof of \Cref{prop:local-shiraki}]\todoGU{This needs to be streamlined, given the revised statement of the lemma}
First, let us prove \Cref{prop:local-shiraki} under additional assumptions. 

\noindent
\textit{Claim (1).} The assertion of the proposition holds if  $\partial m_{L}(\xi)$ and $\partial^2 m_{L}(\xi)$ have the same constant sign on $(R, C_0 R)\subset (0,+\infty)$ and the support of $\FT{f}$ is contained in $(R, C_0 R)\subset(0,+\infty)$. 

\begin{proof}[Proof of Claim (1).]
Using \Cref{lem:multiplier-extension-lemma-step1} we extend $m_{L}$  to $\mLap\in C^2(\R;\R)$. Since $m_{L}(\xi)=\mLap(\xi)$ for $\xi\in\spt{\FT{f}}$, it holds that $e^{i t L } f(x)=e^{i t \Lap } f(x)$. Furthermore, $\xi\mapsto \mLap(\xi)$ satisfies \eqref{eq:shiraki-rogers-global-condition},
 then by \Cref{cor:shiraki-and-rogers-global} we have that 
\[
   \Big\| \sup_{t\in [-1, 1]}\big(e^{it L}f\big) \Big\|_{L^{2}(\R)}
    = 
    \Big\| \sup_{t\in [-1, 1]}\big(e^{it \Lap}f\big) \Big\|_{L^{2}(\R)}
    \lesssim_s  R^s \| f \|_{L^2(\R)} \,,
\]
where in the last inequality we used the definition of $H^s$ and 
$\spt (f) \subset [R, C_0R]$. 
\end{proof}

Next, we drop one hypothesis of Claim (1).

\textit{Claim (2).}
The assertion of the proposition holds if the support of $\FT{f}$ is contained in $(R, C_0 R)\subset(0,+\infty)$. 

\begin{proof}[Proof of Claim (2).]
Note that by \eqref{eq:local-shiraki-multiplier-conditions},  $\partial m_{L}(\xi)$ and $\partial^2 m_{L}(\xi)$ cannot vanish on $(0,+\infty)$ and thus it has constant sign, so let us assume that $\partial m_{L}(\xi)$ and $\partial^2 m_{L}(\xi)$ have opposite signs on $(R, C_0 R)$. We set $\tilde{m_{L}}(\xi)=-m_{L}(-\xi+ 2C_0 R)$ and obtain that for the symbol $\tilde{m_{L}}$ defined on $\big(C_0 R, (2C_0 - 1)R\big)\subset(0,+\infty)$, the derivatives $\partial\tilde{m_{L}}$ and $\partial^{2}\tilde{m_{L}}$ have the same sign, which is precisely the case studied above. Furthermore, $\tilde{m_{L}}$ satisfies the hypotheses of \Cref{prop:local-shiraki} with $C_0 R$ and $\tilde{C}_0 := (2C_0 - 1)R$ instead of respectively $R$ and $C_0 R$ and with appropriate $\tilde{C}_1>1$ and $\tilde{C}_2>1$ depending on $C_0, C_1, C_2$. 

It holds that 
\begin{equation*}
\begin{aligned}[t]
e^{it L}f(x)& =\int_{R}^{C_0 R}e^{ix\xi}e^{it L(\xi)}\FT{f}(\xi)\dd\xi
\\
&=\int_{C_0 R}^{(2C_0-1)R} e^{ix(-\eta+2C_0 R)}e^{it L(-\eta+2C_0 R)}\FT{f}(-\eta+2C_0R)\dd\eta
\\
&= e^{2ixC_0R}\bar{\int_{\R}e^{ix\eta}e^{it \tilde{L}(\eta)}\FT{g}(\eta)\dd\eta},
\end{aligned}
\end{equation*}
where $\FT{g}(\eta)=\bar{\FT{f}(-\eta+2C_0R)}$. Clearly 
\(\|g\|_{L^2(\R)}=\|f\|_{L^2(\R)}\) and $\FT{g}$ is supported on $\big(C_0R,(2-C_0^-1)C_0R\big)$ so, by Claim (1), we obtain that 
\begin{align*}
\Big\| \sup_{t\in [-1, 1]}\big(e^{it L}f\big) \Big\|_{L^{2}(\R)}
    &= 
    \Big\| \sup_{t\in [-1, 1]}\big(e^{it \tilde{L}}g\big) \Big\|_{L^{2}(\R)}
    \lesssim_s  (C_0R)^s \| g \|_{L^2(\R)} \lesssim_s  R^s \| f \|_{L^2(\R)},    
\end{align*}
as desired.
\end{proof}

Finally, let us show that compared to Claim (2) we can relax the assumption $\spt \FT{f}\subset(0,+\infty)$. Indeed, let 
\[
\FT{f}_+(\xi) \eqd \FT{f}(\xi) \1_{(R, C_0 R)}(\xi),
\qquad 
\FT{f}_-(\xi) \eqd \bar{\FT{f}(-\xi)} \1_{(R, C_0 R)}(\xi)
\]
so that 
\[
\begin{aligned}
& \FT{f}(\xi)=\FT{f}_+(\xi)+\bar{\FT{f}_-(-\xi)},
\\ & \spt{\FT{f}_+}, \spt{\FT{f}_-}\subset (R, C_{0}R),
\\ &
\| f_+ \|_{L^2( \R )}+\| f_- \|_{L^2( \R )}\leq 2 \|f\|_{L^2(\R)}.
\end{aligned}
\]
It follows that 
\[
e^{it L}f(x) \begin{aligned}[t]
& =\int_{\R}e^{ix\xi}e^{it L(\xi)}\FT{f}(\xi)\dd\xi
\\
& = \int_{\R}e^{ix\xi}e^{it L(\xi)}
\Big(\FT{f}_+(\xi)+\bar{\FT{f}_-(-\xi)}\Big) \dd\xi
\\
& = \int_{R}^{C_0R} e^{ix\xi} e^{it L(\xi)}
\FT{f}_+(\xi) \dd\xi
 - \bar{\int_{R}^{C_0R} e^{ix\xi} e^{it -L(-\xi)}
\FT{f}_-(\xi) \dd\xi}
\\
&
=e^{it L}f_+(x)-\bar{e^{it L}f_-(x)}.
\end{aligned}
\]

Since $\xi\mapsto -L(-\xi)$ satisfies the conditions \eqref{eq:local-shiraki-multiplier-conditions}, by Claim (2), we have that
\[
\left\| \sup_{t \in [- 1, 1]} e^{i t L } f \right\|_{L^2( \R )} \lesssim_s \Big(\| f_+ \|_{L^2( \R )}+\| f_- \|_{L^2( \R )}\Big) 
\lesssim_s R^{s}  \| f \|_{L^2( \R )},
\]
as required. 
\end{proof}

We conclude this section by proving \Cref{lem:multiplier-extension-lemma-step1}. 

\begin{proof}[Proof of \Cref{lem:multiplier-extension-lemma-step1}] 
We prove the lemma when $\partial m_{L}(\xi)$ and $\partial^2 m_{L}(\xi)$ are both positive on \((R, C_{0} R)\).
If $\partial m_{L}(\xi)$ and $\partial^2 m_{L}(\xi)$ are both negative, it is sufficient to repeat the proof with $L$ replaced by $-L$. By scaling, we may assume that \(R=1\). Indeed, given \(m_{L}\) as in \Cref{lem:multiplier-extension-lemma-step1} the function \(m_{L_{R}}(\xi)\eqd|R|^{-\sigma}m_{L}(R\xi)\) satsifies the hypotheses of \Cref{lem:multiplier-extension-lemma-step1} with \(R=1\). Hence, assuming that the lemma holds for $R = 1$, we use it to obtain the extensions \(\mc{L}_{R}\) and and set \(\mc{L}(\xi)=|R|^{\sigma}\mc{L}_{R}(R^{-1}\xi)\) to obtain the full claim. Henceforth, we suppose \(R=1\).

Fix any \(m>(2C_{1}C_{2})^{-1}\) and set\todoI{Are there typos in the second and fourth lines of this definition? Specifically, should $h_{2}^{+}$ be $h_{2}^{-}$ in the second line? Should $h_{2}^{+}$ feature in the fourth line?}
\[
\mc{L} (\xi) \eqd \begin{dcases}
a_{-}+b_{-}|\xi|^{\sigma}
& \text{if } \xi<0
\\
a_{-}+b_{-}h_{1}^{-}(\xi)+c_{-}h_{2}^{+}(\xi)
& \text{if } 0\leq \xi <1
\\
L (\xi) & \text{if } 1\leq\xi< C_0
\\
a_{+}+b_{+}h_{1}^{+}(\xi/C_{0})+c_{+}h_{1}^{+}(\xi/C_{0})
& \text{if } \xi\geq C_{0} 
 \end{dcases}
 \]
 with
 \[
 \begin{aligned}[t]
 &
 h_{1}^{-}(\xi)\eqd |\xi|^{\sigma}-\frac{\sigma-1}{\sigma+1}|\xi|^{\sigma+1}
 \qquad
 h_{2}^{-}(\xi)\eqd (\sigma+m+1)^{-1}(\sigma+m)^{-1}|\xi|^{\sigma+1+m}
 \\
 &
 h_{1}^{+}(\xi)\eqd \frac{1}{\sigma} |\xi|^{\sigma}-\frac{\sigma-1}{\sigma(\sigma+1)}|\xi|^{-\sigma}
 \qquad
 h_{2}^{+}(\xi)\eqd \frac{1}{\sigma} |\xi|^{\sigma}+\frac{1}{\sigma}|\xi|^{-\sigma}.
 \end{aligned}
 \]
It holds that 
\[\begin{aligned}
&\begin{pmatrix}
\partial h_{1}^{-}(1) & \partial h_{2}^{-}(1)
\\
\partial ^2h_{1}^{-}(1) & \partial^2 h_{2}^{-}(1)
\end{pmatrix}
=
\begin{pmatrix}
1 & (m+\sigma)^{-1}
\\
0 & 1
\end{pmatrix}
\\
&
\begin{pmatrix}
\partial h_{1}^{+}(1) & \partial h_{2}^{+}(1)
\\
\partial^2 h_{1}^{+}(1) & \partial^2 h_{2}^{+}(1)
\end{pmatrix}
=
\begin{pmatrix}
\frac{2\sigma}{\sigma+1} & 0
\\
0 & 2\sigma
\end{pmatrix}\,.
\end{aligned}
\]
Thus, \(\mc{L}(\xi)\), \(\partial\mc{L}(\xi)\), and \(\partial^{2}\mc{L}(\xi)\) are continuous at \(1\) and \(C_{0}\) if and only if the following conditions hold:
\[
\begin{pmatrix}
b_{-}\\c_{-}
\end{pmatrix}
=\begin{pmatrix}
1 & -(m+\sigma)^{-1} \\
0 & 1
\end{pmatrix}
\begin{pmatrix}
\partial L(1)\\ \partial^{2} L(1)
\end{pmatrix},
\]
\[
\begin{pmatrix}
b_{+}\\c_{+}
\end{pmatrix}
=\begin{pmatrix}
\frac{\sigma+1}{2\sigma}  C_{0}& 0 \\
0 & \frac{1}{2\sigma }C_{0}^{2}
\end{pmatrix}
\begin{pmatrix}
\partial L(1)\\ \partial^{2} L(1)
\end{pmatrix},
\]
and \(a_{-}\) and \(a_{+}\) determined by the relations
\(a_{-}+b_{-}h_{1}^{-}(1)+c_{-}b_{-}h_{2}^{-}(1)=L(1)\) and \(
a_{+}+b_{+}h_{1}^{+}(1)+c_{+}h_{2}^{+}(1)=L(C_{0})\).

As long as one takes \(m>(2C_{1}C_{2})^{-1}\) from \eqref{eq:local-shiraki-multiplier-conditions} one obtains
\[
\begin{aligned}[t]
&
(2C_{1})^{-1}\leq b_{-}\leq C_{1}
\qquad
C_{2}^{-1}\leq c_{-}\leq C_{2}
\\
&
\frac{(\sigma+1)C_{0}}{2\sigma C_{1}}
\leq b_{+}
\leq \frac{(\sigma+1)C_{1}C_{0}}{2\sigma}
\qquad
\frac{C_{0}^{2}}{2\sigma C_{2}}\leq c_{+}\leq \frac{C_{2}C_{0}^{2}}{2\sigma} \,.
\end{aligned}
\]
In particular, all coefficients are bounded from above, non-negative, and bounded away from \(0\). This way, there exists \(C_{4}>1\) so that 

\[
\begin{aligned}[t]
0\leq \xi^{l} \partial^{l} h_{j}^{-}(\xi)
\leq C_{4}\xi^{\sigma}
\quad &\text{ when } \xi\in[0, 1], \;j, l\in\{1, 2\},
\\
C_4^{-1}\xi^{\sigma}
\leq \xi^{l} \partial^{l}\Big( h_{1}^{-}(\xi)+h_{2}^{-}(\xi)\Big)
\leq C_{4}\xi^{\sigma}
\quad &\text{ when } \xi\in[0, 1], \;l\in\{1, 2\},
\\
0\leq \xi^{l} \partial^{l} h_{j}^{+}(\xi)
\leq C_{4}\xi^{\sigma}
\quad &\text{ when } \xi\geq 1, \;j, l\in\{1, 2\},
\\
C_4^{-1}\xi^{\sigma}
\leq \xi^{l} \partial^{l}\Big( h_{1}^{+}(\xi)+h_{2}^{+}(\xi)\Big)
\leq C_{4}\xi^{\sigma}
\quad &\text{ when } \xi\geq 1, \;l\in\{1, 2\},
\end{aligned}
\]
the desired claim follows.

\end{proof}

\newpage
{\sloppy \printbibliography}

@article {MR1396248,
    AUTHOR = {Karpman, V. I.},
     TITLE = {Lyapunov approach to the soliton stability in highly
              dispersive systems. {I}. {F}ourth order nonlinear
              {S}chr\"odinger equations},
   JOURNAL = {Phys. Lett. A},
  FJOURNAL = {Physics Letters. A},
    VOLUME = {215},
      YEAR = {1996},
    NUMBER = {5-6},
     PAGES = {254--256},
      ISSN = {0375-9601,1873-2429},
   MRCLASS = {35Q51 (35Q55)},
  MRNUMBER = {1396248},
       DOI = {10.1016/0375-9601(96)00231-9},
       URL = {https://doi.org/10.1016/0375-9601(96)00231-9},
}

@article {IvaKos,
    AUTHOR = {Ivanov,  B.A. and Kosevich, A.M.},
     TITLE = {Stable three-dimensional small-amplitude soliton in magnetic materials},
   JOURNAL = {So. J. Low Temp. Phys.},
    VOLUME = {9},
      YEAR = {1983},
     PAGES = {439–442},
}

@article {Tur,
    AUTHOR = {Turitsyn, S. K.},
     TITLE = {Three-dimensional dispersion of nonlinearity and stability of multidimensional solitons},
   JOURNAL = {Teoret. Mat. Fiz.},
    VOLUME = {64},
      YEAR = {1985},
     PAGES = {226–232},
}

@article{dipierro2024qualitative,
  title={Qualitative properties of positive solutions of a mixed order nonlinear Schr{ö}dinger equation},
  author={Dipierro, Serena and Su, Xifeng and Valdinoci, Enrico and Zhang, Jiwen},
  journal={arXiv preprint arXiv:2411.09941},
  year={2024}
}

@article {MR727767,
    AUTHOR = {Laedke, E. W. and Spatschek, K. H. and Stenflo, L.},
     TITLE = {Evolution theorem for a class of perturbed envelope soliton
              solutions},
   JOURNAL = {J. Math. Phys.},
  FJOURNAL = {Journal of Mathematical Physics},
    VOLUME = {24},
      YEAR = {1983},
    NUMBER = {12},
     PAGES = {2764--2769},
      ISSN = {0022-2488,1089-7658},
   MRCLASS = {76B25 (35Q20 82A45)},
  MRNUMBER = {727767},
MRREVIEWER = {L.\ Debnath},
       DOI = {10.1063/1.525675},
       URL = {https://doi.org/10.1063/1.525675},
}

@article {MR1779828,
    AUTHOR = {Karpman, V. I. and Shagalov, A. G.},
     TITLE = {Stability of solitons described by nonlinear
              {S}chr\"odinger-type equations with higher-order dispersion},
   JOURNAL = {Phys. D},
  FJOURNAL = {Physica D. Nonlinear Phenomena},
    VOLUME = {144},
      YEAR = {2000},
    NUMBER = {1-2},
     PAGES = {194--210},
      ISSN = {0167-2789,1872-8022},
   MRCLASS = {35Q51 (35B35 35Q55 37K40)},
  MRNUMBER = {1779828},
       DOI = {10.1016/S0167-2789(00)00078-6},
       URL = {https://doi.org/10.1016/S0167-2789(00)00078-6},
}

@article {rogersLocalSmoothingEstimate2008,
    AUTHOR = {Rogers, Keith M.},
     TITLE = {A local smoothing estimate for the {S}chr\"odinger equation},
   JOURNAL = {Adv. Math.},
  FJOURNAL = {Advances in Mathematics},
    VOLUME = {219},
      YEAR = {2008},
    NUMBER = {6},
     PAGES = {2105--2122},
      ISSN = {0001-8708,1090-2082},
   MRCLASS = {35J10 (35B45 35L05 35Q40)},
  MRNUMBER = {2456277},
MRREVIEWER = {Thomas\ Duyckaerts},
       DOI = {10.1016/j.aim.2008.08.008},
       URL = {https://doi.org/10.1016/j.aim.2008.08.008},
}

@article{MR2094851,
    AUTHOR = {Koch, Herbert and Tataru, Daniel},
     TITLE = {Dispersive estimates for principally normal pseudodifferential
              operators},
   JOURNAL = {Comm. Pure Appl. Math.},
  FJOURNAL = {Communications on Pure and Applied Mathematics},
    VOLUME = {58},
      YEAR = {2005},
    NUMBER = {2},
     PAGES = {217--284},
      ISSN = {0010-3640,1097-0312},
   MRCLASS = {35S05 (35B45 47G30)},
  MRNUMBER = {2094851},
MRREVIEWER = {Yu.\ V.\ Egorov},
       DOI = {10.1002/cpa.20067},
       URL = {https://doi.org/10.1002/cpa.20067},
}

@article{shirakiPointwiseConvergenceRestricted2020,
  title = {Pointwise {{Convergence Along Restricted Directions}} for the {{Fractional Schr\"odinger Equation}}},
  author = {Shiraki, Shobu},
  date = {2020-06-29},
  journaltitle = {Journal of Fourier Analysis and Applications},
  shortjournal = {J Fourier Anal Appl},
  volume = {26},
  number = {4},
  pages = {58},
  issn = {1531-5851},
  doi = {10.1007/s00041-020-09760-8},
  url = {https://doi.org/10.1007/s00041-020-09760-8},
  langid = {english}
}

@article {CFU,
    AUTHOR = {Casteras, Jean-Baptiste and F\"{o}ldes, Juraj and Uraltsev, Gennady},
     TITLE = {Almost sure local well-posedness for a cubic nonlinear Schrodinger equation with higher order operators },
   JOURNAL = {Preprint arXiv:2203.03500 },
}

@online{CFU2,
  title = {Higher order expansion for the probabilistic local well-posedness theory for a cubic nonlinear Schr\" odinger equation },
  author = {Casteras, Jean-Baptiste and Foldes, Juraj and Uraltsev, Gennady},
  date = {2024},
  eprint = {2401.08872},
  eprinttype = {arxiv},
  eprintclass = {math-ph},
  doi = {10.48550/arXiv.2203.03500},
  pubstate = {preprint}
}

@article {brun,
    AUTHOR = {Brun, Enguerrand and Li, Guopeng and Liu, Ruoyuan and Zine, Younes},
     TITLE = {Global well-posedness of one-dimensional cubic fractional nonlinear Schrödinger equations in negative Sobolev spaces},
   JOURNAL = {arXiv preprint},
       URL = {https://arxiv.org/pdf/2311.13370},
       year={2023}
}

@article {dodson,
    AUTHOR = {Dodson, Benjamin and L\"{u}hrmann, Jonas and Mendelson, Dana},
     TITLE = {Almost sure local well-posedness and scattering for the 4{D}
              cubic nonlinear {S}chr\"{o}dinger equation},
   JOURNAL = {Adv. Math.},
  FJOURNAL = {Advances in Mathematics},
    VOLUME = {347},
      YEAR = {2019},
     PAGES = {619--676},
      ISSN = {0001-8708},
   MRCLASS = {35Q55 (35B30 35P25)},
  MRNUMBER = {3920835},
MRREVIEWER = {Pascal B\'{e}gout},
       DOI = {10.1016/j.aim.2019.02.001},
       URL = {https://doi.org/10.1016/j.aim.2019.02.001},
}

@article {bop2,
    AUTHOR = {B\'{e}nyi, \'{A}rp\'{a}d and Oh, Tadahiro and Pocovnicu, Oana},
     TITLE = {Higher order expansions for the probabilistic local {C}auchy
              theory of the cubic nonlinear {S}chr\"{o}dinger equation on
              {$\mathbb{R}^3$}},
   JOURNAL = {Trans. Amer. Math. Soc. Ser. B},
  FJOURNAL = {Transactions of the American Mathematical Society. Series B},
    VOLUME = {6},
      YEAR = {2019},
     PAGES = {114--160},
   MRCLASS = {35Q55 (35B30 35R60)},
  MRNUMBER = {3919013},
MRREVIEWER = {Felipe Linares},
       DOI = {10.1090/btran/29},
       URL = {https://doi.org/10.1090/btran/29},
}

@article {ionescu1,
    AUTHOR = {Ionescu, Alexandru D. and Kenig, Carlos E.},
     TITLE = {Low-regularity {S}chr\"{o}dinger maps},
   JOURNAL = {Differential Integral Equations},
  FJOURNAL = {Differential and Integral Equations. An International Journal
              for Theory \& Applications},
    VOLUME = {19},
      YEAR = {2006},
    NUMBER = {11},
     PAGES = {1271--1300},
      ISSN = {0893-4983},
   MRCLASS = {35Q55 (35B30)},
  MRNUMBER = {2278007},
MRREVIEWER = {Atanas G. Stefanov},
}

@article {ionescu2,
    AUTHOR = {Ionescu, Alexandru D. and Kenig, Carlos E.},
     TITLE = {Low-regularity {S}chr\"{o}dinger maps. {II}. {G}lobal
              well-posedness in dimensions {$d\geq 3$}},
   JOURNAL = {Comm. Math. Phys.},
  FJOURNAL = {Communications in Mathematical Physics},
    VOLUME = {271},
      YEAR = {2007},
    NUMBER = {2},
     PAGES = {523--559},
      ISSN = {0010-3616},
   MRCLASS = {35Q55 (35B30)},
  MRNUMBER = {2287916},
MRREVIEWER = {Atanas G. Stefanov},
       DOI = {10.1007/s00220-006-0180-4},
       URL = {https://doi.org/10.1007/s00220-006-0180-4},
}

@article {MR2425133,
    AUTHOR = {Burq, Nicolas and Tzvetkov, Nikolay},
     TITLE = {Random data {C}auchy theory for supercritical wave equations.
              {I}. {L}ocal theory},
   JOURNAL = {Invent. Math.},
  FJOURNAL = {Inventiones Mathematicae},
    VOLUME = {173},
      YEAR = {2008},
    NUMBER = {3},
     PAGES = {449--475},
      ISSN = {0020-9910},
   MRCLASS = {58J45 (35A07 35L70 35Q55)},
  MRNUMBER = {2425133},
MRREVIEWER = {Michael J. Goldberg},
       DOI = {10.1007/s00222-008-0124-z},
       URL = {https://doi.org/10.1007/s00222-008-0124-z},
}

@book {MR2002047,
    AUTHOR = {Cazenave, Thierry},
     TITLE = {Semilinear {S}chr\"{o}dinger equations},
    SERIES = {Courant Lecture Notes in Mathematics},
    VOLUME = {10},
 PUBLISHER = {New York University; Courant Institute of Mathematical
              Sciences; New York; American Mathematical Society, Providence,
              RI},
      YEAR = {2003},
     PAGES = {xiv+323},
      ISBN = {0-8218-3399-5},
   MRCLASS = {35Q55 (35-01 35J10 35Q40)},
  MRNUMBER = {2002047},
MRREVIEWER = {Woodford W. Zachary},
       DOI = {10.1090/cln/010},
       URL = {https://doi.org/10.1090/cln/010},
}

@article {MR1055532,
    AUTHOR = {Cazenave, Thierry and Weissler, Fred B.},
     TITLE = {The {C}auchy problem for the critical nonlinear {S}chr\"{o}dinger
              equation in {$H^s$}},
   JOURNAL = {Nonlinear Anal.},
  FJOURNAL = {Nonlinear Analysis. Theory, Methods \& Applications. An
              International Multidisciplinary Journal},
    VOLUME = {14},
      YEAR = {1990},
    NUMBER = {10},
     PAGES = {807--836},
      ISSN = {0362-546X},
   MRCLASS = {35Q55 (45K05)},
  MRNUMBER = {1055532},
MRREVIEWER = {Victor V. Zharinov},
       DOI = {10.1016/0362-546X(90)90023-A},
       URL = {https://doi.org/10.1016/0362-546X(90)90023-A},
}

@article {MR2415387,
    AUTHOR = {Colliander, James and Keel, Markus and Staffilani, Gigiola and Takaoka, Hideo
              and Tao, Terence},
     TITLE = {Global well-posedness and scattering for the energy-critical
              nonlinear {S}chr\"{o}dinger equation in {$\mathbb R^3$}},
   JOURNAL = {Ann. of Math. (2)},
  FJOURNAL = {Annals of Mathematics. Second Series},
    VOLUME = {167},
      YEAR = {2008},
    NUMBER = {3},
     PAGES = {767--865},
      ISSN = {0003-486X},
   MRCLASS = {35Q55 (35B30 35P25)},
  MRNUMBER = {2415387},
MRREVIEWER = {Tohru Ozawa},
       DOI = {10.4007/annals.2008.167.767},
       URL = {https://doi.org/10.4007/annals.2008.167.767},
}

@article {MR2288737,
    AUTHOR = {Ryckman, Eric and Visan, Monica},
     TITLE = {Global well-posedness and scattering for the defocusing
              energy-critical nonlinear {S}chr\"{o}dinger equation in {$\mathbb
              R^{1+4}$}},
   JOURNAL = {Amer. J. Math.},
  FJOURNAL = {American Journal of Mathematics},
    VOLUME = {129},
      YEAR = {2007},
    NUMBER = {1},
     PAGES = {1--60},
      ISSN = {0002-9327},
   MRCLASS = {35Q55 (35B30 35B33 35J55 35P25)},
  MRNUMBER = {2288737},
MRREVIEWER = {Tohru Ozawa},
       DOI = {10.1353/ajm.2007.0004},
       URL = {https://doi.org/10.1353/ajm.2007.0004},
}

@article {MR2353631,
    AUTHOR = {Pausader, Benoit},
     TITLE = {Global well-posedness for energy critical fourth-order
              {S}chr\"{o}dinger equations in the radial case},
   JOURNAL = {Dyn. Partial Differ. Equ.},
  FJOURNAL = {Dynamics of Partial Differential Equations},
    VOLUME = {4},
      YEAR = {2007},
    NUMBER = {3},
     PAGES = {197--225},
      ISSN = {1548-159X},
   MRCLASS = {35Q55 (35B25 35B30)},
  MRNUMBER = {2353631},
MRREVIEWER = {Alessio Pomponio},
       DOI = {10.4310/DPDE.2007.v4.n3.a1},
       URL = {https://doi.org/10.4310/DPDE.2007.v4.n3.a1},
}

@article {MR2746203,
    AUTHOR = {Pausader, Benoit and Shao, Shuanglin},
     TITLE = {The mass-critical fourth-order {S}chr\"{o}dinger equation in high
              dimensions},
   JOURNAL = {J. Hyperbolic Differ. Equ.},
  FJOURNAL = {Journal of Hyperbolic Differential Equations},
    VOLUME = {7},
      YEAR = {2010},
    NUMBER = {4},
     PAGES = {651--705},
      ISSN = {0219-8916},
   MRCLASS = {35Q55 (35B30 35P25)},
  MRNUMBER = {2746203},
MRREVIEWER = {Jacopo Bellazzini},
       DOI = {10.1142/S0219891610002256},
       URL = {https://doi.org/10.1142/S0219891610002256},
}

@article{christ2003ill,
  title={Ill-posedness for nonlinear Schrodinger and wave equations},
  author={Christ, Michael and Colliander, James and Tao, Terence},
  journal={arXiv preprint math/0311048},
  year={2003}
}

@article {MR1309539,
    AUTHOR = {Bourgain, Jean},
     TITLE = {Periodic nonlinear {S}chr\"{o}dinger equation and invariant
              measures},
   JOURNAL = {Comm. Math. Phys.},
  FJOURNAL = {Communications in Mathematical Physics},
    VOLUME = {166},
      YEAR = {1994},
    NUMBER = {1},
     PAGES = {1--26},
      ISSN = {0010-3616},
   MRCLASS = {35Q55 (35B10 35Q53)},
  MRNUMBER = {1309539},
MRREVIEWER = {Elaine Machtyngier},
       URL = {http://projecteuclid.org/euclid.cmp/1104271501},
}

@article {MR1374420,
    AUTHOR = {Bourgain, Jean},
     TITLE = {Invariant measures for the {$2$}{D}-defocusing nonlinear
              {S}chr\"{o}dinger equation},
   JOURNAL = {Comm. Math. Phys.},
  FJOURNAL = {Communications in Mathematical Physics},
    VOLUME = {176},
      YEAR = {1996},
    NUMBER = {2},
     PAGES = {421--445},
      ISSN = {0010-3616},
   MRCLASS = {35Q55 (82B10)},
  MRNUMBER = {1374420},
MRREVIEWER = {Tohru Ozawa},
       URL = {http://projecteuclid.org/euclid.cmp/1104286005},
}

@article {MR939505,
    AUTHOR = {Lebowitz, Joel L. and Rose, Harvey A. and Speer, Eugene R.},
     TITLE = {Statistical mechanics of the nonlinear {S}chr\"{o}dinger equation},
   JOURNAL = {J. Statist. Phys.},
  FJOURNAL = {Journal of Statistical Physics},
    VOLUME = {50},
      YEAR = {1988},
    NUMBER = {3-4},
     PAGES = {657--687},
      ISSN = {0022-4715},
   MRCLASS = {82A05 (35Q20 82A45)},
  MRNUMBER = {939505},
MRREVIEWER = {Gerhard C. Hegerfeldt},
       DOI = {10.1007/BF01026495},
       URL = {https://doi.org/10.1007/BF01026495},
}

@article {MR4236191,
    AUTHOR = {Deng, Yu and Nahmod, Andrea R. and Yue, Haitian},
     TITLE = {Invariant {G}ibbs measure and global strong solutions for the
              {H}artree {NLS} equation in dimension three},
   JOURNAL = {J. Math. Phys.},
  FJOURNAL = {Journal of Mathematical Physics},
    VOLUME = {62},
      YEAR = {2021},
    NUMBER = {3},
     PAGES = {Paper No. 031514, 39},
      ISSN = {0022-2488},
   MRCLASS = {35Q55 (81T08)},
  MRNUMBER = {4236191},
       DOI = {10.1063/5.0045062},
       URL = {https://doi.org/10.1063/5.0045062},
}

@article {MR2425134,
    AUTHOR = {Burq, Nicolas and Tzvetkov, Nikolay},
     TITLE = {Random data {C}auchy theory for supercritical wave equations.
              {II}. {A} global existence result},
   JOURNAL = {Invent. Math.},
  FJOURNAL = {Inventiones Mathematicae},
    VOLUME = {173},
      YEAR = {2008},
    NUMBER = {3},
     PAGES = {477--496},
      ISSN = {0020-9910},
   MRCLASS = {58J45 (35Q55 37L55 81Q20)},
  MRNUMBER = {2425134},
       DOI = {10.1007/s00222-008-0123-0},
       URL = {https://doi.org/10.1007/s00222-008-0123-0},
}

@article {MR4312285,
    AUTHOR = {Sy, Mouhamadou},
     TITLE = {Almost sure global well-posedness for the energy supercritical
              {S}chr\"{o}dinger equations},
   JOURNAL = {J. Math. Pures Appl. (9)},
  FJOURNAL = {Journal de Math\'{e}matiques Pures et Appliqu\'{e}es. Neuvi\`eme S\'{e}rie},
    VOLUME = {154},
      YEAR = {2021},
     PAGES = {108--145},
      ISSN = {0021-7824},
   MRCLASS = {35Q55 (28D05 35R60 37L50 60H15 60H30)},
  MRNUMBER = {4312285},
       DOI = {10.1016/j.matpur.2021.08.002},
       URL = {https://doi.org/10.1016/j.matpur.2021.08.002},
}

@article{spitz2021almost,
  title={Almost sure local wellposedness and scattering for the energy-critical cubic nonlinear {S}chr\" odinger equation with supercritical data},
  author={Spitz, Martin},
  journal={arXiv preprint arXiv:2110.11051},
  year={2021}
}

@article {MR3350022,
    AUTHOR = {B\'{e}nyi, \'{A}rp\'{a}d and Oh, Tadahiro and Pocovnicu, Oana},
     TITLE = {On the probabilistic {C}auchy theory of the cubic nonlinear
              {S}chr\"{o}dinger equation on {$\mathbb{R}^d$}, {$d\geq3$}},
   JOURNAL = {Trans. Amer. Math. Soc. Ser. B},
  FJOURNAL = {Transactions of the American Mathematical Society. Series B},
    VOLUME = {2},
      YEAR = {2015},
     PAGES = {1--50},
   MRCLASS = {35Q55 (35B30)},
  MRNUMBER = {3350022},
MRREVIEWER = {Wolf-Patrick D\"{u}ll},
       DOI = {10.1090/btran/6},
       URL = {https://doi.org/10.1090/btran/6},
}

@article{shen2021almost,
  title={Almost sure scattering for the nonradial energy-critical NLS with arbitrary regularity in 3D and 4D cases},
  author={Shen, Jia and Soffer, Avy and Wu, Yifei},
  journal={arXiv preprint arXiv:2111.11935},
  year={2021}
}

@article{shen2021almost2,
  title={Almost sure well-posedness and scattering of the 3D cubic nonlinear Schr\"{o}dinger equation },
  author={Shen, Jia and Soffer, Avy and Wu, Yifei},
  journal={arXiv preprint arXiv:2110.11648 },
  year={2021}
}

@article{camps2021scattering,
  title={Scattering for the cubic Schr{ö}dinger equation in 3D with randomized radial initial data},
  author={Camps, Nicolas},
  journal={arXiv preprint arXiv:2110.10752},
  year={2021}
}

@article {MR2526409,
    AUTHOR = {Hadac, Martin and Herr, Sebastian and Koch, Herbert},
     TITLE = {Well-posedness and scattering for the {KP}-{II} equation in a
              critical space},
   JOURNAL = {Ann. Inst. H. Poincar\'{e} Anal. Non Lin\'{e}aire},
  FJOURNAL = {Annales de l'Institut Henri Poincar\'{e}. Analyse Non Lin\'{e}aire},
    VOLUME = {26},
      YEAR = {2009},
    NUMBER = {3},
     PAGES = {917--941},
      ISSN = {0294-1449},
   MRCLASS = {35Q53 (35B30 35P25)},
  MRNUMBER = {2526409},
MRREVIEWER = {Ming Mei},
       DOI = {10.1016/j.anihpc.2008.04.002},
       URL = {https://doi.org/10.1016/j.anihpc.2008.04.002},
}

@article {MR2824485,
    AUTHOR = {Herr, Sebastian and Tataru, Daniel and Tzvetkov, Nikolay},
     TITLE = {Global well-posedness of the energy-critical nonlinear
              {S}chr\"{o}dinger equation with small initial data in {$H^1(\mathbb
              T^3)$}},
   JOURNAL = {Duke Math. J.},
  FJOURNAL = {Duke Mathematical Journal},
    VOLUME = {159},
      YEAR = {2011},
    NUMBER = {2},
     PAGES = {329--349},
      ISSN = {0012-7094},
   MRCLASS = {35Q55 (35B30)},
  MRNUMBER = {2824485},
MRREVIEWER = {Matthew D. Blair},
       DOI = {10.1215/00127094-1415889},
       URL = {https://doi.org/10.1215/00127094-1415889},
}

@article {ohtzvwan1,
    AUTHOR = {Oh, Tadahiro and Tzvetkov
 , Nikolay and Wang, Yuzhao},
     TITLE = {Solving the 4NLS with white noise initial data},
  FJOURNAL = {Forum of Mathematics, Sigma},
    VOLUME = {8},
      YEAR = {2020},
}

@book {MR3618884,
    AUTHOR = {Koch, Herbert and Tataru, Daniel and Vi\c{s}an, Monica},
     TITLE = {Dispersive equations and nonlinear waves},
    SERIES = {Oberwolfach Seminars},
    VOLUME = {45},
      NOTE = {Generalized Korteweg-de Vries, nonlinear Schr\"{o}dinger, wave and
              Schr\"{o}dinger maps},
 PUBLISHER = {Birkh\"{a}user/Springer, Basel},
      YEAR = {2014},
     PAGES = {xii+312},
      ISBN = {978-3-0348-0735-7; 978-3-0348-0736-4},
   MRCLASS = {35-02 (35L70 35Q51 35Q53 35Q55)},
  MRNUMBER = {3618884},
}

\end{document}